\numberwithin{equation}{section}
\theoremstyle{plain}
\newtheorem{thm}{Theorem}[section]
\newtheorem{lem}[thm]{Lemma}
\newtheorem{corollary}[thm]{Corollary}
\newtheorem{prop}[thm]{Proposition}
\newtheorem{thmx}{Theorem}
\theoremstyle{definition}
\newtheorem{rmk}[thm]{Remark}
\newtheorem{definition}[thm]{Definition}
\newtheorem{defn-thm}[thm]{Definition-Theorem}
\newtheorem{defn-pro}[thm]{Definition-Proposition}
\theoremstyle{remark}
\newcommand{\A}{{\mathsf A}}%
\newcommand{\sA}{{\mathcal A}}
\newcommand{\sB}{{\mathcal B}}
\newcommand{\rB}{{\mathrm B}}
\newcommand{\C}{{\mathbb C}}
\newcommand{\sC}{{\mathcal C}}
\newcommand{\sD}{{\mathcal D}}
\newcommand{\e}{{\mathrm e}}
\newcommand{\sE}{{\mathcal E}}
\newcommand{\F}{{\mathbb F}}
\newcommand{\sF}{{\mathcal F}}
\newcommand{\fF}{{\mathfrak{F}}}
\newcommand{\fG}{{\mathfrak{G}}}
\renewcommand{\H}{{\mathbb H}}
\newcommand{\I}{{\mathrm I}}
\newcommand{\sI}{{\mathcal I}}
\newcommand{\sJ}{{\mathcal J}}
\newcommand{\K}{{\mathbb K}}
\newcommand{\sL}{{\mathcal L}}
\newcommand{\m}{{\mathfrak m}}
\newcommand{\M}{{\mathbb M}}
\newcommand{\sM}{{\mathcal M}}
\newcommand{\fM}{{\mathfrak M}}
\newcommand{\N}{{\mathbb N}}
\newcommand{\sN}{{\mathcal N}}
\newcommand{\sO}{{\mathcal O}}
\renewcommand{\P}{{\mathbb P}}
\newcommand{\sP}{{\mathcal P}}
\newcommand{\rQ}{{\mathrm Q}}
\newcommand{\sQ}{{\mathcal Q}}
\newcommand{\R}{{\mathbb R}}
\newcommand{\T}{{\mathbb T}}
\newcommand{\rT}{{\mathsf T}}
\newcommand{\sT}{{\mathcal T}}
\newcommand{\U}{{\mathbf U}}
\renewcommand{\u}{{\mathbf u}}
\newcommand{\fU}{{\mathfrak U}}
\newcommand{\V}{{\mathbb V}}
\newcommand{\sV}{{\mathcal V}}
\newcommand{\bW}{{\mathrm W}}
\newcommand{\Z}{{\mathbb Z}}
\newcommand{\sZ}{{\mathcal Z}} 
\newcommand{\p}{\mathfrak{p}}
\renewcommand{\k}{\C} 
\newcommand{\id}{\mathrm{id}}
\newcommand{\pr}{\mathrm{pr}}
\newcommand{\xs}{{\ \xrightarrow{\sim} \ }} 
\newcommand{\hgt}{\mathrm{ht}}
\newcommand{\rad}{\mathrm{rad}} 
\newcommand{\Hom}{\mathrm{Hom}}
\newcommand{\End}{\mathrm{End}}
\newcommand{\Ext}{\mathrm{Ext}}
\newcommand{\gr}{{\mathrm{gr}}}
\newcommand{\op}{{\mathrm{op}}}
\newcommand{\Frac}{{\mathrm{Frac}}}
\newcommand{\Mod}{\text{-}\mathrm{Mod}}
\renewcommand{\mod}{\text{-}\mathrm{mod}}
\newcommand{\gmod}{\text{-}\mathrm{gmod}}
\newcommand{\Rep}{\mathrm{Rep}}
\newcommand{\rep}{\mathrm{rep}}
\newcommand{\Coh}{\mathrm{Coh}}
\newcommand{\QCoh}{\mathrm{QCoh}}
\renewcommand{\for}{\mathrm{for}}
\newcommand{\ind}{\mathrm{ind}}
\newcommand{\coind}{\mathrm{coind}}
\newcommand{\rI}{\mathrm{I}} 
\newcommand{\Spec}{\mathrm{Spec}} 
\newcommand{\Proj}{\mathrm{Proj}}
\newcommand{\Lie}{\mathrm{Lie}} 
\renewcommand{\sc}{\mathrm{sc}} 
\newcommand{\g}{{\mathfrak{g}}}
\renewcommand{\b}{{\mathfrak{b}}}
\renewcommand{\t}{{\mathfrak{t}}}
\newcommand{\n}{{\mathfrak{n}}}
\newcommand{\Fr}{\mathrm{Fr}} 
\newcommand{\HC}{\mathrm{HC}} 
\newcommand{\St}{\mathrm{St}} 
\newcommand{\af}{\mathrm{af}} 
\newcommand{\ex}{\mathrm{ex}}
\newcommand{\sEnd}{\mathcal{E}nd}
\newcommand{\sHom}{\mathcal{H}om}
\newcommand{\Fl}{\mathcal{F}{l}} 
\newcommand{\hb}{\mathrm{hb}}
\newcommand{\Db}{D^\mathrm{b}}
\newcommand{\FN}{\mathrm{FN}} 
\newcommand{\semi}{{\frac{\infty}{2}}}
\begin{document}
\title{Category $\mathcal{O}$ for hybrid quantum groups and non-commutative Springer resolutions} 

\makeatletter
\let\MakeUppercase\relax
\makeatother

\author{Quan Situ} 
\address{Yau Mathematical Sciences Center\\
Tsinghua University\\
Beijing 100084, P.~R.~China}
\email{stq19@tsinghua.org.cn, quan.situ@uca.fr}
\date{}
\begin{abstract} 
The hybrid quantum group was firstly introduced by Gaitsgory, whose category $\mathcal{O}$ can be viewed as a quantum analogue of BGG category $\mathcal{O}$. 
We give a coherent model for its principal block at roots of unity, using the non-commutative Springer resolution defined by Bezrukavnikov--Mirkovi\'{c}. 
In particular, the principal block is derived equivalent to the affine Hecke category. 
As an application, we endow the principal block with a canonical grading, and show that the graded multiplicity of simple module in Verma module is given by the generic Kazhdan--Lusztig polynomial. 
\end{abstract} 

\maketitle
\setcounter{tocdepth}{1} \tableofcontents 

\section{Introduction}\label{sect 1} 
\subsection{Hybrid quantum group and its category $\sO$} 
Let $G$ be a complex connected and simply-connected semisimple algebraic group, with a Borel subgroup $B$ and a Cartan subgroup $T\subset B$. 
Let $U_q$ be \textit{Lusztig's quantum group} \cite{Lus90} and let $\fU_q$ be \textit{De Concini--Kac's quantum group} \cite{DeCK90}, associated to $G$. 
The \textit{hybrid (or mixed) quantum group} $U^\hb_q$ is an algebra admitting a triangular decomposition
$$U^\hb_q=\fU^-_q\otimes \fU^0_q\otimes U^+_q,$$ 
which was firstly introduced by Gaitsgory \cite{Gai18} with the perspective of generalizing the Kazhdan--Lusztig equivalence. 
It also appears naturally when one considers the quantum analogue of the BGG category $\sO$ \cite{BGG76}. 
Recall that the BGG category $\sO$ (with integral weights) for the Lie algebra $\g=\Lie(G)$ can be interpreted as the category 
$$\g\mod^B$$ 
of $B$-equivariant finitely generated $\g$-modules on which the actions of $\b=\Lie(B)$ by  restriction of $\g$-action and by differential of $B$-action coincide. 
Therefore, one can define the quantum category $\sO$ to be the category 
$$\sO_q=\fU_q\mod^{U^{\geq}_q}$$ 
of finitely generated $\fU_q$-modules equipped with an equivariant and integrable $U^{\geq}_q$-module structure that is compatible through $\fU^{\geq}_q\rightarrow U^{\geq}_q$. 
It is exactly the category $\sO$ (with integral weights) associated to $U^\hb_q$ with its triangular decomposition. 

\subsection{Main result} 
Let $\sO^0_q$ be the principal block for $\sO_q$. 
From now on, we specialize $q$ to a root of unity $\zeta$ whose order satisfies some reasonable conditions specified in \textsection\ref{subsect 2.1} and \textsection\ref{subsect 3.2.2} (see also Remark \ref{rmk 5.14}). 
Our main goal is to give a coherent model for $\sO^0_\zeta$, using the non-commutative Springer (Grothendieck) resolution defined in \cite{BM13}. 

Let $\A$ be the ($G$-equivariant) $\C[\g\times_{\t/W}\t]$-algebra defined in \textit{loc. cit.} as the non-commutative counterpart of the Grothendieck resolution 
$\widetilde{\g}=G\times^B \b \rightarrow \g\times_{\t/W} \t$ (where $\t=\Lie(T)$ and $W$ is the Weyl group). 
Let $\pi_{_{\widetilde{\sN}}}:\widetilde{\sN}=G\times^B \n \rightarrow \g$ be the Springer resolution (where $\n$ is the nilpotent radical of $\b$). 
Our main result is 
\begin{thmx}[=Corollaries \ref{cor 5.14}\&\ref{cor 6.8}]\label{thm A} 
There is an equivalence of abelian categories 
\begin{equation}\label{equ 1}
\sO^0_\zeta \xs \Coh^G(\pi_{_{\widetilde{\sN}}}^*\A) 
\end{equation}
intertwining the reflection functors on both sides. 
\end{thmx}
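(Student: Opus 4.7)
\emph{Strategy.} The plan is to establish the equivalence via a deformation-and-specialization pattern, following the spirit of the Arkhipov--Bezrukavnikov--Ginzburg coherent realization of the small quantum group block, but with the non-commutative Springer resolution $\pi^*_{\tilde{\sN}}\A$ replacing $\tilde{\sN}$. Concretely, I would first introduce a deformed principal block $\widetilde{\sO}^0_\zeta$ over a formal neighborhood of the principal central character — allowing generalized weights for $\fU^0_\zeta$ and producing a flat family over (a completion of) $\t/W$ — and argue that it is equivalent to $\Coh^G(\A)$, the $G$-equivariant modules over the full non-commutative Grothendieck resolution on $\g\times_{\t/W}\t$. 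The theorem then arises by base change along $\tilde{\sN}\hookrightarrow \tilde{\g}$ on the coherent side, matched with specialization to the principal character on the quantum side.

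\emph{Progenerator and endomorphism algebra.} The core technical step is to exhibit an explicit projective (pro-)generator $P\in\widetilde{\sO}^0_\zeta$ — naturally built from deformed projective covers of simples, or equivalently via a Soergel/tilting-type module — and to identify $\End_{\widetilde{\sO}^0_\zeta}(P)^{\op}$, $G$-equivariantly, with the endomorphism algebra of a natural generator of $\Coh^G(\A)$. Once this identification is in place, standard Morita theory produces the deformed equivalence. Flatness of both families over their respective bases then ensures that the specialization along $\tilde{\sN}\to\tilde{\g}$ on the coherent side corresponds exactly to fixing the central character on the quantum side, delivering the abelian equivalence $(\ref{equ 1})$.

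\emph{Reflection functors and the main obstacle.} On the quantum side, reflection functors come from translation through the walls of $\widetilde{\sO}^0_\zeta$; on the coherent side, they arise from the natural action of simple reflections in $W$ (or the affine Hecke category) on $\Coh^G(\tilde{\g})$ and its non-commutative deformation. Since both constructions descend from the same $W$-symmetry of the deformation parameter $\t$, the intertwining should follow from a naturality/uniqueness argument for this $W$-action on the deformed category. The main obstacle is Step two: concretely constructing $P$ and computing $\End(P)$ with the correct $G$-equivariant structure to match $\A$. This requires fine control over Verma filtrations and $\Ext^1$-vanishing in the hybrid category at roots of unity, a workable presentation of $\A$ adapted to this comparison, and careful handling of the integrability condition imposed by $U^{\geq}_\zeta$ alongside the $G$-equivariance on the coherent side.
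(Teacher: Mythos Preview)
Your deformation-then-specialization skeleton is correct, and the paper does prove a deformed equivalence $\sO^0_{\zeta,S}\simeq\Coh^G(\pi^*\A_{(\t/W)_{\hat{0}}})$ first (Theorem~B) and then specialize. But the heart of your proposal --- build a progenerator $P$ and directly identify $\End(P)^{\op}$ with the coherent side --- is precisely the step the paper \emph{avoids}, because carrying out that computation with the correct $G$-equivariant structure is the whole difficulty. You flag this as ``the main obstacle'' and offer no mechanism to resolve it; that is a genuine gap.

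The paper's route around this obstacle has two ingredients you are missing. First, it does not compare endomorphism algebras directly but instead constructs \emph{relative Soergel functors} $\V_r:\sO^0_{\zeta,S}\to\sO^{-\rho}_{\zeta,S\otimes_{\k[\t/W]}\k[\t]}$ and $\V_c:\A^b_S\mod^B\to\Coh^B(\b_S\times_{\t/W}\t)$, using translation to the Steinberg block while remembering the central action, and shows these are fully faithful on Verma-filtered objects (this requires the deformed setting; it fails undeformed). The target categories are then matched via the author's \emph{prior} equivalence $\sO^{-\rho}_\zeta\simeq\Coh^G(\tilde{\sN})$ for the Steinberg block. Second, to identify the images of projectives under $\V_r$ and $\V_c$, the paper first establishes a $T$-equivariant (``de-equivariantized'') version $\fU^b_\zeta\mod^{T,0}\simeq\A^b_S\mod^T$ using Tanisaki's localization theorem for $\fU_\zeta$ and its splitting bundles, then passes to the $B$-equivariant level by induction. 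Your sketch has no analogue of either ingredient.

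Your reflection-functor argument is also too vague: the functors $\Theta_s$ are indexed by \emph{affine} simple reflections $s\in\I_\af$, not merely finite $W$, so ``descending from the $W$-symmetry of the deformation parameter $\t$'' does not suffice. The paper handles this by first showing the equivalence is $\rep(B)$-linear, then deducing that the $B$-equivariant equivalence is the equivariantization of the $T$-equivariant one, where compatibility with all $\Theta_s$ (including $s_0$) was already known from Tanisaki's work.
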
 

Note that a similar result for modular analogue of BGG category $\sO$ was obtained by Losev \cite{Los23} recently. 
The category $\Coh^G(\pi_{_{\widetilde{\sN}}}^*\A)$ firstly appeared in \cite{BLin}, which was shown to be derived equivalent to a version of the affine Hecke category and coincide with the heart of the ``new t-structure" defined by Frenkel--Gaitsgory \cite{FG09}. 
It would be interesting to explore the relationship between (\ref{equ 1}) and a conjectural equivalence of Gaitsgory \cite[Conj. 9.2.2]{Gai18} recently proved by Chen--Fu \cite{CF21}. 

After this paper is written, Ivan Losev \cite{Los23b} informed us that he has a different approach to prove the equivalence (\ref{equ 1}) independently. 

\ 

Let us explain the ingredients of the proof of Theorem \ref{thm A}. 

\subsection{Equivalence of Steinberg block and functor $\V$}\label{subsect 1.3} 
A key ingredient is an equivalence of the Steinberg block $\sO^{-\rho}_\zeta$ (the block containing the Steinberg representation) established in our previous work \cite{Situ2}. 
\begin{thm}[\cite{Situ2}]
There is an equivalence of abelian categories 
\begin{equation}\label{equ 1.3} 
\sO^{-\rho}_{\zeta} \xs \Coh^G(\widetilde{\sN}). 
\end{equation}
\end{thm}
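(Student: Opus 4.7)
The plan is to realize $\sO^{-\rho}_\zeta$ by a quantum Beilinson--Bernstein style localization onto the Springer resolution, in analogy with the Bezrukavnikov--Mirkovi\'{c}--Rumynin equivalence in modular representation theory. The Steinberg block is the natural starting point because the Harish-Chandra character $\chi_{-\rho}$ is special: on the Lusztig/small quantum group side the corresponding module is simultaneously simple, projective and injective (the Steinberg module $\St_\zeta$), which provides the homological rigidity driving the equivalence.

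First I would construct a canonical projective generator $P \in \sO^{-\rho}_\zeta$ by inducing $\St_\zeta$, equipped with its tautological $U^{\geq}_\zeta$-integrable structure, from $\fU^{\geq}_\zeta$ to $\fU_\zeta$. Then $P \simeq \fU^-_\zeta \otimes \St_\zeta$ as a left $\fU^-_\zeta$-module and inherits a compatible integrable $U^{\geq}_\zeta$-action. Setting $A := \End_{\sO^{-\rho}_\zeta}(P)^{\op}$, the functor $\Hom(P,-)$ should realize $\sO^{-\rho}_\zeta$ as finitely generated $A$-modules once one knows $P$ is projective and a generator. I would then identify $A$ $G$-equivariantly with the coordinate ring of $\tilde{\sN}$, using two inputs: the Frobenius part of the De Concini--Kac center of $\fU_\zeta$ at a root of unity, whose spectrum produces a large commutative subalgebra restricting, over $\chi_{-\rho}$, to $\C[\sN]$; and the integrable $U^{\geq}_\zeta$-structure on $P$, which upgrades a $B$-equivariant sheaf on $\n$ to a $G$-equivariant sheaf on $\tilde{\sN}=G\times^B\n$.

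The hard part will be to prove that $\Hom(P,-)$ is actually an equivalence. Projectivity of $P$ should reduce to an $\Ext$-vanishing argument using the projectivity of $\St_\zeta$ in the small quantum group category together with exactness of the $\fU^-_\zeta$-induction. Showing that $A$ is commutative and equal to $\C[\tilde{\sN}]$ on the nose --- rather than a non-commutative deformation of the kind $\A$ of Bezrukavnikov--Mirkovi\'{c} that governs the principal block in Theorem \ref{thm A} --- crucially uses that at the Steinberg character the Harish-Chandra center acts by a fixed homomorphism, killing any potential deformation parameter. Naturality in $G$ would then promote the algebra identification to the asserted $G$-equivariant equivalence $\sO^{-\rho}_\zeta \xs \Coh^G(\tilde{\sN})$.
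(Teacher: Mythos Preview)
Your conceptual framework is sound: at the Steinberg character the Harish--Chandra direction degenerates, so the block is governed by the commutative Frobenius center $Z^-_\Fr\simeq\C[\n]$ rather than a non-commutative resolution, and the integrable $U^{\geq}_\zeta$-structure promotes $\C[\n]$-modules to $B$-equivariant sheaves on $\n$, i.e.\ to $\Coh^G(\tilde{\sN})$ via $\tilde{\sN}=G\times^B\n$. But the specific Morita construction has gaps. A single $P$ cannot generate $\sO^{-\rho}_\zeta$: the target $\Coh^B(\n)$ has infinitely many non-isomorphic simples (e.g.\ all the characters $\k_\lambda$ supported at $0$), so one needs the full $\Lambda$-family $\{P\otimes\k_{l\lambda}\}_\lambda$, and $A$ must be taken as the $\Lambda$-graded Hom-algebra, which is $\C[\n]$ as a $B$-equivariant algebra rather than ``$\C[\tilde{\sN}]$ $G$-equivariantly''. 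Moreover your $P=\fU_\zeta\otimes_{\fU^{\geq}_\zeta}\St_\zeta$ is not obviously an object of $\sO^{-\rho}_\zeta$ (whose objects are $U^\hb_\zeta$-modules, not bare $\fU_\zeta$-modules), and the asserted inherited $U^{\geq}_\zeta$-integrable structure requires an actual construction.

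The result is quoted from \cite{Situ2}, but the functor realizing it is described in the present paper (see the definition of $\I^\hb_1$ in \textsection\ref{subsect 4.2} and the theorem recalled just before Theorem~\ref{thm 5.13}). Rather than Morita theory, one constructs an explicit Frobenius-type quotient $\Fr:\fU^\hb_\zeta\twoheadrightarrow\k[B\times_TT]\rtimes U\n$ of the subalgebra $\fU^\hb_\zeta=Z^-_\Fr\otimes\fU^0_\zeta\otimes U^+_\zeta\subset U^\hb_\zeta$, and sets $\I^\hb_1(V)=U^\hb_\zeta\otimes_{\fU^\hb_\zeta}\Fr^*(V\otimes\k_{-\rho})$; this functor sends $\k[\n]\otimes\k_\lambda$ to the Verma module $M(-\rho+l\lambda)$ and is shown to be an equivalence onto the Steinberg block. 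Thus the correct generating family consists of the Verma modules $M(-\rho+l\lambda)$ themselves --- already in $\sO^{-\rho}_\zeta$, with the $\C[\n]$-action coming from their freeness over $Z^-_\Fr$ --- rather than an induction of the Steinberg representation.
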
 
\noindent 
Based on the equivalence above, we establish functors $\V_r$ and $\V_c$ (here ``$r$" and ``$c$" represent ``representation" and ``coherent", respectively) on both sides of (\ref{equ 1}) that are fully-faithful on the objects admitting Verma flags (in fact our constructions are in the deformed setting, see \textsection\ref{subsect 1.5} below). 

The functors $\V$ are constructed as follows. 
Let $C=\C[0\times_{\t/W} \t]$ be the coinvariant ring. 
There is an algebra homomorphism (see \cite{BBASV}) 
$$C\rightarrow Z(\sO^0_\zeta).$$ 
We consider the category $\sO^{-\rho}_{\zeta,C}$ of $U^\hb_\zeta\otimes C$-modules that are contained in $\sO^{-\rho}_\zeta$ when viewed as $U^\hb_\zeta$-modules. 
We define a functor 
$$\V_r:\ \sO^0_\zeta\rightarrow \sO^{-\rho}_{\zeta,C}$$ 
by the translation functor from $\sO^0_\zeta$ to $\sO^{-\rho}_\zeta$ and remembering at the same time the action of $C$ on $\sO^0_\zeta$. 
If the parameter $q$ is generic, then the Steinberg block $\sO^{-\rho}_q$ is trivial, so the functor $\V_r$ is given by 
$$\V_r:\ \sO^0_q\rightarrow \sO^{-\rho}_{q,C}\simeq C\mod.$$ 
In this case, it coincides with the Soergel's functor $\V$ \cite{Soe90} for the classical BGG category $\sO$. 
When $q=\zeta$, the category $\sO^{-\rho}_\zeta$ is non-trivial. 
Hence we may view $\V_r$ as a ``relative analogue" of Soergel's functor. 

On the coherent side, we define a functor 
$$\V_c:\ \Coh^G( \pi_{_{\widetilde{\sN}}}^*\A) \rightarrow \Coh^G(\widetilde{\sN}\times_{\t/W} \t)$$
by base change along $\pi_{_{\widetilde{\sN}}}:\widetilde{\sN}\rightarrow \g$ of the non-commutative resolution 
$\A\mod\rightarrow \Coh(\g\times_{\t/W} \t)$. 
To relate the images of two functors $\V_r$ and $\V_c$, we use a $C$-linear extension of the equivalence (\ref{equ 1.3}), i.e. 
$$\sO^{-\rho}_{\zeta,C} \xs \Coh^G(\widetilde{\sN}\times_{\t/W} \t).$$ 

Our next step is to compare the images of the projective objects under the functors $\V_r$ and $\V_c$. 
To that end, we firstly establish a ``de-equvariantization" of Theorem \ref{thm A}. 

\subsection{A de-equvariantization}\label{subsect 1.4} 
More precisely, let $\fU^b_\zeta$ be the image of $\fU_\zeta\rightarrow U^\hb_\zeta$, and consider the category $\fU^b_\zeta\mod^{T}$ of $X^*(T)$-graded representations of $\fU^b_\zeta$. 
Let $\fU^b_\zeta\mod^{T,0}$ be its principal block. 
Note that the restriction of $\pi_{_{\widetilde{\sN}}}^*\A$ on the fiber $\n\hookrightarrow \widetilde{\sN}$ is $\A\otimes_{\C[\g]} \C[\n]$. 
Hence we can identify 
$$\Coh^G(\pi_{_{\widetilde{\sN}}}^*\A)=\Coh^B(\A\otimes_{\C[\g]} \C[\n])$$ 
by restriction to $\n$. 
We have the following $T$-equivariant version of Theorem \ref{thm A}. 
\begin{prop}[=Proposition \ref{prop 4.1}] \label{prop 1.2}
There is an equivalence of abelian categories 
\begin{equation}\label{equ 1.2} 
\fU^b_\zeta\mod^{T,0} \xs \Coh^T(\A\otimes_{\C[\g]} \C[\n]), 
\end{equation} 
intertwining the reflection functors on both sides. 
\end{prop}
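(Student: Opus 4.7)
The plan is to de-equivariantize the Steinberg block equivalence (\ref{equ 1.3}) and then transport the result from the $-\rho$-block to the principal block, with the non-commutative algebra $\A$ encoding the translation on the coherent side. The guiding principle is that restriction of equivariance from $G$ (or $B$) to $T$ on the coherent side corresponds, on the module side, to retaining only the $T$-grading coming from $U^0_\zeta$ while dropping the full integrability of $U^{\geq}_\zeta$ that characterizes $\sO_\zeta$.

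First, I would establish a $T$-equivariant Steinberg equivalence
\[
\fU^b_\zeta\mod^{T,-\rho} \xs \Coh^T(\n).
\]
Using $\tilde{\sN}=G\times^B \n$, identify $\Coh^G(\tilde{\sN}) = \Coh^B(\n)$; then forgetting $N$-equivariance produces $\Coh^T(\n)$. On the module side, forgetting $U^+_\zeta$-integrability while retaining the $T$-grading yields a functor $\sO^{-\rho}_\zeta \to \fU^b_\zeta\mod^{T,-\rho}$. One verifies that the equivalence (\ref{equ 1.3}) carries the first forgetful functor to the second, and that both admit induction left adjoints satisfying monadic descent, so the equivalence descends.

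Next, I would analyze the translation functor $T^0_{-\rho}: \fU^b_\zeta\mod^{T,-\rho} \to \fU^b_\zeta\mod^{T,0}$ on the module side and compare it with the functor $\Coh^T(\n) \to \Coh^T(\A\otimes_{\C[\g]}\C[\n])$ given by base change along the canonical map $\C[\n]\to \A\otimes_{\C[\g]}\C[\n]$. The key claim is that these two functors are intertwined by the equivalence of the previous step; this can be verified by testing on Verma / standard objects and comparing their endomorphism algebras. Proposition \ref{prop 1.2} then follows by adjunction, since both $\fU^b_\zeta\mod^{T,0}$ and $\Coh^T(\A\otimes_{\C[\g]}\C[\n])$ are generated in the appropriate sense by the images of translation from the Steinberg wall. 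The intertwining of reflection functors is inherited from the compatibility of wall-crossing with translation to/from the Steinberg wall and with restriction of equivariance on both sides.

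The main obstacle is the translation-matching in the second step. Translation functors in $\fU^b_\zeta\mod^T$ are less rigid than in $\sO_\zeta$: the category carries only the partial central action of the coinvariant ring $C$ via $C \to Z(\sO^0_\zeta)$, so one must set up translation in a relative sense and track it carefully against the $\A$-action on the coherent side. Concretely, one must identify, as $T$-equivariant algebras, the endomorphism ring of the image of a projective generator under $T^0_{-\rho}$ with that of $\A\otimes_{\C[\g]}\C[\n]$. This is a Soergel-type $\Hom$-computation whose success relies on the structural properties of $\A$ as a universal non-commutative Springer resolution in the sense of Bezrukavnikov--Mirković, together with flat base change along $\n \hookrightarrow \g$.
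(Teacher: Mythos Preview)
Your route is genuinely different from the paper's, and the gap is exactly where you flag it as ``the main obstacle.''

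The paper does \emph{not} start from the Steinberg equivalence (\ref{equ 1.3}) and translate. It starts from Tanisaki's $T$-equivariant localization (Proposition~\ref{prop 3.7}), which already furnishes an equivalence $\A\mod^T_0 \simeq \fU_\zeta\mod^\Lambda_{(1,\chi_0)}$ on objects set-theoretically supported at $0\in\g$. Base-changing along $\b\hookrightarrow\g$ gives $\A^b\mod^T_0 \simeq \fU^b_\zeta\Mod^\Lambda_{(1,\chi_0)}$ on the nose; the paper's remaining work is an inverse-limit argument (Lemmas~\ref{lem 4.2.1}, \ref{lem 4.2}) lifting this from the completion to the full category $\A^b_S\mod^T$ and identifying the endomorphism ring of a projective generator with $\A^b_S$. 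The compatibility with reflection functors is inherited directly from Tanisaki.

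Your step~2 is where the argument breaks. You propose to show that $T^0_{-\rho}$ on the module side matches ``base change along $\C[\n]\to\A_\n$'' on the coherent side, and then conclude by an endomorphism computation. But the matching $T^{-\rho}_0\leftrightarrow\pi_*$ (Proposition~\ref{prop 4.3}) is, in the paper's logic, a \emph{consequence} of already having both $\fF^b$ and $\fG^b$ from Tanisaki; it is not an input available prior to Proposition~\ref{prop 4.1}. More concretely, the endomorphism identification you need --- that $\End$ of $T^0_{-\rho}$ of a projective generator is $\A_\n$ as a $T$-equivariant algebra --- is essentially the content of Tanisaki's splitting bundle $\fM_0$ and the exotic t-structure comparison. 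You invoke ``structural properties of $\A$'' from \cite{BM13}, but those properties characterize $\A$ up to Morita equivalence over $\C[\g]$, not as a specific algebra; pinning it down against the quantum side requires exactly the bridge that Tanisaki builds. Without that input (or a substitute of comparable strength), the Soergel-type computation you sketch has no starting point.

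A smaller issue: your step~1 de-equivariantization of (\ref{equ 1.3}) via monadic descent is plausible for the Steinberg block (where the equivalence is the explicit $M(-\rho)\otimes_{\C[\n]}-$), but note that the paper takes care to point out (\S\ref{subsect 1.4}) that compatibility of equivariance is subtle in general and is only established \emph{after} the main theorem (Corollary~\ref{cor 6.6}).
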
 
\noindent 
We observe that a generating family of projective objects of the categories in (\ref{equ 1}) can be obtained from the inductions of the projective objects of categories in (\ref{equ 1.2}). 
Therefore, we construct similar functors $\V$ for the categories in (\ref{equ 1.2}), and show their compatibility with the induction functors. 
So that we can identify the images of projective objects under the functors $\V_r$ and $\V_c$. 

The proof of Proposition \ref{prop 1.2} is based on results of Tanisaki \cite{Tan21}, where the localization theorem for $\fU_\zeta$ was established and it further related $\fU_\zeta$-modules to $\A$-modules by some splitting bundles for the sheaf of differential operators on the quantized flag variety. 
Note that it is not clear from their constructions whether the splitting bundles are $U_\zeta$-equivariant or not. 
So \textit{\`{a} priori}, one can not derive (\ref{equ 1}) from (\ref{equ 1.2}), by adding the $U^{\geq}_\zeta$-equivariant and $B$-equivariant conditions on both sides. 
However, after proving our main result, we can show that (\ref{equ 1.2}) does respect the $B$-actions on both categories as a consequence (see Corollary \ref{cor 6.6}). 

\ 

Finally, we have to show the fully-faithfulness for $\V_r$ and $\V_c$ on the objects admitting Verma flags, which is true only in the deformed settings (see Remark \ref{rmk 4.12}). 
Hence we need deformations of category $\sO$, which we mainly work with in this paper. 

\subsection{Deformations}\label{subsect 1.5} 
For the classical category $\sO$, the deformation technique played an important role on studying basic structures, e.g. \cite{Soe90}, \cite{Strop09} and \cite{Fie03}. 
Deformation of $\sO_\zeta$ was studied in \cite{Situ1}. 
Let $S=\C[\t]_{\hat{0}}$ be the completion of $\C[\t]$ at $0\in \t$. 
The deformation category for $\sO_\zeta$ is an $S$-linear category $\sO_{\zeta,S}$ whose specialization to the residue field is $\sO_\zeta$. 
Let $\A_{(\t/W)_{\hat{0}}}$ be the completion of $\A$ at $0\in \t/W$. 
We have a deformed version of Theorem \ref{thm A} 
\begin{thmx}[=Theorem \ref{thm 5.13}]\label{thm B} 
There is an equivalence of $S$-linear abelian categories 
\begin{equation}\label{equ 1.4} 
\sO^0_{\zeta,S}\xs \Coh^G(\pi^*\A_{(\t/W)_{\hat{0}}}),
\end{equation}
where $\sO^0_{\zeta,S}$ is the principal block of $\sO_{\zeta,S}$ and $\pi: \widetilde{\g}\rightarrow \g$ is the natural projection. 
\end{thmx}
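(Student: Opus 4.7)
The plan is to reduce Theorem \ref{thm B} to matching a generating family of projective objects on the two sides of the desired equivalence, using the two functors $\V_r$ and $\V_c$ described in \textsection\ref{subsect 1.3} together with a deformed version of the Steinberg block equivalence (\ref{equ 1.3}).

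The first step is to upgrade the Steinberg block equivalence to an $S$-linear, $C$-linear version, that is, to establish an equivalence
\[
\sO^{-\rho}_{\zeta,S} \;\xs\; \Coh^G\bigl(\tilde{\sN}\times_{\t/W}\t\bigr)_{(\t/W)_{\hat{0}}},
\]
obtained from (\ref{equ 1.3}) by base change along $\t \to \t/W$ and completion at $0$. Next I would construct the deformed $\V_r$ as the translation functor $\sO^0_{\zeta,S} \to \sO^{-\rho}_{\zeta,S}$ from the principal block to the Steinberg block, retaining simultaneously the action of $C$ via the map $C \to Z(\sO^0_\zeta)$. On the coherent side, $\V_c$ is defined by base change along $\pi_{_{\tilde{\sN}}} : \tilde{\sN} \to \g$ (suitably extended in the deformed setting to $\pi: \tilde{\g}\to\g$) of the non-commutative resolution $\A\mod \to \Coh(\g\times_{\t/W}\t)$.

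The next step is to verify that $\V_r$ and $\V_c$ correspond to each other under the deformed Steinberg block equivalence, so that matching images reduces to a computation on the Steinberg side. This should follow from functoriality of the translation functor, combined with an explicit description of $\V_c$ on the generators coming from standard modules. After this, I must establish the fully-faithfulness of $\V_r$ (and dually $\V_c$) on objects admitting Verma (resp. standard) flags. This is where deformations are indispensable: in the spirit of Soergel's original argument \cite{Soe90}, fully-faithfulness is first verified on the generic fiber of $\Spec S$, where $\sO^0_{\zeta,S}$ degenerates to a semisimple-like category and $\V_r$ becomes visibly faithful on Vermas, and is then propagated to the closed fiber using standard deformation arguments for modules with filtrations. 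The analogous statement for $\V_c$ is a coherent flatness check over $\t/W$.

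Finally, to conclude the equivalence, I would match projective generators. A generating family of projectives of $\sO^0_{\zeta,S}$ is obtained by induction from the projectives of $\fU^b_\zeta\mod^{T,0}$, while a generating family of $\Coh^G(\pi^*\A_{(\t/W)_{\hat{0}}})$ arises by induction from $\Coh^T(\A\otimes_{\C[\g]}\C[\n])$. Proposition \ref{prop 1.2} identifies these $T$-equivariant categories, and by checking that $\V_r$ and $\V_c$ intertwine induction from $T$ to $G$, the identification of projectives transports to the full principal block. Combined with fully-faithfulness on Verma-flag objects, this yields the equivalence (\ref{equ 1.4}).

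The main obstacle is the fully-faithfulness of $\V_r$ on Verma-flag objects: one must precisely control the generic fibre of $\sO^0_{\zeta,S}$ over $S$ and identify the endomorphism algebras of standard objects with their coherent counterparts on the Steinberg side. This is the step where the deformation machinery of \cite{Situ1}, together with the interplay between the Steinberg block equivalence and the action of $C$, has to be used most delicately.
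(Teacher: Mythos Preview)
Your proposal follows the same architecture as the paper: construct $\V_r$ and $\V_c$, relate them through a deformed Steinberg equivalence $\fG$, prove fully-faithfulness on Verma-flag objects via passage to the generic fibre of $S$, and match projectives obtained by induction from the $T$-equivariant setting of Proposition~\ref{prop 1.2}. The overall strategy is correct.

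There is, however, one genuine gap: the truncation machinery. Neither $\sO^0_{\zeta,S}$ nor $\A^b_S\mod^B=\Coh^G(\pi^*\A_{(\t/W)_{\hat 0}})$ has enough projectives. The induced object $\ind_r(P)=U^\hb_\zeta\otimes_{\fU^b_\zeta}P$ lies only in $U^\hb_\zeta\Mod^{\Lambda}_S$, not in $\sO_S$ (it carries an infinite Verma flag), so your claim that ``a generating family of projectives of $\sO^0_{\zeta,S}$ is obtained by induction'' is false as stated. The paper works instead in truncated subcategories $\sO^{0,\leq l\nu}_S$ and $\A^b_S\mod^{B,\leq\nu}$, where the truncated inductions $\ind_r(P)^{\leq l\nu}$ and $\ind_c(\sP)^{\leq\nu}$ are genuine projective generators (Lemmas~\ref{lem 2.2} and \ref{lem 5.6}). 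Defining the truncation on the coherent side is not automatic: one needs Proposition~\ref{prop 5.4}, which controls $\Ext^i(\sM_x,\sM_{x'})$ via the semi-infinite order and is proved by passing through Bezrukavnikov's equivalence with the affine Hecke category. One then has to verify that $\V_r$ and $\V_c$ commute with truncation (Lemmas~\ref{lem 5.8} and \ref{lem 5.9}) before the matching of projectives can go through.

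A smaller point: the paper does not establish fully-faithfulness of $\V_c$ by an independent ``coherent flatness check''. Rather, Corollary~\ref{cor 4.4} shows that the equivalence $\fF^b$ intertwines $\V^b_c$ with $\V^b_r$, so fully-faithfulness of $\V^b_c$ follows immediately from that of $\V^b_r$ (Proposition~\ref{prop 4.11}, proved by your generic-fibre argument). The full $\V_c$ is then reduced to $\V^b_c$ via the compatibility $\V_c\circ\ind_c\simeq\ind_c\circ\V^b_c$ (Lemma~\ref{lem 5.7}) and the truncation lemmas above.
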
 
\noindent 
We have similar constructions as in \textsection\ref{subsect 1.3} and \textsection\ref{subsect 1.4} in the deformed setting, which together with the fact that $\V_r$ and $\V_c$ are fully-faithful on objects admitting Verma flags (see Proposition \ref{prop 5.8}) imply Theorem \ref{thm B}. 
Then the equivalence (\ref{equ 1}) follows from (\ref{equ 1.4}) by $S$-linearity. 

\subsection{Kazhdan--Lusztig polynomial} 
Consider the $\C^\times$-action on $\g$ by $t.x=t^{-2}x$, for any $t\in \C^\times$, $x\in \g$. 
The $\C[\g]$-algebra $\A$ can be equipped with a $\C^\times$-equivariant structure. 
By the equivalence (\ref{equ 1}), we can define a grading on $\sO^0_\zeta$ via 
$$\sO^{0,\gr}_\zeta:=\Coh^{G\times \C^\times}(\pi_{_{\widetilde{\sN}}}^*\A)\ \rightarrow\ \sO^{0}_\zeta.$$ 
We compute the Kazhdan--Lusztig polynomials in $\sO^{0,\gr}_\zeta$: 

\begin{thmx}[=Theorem \ref{thm 7.3}]
The Verma modules and simple modules in $\sO^{0}_\zeta$ admit liftings in $\sO^{0,\gr}_\zeta$. 
Moreover, the graded multiplicities of simple module in Verma module are given by the generic Kazhdan--Lusztig polynomials. 
\end{thmx}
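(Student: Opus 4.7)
The plan is to transfer the statement across the equivalence of Theorem~\ref{thm A} and compute everything inside the coherent model $\Coh^{G\times \C^\times}(\pi_{_{\tilde{\sN}}}^*\A)$, where the $\C^\times$-action comes from the scaling $t.x = t^{-2}x$ on $\g$. Since $\A$ is itself $\C^\times$-equivariant (\cite{BM13}), the graded category $\sO^{0,\gr}_\zeta$ is well-defined and the forgetful functor to $\sO^0_\zeta$ corresponds to forgetting the $\C^\times$-weights. Thus the problem reduces to (i) producing canonical $\C^\times$-equivariant enhancements of the objects on the coherent side corresponding to Verma and simple modules, and (ii) computing their graded multiplicities.

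For the Verma modules, I would work in the deformed setting of Theorem~\ref{thm B} and identify the image of a deformed Verma $M_S(\lambda)$ with a ``standard'' object in $\Coh^G(\pi^*\A_{(\t/W)_{\hat{0}}})$, built from pushforward along the projection $\tilde{\g}\rightarrow \g$ of a line-bundle-like object on the Grothendieck resolution (or restriction of the tautological module of $\A$ to a fiber). The $\C^\times$-action on $\g$ lifts to $\tilde{\g}$, so these standard objects carry obvious $\C^\times$-equivariant structures; specializing to $S/\m S$ produces graded lifts of the Vermas. For the simples, I would use that $\Coh^G(\pi_{_{\tilde{\sN}}}^*\A)$ is derived equivalent to (a block of) the affine Hecke category, under which the simple objects correspond to the indecomposable tilting/IC-like generators indexed by the affine Weyl group; these are intrinsically graded objects in the mixed/$\C^\times$-equivariant setting, and this gives the desired graded lifts.

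For the multiplicity computation, I would invoke the standard Kazhdan--Lusztig theory on the coherent side: under the Bezrukavnikov--Lin derived equivalence and Bezrukavnikov--Mirkovi\'{c}'s theory of the non-commutative Springer resolution, the classes of standard (resp.\ simple) objects in the graded Grothendieck group correspond to the standard (resp.\ Kazhdan--Lusztig canonical) basis of the (periodic / generic) affine Hecke module. Thus the graded multiplicities $[M(\lambda):L(\mu)]_q$, translated via Theorem~\ref{thm A}, coincide with the coefficients expressing the canonical basis in the standard basis, which are by definition the generic Kazhdan--Lusztig polynomials $P_{\mu,\lambda}(q)$.

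The main obstacle is the precise matching of objects and normalisations. One must verify that (a) the standard coherent objects described above really do correspond to Verma modules under the equivalence~(\ref{equ 1})---this should follow by checking their characters and the action of the translation/reflection functors, since the equivalence already intertwines the reflection functors; (b) the canonical graded lifts of simples constructed via the affine Hecke category are preserved under the equivalence with the right shift conventions (so that the resulting polynomials are the \emph{generic} KL polynomials, not a shifted or inverse variant); and (c) the grading induced by the $\C^\times$-action $t.x = t^{-2}x$ matches the internal grading convention of the Hecke algebra. Once the normalisations are pinned down, the computation is forced by positivity and the characterisation of the canonical basis, and the theorem follows.
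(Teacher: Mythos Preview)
Your approach differs substantially from the paper's, and while it is plausible in outline, it leaves a real gap at the critical step.

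The paper does not compute graded multiplicities by appealing directly to the Bezrukavnikov equivalence with the affine Hecke category. Instead it proceeds in two stages through the de-equivariantized picture. First, it invokes the Andersen--Jantzen--Soergel result (Theorem~\ref{thm 7.4}), which provides a Koszul grading on the principal block of the \emph{small} quantum group $u_\zeta$ together with the formula $[\widetilde{Z}_x:\widetilde{L}_y]_\gr = p_{w_0x,w_0y}$ in terms of \emph{periodic} Kazhdan--Lusztig polynomials; the paper then checks (Proposition~\ref{prop 7.5}) that this AJS grading on $\A_0\mod^T$ agrees with the $\C^\times$-grading coming from the scaling action on $\g$, using Koszulity of $\A$ from \cite{BM13}. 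Second, it passes from baby Vermas $\sZ_x$ to full Vermas $\sM^\C_x$ via the Koszul resolution $\sM^\C_x\otimes_{\C[\n]}(\C[\n]\otimes\bigwedge^\bullet \n^*)\to\sZ_x$, which on the Grothendieck group implements exactly the operator $\prod_{\alpha\in\Phi^+}(1-v^2\langle-\alpha\rangle)$ relating periodic and generic polynomials (equations~(\ref{equ 7.1}) and (\ref{equ 7.11})). Inverting via Lusztig's formula (\ref{equ 7.13}) then yields $[\widetilde{\sM}^\C_x:\widetilde{\sL}_y]_\gr = q_{w_0x,w_0y}$.

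Your route through the Hecke category could in principle be made to work, but you are vague precisely where it matters. You write that simples ``correspond to indecomposable tilting/IC-like generators'': these are different families, and you must commit. Under Bezrukavnikov's equivalence the exotic t-structure matches the perverse t-structure, so simples go to IC sheaves; the paper shows (proof of Proposition~\ref{prop 5.4}) that the Verma objects $\sM_x$ go to Wakimoto-type objects $\sJ_\lambda * j_{w!}$, which for dominant enough translates become costandards $j_{y*}$. The graded multiplicity $[j_{y*}:\mathrm{IC}_z]$ is then an \emph{affine} Kazhdan--Lusztig polynomial, and to conclude it stabilises to the \emph{generic} polynomial $q_{w_0x,w_0y}$ you would need Kato's limit theorem or an equivalent. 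You would also have to verify that the $\C^\times$-grading on the coherent side matches the Frobenius/weight grading on the constructible side; this is expected but not a triviality, and you only flag it as an obstacle without resolving it. The paper's route avoids both issues by staying on the coherent/representation-theoretic side and importing AJS, at the cost of the combinatorial manipulation with the Koszul resolution.
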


\subsection{Organization of the paper} 
In Section~\ref{sect 2} we introduce notations for quantum groups and recall some basic properties of the category $\sO$ for $U^\hb_\zeta$ from \cite{Situ1}. 
In Section~\ref{sect 3} we give some reminders on the non-commutative Springer resolutions following \cite{BM13} and the localization theorem for $\fU_\zeta$ following \cite{Tan21}. 

In Section~\ref{sect 4} we prove the de-equivariantization of main result (i.e. Proposition \ref{prop 1.2}) in \textsection\ref{subsect 4.1new} and its analogue for Steinberg block in \textsection\ref{subsect 4.2}. 
In \textsection\ref{subsect 4.3new} we study the induction functors from the categories in (\ref{equ 1.2}) to the categories in (\ref{equ 1}). 
We construct the functors $\V_r^b$, $\V^b_c$ in \textsection\ref{subsect 4.2.2}, whose fully-faithfulness will be proved in \textsection\ref{subsect 4.4}. 
In \textsection\ref{subsect 4.3} we investigate the image of Verma modules under the equivalence (\ref{equ 1.2}). 

In Section~\ref{sect 5}, we define the functors $\V_r$, $ \V_c$, and show their compatibility with reflection functors in \textsection\ref{subsect 5.1}. 
The subsection \textsection\ref{subsect 5.2} is slightly technical: we consider certain truncated subcategories of $\sO^0_\zeta$ and of the coherent side, which admit enough projective objects and are compatible with functors $\V_r$, $ \V_c$. 
In \textsection\ref{subsect 5.3} we show the fully-faithfulness of $\V_r$, $ \V_c$ and then prove the main theorem. 

In Section~\ref{sect 6}, we firstly recall the formalism of a rational group action on a category in \textsection\ref{subsect 6.1}, then equip a $B$-action on LHS of (\ref{equ 1.2}) using coinduction functor in \textsection\ref{subsect 6.2}. 
In \textsection\ref{subsect 6.3}, we apply our main theorem to show that equivalence (\ref{equ 1}) is the $B$-equivariantization of (\ref{equ 1.2}), and obtain the compatibility of (\ref{equ 1}) with reflection functors as an application. 

In Section~\ref{sect 7}, we firstly recall the periodic and generic polynomials in \textsection\ref{subsect 7.1}. 
Then we discuss the liftings of the simple, Verma and projective objects in the graded principal block and compute the related graded multiplicities in \textsection\ref{subsect 7.2}. 

\subsection{Conventions} 
Let $A$ be an algebra over a commutative Noetherian ring $R$. 
For any closed point $z\in \Spec R$, we denote by $A_{\hat{z}}$ the completion of $A$ at $z$. 
Suppose $f: X\rightarrow \Spec R$ is a morphism of schemes. 
By \textit{formal neighborhood} of $Y=f^{-1}(z)$ in $X$, we mean a \textit{scheme} 
$$\FN_{X}(Y):=X_{\hat{z}}:= X\times_{\Spec R} \Spec(R_{\hat{z}}).$$ 

Let $\sA$ be a quasi-coherent sheaf of algebras on a scheme $X$. 
We will denote by $\sA\mod$ (resp. $\sA\Mod$) the category of coherent (resp. quasi-coherent) $\sA$-modules. 
For a closed subscheme $Y$ in $X$, we denote by $\sA\mod_Y$ (resp. $\sA\Mod_Y$) the category of sheaves in $\sA\mod$ (resp. $\sA\Mod$) that are \textit{set-theoretically} supported on $Y$. 
Abbreviate $\Coh_Y(X)=\sA\mod_Y$ if $\sA=\sO_X$. 
We will view any algebra $A$ as a sheaf on $\Spec Z(A)$. 

For an algebraic group $K$, we will denote by $\Rep(K)$ the category of rational representations of $K$, and by $\rep(K)$ the full subcategory of finite dimensional representations in $\Rep(K)$. 

For a Hopf algebra $H$, we usually denote by $\Delta$ its comultiplication and by $\mathrm{S}$ its antipode. 
We will use the Sweedler's notation $\Delta(u)=u^{(1)}\otimes u^{(2)}$, for any $u\in H$. 

For a category $\sC$, we denote by $Z(\sC)=\End(1_\sC)$ its center. 
We will write $\Hom=\Hom_\sC$ if without confusions. 

\subsection{Acknowledgements} 
The author sincerely thanks his supervisor Peng Shan for helpful discussions. 
The author is deeply grateful to Simon Riche for his invitation of visiting Universit\'{e} Clermont Auvergne and for the enlightening discussions. 
The author is deeply grateful to Eric Vasserot for his invitation of visiting IMJ-PRG, Universit\'{e} Paris Cit\'{e} (where the major part of this work was done), and for the constant discussions and enlightening suggestions. 
These visits are supported by Tsinghua Scholarship for Overseas Graduate Studies. 
The author also thanks Lin Chen, Gurbir Dhillon, Yuchen Fu, Ivan Losev, Cris Negron and Ruotao Yang for stimulating discussions. 
The author is supported by NSFC Grant No. 12225108. 

\section{Quantum groups and their category $\sO$}\label{sect 2} 
\subsection{Root data}\label{subsect 2.1} 
Let $G$ be a connected and simply-connected semisimple algebraic group over $\k$, with a Borel subgroup $B$ and a maximal torus $T$ contained in $B$. 
Denote their Lie algebras by 
$$\g=\Lie(G),\quad \b=\Lie(B), \quad \n=\Lie(N),\quad \t=\Lie(T).$$ 
We identify $T_{\hat{1}}=\t_{\hat{0}}$ via the exponential map $\t\rightarrow T$. 
Denote by $\sB=G/B$ the flag variety. 
Let $W=N_G(T)/T$ be the Weyl group for $G$. 
Let $\I\subset W$ be the subset of simple reflections. 
Let $w_0\in W$ be the longest element, and set $B^-=w_0Bw^{-1}_0$, $N^-=w_0Nw^{-1}_0$.  

Let $(X^*(T),X_*(T), \Phi, \check{\Phi})$ be the root datum associated with $G$. 
Let $\langle -,-\rangle$ be the natural pairing of $\Lambda:=X^*(T)$ and $X_*(T)$. 
Let $\Phi^+\subset \Phi$ be the subsets of positive roots. 
Denote by $\alpha_s\in \Phi^+$ the simple root associated to $s\in \I$. 
Let $\rQ\subset \Lambda$ be the root lattices. 
There is a partial order $\leq$ on $\Lambda$ given by $\mu\leq \lambda$ if $\lambda-\mu\in \sum_{s\in \I} \N\alpha_s$. 
Let $(d_s)_{s\in \I}\in \N^\I$ be the unique tuple of relatively prime positive integers such that the matrix $\big(d_s \langle \check{\alpha}_{s}, \alpha_t\rangle \big)_{s,t\in \I}$ is symmetry and positive definite. 
It defines a pairing $(-,-):\rQ \times \rQ \rightarrow \Z$ by $(\alpha_s,\alpha_t):= d_s \langle \check{\alpha}_{s}, \alpha_t\rangle$ and extents to 
$$(-,-):\Lambda \times \Lambda \rightarrow \frac{1}{\e} \Z, \quad \e:=|\Lambda/\rQ|.$$ 

Let $l$ be an odd positive integer larger than the Coxeter number of $G$ which is prime to $\e$, and to $3$ if $G$ contains a component of type $G_2$. 
Let $\zeta_\e\in \k$ be a primitive $l$-th root of unity, and let $\zeta=(\zeta_\e)^\e$. 

\subsubsection{Affine Weyl group} 
Let $W_\af=W\ltimes \rQ$ be the affine Weyl group. 
Let $\I_{\af}$ be the subset of simple reflections in $W_{\af}$. 
Let $W_\ex=W\ltimes \Lambda$ be the extended affine Weyl group. 
We have $W_\ex\simeq W_\af\rtimes \Lambda/\rQ$, where $\Lambda/\rQ$ acts by automorphisms of the affine root system. 
Denote by $t(\lambda)$ the translation by $\lambda\in \Lambda$ in $W_\ex$. 
For any $n\in \Z$, there is an $n$-dilated $(-\rho)$-shifted action of $W_\ex$ on $\Lambda$ given by 
$$wt(\nu)\bullet_n \lambda=w(\lambda+n\nu+\rho)-\rho, \quad \forall w\in W,\ \forall \nu, \lambda\in \Lambda,$$ 
where $\rho=\frac{1}{2}\sum_{\alpha\in \Phi^+} \alpha$. 
We abbreviate $\bullet=\bullet_n$ if $n=1$. 

We set $\Xi=\Lambda/(W_{\ex},\bullet_l)$. 
For any $\omega\in \Xi$, we will fix a representative (still denoted by $\omega$) in the fundamental alcove $\Xi_\sc$ of $W_{\af}$-action on $\Lambda$, where 
$$\Xi_\sc:= \{\lambda \in \Lambda\ |\ 0\leq\langle \lambda+\rho, \check{\alpha} \rangle\leq l,\ \forall \alpha\in \Phi^+ \} \leftrightarrow \Lambda/(W_{\af},\bullet_l). $$ 

Let $\mathfrak{B}_\ex$ be the extended affine braid group associated with $W_\ex$, which admits standard generators $\{\tilde{s}\}_{s\in \I}$ and $\{\theta_\lambda\}_{\lambda\in \Lambda}$ (see their relations in e.g. \cite[\textsection 1.1]{BR12}). 
There is a canonical lift $W_\af\rightarrow \mathfrak{B}_\ex$ of the natural projection, such that $s\mapsto \tilde{s}$, $s\in \I$. 
We denote by $\tilde{s}$ the canonical lift of any $s\in \I_\af\backslash \I$ in $\mathfrak{B}_\ex$. 

\subsection{Quantum groups} 
Let $q$ be a formal variable. 
The quantum group $\mathscr{U}_q$ associated with $G$ is the $\k(q^{\frac{1}{\e}})$-algebra generated by the standard generators $E_i, F_i, K_\lambda \ (i\in \I, \lambda\in \Lambda)$ modulo the quantum Chevalley--Serre relations. 
It is a Hopf algebra with comultiplication, antipode and counit given by 
$$\Delta(E_i)=E_i\otimes K_{\alpha_i}+ 1\otimes E_i,\quad \Delta(F_i)=F_i\otimes 1+ K_{\alpha_i}^{-1}\otimes F_i,\quad \Delta(K_\lambda)=K_\lambda\otimes K_\lambda, $$ 
$$\mathrm{S}(E_i)=-E_iK_{\alpha_i}^{-1},\quad \mathrm{S}(F_i)=-K_{\alpha_i}F_i,\quad \mathrm{S}(K_\lambda)=K_\lambda^{-1},$$ 
$$\varepsilon(E_i)=\varepsilon(F_i)=0, \quad \varepsilon(K_\lambda)=1.$$ 
The \textit{Lusztig's quantum group} $U_q$ is a $\k[q^{\pm\frac{1}{\e}}]$-subalgebra of $\mathscr{U}_q$ generated by $E_i^{(n)}, F_i^{(n)}, K_\lambda$; the \textit{De Concini--Kac's quantum group} $\fU_q$ is generated by $E_i, F_i, K_\lambda$. 
The \textit{hybrid quantum group} $U^\hb_q$ is the $\k[q^{\pm\frac{1}{\e}}]$-subalgebra generated by $E_i^{(n)}, F_i, K_\lambda$. 
There are all Hopf subalgebras with inclusions 
$$\fU_q \subset U_q^{\hb} \subset U_q.$$ 
We denote the $\k(q^{\frac{1}{\e}})$-subalgebras $\mathscr{U}^+_q:=\langle E_i\rangle_{i\in \I}$, $\mathscr{U}^-_q:=\langle F_i\rangle_{i\in \I}$ and $\mathscr{U}^0_q:=\langle K_\lambda\rangle_{\lambda\in \Lambda}$, and denote by $\fU_q^+, \fU_q^-, \fU^0_q$ and $U_q^+, U_q^-, U^0_q$ their intersections with $\fU_q$ and $U_q$. 
There is a triangular decomposition 
$$U_q^{\hb}=\fU_q^-\otimes \fU_q^0 \otimes U_q^+.$$ 
We will identify $\mathscr{U}^0_q=\k(q^{\frac{1}{\e}})[\Lambda]=\k(q^{\frac{1}{\e}})[T]$. 

For any integral form $A_q$ above, we let the $\k$-algebra $A_\zeta:=A_q\otimes_{\k[q^{\pm\frac{1}{\e}}]} \k$ be the specialization at $q^{\frac{1}{\e}}= \zeta_\e$. 
The specialization yields a chain of maps 
$$\fU_\zeta \rightarrow U_\zeta^{\hb} \rightarrow U_\zeta .$$
Let $u_\zeta$ be the \textit{small quantum group} in $U_\zeta$, which coincides with the image of $\fU_\zeta\rightarrow U_\zeta$. 
Denote by $\fU^b_\zeta$ the image of $\fU_\zeta \rightarrow U_\zeta^{\hb}$. 
Then there are triangular decompositions 
$$u_\zeta=u_\zeta^-\otimes u_\zeta^0 \otimes u_\zeta^+, \quad \fU^b_\zeta= \fU^-_\zeta\otimes \fU^0_\zeta \otimes u_\zeta^+.$$ 
We set $\fU^t_\zeta$ be the algebra with triangular decomposition $\fU^t_\zeta=u_\zeta^-\otimes \fU_\zeta^0 \otimes u_\zeta^+$. 
For $A=A_q$ or $A_\zeta$ above, we abbreviate the subalgebras $A^\leq:=A^-A^0$ and $A^\geq:=A^+A^0$. 

\subsubsection{Harish-Chandra center} 
We set $\mathscr{U}_q^{0,ev}:=\k(q^{\frac{1}{\e}}) \langle K_{2\lambda}\rangle_{\lambda\in \Lambda}$. 
There is an algebra isomorphism 
$$Z(\mathscr{U}_q) \xs (\mathscr{U}_q^{0,ev})^{(W,\bullet)}$$ 
given by projecting $Z(\mathscr{U}_q)$ to $\mathscr{U}_q^0$ under the triangular decomposition, where $(W,\bullet)$ is the shifted action by $w\bullet K_\lambda =q^{(w\lambda -\lambda, \rho)}K_{w\lambda}$, for any $w\in W$, $\lambda\in\Lambda$. 
The natural identification 
$$(\mathscr{U}_q^{0,ev})^{(W,\bullet)}= \k(q^{\frac{1}{\e}})[T]^{(W,\bullet)},\quad f(K_{2\lambda})\mapsto f(K_\lambda),$$ 
gives an isomorphism 
$$Z(\mathscr{U}_q) \xs \k(q^{\frac{1}{\e}})[T/(W,\bullet)].$$ 
The center of $\fU_q$ is by $Z(\fU_q)= Z(\mathscr{U}_q) \cap \fU_q$. 
We obtain an isomorphism 
$$Z_\HC:= Z(\fU_q)/(q^{\frac{1}{\e}}-\zeta_\e)Z(\fU_q) \xs \k[T/(W,\bullet)].$$ 
Here $Z_\HC$ is called the \textit{Harish-Chandra center} of $\fU_\zeta$. 
For any $\lambda\in \Lambda$, we denote by $\chi_\lambda$ the character of $Z_\HC$ corresponding to the image of $\zeta^\lambda$ under the map $T\rightarrow T/(W,\bullet)$, by $t\mapsto W\bullet t^2$ for any $t\in T$. 

\subsubsection{Frobenius center} 
The \textit{Frobenius center} of $\fU_\zeta$ is the $\k$-subalgebra 
$$Z_\Fr:=\langle K^l_{\lambda}, F^l_\beta , E^l_\beta \rangle_{\lambda\in \Lambda, \beta\in \Phi^+}, $$ 
where $E_\beta\in \fU^+_\zeta$ and $F_\beta\in \fU^-_\zeta$ are the root vectors. 
We abbreviate $Z_\Fr^{\flat}:= Z_\Fr \cap \fU_\zeta^{\flat}$ for $\flat=-,+,0,\leq$ and $\geq$. 
By \cite{DeCKP92} (see also \cite{Tan12}) there are isomorphisms of $\k$-algebras 
\begin{equation}\label{equ 2.0} 
Z_\Fr^{-}\xs \k[N],\quad Z_\Fr^{+}\xs \k[N^-], \quad \text{and}\quad Z_\Fr^{0}\xs \k[T],
\end{equation}
which give an isomorphism 
$$Z_\Fr \xs \k[G^*], $$ 
where $G^*=N^-\times T \times N$ is the Poisson dual group of $G$. 
There is a unramified covering to the big open cell $\kappa: G^*\rightarrow G$, sending $(n_1,t,n_2)\in N^-\times T \times N$ to $n_1t^2n_2^{-1}$. 
We have the following isomorphisms by \cite{DeCP92}, 
\begin{equation}\label{equ 2.-1} 
Z(\fU_\zeta)=Z_\Fr\otimes_{Z_\Fr\cap Z_\HC}Z_\HC=\k[G^*\times_{T/W}T/(W,\bullet)], 
\end{equation} 
where $G^*\rightarrow T/W$ is the composition of $\kappa$ with $G\rightarrow G/\!\!/G=T/W$, and $T/(W,\bullet) \rightarrow T/W$ is induced by taking $l$-th power. 

\subsection{Module categories} 
There is a $\Lambda$-action on $\mathscr{U}_q^0$ such that any $\mu\in \Lambda$ corresponds to the $\k(q^{\frac{1}{\e}})$-algebra automorphism  
$$\tau_\mu: K_\lambda \mapsto q^{(\mu,\lambda)}K_\lambda, \quad \forall\lambda\in\Lambda.$$ 
The $\Lambda$-action on $\mathscr{U}_q^0$ preserves the integral forms $\fU^0_q$ and $U^0_q$, and specializes to an action on $\fU^0_\zeta$ and $U^0_\zeta$. 
Let $U^0$ be a Noetherian sub-quotient algebra of $\fU_\zeta^0$ or $U_\zeta^0$ such that the $\Lambda$-action induces an action on $U^0$. 
Let $U=\bigoplus_{\lambda\in \rQ} U_\lambda$ be a Noetherian $\rQ$-graded $\k$-algebra with triangular decomposition $U=U^-\otimes U^0\otimes U^+$, such that 
$$fm=m\tau_\lambda(f),\quad \forall f\in U^0,\ \forall m\in U_\lambda,$$ 
and satisfies further conditions as in \cite[\textsection 2.3]{Situ1}. 
We abbreviate $U^\leq=U^-U^0$ and $U^\geq=U^+U^0$. 
A \textit{deformation ring} $R$ for $U$ is a commutative Noetherian $U^0$-algebra. 
Let $\iota: U^0\rightarrow R$ be the structure map. 
If $U^0$ is an augmented algebra, then we will view $\C$ as a deformation ring via the counit map $U^0\rightarrow \C$. 

We define $U\Mod_R^{\Lambda}$ to be the category consisting of $U\otimes R$-modules $M$ endowed with a decomposition $M=\bigoplus\limits_{\mu\in \Lambda}M_\mu$ of $R$-modules (called the \textit{weight spaces}), such that $M_\mu$ is killed by the elements in $U\otimes R$ of the form 
$$f\otimes 1-1\otimes \iota({\tau_\mu}(f)), \quad f\in U^0.$$ 
Let $U\mod_R^{\Lambda}$ be the full subcategory of $U\Mod_R^{\Lambda}$ consisting of finitely generated $U\otimes R$-modules whose weight spaces are finitely generated $R$-modules. 
Define the \textit{category $\sO$ for $U$} to be the full subcategory $\sO^U_R$ of $U\mod_R^{\Lambda}$ of modules that are locally unipotent for the action of $U^+$. 
It is an abelian subcategory of $U\Mod_R^{\Lambda}$. 
If $U^+$ is finite dimensional, then we have $\sO^U_R=U\mod^\Lambda_R$. 

Define the \textit{Verma module} 
$$M^U(\lambda)_R:= U \otimes_{U^{\geq}} R_\lambda \ \in \sO^U_R$$ 
where $R_\lambda$ is a $U^{\geq}$-module via $U^{\geq}\twoheadrightarrow U^0\xrightarrow{\iota\circ\tau_{\lambda}}R_\lambda$. 
We have an isomorphism $M^U(\lambda)_R=U^-\otimes R_\lambda$ as $U^\leq$-modules. 
A \textit{Verma flag} of an object in $U\Mod_R^{\Lambda}$ is a filtration \textit{of finite length} with composition factors by Verma modules. 
The composition factors of such an object are called \textit{Verma factors}, which (including multiplicities) are independent on the choice of Verma flags. 

If $R=\F$ is a field, then $M^U(\lambda)_\F$ admits a unique simple quotient $ L^U(\lambda)_\F$, and $\{L^U(\lambda)_\F\}_{\lambda\in \Lambda}$ provides a complete family of pairwise non-isomorphic simple objects in $\sO^U_R$. 

The following is standard, see e.g. \cite[Prop 3.1]{Hum08}. 
\begin{lem}\label{lem 2.1} 
We have 
\begin{equation}\label{equ new2.3}
\Hom(M^U(\lambda)_{R}, M^U(\mu)_{R})\neq 0 \quad \text{only if}\ \lambda\leq \mu,
\end{equation}
\begin{equation}\label{equ new2.4}
\Ext^1(M^U(\lambda)_{R}, M^U(\mu)_{R})\neq 0 \quad \text{only if}\ \lambda< \mu.
\end{equation}
\end{lem}

\subsubsection{Truncated categories}\label{subsect 2.3.1} 
For any subset $\Omega \subset \Lambda$, 
there is a \textit{truncated subcategory} (by $\Omega$) 
$$U\Mod_R^{\Omega}$$ 
consisting of the module $M$ in $U\Mod_R^{\Lambda}$ such that $M_\mu=0$ unless $\mu \in \Omega$. 
It is a Serre subcategory of $U\Mod_R^{\Lambda}$. 
The \textit{truncation functor} 
$$\tau^{\Omega}:\ U\Mod_R^{\Lambda} \rightarrow U\Mod_R^{\Omega} ,\quad 
M \mapsto M  \big/ U. \big( \bigoplus_{\mu\notin \Omega} M_\mu \big)$$ 
by taking the maximal quotient in $U\Mod_R^{\Omega}$ is left adjoint to the natural inclusion functor $U\Mod_R^{\Omega} \hookrightarrow U\Mod_R^{\Lambda}$. 
We abbreviate $U\mod_R^{\Omega}:=U\mod_R^{\Lambda}\cap U\Mod_R^{\Omega}$. 

Let now $\Omega\subset \Lambda$ be a bounded above poset ideal. 

Then any finitely generated $U\otimes R$-module in $U\Mod_R^{\Omega}$ is contained in $\sO^{U}_R$. 
We set 
$$\sO^{U,\in \Omega}_R:=\sO^{U}_R\cap U\Mod_R^{\Omega}.$$ 
The category $\sO^{U}_R$ is the union of its full subcategories $\sO^{U,\in \Omega}_R$, running over all bounded above poset ideals $\Omega\subset \Lambda$. 
The category $\sO^{U,\in \Omega}_R$ always admits enough projective objects, in contrast to $\sO^{U}_R$. 
Moreover, there is a generating family of projective objects in $\sO^{U,\in \Omega}_R$ admitting Verma flags, with factors of the form $M^U(\lambda)_R$ with $\lambda\in \Omega$, see e.g. \cite[Lem 2.3]{Fie03}. 
If $R$ is local, then any projective module in $\sO^{U,\in \Omega}_R$ admits Verma flags, see e.g. \cite[Lem 2.5]{Fie03}. 
If $R$ is a complete local domain with residue field $\F$, we write $Q^U(\lambda)_R^{\Omega}$ as the projective cover of $L^U(\lambda)_\F$ in $\sO^{U,\in \Omega}_R$, for any $\lambda\in \Omega$. 

Let $R'$ be a commutative Noetherian $R$-algebra. 
We have a natural functor 
$$-\otimes_R R':\ U\Mod^{\Lambda}_R\rightarrow U\Mod^{\Lambda}_{R'}.$$
By \cite[Prop 2.4]{Fie03}, it yields an equivalence 
\begin{equation}\label{equ basechange}
\Proj(\sO^{U,\in \Omega}_R)\otimes_R R' \xs \Proj(\sO^{U,\in \Omega}_{R'}),
\end{equation}
for any bounded above poset ideal $\Omega$ in $\Lambda$.

\begin{lem}\label{lem 2.1} 
Let $\Omega$ be a poset ideal in $\Lambda$. 
For any object in $U\Mod_R^{\Lambda}$ admitting a Verma flag, its truncation in $U\Mod_R^{\Omega}$ is the quotient by the submodule composed by its Verma factors in $\{M^U(\lambda)_{R}\}_{\lambda\notin \Omega}$. 
\end{lem} 
\begin{proof}
Let $P$ be an object in $U\Mod_R^{\Lambda}$ admitting a Verma flag. 
By (\ref{equ new2.4}), there is a short exact sequence $0\rightarrow P_1\rightarrow P\rightarrow P_2\rightarrow 0$, where $P_1$ is the submodule composed by the Verma factors in $\{M^U(\lambda)\}_{\lambda\notin \Omega}$ and $P_2$ is the quotient module composed by the Verma factors in $\{M^U(\lambda)\}_{\lambda\in \Omega}$. 
It is clear that $P_2\in U\Mod_R^{\Omega}$. 
Note that we have 
$$\tau^{\Omega}M^U(\lambda)_R =\begin{cases}
M^U(\lambda)_R, & \lambda\in \Omega \\ 0, & \lambda\notin \Omega
\end{cases}.$$ 
Using the fact that $\tau^\Omega$ is right exact, we deduce that $\tau^\Omega(P_1)=0$ and that $\tau^\Omega(P)=\tau^\Omega(P_2)=P_2$. 
\end{proof}

For $\Omega=\{\lambda\in \Lambda|\lambda\leq \nu\}$, we abbreviate ``$\Omega$" and ``$\in \Omega$" by ``$\leq \nu$" in all the superscripts above. 
We will drop the superscripts $U$ from the notations above when $U=U^\hb_\zeta$. 
For example, if $\Omega=\{\lambda\in \Lambda|\lambda\leq \nu\}$ and $U=U^\hb_\zeta$, we abbreviate $\sO^{U,\in \Omega}_R$, $Q^{U}(\lambda)_R^{\Omega}$ by $\sO^{\leq \nu}_R$, $Q(\lambda)_R^{\leq \nu}$. 

\subsubsection{Projectives and simples in $\sO_R$} 
We set 
$$S=\k[T]_{\hat{1}}=\k[\t]_{\hat{0}}.$$ 
Then $S$ is naturally an algebra of $\fU^0_\zeta=\k\langle K_\lambda \rangle_{\lambda\in \Lambda}=\k[T]$. 
We thus view $S$ as a deformation ring for both $\fU^b_\zeta$ and $U^\hb_\zeta$. 

\begin{lem}[{\cite[Lem 3.1]{Situ1}}]\label{lem 2.1*} 
The $U_\zeta^\hb$-module $L(\lambda)_\C$ remains simple as a $\fU^b_\zeta$-module, and the $\fU^b_\zeta$-action on $L(\lambda)_\C$ factors through the quotient $\fU^b_\zeta\twoheadrightarrow u_\zeta$. 
Hence $L(\lambda)_\C$ is a simple $u_\zeta$-module. 
\end{lem}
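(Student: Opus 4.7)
The plan is to descend $L(\lambda)_\C$ to a $u_\zeta$-module by first showing that the Frobenius-negative generators $F_\beta^l$ (for $\beta\in\Phi^+$) annihilate $L(\lambda)_\C$, then verifying that the full $U^\hb_\zeta$-action on the relevant quotient of the Verma module already factors through $u_\zeta$. This will simultaneously yield the factoring of the $\fU^b_\zeta$-action through $u_\zeta$ (the second assertion), the simplicity of $L(\lambda)_\C$ as a $\fU^b_\zeta$-module (the first assertion), and its simplicity as a $u_\zeta$-module.

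The key technical input is the vanishing $E_i^{(n)}F_\beta^l v_\lambda=0$ in the Verma module $M(\lambda)_\C$, for every $i\in\I$, every $\beta\in\Phi^+$, and every $n\geq 1$. For $\beta=\alpha_i$ I would apply Lusztig's divided-power commutation formula
$$E_i^{(n)}F_i^{(m)}=\sum_{k=0}^{\min(n,m)}F_i^{(m-k)}\binom{K_i;\,2k-n-m}{k}E_i^{(n-k)}$$
with $m=l$ and write $F_i^l=[l]_q!\,F_i^{(l)}$. Since $E_i^{(n-k)}v_\lambda=0$ unless $n=k$, the sum collapses onto a single term whose overall scalar prefactor $[l]_q!/[l-n]_q!=[l]_q[l-1]_q\cdots[l-n+1]_q$ vanishes at $q=\zeta$ because $[l]_\zeta=0$. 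The case $\beta=\alpha_j$ with $j\neq i$ is immediate from $[E_i,F_j]=0$; for non-simple $\beta$, I would reduce to the simple case either by direct PBW commutation or by invoking Lusztig's braid automorphisms on $U^\hb_\zeta$.

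This vanishing shows that $F_\beta^l v_\lambda$ is killed by the augmentation ideal of $U^+_\zeta$, so the $U^\hb_\zeta$-submodule $N\subset M(\lambda)_\C$ it generates equals $\sum_\beta\fU^-_\zeta\cdot F_\beta^l v_\lambda$. Every weight of $N$ lies strictly below $\lambda$, so $v_\lambda\notin N$; hence $N\subseteq\ker(M(\lambda)_\C\twoheadrightarrow L(\lambda)_\C)$ and $F_\beta^l v_\lambda=0$ in $L(\lambda)_\C$. Because $F_\beta^l$ is central in $\fU^-_\zeta$ (De Concini--Kac) and $L(\lambda)_\C=\fU^-_\zeta\cdot v_\lambda$, this forces $F_\beta^l\cdot L(\lambda)_\C=0$. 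Combined with the trivial vanishing of $K_\mu^l-1$ on every weight space (using $\zeta_\e^l=1$ and $\e(\mu,\nu)\in\Z$), the kernel of $\fU^b_\zeta\twoheadrightarrow u_\zeta$ annihilates $L(\lambda)_\C$, which gives the factoring of the $\fU^b_\zeta$-action.

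Finally, a parallel divided-power computation---applying the same formula with $n\geq l$ and $m<l$, where $E_i^{(n-k)}v_\lambda=0$ for every admissible $k\leq m<n$---yields $E_i^{(n)}F_i^m v_\lambda=0$, and I would then extend this to arbitrary restricted PBW monomials. Hence $E_i^{(n)}$ for $n\geq l$ annihilates the quotient $M(\lambda)_\C/N$, which is therefore identified with the baby Verma $u_\zeta\otimes_{u^{\geq}_\zeta}\C_\lambda$; the full $U^\hb_\zeta$-action on $L(\lambda)_\C$ then factors through $u_\zeta$, so the lattices of $U^\hb_\zeta$-, $\fU^b_\zeta$-, and $u_\zeta$-submodules of $L(\lambda)_\C$ all coincide, and the simplicity of $L(\lambda)_\C$ over $U^\hb_\zeta$ transports to simplicity over both $\fU^b_\zeta$ and $u_\zeta$. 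The main technical obstacle will be the non-simple root case of the key vanishing, and the extension of the companion identity $E_i^{(n)}F^I v_\lambda=0$ (for $n\geq l$ and restricted $I$) from single-$F_i$-monomials to general PBW monomials in $u^-_\zeta$.
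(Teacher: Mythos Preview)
The paper does not prove this lemma (it is quoted from a companion paper), so there is no paper-proof to compare against directly. Your overall plan is sensible, but the two ``technical obstacles'' you flag at the end are not merely technical: both of the underlying claims are actually false, and the second one undermines your route to simplicity.

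\textbf{The vanishing $E_i^{(n)}F_\beta^l v_\lambda=0$ fails for non-simple $\beta$ once $n\ge l$.} Lusztig's braid operators $T_w$ do not preserve $U^\hb_\zeta$ (they exchange pieces of $\fU^-_\zeta$ and $U^+_\zeta$), so that reduction is unavailable. For $\g=\sl_3$ and $\beta=\alpha_1+\alpha_2$, the Frobenius homomorphism $\Fr:\fU^\hb_\zeta\to\C[B\times_T T]\rtimes U\n$ of \S\ref{subsect 4.2} carries $[E_1^{(l)},F_\beta^l]$ to the derivation $e_1$ applied to the coordinate corresponding to $F_\beta^l$, which is a nonzero multiple of the coordinate for $F_{\alpha_2}^l$; hence $E_1^{(l)}F_\beta^l v_\lambda$ is a nonzero multiple of $F_{\alpha_2}^l v_\lambda$. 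The correct replacement---which still yields the factoring through $u_\zeta$---is the containment $U^+_\zeta\cdot Z^-_{\Fr,+}v_\lambda\subset Z^-_{\Fr,+}v_\lambda$. Using that $\fU^\hb_\zeta=Z^-_\Fr\,\fU^0_\zeta\,U^+_\zeta$ is a subalgebra one gets $E_i^{(n)}z\,v_\lambda\in Z^-_\Fr v_\lambda$ for $z\in Z^-_{\Fr,+}$; by weight this lies in $Z^-_{\Fr,+}v_\lambda$ unless the weight of $z$ equals $-n\alpha_i$, which forces $z\in\C F_{\alpha_i}^n$---precisely the simple-root case, where your $\sl_2$ computation gives zero. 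It follows that $N=Z^-_{\Fr,+}M(\lambda)_\C$ is a $U^\hb_\zeta$-submodule, and part (b) follows.

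\textbf{The divided powers $E_i^{(n)}$ with $n\ge l$ do not annihilate the baby Verma.} Again in $\sl_3$ one computes directly
\[
E_1^{(l)}\!\cdot\bigl(F_1^{\,l-1}F_{12}\,v_\lambda\bigr)\;=\;\zeta^{a}\Bigl(\textstyle\prod_{s=0}^{l-2}[a-s]_\zeta\Bigr)\,F_2 v_\lambda,\qquad a=\langle\lambda,\check\alpha_1\rangle,
\]
which is nonzero whenever $a\equiv -1\pmod l$. So the images of $U^\hb_\zeta$ and of $\fU^b_\zeta$ in $\End(L(\lambda)_\C)$ do not coincide in general, and your inference that the three submodule lattices agree breaks down. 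A genuinely different argument is needed for the simplicity assertion; note also that the standard contragredient-duality shortcut is unavailable here, since there is no anti-involution of $U^\hb_\zeta$ interchanging $\fU^-_\zeta$ and $U^+_\zeta$.
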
 

Let $R$ be a complete local domain over $\fU^0_\zeta$. 
Let $P^b(\lambda)_R$ be the projective cover of the simple module of highest weight $\lambda$ in $\fU^b_\zeta\mod^\Lambda_R$. 
Consider the induced module 
$$Q(\lambda)_R=U^\hb_\zeta\otimes_{\fU^b_\zeta} P^b(\lambda)_R$$ 
in $U^\hb_\zeta\Mod^\Lambda_R$. 

\begin{lem}[{\cite[Lem 3.7]{BBASV} and \cite[Lem 3.2]{Situ1}}]\label{lem 3.2} \label{lem 2.2}
\ 
\begin{enumerate} 
\item The functor $\Hom_{U^\hb_{\zeta}\Mod^{\Lambda}_R}(Q(\lambda)_R,-)$ is exact on $\sO_{R}$; 
\item Let $\Omega$ be a poset ideal in $\Lambda$, and let $\lambda\in \Omega$. 
The projective cover for $L(\lambda)_\C$ in $\sO^{\in \Omega}_{S}$ (resp. $\sO^{\in \Omega}_{\C}$) is $Q(\lambda)_S^{\Omega}=\tau^{\Omega}Q(\lambda)_S$ (resp. $Q(\lambda)_\C^{\Omega}=\tau^{\Omega}Q(\lambda)_\C$). 
\end{enumerate}
\end{lem}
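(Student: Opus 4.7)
The plan is to derive both parts from Frobenius reciprocity for the induction $U^\hb_\zeta \otimes_{\fU^b_\zeta} (-)$, combined with Lemma \ref{lem 2.1*}. For (1), the tensor-Hom adjunction gives a natural isomorphism
$$\Hom_{U^\hb_\zeta\Mod^\Lambda_R}(Q(\lambda)_R, M) \simeq \Hom_{\fU^b_\zeta\Mod^\Lambda_R}(P^b(\lambda)_R, \res M).$$
Restriction is exact and sends $\sO_R$ into $\fU^b_\zeta\mod^\Lambda_R$, where $P^b(\lambda)_R$ is projective by definition; therefore the composite functor $\Hom(Q(\lambda)_R, -)$ is exact on $\sO_R$.

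For (2), I would proceed in three steps. First, since the truncation functor $\tau^{\leq \nu}$ is left adjoint to the inclusion $U^\hb_\zeta\Mod^{\Lambda,\leq \nu}_R \hookrightarrow U^\hb_\zeta\Mod^\Lambda_R$, one has $\Hom_{\sO^{\leq \nu}_R}(Q(\lambda)_R^{\leq \nu}, M) = \Hom_{U^\hb_\zeta\Mod^\Lambda_R}(Q(\lambda)_R, M)$ for every $M \in \sO^{\leq \nu}_R$, which by (1) is exact in $M$; hence $Q(\lambda)_R^{\leq \nu}$ is projective in $\sO^{\leq \nu}_R$. Second, I handle the case $R = \C$: for any $\mu \leq \nu$, Frobenius reciprocity together with Lemma \ref{lem 2.1*} (which identifies $L^b(\lambda)_\C$ with $L(\lambda)_\C$) yields
$$\Hom_{\sO^{\leq \nu}_\C}(Q(\lambda)_\C^{\leq \nu}, L(\mu)_\C) = \Hom_{\fU^b_\zeta\mod^\Lambda_\C}(P^b(\lambda)_\C, L^b(\mu)_\C) = \delta_{\lambda,\mu}\,\C,$$
because $P^b(\lambda)_\C$ is the projective cover of $L^b(\lambda)_\C$; thus $Q(\lambda)_\C^{\leq \nu}$ is indecomposable with unique simple top $L(\lambda)_\C$ and is the projective cover in $\sO^{\leq \nu}_\C$. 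Third, for $R = S$, the functors of induction and truncation both commute with the base change $S \to \C$, so $Q(\lambda)_S^{\leq \nu} \otimes_S \C \simeq Q(\lambda)_\C^{\leq \nu}$; since $S$ is complete local, idempotents lift from the residue field, whence $Q(\lambda)_S^{\leq \nu}$ remains indecomposable with the same simple top, and is therefore the projective cover in $\sO^{\leq \nu}_S$.

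The main subtlety I anticipate is that $Q(\lambda)_R$ itself need not lie in $\sO_R$: the identification $U^\hb_\zeta \otimes_{\fU^b_\zeta} (-) = U^+_\zeta \otimes_{u^+_\zeta} (-)$ involves the infinite-dimensional quantum Frobenius algebra $U^+_\zeta/u^+_\zeta$, so weight spaces of $Q(\lambda)_R$ may fail to be finitely generated over $R$. Consequently the adjunction in (1) must be stated and used at the level of the ambient category $U^\hb_\zeta\Mod^\Lambda_R$, and only the truncations $Q(\lambda)_R^{\leq \nu}$ are legitimate objects of $\sO^{\leq \nu}_R$, where the projective-cover analysis takes place.
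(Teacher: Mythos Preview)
The paper does not supply its own proof of this lemma; it is quoted from \cite[Lem 3.7]{BBASV} and \cite[Lem 3.2]{Situ1}. Your argument is correct and is the expected one: Frobenius reciprocity for the induction $U^\hb_\zeta\otimes_{\fU^b_\zeta}(-)$ reduces (1) to projectivity of $P^b(\lambda)_R$, and the same adjunction combined with Lemma \ref{lem 2.1*} identifies the top of $Q(\lambda)^{\leq\nu}_R$.

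One minor simplification for your third step: since the simple objects of $\sO^{\leq\nu}_S$ coincide with those of $\sO^{\leq\nu}_\C$ (as $S$ is complete local with residue field $\C$), the computation
\[
\Hom_{\sO^{\leq\nu}_S}\bigl(Q(\lambda)_S^{\leq\nu},L(\mu)_\C\bigr)
=\Hom_{\fU^b_\zeta\mod^\Lambda_S}\bigl(P^b(\lambda)_S,L(\mu)_\C\bigr)
=\delta_{\lambda,\mu}\,\C
\]
already works verbatim for $R=S$, so the detour through base change of the truncation and idempotent lifting is unnecessary (though not wrong). Your closing remark about $Q(\lambda)_R$ not lying in $\sO_R$ is exactly the point: only after truncation does one obtain a finite Verma flag and hence an object of $\sO^{\leq\nu}_R$.
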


\subsubsection{Block decomposition}\label{subsect 2.2.1} 
Till the end of this section, we let $U$ be either $\fU^t_\zeta$, $\fU^t_\zeta U^{+}_\zeta$, $\fU^b_\zeta$, $\fU_\zeta$ or $U^\hb_\zeta$. 
Then there are algebra homomorphisms $Z_\HC \hookrightarrow \fU_\zeta \rightarrow U$. 
Let $R$ be a commutative Noetherian $S$-algebra. 
We have algebra homomorphisms 
$$S\otimes Z_\HC \rightarrow R\otimes Z_\HC\rightarrow Z(U\Mod^\Lambda_{R}).$$ 
By the Harish-Chandra isomorphism, it factors through a homomorphism 
\begin{equation}\label{equ 2.2} 
\k[T_{\hat{1}}\times_{T/W} T/(W,\bullet)] \rightarrow Z(U\Mod^\Lambda_{S}).
\end{equation}
There are algebra isomorphisms (see a specialized version in \cite[Thm 4.5]{BG01}) 
\begin{equation}\label{equ 2.1} 
\begin{aligned}
\k[T_{\hat{1}}\times_{T/W} T/(W,\bullet)] &= 
\bigoplus_{\omega\in \Xi} \k[T]_{\hat{1}} \otimes_{\k[T/W]_{\hat{1}}} \k[T/(W,\bullet)]_{\widehat{\chi_\omega}} \\ 
&\simeq \bigoplus_{\omega\in \Xi} \k[\t]_{\hat{0}} \otimes_{\k[\t/W]_{\hat{0}}} \k[\t/W_{\omega}]_{\hat{0}} \\ 
&= \bigoplus_{\omega\in \Xi} \k[\t\times_{\t/W} \t/W_{\omega}]_{\hat{0}}, 
\end{aligned}
\end{equation} 
where $W_{\omega}$ is the stabilizer of the $(W,\bullet)$-action on $\zeta^\omega\in T$. 
The idempotents on the RHS of (\ref{equ 2.1}) yield a block decomposition 
$$U\Mod^\Lambda_{R}= \bigoplus_{\omega\in \Xi} U\Mod^{\Lambda,\omega}_{R},$$ 
such that the Verma module $M^U(\lambda)_R$ is contained in $U\Mod^{\Lambda,\omega}_{R}$ if and only if $\omega=W_{\ex}\bullet_l \lambda$. 
We have algebra homomorphisms 
\begin{equation}\label{equ new2.7}
\k[\t\times_{\t/W} \t/W_{\omega}]_{\hat{0}}\rightarrow Z(U\Mod^{\Lambda,\omega}_{S}), \quad \forall \omega\in \Xi.
\end{equation}
The block decomposition induces a decomposition 
$$\sO^{U}_R= \bigoplus_{\omega\in \Xi} \sO^{U,\omega}_R.$$ 
Since $u_\zeta$, $u_\zeta U^+_\zeta$ are quotients of $\fU_\zeta^t$, $\fU^t_\zeta U^+_\zeta$, we obtain block decompositions 
$$u_\zeta\mod^{\Lambda}_\C=\bigoplus_{\omega\in \Xi} u_\zeta\mod^{\Lambda,\omega}_\C \quad \text{and}\quad 
\sO^{u_\zeta U^+_\zeta}_\C=\bigoplus_{\omega\in \Xi} \sO^{u_\zeta U^+_\zeta,\omega}_\C. $$ 

\begin{rmk}
Since $U$ is $\rQ$-graded and $U\Mod^{\Lambda}_{S}$ consists of $\Lambda$-graded modules, the blocks above labelled by $\omega\in \Xi$ further decompose into direct sums of $\e$ equivalent sub-blocks. 
\end{rmk}

For a subset $\Omega\subset \Lambda$, we abbreviate $U\Mod^{\Omega ,\omega}_{S}:=U\Mod^{\Lambda ,\omega}_{S}\cap U\Mod^{\Omega}_{S}$ and $\sO^{U,\omega,\in \Omega}_R:=\sO^{U,\omega}_R\cap U\Mod^{\Omega}_{S}$. 
Again, for $\Omega=\{\lambda\in \Lambda|\lambda\leq \nu\}$, we abbreviate ``$\Omega$" and ``$\in \Omega$" by ``$\leq \nu$" in the superscripts. 

We will also drop the superscripts $U$ from the notations above when $U=U^\hb_\zeta$. 

\subsubsection{Translation functors} 
Let $R$ be a commutative Noetherian $S$-algebra. 
For any $\omega_1,\omega_2 \in \Xi$, there is a unique dominant element $\nu$ in the $W$-orbit of $\omega_2-\omega_1$. 
Denote by $V(\nu)$ the Weyl module for $U_\zeta$ of highest weight $\nu$, see e.g. \cite{APW91}. 
We view $V(\nu)$ as a $U$-module via the natural map $U\rightarrow U_\zeta$. 
We define the \textit{translation functors} by 
$$\rT_{\omega_1}^{\omega_2} : U\Mod^{\Lambda,\omega_1}_{R} 
\rightarrow U\Mod^{\Lambda,\omega_2}_{R}, \quad M \mapsto \pr_{\omega_2} (M \otimes V(\nu)),$$ 
$$\rT^{\omega_1}_{\omega_2} : U\Mod^{\Lambda,\omega_2}_{R} \rightarrow 
U\Mod^{\Lambda,\omega_1}_{R}, \quad M \mapsto \pr_{\omega_1} (M \otimes V(\nu)^*),$$ 
where $\pr_{\omega_i}$ is the natural projection to the block $U\Mod^{\Lambda,\omega_i}_{R}$. 
Note that $\rT_{\omega_1}^{\omega_2}$ and $\rT^{\omega_1}_{\omega_2}$ are exact and biadjoint to each other. 
The translation functors preserve the subcategory $\sO^{U}_{R}$ and induce translation functors between blocks in $\sO^{U}_R$. 

For any simple reflection $s\in \I_{\af}$, we fix an element $\omega_s\in \Xi_\sc$ whose stabilizer under the $W_{\af}$-action is $\{1,s\}$. 
We abbreviate $\rT^0_s=\rT^0_{\omega_s}$ and $\rT_0^s=\rT_0^{\omega_s}$. 
Define the \textit{reflection functor} on $U\Mod^{\Lambda,0}_R$ by 
$$\Theta^r_s :=\rT^0_{s}\circ \rT_0^s.$$ 
We also consider the functor $\T^r_s :=\ker(\Theta^r_s\xrightarrow{\text{counit}} 1)$. 

We also obtain translations and reflections functors for the blocks of $u_\zeta\mod^{\Lambda}_\C$ and $\sO^{u_\zeta U^+_\zeta}_\C$. 

We can similarly define translation functors and reflection functors on the categories $\fU_\zeta\mod_{\chi_\omega}$ with $\omega\in \Xi$. 

\subsubsection{$\Lambda$-translations} 
Let $U'$ be $U$ as in \textsection\ref{subsect 2.2.1}, or be $u_\zeta$, $u_\zeta U^+_\zeta$. 
Let $R$ be a deformation ring for $U'$. 
For any $\nu\in \Lambda$, there is a trivial $U'$-module $\k_{l\nu}$ supported on the weight $l\nu$. 
It gives auto-equivalences $-\otimes \k_{l\nu}$, $\nu\in \Lambda$ on $U'\Mod^{\Lambda}_R$ preserving each blocks, called the \textit{$\Lambda$-translations}.

\section{Non-commutative Springer resolutions and localization theorem}\label{sect 3} 
In this section, we give reminders on the non-commutative Springer resolutions developed in \cite{BM13} and on the localization theorem for $\fU_\zeta$ following \cite{Tan21}. 

\subsection{Non-commutative Springer resolutions} 
For any standard parabolic subgroup $P$ in $G$, we set $\widetilde{\g}_P=G\times^P \Lie(P)$ and abbreviate $\widetilde{\g}=\widetilde{\g}_B$. 
There is a natural map $\pi_P: \widetilde{\g}\rightarrow \widetilde{\g}_P$. 
The \textit{Grothendieck resolution} is $\pi:\widetilde{\g}\rightarrow \g$. 
If $P=P_s$ is the minimal parabolic subgroup associated to $s\in \I$, we will abbreviate $\widetilde{\g}_s=\widetilde{\g}_{P_s}$ and $\pi_s=\pi_{P_s}$. 
Denote by $\sO_\sB(\lambda)$ the line bundle $G\times^B \k_\lambda \rightarrow \sB$, and denote by $\sO_{Y}(\lambda)$ its pullback along any $Y\rightarrow \sB$. 
Consider the $\C^\times$-action on $\g$ via $t.x=t^{-2}x$, for any $t\in \C^\times$, $x\in \g$. 
It induces $\C^\times$-actions on $\widetilde{\g}_P$ for any $P$. 

\subsubsection{Affine braid group actions} 
Let $X$ be a $\g$-scheme satisfying 
$${\sT or}_{i}^{\sO_\g}(\sO_{\widetilde{\g}}, \sO_X)=0, \quad \forall i>0.$$ 
We set $\widetilde{\g}_{P,X}:=\widetilde{\g}_P\times_\g X$ 
and write $\pi_{P}=\pi_P\times_\g X$ by abuse of notations. 
There is a functor $\Theta_{P}^c:=\pi_P^* \pi_{P*}$ on $\Db\Coh(\widetilde{\g}_X)$. 
We abbreviate $\Theta^c_s=\Theta^c_{P_s}$ for any $s\in \I$, which are called the \textit{reflection functors}. 

\begin{thm}[{\cite{BR12}}]\label{thm 3.1} 
For any $s\in \I$, there is a self-equivalence $\T^c_s$ on $\Db\Coh(\widetilde{\g}_X)$ with natural transformations 
$$\T^c_s \rightarrow \Theta^c_s\xrightarrow{\text{counit}} 1 \xrightarrow{+1},$$ 
which is a distinguished triangle when applying on any object in $\Db\Coh(\widetilde{\g}_X)$. 
The assignments 
$$\tilde{s} \mapsto (\T^c_s)^{-1} \quad \text{and}\quad 
\theta_\lambda\mapsto -\otimes^L_{\sO_{\widetilde{\g}_X}} \sO_{\widetilde{\g}_X}(\lambda), \qquad \forall s\in \I,\ \forall \lambda\in \Lambda,$$ 
define a (weak, right) $\mathfrak{B}_\ex$-action on $\Db\Coh(\widetilde{\g}_X)$. 
Moreover, if $X\rightarrow \g$ is equivariant for a closed subgroup $K\subset G\times \C^\times$, the constructions above are naturally $K$-equivariant. 
\end{thm}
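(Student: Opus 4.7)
The plan is to construct $\T^c_s$ via an explicit Fourier--Mukai kernel supported on the Steinberg-type correspondence $Z_s:=\tilde{\g}\times_{\tilde{\g}_s}\tilde{\g}$, then verify the $B_\ex$-relations by reduction to rank-two parabolic computations. First I would identify $\Theta^c_s=\pi_s^*\pi_{s*}$ with convolution against $\sO_{Z_s}$, and observe that the counit $\Theta^c_s\to 1$ corresponds to restriction along the diagonal embedding $\Delta\tilde{\g}\hookrightarrow Z_s$. Defining $\T^c_s$ via convolution with the (shifted) ideal sheaf $\sI_{\Delta\tilde{\g}/Z_s}[1]$ fits it into the required distinguished triangle $\T^c_s\to\Theta^c_s\to 1\xrightarrow{+1}$. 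The Tor-vanishing hypothesis on $X\to\g$ guarantees that all kernels survive the base change cleanly, so everything transfers to $D^b\Coh(\tilde{\g}_X)$ without derived correction terms.

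To see that $\T^c_s$ is a self-equivalence, I would exploit that $\pi_s$ is a smooth proper $\P^1$-fibration with relative dualizing sheaf $\omega_{\tilde{\g}/\tilde{\g}_s}\simeq\sO(-\alpha_s)[1]$; this allows an explicit inverse kernel, and the verification that convolution in either order recovers the diagonal reduces to a computation on the triple fiber product over $\g$. For the braid relations $\underbrace{\tilde{s}\tilde{t}\tilde{s}\cdots}_{m_{st}}=\underbrace{\tilde{t}\tilde{s}\tilde{t}\cdots}_{m_{st}}$ I would pass to the rank-two parabolic $P_{st}\subset G$: both sides become convolution kernels that assemble into the longer resolution $\tilde{\g}\to\tilde{\g}_{P_{st}}$ and therefore agree. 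The Bernstein-type commutation between $\tilde{s}$ and $\theta_\lambda$ follows from the Koszul sequence $\sO(-\alpha_s)\hookrightarrow\sO\twoheadrightarrow\sO_{\Delta}$ on the $\P^1$-bundle combined with the projection formula and Borel--Weil--Bott along the fibers, which reproduces exactly the shifts dictated by $\langle\lambda,\check{\alpha}_s\rangle$.

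Equivariance under a closed subgroup $K\subset G\times\C^\times$ is essentially built in: the schemes $\tilde{\g},\tilde{\g}_s,Z_s$ and the ideal sheaf $\sI_{\Delta\tilde{\g}/Z_s}$ all carry canonical $G\times\C^\times$-equivariant structures, the functors $-\otimes^L\sO_{\tilde{\g}_X}(\lambda)$ are clearly $K$-equivariant, and any $K$-equivariant base change $X\to\g$ preserves the whole package through Tor-independent pullback. The main obstacle I anticipate is the rank-two braid verification in type $G_2$, where one must compare two length-six convolutions of kernels; this is the technical heart of the Bezrukavnikov--Riche argument and is typically handled by exhibiting a common Bott--Samelson-type resolution of the longest element of the rank-two Weyl group through which both sides of the relation factor, rather than by direct computation on the full fiber product.
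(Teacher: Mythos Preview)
The paper does not prove this theorem at all: it is stated with attribution \cite{BR12} (Bezrukavnikov--Riche) and used as a black box, with no proof or sketch given. So there is nothing in the paper to compare your proposal against.

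That said, your outline is a faithful summary of the Bezrukavnikov--Riche strategy: defining $\T^c_s$ via the Fourier--Mukai kernel $\sO_{Z_s}(-\rho,\rho-\alpha_s)$ (or equivalently, up to twist, the ideal sheaf of the diagonal in $Z_s$), checking invertibility from the $\P^1$-bundle structure of $\pi_s$, and reducing the braid relations to rank-two parabolic computations. The Tor-independence hypothesis on $X\to\g$ is indeed what makes the base change clean, and the equivariance is inherited from the canonical $G\times\C^\times$-structures on the kernels. Your caveat about the $G_2$ braid relation being the delicate point is accurate; in \cite{BR12} this is handled (for all types) by a deformation argument passing through the regular locus, where the correspondence $Z_s$ becomes the graph of an honest automorphism, rather than by a direct Bott--Samelson comparison as you suggest. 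If you were actually writing up a proof, that deformation-to-the-regular-locus step is the key idea you are missing.
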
 

For any affine reflection $s\in \I_\af\backslash \I$, there is an element $b\in \mathfrak{B}_\ex$ and a finite simple reflection $s'\in \I$ satisfying $\tilde{s}=b\tilde{s'}b^{-1}$, see \cite[Lem 6.1.2]{R10}. 
We define the reflection functor associated to $s$ as $\Theta^c_{s}=b^{-1}\circ \Theta^c_{s'}\circ b$, where $b$ is viewed as an auto-functor on $\Db\Coh(\widetilde{\g}_X)$. 

\subsubsection{Non-commutative Springer resolutions}\label{subsect 3.1.2} 
Let $\mathfrak{B}^+_\ex$ be the semi-group of $\mathfrak{B}_\ex$ generated by the canonical lifts $\tilde{s}$ with $s\in \I_\af$. 

\begin{thm}[{\cite{BM13}}]\label{thm BM} 
\begin{enumerate}
\item There is a unique t-structure (called the \textit{exotic t-structure}) on $\Db\Coh(\widetilde{\g}_X)$ defined by 
$$\Db\Coh(\widetilde{\g}_X)^{\leq}=\{\sF|\ \pi_*(b\sF)\in D^{\leq}\Coh X,\ \forall b\in \mathfrak{B}^+_\ex\};$$ 
$$\Db\Coh(\widetilde{\g}_X)^{\geq}=\{\sF|\ \pi_*(b^{-1}\sF)\in D^{\geq}\Coh X,\ \forall b\in \mathfrak{B}^+_\ex\}.$$ 

\item There exists a $G\times \C^\times$-equivariant tilting bundle $\sE$ on $\widetilde{\g}$ such that 
$$R\sHom_{\widetilde{\g}_X}(\sE_X,-):\ \Db\Coh(\widetilde{\g}_X) \rightarrow \Db(\A_X\mod)$$ 
is an equivalence, where $\sE_X:=\sE \times_\g X$, $\A_X=\A\otimes_{\sO_\g} \sO_X$ and $\A=\sEnd_{\widetilde{\g}}^\op(\sE)$, and such that this equivalence is t-exact with respect to the exotic t-structure on the LHS and the trivial t-structure on the RHS. 
\item Moreover, if $X\rightarrow \g$ is equivariant for a closed subgroup $K\subset G\times \C^\times$, we have similar statements for $\Db\Coh^K(\widetilde{\g}_X)$. 
\end{enumerate} 
\end{thm}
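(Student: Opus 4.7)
The approach is to first establish the key case $X=\g$; given the Tor-vanishing hypothesis on $\sO_X$, the candidate tilting bundle, the algebra $\A_X$, and the defining conditions for the exotic t-structure all base change cleanly from $\tilde{\g}\rightarrow \g$ to $\tilde{\g}_X\rightarrow X$, since the reflection functors of Theorem \ref{thm 3.1} are compatible with derived pullback and, under the Tor hypothesis, derived pullback coincides with ordinary pullback on $\sE$. So I would reduce to constructing $\sE$ on $\tilde{\g}$ itself, and let the general statement follow by the base change $\sE_X = \sE\otimes_{\sO_\g}\sO_X$.

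For the construction of $\sE$, I would form a direct sum of $G\times \C^\times$-equivariant summands obtained by iteratively applying the reflection functors $\T^c_s$ (and the line bundle twists $-\otimes \sO_{\tilde{\g}}(\lambda)$) of Theorem \ref{thm 3.1} to a sufficiently positive base line bundle, e.g. summands of the form $\tilde{w}\cdot \sO_{\tilde{\g}}(\lambda)$ for $w\in W$ and a fixed regular $\lambda$. The two properties to verify are (a) $\Ext^{>0}_{\tilde{\g}}(\sE,\sE)=0$, and (b) $\sE$ classically generates $D^b\Coh(\tilde{\g})$. Property (b) follows because any line bundle $\sO_{\tilde{\g}}(\mu)$ is reachable from summands of $\sE$ via cones produced by the distinguished triangles $\T^c_s\rightarrow \Theta^c_s\rightarrow 1$ together with the braid relations in $B_\ex$. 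Once $\sE$ is produced, the equivalence
$$R\sHom_{\tilde{\g}}(\sE,-):\ D^b\Coh(\tilde{\g}) \xs D^b(\A\mod), \qquad \A = \sEnd^\op_{\tilde{\g}}(\sE),$$
is formal, and a t-structure is transported from the standard one on the RHS. To match this with the definition in (1), I would check that $\sF$ is connective for the transported t-structure iff $R\Hom(\sE, b\sF)\in D^{\leq 0}$ for all $b\in B^+_\ex$; invoking the braid action on $\sE$ and the compatibility with $\pi_*$ this reformulates into the condition $\pi_*(b\sF)\in D^{\leq 0}\Coh(\g)$ of part (1).

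The equivariant version (3) follows once all of the constructions above are performed $G\times \C^\times$-equivariantly from the start, which is possible because the reflection functors, the line bundle twists, and the base change to $X$ are all naturally $G\times\C^\times$-equivariant; for a closed subgroup $K\subset G\times \C^\times$ one simply works throughout with $K$-equivariant coherent sheaves. The main obstacle I expect is the Ext-vanishing (a): this is a substantive cohomological computation on $\tilde{\g}$ that in BM13 is handled via Bott--Samelson-type resolutions, careful weight bookkeeping along Springer fibers, and positivity phenomena rooted in Kazhdan--Lusztig theory; maintaining the $G\times \C^\times$-equivariance throughout adds bookkeeping but no new conceptual difficulty.
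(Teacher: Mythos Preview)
The paper does not prove this theorem: it is stated with attribution to \cite{BM13} and used as a black box, so there is no ``paper's own proof'' to compare your proposal against. Your sketch is a plausible high-level outline of the Bezrukavnikov--Mirkovi\'c construction (reduce to $X=\g$ by base change using the Tor-vanishing hypothesis, build $\sE$ from iterated applications of the braid action to line bundles, verify tilting and generation, then transport the t-structure), but since the present paper simply quotes the result, there is nothing here to match or contrast.
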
 

By definition, the functor $\pi_*$ is t-exact for the exotic t-structure on $\Db\Coh(\widetilde{\g}_X)$. 
It yields a functor $\pi_*: \A_X\mod \rightarrow \Coh(X)$. 
By \cite{BM13}, the functors $\Theta^c_P$, $\Theta^c_s$ ($s\in \I_\af$) are also exact for the exotic t-structure. 
We thus view them as functors on $\A_X\mod$. 

Consider the completion $X=\g_{\hat{0}}$, then there is an equivalence 
$$\Db(\A_{\g_{\hat{0}}}\mod)\simeq \Db\Coh(\widetilde{\g}_{\hat{0}}),$$ 
whose restriction on the full subcategories of objects set-theoretically supported at $0\in \g$ induces an equivalence 
\begin{equation}\label{equ 3.5} 
\Db(\A\mod_0)\simeq \Db\Coh_{\sB}(\widetilde{\g}). 
\end{equation} 

\subsection{Localization theorem for $\fU_\zeta$}\label{subsect 3.2} 
The localization theorem for $\fU_\zeta$ was established by Backelin--Kremnizer \cite{BK08} and Tanisaki \cite{Tan12, Tan14, Tan21}. 
In this subsection, we give a reminder on the localization theorem mainly following \cite{Tan21}. 
\subsubsection{Quantum differential operators} 
Lunts---Rosenberg \cite{LR99} introduced the quantized flag variety $\sB_\zeta$ as a non-commutative scheme over $\sB$. 
Let $\sO_{\sB_\zeta}$ be the pushforward of the structure sheave of $\sB_\zeta$ on $\sB$, which is a coherent sheaf of (in general non-commutative) algebras on $\sB$. 
In \cite{Tan05}, Tanisaki defined the ring of enhanced differential operators on 
$\sB_\zeta$ (note that in \textit{loc. cit.} the convention for flag variety is $B^-\backslash G$, which is different from us). 
Let $\widetilde{\sD}_\zeta$ be its localization on $\sB$. 
There is an algebra homomorphism 
\begin{equation}\label{equ 3.99}
\fU_\zeta \otimes \k[T] \rightarrow \Gamma(\widetilde{\sD}_\zeta).
\end{equation} 

Consider the scheme 
$$\widetilde{G^*}=\{(gB,\chi)\in \sB \times G^*\ |\ g^{-1}\kappa(\chi)g \in B\},$$ 
and set 
$$\sV=\widetilde{G^*}\times_T T 
=\{(gB,\chi,t)\in \sB \times G^*\times T \ |\ g^{-1}\kappa(\chi)g \in t^lN\},$$ 
where $T\rightarrow T$ is by taking $l$-th power, and $\widetilde{G^*}\rightarrow T$ is induced by $B=N\times T\rightarrow T$, $nt\mapsto t^2$ for any $n\in N$ and $t\in T$. 
Note that $\sV$ is affine over $\sB$. 
By \cite[Thm 5.2]{Tan12}, there is an isomorphism from the central subsheaf of $\widetilde{\sD}_\zeta$ to the structure sheaf $\sO_{\sV}$ of $\sV$. 
We thus also view $\widetilde{\sD}_\zeta$ as a (coherent) sheaf of algebras on $\sV$. 
The homomorphism $\fU_\zeta\rightarrow \widetilde{\sD}_\zeta$ restricting on their centers is compatible with the natural map 
$$\varpi:\ \sV \rightarrow {G^*}\times_{T/W} T \rightarrow {G^*}\times_{T/W} T/(W,\bullet).$$ 

\subsubsection{Localization theorem}\label{subsect 3.2.2} 
From now on, we assume moreover that $l$ is a prime power, following \cite{Tan21}. 

The homomorphism $\fU_\zeta\rightarrow \Gamma(\widetilde{\sD}_\zeta)$ yields a functor 
$$R\Gamma: \Db(\widetilde{\sD}_\zeta\mod)\rightarrow \Db(\fU_\zeta \mod). $$ 

\begin{thm}[{\cite{Tan21}}]\label{thm 3.3} 
For any $(\chi,\zeta^{2\lambda})\in {G^*}\times_{T/W} T$ (where $\lambda\in \Lambda$), if $\lambda$ is regular (i.e. the stabilizer of $(W_\ex,\bullet_l)$-action on $\lambda$ is trivial), then there is an equivalence 
$$R\Gamma: \Db(\widetilde{\sD}_\zeta\mod_{(\chi,\zeta^{2\lambda})})\xs \Db(\fU_\zeta \mod_{(\chi,\chi_\lambda)}).$$ 
\end{thm}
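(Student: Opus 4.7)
The plan is to adapt the Beilinson--Bernstein--Azumaya strategy to the quantum root-of-unity setting. Denote by $L\Delta_\lambda$ the left adjoint of $R\Gamma$ obtained by derived tensoring with $\widetilde{\sD}_\zeta$, viewed as a sheaf of $(\widetilde{\sD}_\zeta, \fU_\zeta)$-bimodules via (\ref{equ 3.99}); after completing both sides at the central parameters specified, this functor lands in $D^b(\widetilde{\sD}_\zeta\mod_{(\chi,\zeta^{2\lambda})})$. The goal is then to verify that the unit and counit of this adjunction become isomorphisms under the regularity hypothesis on $\lambda$.

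The first main input is a quantum Kempf-type vanishing, establishing that $R\Gamma = \Gamma$ on the block $\widetilde{\sD}_\zeta\mod_{(\chi,\zeta^{2\lambda})}$ when $\lambda$ is regular. A PBW-type filtration of $\widetilde{\sD}_\zeta$ whose associated graded is a coherent pullback from $\sV$, together with the fact that $\sV\to \sB$ is affine and the standard Borel--Weil--Bott computation on $\sB$, reduces the desired vanishing to a coherent-sheaf statement on the formal neighborhood of $(\chi,\zeta^{2\lambda})$ in $\sV$. The regularity of $\lambda$ is used here to guarantee that only the degree-zero term contributes on the block in question.

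The second step is to exploit the Azumaya algebra structure of $\widetilde{\sD}_\zeta$ over (a formal neighborhood of) $\sV$, following \cite{BK08} and \cite{Tan12}, so that its completed global sections algebra is Morita equivalent to the completed block of $\fU_\zeta$. Regularity of $\lambda$ is essential a second time: it forces the map $\varpi: \sV \rightarrow G^*\times_{T/W} T/(W,\bullet)$ to be \'{e}tale at $(\chi,\zeta^{2\lambda})$, hence an isomorphism after completion onto the formal neighborhood of $(\chi,\chi_\lambda)$. Combined with the vanishing of higher cohomology, this lets one check the unit and counit of $(L\Delta_\lambda, R\Gamma)$ on a single Azumaya generator on each side, where the claim reduces to classical Morita theory over the completed base.

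The principal obstacle will be the quantum Kempf vanishing: unlike the commutative case, the filtration on $\widetilde{\sD}_\zeta$ has non-trivial quantum-deformed commutation relations, and a direct Borel--Weil--Bott argument on $\sB_\zeta$ is delicate. The other subtle point is the precise match of central parameters via $\varpi$, where the regularity hypothesis is exactly what is needed to turn a generically \'{e}tale covering into a formal isomorphism at the specified point; without regularity, the fiber of $\varpi$ over $(\chi,\chi_\lambda)$ acquires a nontrivial stabilizer and the adjunction fails to be an equivalence even after completion.
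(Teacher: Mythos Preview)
The paper does not give its own proof of this statement: Theorem~\ref{thm 3.3} is stated as a citation from \cite{Tan21} (see also \cite{BK08}, \cite{Tan12}, \cite{Tan14}), and the surrounding Section~\ref{subsect 3.2} is explicitly a ``reminder'' on the localization theorem for $\fU_\zeta$. There is therefore nothing in the paper to compare your argument against.

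That said, your sketch is broadly consonant with the strategy actually used in the cited literature (and with the characteristic-$p$ prototype of Bezrukavnikov--Mirkovi\'c--Rumynin): one sets up the adjunction $(L\Delta_\lambda, R\Gamma)$, proves a cohomology vanishing so that $R\Gamma$ is already exact on the relevant block, and then uses the Azumaya property of $\widetilde{\sD}_\zeta$ over $\sV$ together with an analysis of the central parameters to reduce to Morita theory over the completed base. Two cautions about your write-up: first, the cohomology vanishing in the quantum setting is not a direct Kempf/Borel--Weil--Bott argument on $\sB$ but rather a reduction (via the PBW filtration and the Azumaya splitting on formal neighborhoods) to vanishing for line bundle twists, and the precise conditions on $l$ that the paper imposes (odd, larger than the Coxeter number, prime to $\e$, and a prime power in \S\ref{subsect 3.2.2}) enter exactly here; second, your claim that regularity makes $\varpi$ \'etale at the point is not quite how the argument goes---what one actually uses is that on the formal neighborhood the Azumaya algebra splits (this is the content of the companion Theorem~\ref{thm 3.4}) and that $R\Gamma$ of the splitting bundle produces a projective generator on the $\fU_\zeta$-side, which is where regularity of $\lambda$ is genuinely consumed. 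If you want to turn your outline into a proof, those are the two places that need the most care.
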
 

\subsubsection{Splitting Azumaya algebras}\label{subsect 3.2.3} 
For any $(\chi,t)\in {G^*}\times_{T/W} T$, we denote by ${\sV}_{(\chi,t)}$ the fiber of $(\chi,t)$ in $\sV$. 
Although the following result holds for any ${\sV}_{(\chi,t)}$ with more assumptions on $l$, we only consider the case when $t=\zeta^{\lambda}$ for some $\lambda\in \Lambda$. 

\begin{thm}[{\cite{Tan21}}]\label{thm 3.4}
The sheaf $\widetilde{\sD}_\zeta$ is an Azumaya algebra on $\sV$, whose restriction on the formal neighborhood of ${\sV}_{(\chi,\zeta^{\lambda})}$ splits, for any $(\chi,\zeta^{\lambda}) \in {G^*}\times_{T/W} T$. 
Therefore, for any splitting bundle $\fM^{\chi}_{\zeta^\lambda}$ of $\widetilde{\sD}_\zeta$ on $\FN(\sV_{(\chi,\zeta^{\lambda})})$, there is an equivalence 
\begin{equation}\label{equ 3.98} 
\fM^{\chi}_{\zeta^\lambda} \otimes_{\sO_{\FN({\sV}_{(\chi,\zeta^\lambda)})}}-: 
\Coh_{{\sV}_{(\chi,\zeta^\lambda)}}(\sV)\xs \widetilde{\sD}_\zeta\mod_{(\chi,\zeta^\lambda)}.
\end{equation} 
\end{thm}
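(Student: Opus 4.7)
The plan is to prove the three assertions in sequence: that $\widetilde{\sD}_\zeta$ is a sheaf of Azumaya algebras on $\sV$, that it splits on the formal neighborhoods of the prescribed fibers, and that the resulting functor is an equivalence. Since we already have the identification of the center of $\widetilde{\sD}_\zeta$ with $\sO_\sV$ recalled in \textsection\ref{subsect 3.2}, to verify the Azumaya property it suffices to check that $\widetilde{\sD}_\zeta$ is coherent and locally free of constant rank over $\sO_\sV$, and that the fiber over every closed point of $\sV$ is a central simple algebra over the residue field. Local freeness, together with the computation of the rank (which should be $l^{2\dim \sB}$), follows from a quantum PBW-type presentation of $\widetilde{\sD}_\zeta$ local on $\sB$: on each open affine chart one writes the enhanced operator algebra as a twist of a polynomial ring in quantum derivations, the relations being dictated by the standard commutation with $\sO_{\sB_\zeta}$ and by (\ref{equ 3.99}). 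To upgrade to Azumaya, I would first exhibit a dense open locus in $\sV$ (for instance above the regular semisimple part of $G^*\times_{T/W}T$) where the algebra can be computed explicitly and is visibly a matrix algebra, then invoke constancy of fiber dimension plus openness of the Azumaya locus to propagate this to the whole of $\sV$.

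For the splitting on $\FN(\sV_{(\chi,\zeta^\lambda)})$, my approach is to reduce to producing a splitting on the closed fiber $\sV_{(\chi,\zeta^\lambda)}$ itself. Given such a splitting, one obtains a splitting on the formal neighborhood by Azumaya deformation theory: once $\widetilde{\sD}_\zeta$ is known to be Azumaya and flat, obstructions to lifting a locally free module from a closed subscheme to successive infinitesimal neighborhoods lie in second Ext groups that vanish for flat Azumaya data, so a splitting bundle on the fiber lifts (essentially uniquely, up to twisting by a line bundle). A splitting on the fiber is, by definition, a $\widetilde{\sD}_\zeta|_{\sV_{(\chi,\zeta^\lambda)}}$-module locally free of rank $l^{\dim\sB}$ over $\sO_{\sV_{(\chi,\zeta^\lambda)}}$. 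In the regular case Theorem \ref{thm 3.3} already identifies the two derived categories, so one may take the image of a suitably chosen projective (or baby-Verma-type) $\fU_\zeta$-module with central character $(\chi,\chi_\lambda)$ and verify the rank by comparison with the PBW decomposition of $u_\zeta$ over $Z_\Fr$. For singular $\lambda$ or $\chi$, deform to a nearby regular point along $G^*\times_{T/W}T$ and take the flat extension over the formal neighborhood, or equivalently transport a splitting via a translation functor from a regular weight in the same $(W_\ex,\bullet_l)$-orbit.

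The Morita equivalence is then formal once the splitting is produced: one has $\sEnd_{\sO}(\fM^{\chi}_{\zeta^\lambda})\simeq \widetilde{\sD}_\zeta|_{\FN(\sV_{(\chi,\zeta^\lambda)})}$, so $\fM^{\chi}_{\zeta^\lambda}\otimes_{\sO}-$ is the standard Morita equivalence from coherent sheaves on the formal neighborhood to $\widetilde{\sD}_\zeta$-modules supported there; matching the supports recovers exactly the category $\widetilde{\sD}_\zeta\mod_{(\chi,\zeta^\lambda)}$ on the right. The main obstacle, in my view, is the splitting step in the non-regular case: unlike the characteristic-$p$ story of Bezrukavnikov--Mirkovi\'{c}--Rumynin, one has two simultaneous parameters to track (the Frobenius-type $\chi$ and the Harish-Chandra $\lambda$), and producing a concrete locally free module of the correct rank on a singular fiber is not direct. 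The deformation/translation strategy above circumvents this, but verifying that the constructed sheaf is genuinely locally free of the correct rank—rather than merely having the right numerical invariants—is where the bulk of the technical work will lie.
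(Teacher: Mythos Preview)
The paper does not supply its own proof of this statement: Theorem~\ref{thm 3.4} is quoted from \cite{Tan21} as a black box, so there is no argument to compare your sketch against at the level of the Azumaya property or the general splitting claim. What the paper \emph{does} do, immediately after the statement, is recall Tanisaki's explicit construction of the splitting bundles in the case $\chi=1$ (see \textsection\ref{subsect 3.2.3}), and that construction is genuinely different from the one you outline.

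Concretely, the paper's recalled construction starts not from a regular weight but from the \emph{most singular} one, $\lambda=-\rho$. At that point the map $\varpi^*\fU_\zeta\to\widetilde{\sD}_\zeta$ becomes an isomorphism on the relevant formal neighborhood (equation~(\ref{equ 3.0})), so the splitting of $\widetilde{\sD}_\zeta$ is pulled back from a splitting of the \emph{global} algebra $(\fU_\zeta)_{\widehat{(1,\chi_{-\rho})}}$, which is known to be split Azumaya by Brown--Gordon \cite{BG01}. One then reaches all other $\lambda$ by tensoring with the rank-one quantum line bundles $\sO_{\sB_\zeta}^{\lambda+\rho}$. Your proposal runs in the opposite direction: start from a regular $\lambda$ (where Theorem~\ref{thm 3.3} is available) and propagate to singular points by deformation or translation. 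Both routes are plausible in principle, but the $-\rho$-first approach has the advantage that the splitting module is produced directly from the representation theory of $\fU_\zeta$ without any deformation argument, and the resulting bundles $\fM_\lambda$ come with an explicit description that the paper exploits heavily later (notably in the proof of Proposition~\ref{prop 4.7}). Your deformation step, by contrast, would need an additional argument to identify the limiting sheaf concretely enough for those later computations.
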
 

Let $(\chi,t)=(1,\zeta^{\lambda})\in {G^*}\times_{T/W} T$. 
Then we have 
$$\sV_{(1,\zeta^{\lambda})}=\sB\times \{(1,\zeta^{\lambda})\}\simeq \sB \quad \text{in}\quad \sV.$$ 
We abbreviate $\fM^{}_{\lambda}=\fM^{1}_{\zeta^{2\lambda}}$. 

The equivalence (\ref{equ 3.98}) above depends on the choice of splitting bundles. 
We will fix and recall the construction of splitting bundles $\fM_\lambda$ given in \cite[\textsection 5.4]{Tan21} as follows. 
Firstly, we consider the completion of $\fU_\zeta$ at $(1,\chi_{-\rho})\in G^*\times_{T/W} T/(W,\bullet)$. 
By \cite[Thms 2.5 and 4.9]{BG01}, the algebra $(\fU_\zeta)_{\widehat{(1,\chi_{-\rho})}}$ is a split Azumaya algebra over its center 
$$\k[G^*\times_{T/W} T/(W,\bullet)]_{\widehat{(1,\chi_{-\rho})}}=\k[G^*]_{\hat{1}}.$$ 
Fix a splitting module $M_{\chi_{-\rho}}$ of $(\fU_\zeta)_{\widehat{(1,\chi_{-\rho})}}$, then we have an equivalence 
\begin{equation}\label{equ 3.2}
M_{\chi_{-\rho}}\otimes_{\k[\g]_{\hat{0}}}-:\ 
\Coh_1(G^*) \xs \fU_\zeta\mod_{(1,\chi_{-\rho})}. 
\end{equation} 
On the other hand, by \cite{Tan12} the natural map $\varpi^*\fU_\zeta \rightarrow \widetilde{\sD}_\zeta$ yields an isomorphism 
\begin{equation}\label{equ 3.0} 
\varpi^*\fU_\zeta|_{\FN(1,\chi_{-\rho})} \xs \widetilde{\sD}_\zeta|_{\FN({\sV}_{(1,\zeta^{-2\rho})})}. 
\end{equation} 
Hence $\fM^{}_{{-\rho}}=\varpi^*M_{\chi_{-\rho}}$ is a splitting bundle for $\widetilde{\sD}_\zeta$ on $\FN({\sV}_{(1,\zeta^{-2\rho})})$. 
In general, in \cite{Tan05} the author constructed a locally free $\sO_{\sB_\zeta}$-module $\sO^{\lambda}_{\sB_\zeta}$ of rank 1 for any $\lambda\in \Lambda$. 
By \cite[Thm 5.3]{Tan21}, one can set $\fM^{ }_{\lambda}=\sO_{\sB_\zeta}^{\lambda+\rho}\otimes_{\sO_{\sB_\zeta}} \fM_{{-\rho}}$. 

\subsubsection{Formal neighborhood of $\sB$} 
We set $\widetilde{G}=G\times^B B$ where $B$ acts on itself by conjugation. 
There is a natural map $\widetilde{G}\rightarrow G$. 
Note that $\widetilde{G^*}$ fits into the Cartesian diagram 
$$\begin{tikzcd}
\widetilde{G^*} \arrow[r,"\tilde{\kappa}"]\arrow[d] 
& \widetilde{G} \arrow[d]\\ 
G^* \arrow[r,"\kappa"] 
& G. 
\end{tikzcd}$$ 
Since $\kappa$ is \'{e}tale, $\tilde{\kappa}$ is \'{e}tale. 
It induces isomorphisms of formal neighborhoods 
$$\FN_{\sV}(\sB\times \{(1,\zeta^\lambda)\})
\simeq \FN_{\widetilde{G^*}}(\sB)
\simeq \FN_{\widetilde{G}}(\sB), \quad \forall \lambda \in \Lambda.$$ 
The exponential map $\exp:\g \rightarrow G$ induces isomorphisms of schemes 
$$\g_{\hat{0}}\xs G_{\hat{1}} \quad \text{and} \quad \widetilde{\g}_{\hat{0}} \xs \widetilde{G}_{\hat{1}}.$$ 
Hence we can identify $\FN_{\widetilde{G}}(\sB)=\FN_{\widetilde{\g}}(\sB)$, and it follows that 
$$\Coh_{\sB\times \{(1,\zeta^{2\lambda})\}}(\sV)
\simeq 
\Coh_{\sB}(\widetilde{\g}), \quad \forall \lambda \in \Lambda.$$  
Therefore, by Theorems \ref{thm 3.3} and \ref{thm 3.4}, we have an equivalence 
\begin{equation}\label{equ 3.3}
\Db\Coh_{\sB}(\widetilde{\g})\xs \Db(\fU_\zeta \mod_{(1,\chi_0)}). 
\end{equation}

We set $\A_0$ be the specialization of $\A$ at $0\in \g$. 

\begin{prop}[{\cite{Tan21}}]\label{prop 3.5} 
The equivalence (\ref{equ 3.3}) is exact for the exotic t-structure on the LHS and the trivial t-structure on the RHS. 
Therefore, there is an equivalence 
\begin{equation}\label{equ 3.1} 
\A\mod_0\xs \fU_\zeta \mod_{(1,\chi_0)} .
\end{equation} 
Moreover, this equivalence intertwines the reflection functors $\Theta^c_s$ and $\Theta^r_s$ ($s\in \I_\af$), and there is a commutative diagram 
$$\begin{tikzcd} 
\A\mod_0\arrow[r,"\simeq"]\arrow[d,"\pi_*"] 
& \fU_\zeta\mod_{(1,\chi_0)} \arrow[d,"\rT^{-\rho}_0"]\\ 
\Coh_0(\g) \arrow[r,"\simeq"] 
& \fU_\zeta\mod_{(1,\chi_{-\rho})}. 
\end{tikzcd}$$ 
\end{prop}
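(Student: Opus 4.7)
The plan is to combine the localization equivalences of Theorems \ref{thm 3.3} and \ref{thm 3.4} with a careful comparison between the Steinberg block at $\chi_{-\rho}$ and the principal block at $\chi_0$. Since $l$ exceeds the Coxeter number, a direct check shows that $0 \in \Lambda$ is regular for the $(W_\ex,\bullet_l)$-action, so Theorem \ref{thm 3.3} applies at $\lambda=0$ and yields a derived equivalence $R\Gamma: D^b(\widetilde{\sD}_\zeta\mod_{(1,\zeta^0)}) \xs D^b(\fU_\zeta\mod_{(1,\chi_0)})$. Combining this with the Morita-type equivalence of Theorem \ref{thm 3.4} via the splitting bundle $\fM_0=\sO^{\rho}_{\sB_\zeta}\otimes\fM_{-\rho}$, and the canonical identification of formal neighborhoods $\FN_\sV(\sV_{(1,1)})=\FN_{\widetilde{G}}(\sB)=\tilde{\g}_{\hat{0}}$, produces the derived equivalence (\ref{equ 3.3}). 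The remaining content of the proposition is the t-exactness and the functorial compatibilities.

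To establish t-exactness, I would first handle the Steinberg block. There the splitting bundle $\fM_{-\rho}=\varpi^*M_{\chi_{-\rho}}$ is pulled back from the split Azumaya structure on $(\fU_\zeta)_{\widehat{(1,\chi_{-\rho})}}$ via the identification (\ref{equ 3.0}), so the equivalence $\Coh_\sB(\tilde{\g})\xs\fU_\zeta\mod_{(1,\chi_{-\rho})}$ is t-exact with respect to the \emph{standard} t-structures on both sides. The translation functor $T^{-\rho}_0$ is exact on the representation side, and its coherent counterpart, obtained by comparing the splitting bundles $\fM_0$ and $\fM_{-\rho}$ (which differ by the twist $\sO^{\rho}_{\sB_\zeta}$), should be $\pi_*$ under the identification $\A\mod\simeq\Coh(\tilde{\g})$ of Theorem \ref{thm BM}. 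Since the exotic t-structure on $\Coh_\sB(\tilde{\g})$ is by construction the one for which $\A\mod_0$ is the heart and for which $\pi_*$ is t-exact, the commutativity of the resulting square together with the t-exactness of the bottom equivalence forces t-exactness of (\ref{equ 3.3}). This simultaneously establishes the commutative diagram involving $\pi_*$ and $T^{-\rho}_0$.

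For the intertwining of reflection functors, I would argue that for a finite simple reflection $s\in\I$, wall-crossing through the $s$-wall on the representation side corresponds, via the partial-flag variant of localization, to the push-pull $\pi_s^*\pi_{s*}=\Theta^c_s$ on $\Coh(\tilde{\g})$. This requires the splitting bundles $\fM_\lambda$ to be compatible with the natural map from $\widetilde{\sD}_\zeta$ to its partial analogue on the quantized $G/P_s$, which follows from the explicit factorization $\fM_\lambda = \sO^{\lambda+\rho}_{\sB_\zeta} \otimes \fM_{-\rho}$ together with the behavior of $\sO^\mu_{\sB_\zeta}$ under partial-flag pushforward. For the affine reflection $s_0$, one uses the relation $\tilde{s}_0=b\tilde{s}b^{-1}$ in $B_\ex$ and the fact that the equivalence already intertwines the Braid group action on both sides, the latter being built into the construction through translation functors and the line-bundle twists $\sO^\mu_{\sB_\zeta}$.

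The main obstacle I anticipate is the bookkeeping of splitting bundles and their compatibility with partial flag projections, line-bundle twists, and Braid group actions. Since an Azumaya splitting is only determined up to tensoring by an invertible sheaf on the support, verifying that the specific choice $\fM_\lambda=\sO^{\lambda+\rho}_{\sB_\zeta}\otimes\fM_{-\rho}$ genuinely intertwines the translations and reflections on the two sides, rather than doing so only up to an a priori uncontrolled twist, requires a careful computation tracing through the construction of \cite{Tan21}. Once this is pinned down, t-exactness and the commutative diagram follow formally from the characterization of the exotic t-structure through $\pi_*$ and the Braid group action.
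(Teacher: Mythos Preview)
The paper does not give its own proof of this proposition: it is stated with the attribution \cite{Tan21} and no proof environment follows. The paper treats it as a black-box import from Tanisaki's work and immediately moves on to discuss the linearity of the resulting equivalence over $\k[\g_{\hat{0}}\times_{(\t/W)_{\hat{0}}}\t_{\hat{0}}]$.

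Your sketch is a plausible outline of the argument one finds in \cite{Tan21}: assemble the derived equivalence from Theorems \ref{thm 3.3} and \ref{thm 3.4}, use the explicit relation $\fM_0=\sO^\rho_{\sB_\zeta}\otimes_{\sO_{\sB_\zeta}}\fM_{-\rho}$ together with the isomorphism (\ref{equ 3.0}) to reduce the t-exactness and the commutative square to the Steinberg block, and obtain the reflection-functor compatibility from a partial-flag version of localization plus the braid relation $\tilde{s}_0=b\tilde{s}b^{-1}$. The honest gaps you flag---checking that the chosen splitting bundles are genuinely compatible with partial-flag pushforwards and line-bundle twists, rather than only up to an uncontrolled invertible twist---are exactly the technical content carried by the citation. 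Since the paper offers no argument of its own here, there is nothing further to compare your proposal against.
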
 
\noindent 
By construction, the equivalence (\ref{equ 3.1}) is linear with respect to 
\begin{equation}\label{equ 3.9}
\k[\g_{\hat{0}}\times_{(\t/W)_{\hat{0}}} \t_{\hat{0}}]\xs 
\k[G^*_{\hat{1}}\times_{(T/W)_{\hat{1}}} \big(T/(W,\bullet)\big)_{\widehat{\chi_0}}].
\end{equation} 
Therefore, (\ref{equ 3.1}) can be specialized to an equivalence 
$$\A_0\mod \xs u_\zeta\mod_{\chi_0}.$$

\subsubsection{Graded version} 
The constructions above can be made $T$-equivariantly as follows. 
Define $\fU_\zeta \mod^{\Lambda}_{(1,\chi_\lambda)}$ to be the category of $\Lambda$-graded $\fU_\zeta$-modules $M$ such that $M\in \fU_\zeta \mod_{(1,\chi_\lambda)}$, and that for any $K_\mu$ and $m_\nu\in M_\nu$ there exists $n>0$ satisfying 
$$(K_\mu -\zeta^{(\mu,\nu)})^n. m_\nu =0.$$ 
By \cite[\textsection 7.4]{Tan21} there exists a $\Lambda$-grading (in completion sense) on $M_{\chi_{-\rho}}$. 
It yields an equivalence 
$$M_{\chi_{-\rho}}\otimes_{\k[\g]_{\hat{0}}}-:\ 
\Coh_0^T(\g) \xs \fU_\zeta \mod^{\Lambda}_{(1,\chi_{-\rho})}.$$ 
Moreover, the splitting bundles $\fM_{\lambda}$, $\lambda\in \Lambda$ are thus endowed with $\Lambda$-grading (in completion sense) by their constructions in \textsection\ref{subsect 3.2.3}. 
We have the following graded version of Proposition \ref{prop 3.5}. 

\begin{prop}[{\cite{Tan21}}] \label{prop 3.7} 
The $\Lambda$-grading structure on $\fM_{{0}}$ induces an equivalence 
\begin{equation}\label{equ 3.4}
\A\mod^T_0\xs \fU_\zeta \mod^{\Lambda}_{(1,\chi_0)} , 
\end{equation}
intertwining the reflection functors $\Theta^c_s$ and $\Theta^r_s$ ($s\in \I_\af$). 
There is a commutative diagram 
$$\begin{tikzcd} 
\A\mod_0^T\arrow[r,"\simeq"]\arrow[d,"\pi_*"] 
& \fU_\zeta\mod^{\Lambda}_{(1,\chi_0)} \arrow[d,"\rT^{-\rho}_0"]\\ 
\Coh^T_0(\g) \arrow[r,"\simeq"] 
& \fU_\zeta\mod^{\Lambda}_{(1,\chi_{-\rho})}. 
\end{tikzcd}$$ 
\end{prop}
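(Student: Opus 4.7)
The plan is to run the proof of Proposition \ref{prop 3.5} in the $T$-equivariant (equivalently, $\Lambda$-graded) setting, tracking the grading data through each step of the chain of equivalences built up in \textsection\ref{subsect 3.2.3}.

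First, I would use the $\Lambda$-grading on $M_{\chi_{-\rho}}$ provided by \cite[\textsection 7.4]{Tan21} to upgrade the equivalence (\ref{equ 3.2}) to
$$M_{\chi_{-\rho}}\otimes_{\k[\g]_{\hat{0}}}-:\ \Coh^T_0(\g) \xs \fU_\zeta\mod^{\Lambda}_{(1,\chi_{-\rho})},$$
after using the $T$-equivariant exponential identification $\g_{\hat{0}} \xs G_{\hat{1}}$. Since the splitting bundles $\fM_\lambda$ are built from $\fM_{-\rho}=\varpi^* M_{\chi_{-\rho}}$ and the intrinsically $T$-equivariant line bundles $\sO^{\lambda+\rho}_{\sB_\zeta}$, each $\fM_\lambda$ carries a natural $\Lambda$-grading. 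In particular, $\fM_0$ provides a $T$-equivariant splitting of $\widetilde{\sD}_\zeta$ on $\FN(\sV_{(1,1)})$, yielding a $\Lambda$-graded refinement of (\ref{equ 3.98}):
$$\Coh^T_{\sV_{(1,1)}}(\sV) \xs \widetilde{\sD}_\zeta\mod^{\Lambda}_{(1,1)}.$$

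Next, I would combine this with the localization $R\Gamma$ of Theorem \ref{thm 3.3}, which respects the $\Lambda$-grading through (\ref{equ 3.99}). Using the $T$-equivariant identifications $\FN_\sV(\sB\times\{(1,1)\}) = \FN_{\widetilde{G}}(\sB) = \FN_{\tilde{\g}}(\sB)$ and passing through the tilting equivalence of Theorem \ref{thm BM} (which is $G\times\C^\times$-equivariant, hence in particular $T$-equivariant), I obtain the desired equivalence $\A\mod^T_0 \xs \fU_\zeta\mod^{\Lambda}_{(1,\chi_0)}$. The t-exactness statement relating the exotic t-structure on the left with the trivial t-structure on the right reduces at once to Proposition \ref{prop 3.5} via the conservative exact forgetful functor to the ungraded categories.

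For the intertwining of the reflection functors $\Theta^c_s$ and $\Theta^r_s$, I would invoke the fact that the affine braid group action of Theorem \ref{thm 3.1} is canonically $G\times\C^\times$-equivariant, while on the quantum side $\Theta^r_s$ is defined by translation functors built from Weyl modules of $U_\zeta$ (which are naturally $\Lambda$-graded). Both sides therefore lift to the graded setting and the intertwining is inherited from Proposition \ref{prop 3.5}. The commutative square with $\pi_*$ and $T^{-\rho}_0$ is then an immediate consequence of its ungraded counterpart together with the observation that both vertical functors are manifestly $\Lambda$-graded. The main obstacle I anticipate is making precise the ``completion-sense'' $\Lambda$-grading on $M_{\chi_{-\rho}}$ and on each $\fM_\lambda$: because the central character $\chi_{-\rho}$ is not honestly $T$-fixed, one must work with a pro-grading in which each generator $K_\mu$ acts on the $\nu$-weight component as $\zeta^{(\mu,\nu)}$ times a unipotent operator, and then check that this pro-grading descends correctly through the tilting equivalence. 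Once this formalism is set up following \cite[\textsection 7.4]{Tan21}, the remainder of the argument is a routine $T$-equivariant rewriting of Proposition \ref{prop 3.5}.
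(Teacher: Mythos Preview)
The paper does not give its own proof of Proposition~\ref{prop 3.7}: the statement is attributed to \cite{Tan21} and is recorded without argument, just as Proposition~\ref{prop 3.5} is. So there is no proof in the paper to compare against.

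Your outline is a faithful description of how one would carry the argument of \cite{Tan21} into the $T$-equivariant setting, and it correctly identifies the only nontrivial point: the pro-$\Lambda$-grading on $M_{\chi_{-\rho}}$ (and hence on the $\fM_\lambda$) from \cite[\textsection 7.4]{Tan21}. Once that is in place, the rest is, as you say, a routine equivariant rewriting; the conservativity of the forgetful functor handles t-exactness, and the $G\times\C^\times$-equivariance built into Theorems~\ref{thm 3.1} and~\ref{thm BM} ensures compatibility with reflection functors and with $\pi_*$. There is nothing to correct in your sketch; it is simply more detailed than what the paper records.
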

\noindent 
We can identify $u_\zeta\mod^{\Lambda,0}_{\C}$ with the subcategory of modules in $\fU_\zeta \mod^{\Lambda}_{(1,\chi_\lambda)}$ on which $Z_\Fr$ acts trivially. 
Hence (\ref{equ 3.4}) can be specialized to an equivalence 
$$\A_0\mod^T \xs u_\zeta\mod^{\Lambda,0}_{\C}.$$

\section{Equivalence $\fF^b$}\label{sect 4} 
We define the following scheme and algebras  
$$\b_S=\b\times_\t \t_{\hat{0}},\quad \A^b=\A\otimes_{\k[\g]}\k[\b], \quad \text{and} \quad 
\A^b_S=\A^b\otimes_{\k[\b]} \k[\b_S].$$ 
The exponential map induces isomorphisms of schemes $\n \xs N$ and $\b_S\xs B\times_T T_{\hat{1}}$. 
Since the equivalence (\ref{equ 3.4}) is $\k[\g]_{\hat{0}}$-linear, it specializes to an equivalence 
\begin{equation}\label{equ 4.3} 
\fF':\ \A^b\mod^T_0\xs \fU^b_\zeta \mod^{\Lambda}_{(1,\chi_0)}. 
\end{equation} 
In this section, we firstly extend (\ref{equ 4.3}) to an equivalence 
$$\fF^b:\ \A^b_S\mod^T\xs \fU^b_\zeta\mod^{\Lambda,0}_S$$ 
and then investigate some properties of this equivalence. 

\subsection{Equivalence $\fF^b$}\label{subsect 4.1new} 
Note that $\A^b\mod^T_0$ is the full subcategory of $\A^b_S\mod^T$ of the modules set-theoretically supported at $0\in \b_S$. 
We also view $\fU^b_\zeta \mod^{\Lambda}_{(1,\chi_0)}$ as a full subcategory of $\fU^b_\zeta\mod^{\Lambda,0}_S$ as follows. 
Let $M$ be in $\fU^b_\zeta \mod^{\Lambda}_{(1,\chi_0)}$, then by definition the operator $\zeta^{-(\mu,\lambda)}K_\mu$ on $M_\lambda$ is unipotent for any $\mu,\lambda \in \Lambda$. 
Hence the assignment 
$$K_\mu\mapsto \bigoplus\limits_{\lambda} \zeta^{-(\mu,\lambda)}K_\mu, \quad \forall \mu\in \Lambda $$ 
induces an $S$-module structure on $M=\bigoplus_\lambda M_\lambda$, making $M$ as a module in $\fU^b_\zeta\mod^{\Lambda,0}_S$. 

Since the reflection functors $\Theta^c_s$ ($s\in \I_\af$) on $\A\mod$ are $G$-equivariant and $\k[\g]$-linear, they induce reflection functors (still denoted by $\Theta^c_s$) on $\A^b_S\mod^{K}$ by base change along $\b_S\rightarrow \g$, for any subgroup scheme $K$ in $B$. 

\begin{prop}\label{prop 4.1} 
The equivalence $\fF'$ in (\ref{equ 4.3}) extends to an $S$-linear equivalence 
\begin{equation}\label{equ 4.2} 
\fF^b:\ \A^b_S\mod^T\xs \fU^b_\zeta\mod^{\Lambda,0}_S, 
\end{equation}
compatible with the reflection functors $\Theta^c_s$, $\Theta^r_s$ ($s\in \I_\af$) and $\Lambda$-translations on both sides. 
\end{prop}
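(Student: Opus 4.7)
The plan is to construct $\fF^b$ as an $S$-linear, $T$-equivariant extension of the splitting-module machinery that produces $\fF'$, then verify equivalence via a Nakayama-style reduction to (\ref{equ 4.3}), and finally deduce compatibility with reflection functors and $\Lambda$-translations from Proposition \ref{prop 3.7}.

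First I recall that $\fF'$ is obtained by specializing along $\b \hookrightarrow \g$ the $T$-equivariant splitting bundle $\fM_0$ for $\widetilde{\sD}_\zeta$ on $\FN_\sV(\sB)$: via the isomorphism (\ref{equ 3.0}) and the exponential identifications $\tilde{\g}_{\hat{0}} \xs \widetilde{G}_{\hat{1}} \xs \FN_\sV(\sB)$, the bundle $\fM_0$ transfers to a splitting of $\A_{\hat{0}}$ on $\tilde{\g}_{\hat{0}}$, and then to a splitting of $\A^b_S$ on $\b_S$ by further base change. The matching of $S$-actions on the two candidate sides follows from $T_{\hat{1}} = \t_{\hat{0}}$: the $S$-action on $\fU^b_\zeta\mod^{\Lambda,0}_S$ induced by $\fU^0_\zeta = \k[T]$ completed at $1$ (in the form described just before the proposition) coincides with the $S$-action on $\A^b_S\mod^T$ coming from $\k[\b_S]$. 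I then define $\fF^b$ by tensoring with (a $T$-equivariant lift of) the extended splitting bundle, so that it is manifestly $S$-linear and restricts to $\fF'$ on the subcategories set-theoretically supported at $0 \in \b_S$ (equivalently, after inverting no elements and killing $\m_0$).

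To show $\fF^b$ is an equivalence, I filter both sides by the $\m_0$-adic topology, where $\m_0 \subset S$ is the maximal ideal. By the definition of $\mod$, weight components of any object on either side are finitely generated $S$-modules and hence are $\m_0$-adically separated and complete. The $n=1$ truncation of $\fF^b$ is precisely $\fF'$ from (\ref{equ 4.3}); by $S$-linearity and exactness, induction on $n$ propagates the equivalence to each $\m_0^n$-torsion subcategory; passing to the inverse limit and using the Noetherian property of $S$ together with finite generation of weight components, both full faithfulness and essential surjectivity of $\fF^b$ follow from those of $\fF'$.

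For the intertwining of reflection functors, note that $\Theta^c_s$ ($s \in \I_\af$) on $\A^b_S\mod^T$ is the base change of a $G$-equivariant functor on $\A\mod$ and is $S$-linear, while $\Theta^r_s$ on $\fU^b_\zeta\mod^{\Lambda,0}_S$ is constructed from translation functors preserving the $S$-action; Proposition \ref{prop 3.7} gives the intertwining modulo $\m_0$, and the same Nakayama/induction step promotes it to the deformed setting. Compatibility with $\Lambda$-translations is built into the construction $\fM_\lambda = \sO_{\sB_\zeta}^{\lambda+\rho}\otimes \fM_{-\rho}$ of \textsection \ref{subsect 3.2.3}, under which tensoring with $\k_{l\nu}$ on the quantum side matches tensoring with $\sO_{\b_S}(l\nu)$ on the coherent side. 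The main technical obstacle is the Nakayama step in the third paragraph: one must verify carefully that $\fF^b$ commutes with $\m_0$-adic completion and preserves finite generation of weight components in both directions, which is what pins down the essentially unique $S$-linear extension of $\fF'$ and is the heart of the extension argument.
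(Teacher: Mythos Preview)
There is a genuine gap in your construction of $\fF^b$. The splitting bundle $\fM_0$ lives only on the formal neighborhood $\FN_\sV(\sB)\simeq \tilde{\g}_{\hat{0}}$, so after restricting along $\b\hookrightarrow\g$ you obtain a splitting over the full completion $\b_{\hat{0}}$, not over $\b_S=\b\times_\t \t_{\hat{0}}$, which leaves the $\n$-direction uncompleted. There is no reason for the relevant Azumaya algebra to split over this larger base, so ``tensoring with the extended splitting bundle'' does not define a functor on all of $\A^b_S\mod^T$; producing an extension from $\b_{\hat{0}}$ to $\b_S$ is exactly the content of the proposition. Your reduction step compounds the problem: with $\m_0\subset S$, the $n=1$ truncation is $\A_\n\mod^T$, whose objects (for instance $\A_\n$ itself) are supported on all of $\n$, whereas the domain $\A^b\mod^T_0$ of $\fF'$ consists of modules supported at the single point $0\in\b_S$. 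So the base case of your induction is not $\fF'$.

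The paper circumvents extending the splitting bundle by a Morita argument. It filters by powers of the maximal ideal of $\k[\b]$ (so each $\A^b_n$-module is supported at $0$ and $\fF'$ applies by $\k[\b]_{\hat{0}}$-linearity), lifts the compatible system $\{\fF'(\A^b_n)\}_n$ to a single projective $P^b\in\Proj(\fU^b_\zeta\mod^{\Lambda,0}_S)$, and then identifies $\bigoplus_\nu\Hom(P^b,P^b\otimes\k_{l\nu})\cong\A^b_S$ as $\Lambda$-graded algebras. The heart of this identification is recovering the Hom-space from its truncations (Lemma~\ref{lem 4.2}), which in turn rests on a graded completeness statement (Lemma~\ref{lem 4.2.1}): for a finitely generated graded module over a graded ring with complete degree-zero part, each graded piece agrees with the inverse limit of its truncations. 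Your ``passing to the inverse limit'' step is exactly where such a lemma is needed, and it is not automatic.
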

\begin{proof} 
Let $\m_0$ (resp. $\m_1$) be the maximal ideal of $\k[\b]$ (resp. of $Z^{\leq}_\Fr= \k[B]$) corresponding to $0\in \b$ (resp. $1\in B$). 
Consider the specializations 
$$S_n=S/(\m_1\cap \k[T])^{n+1}S, \qquad \A^b_n=\A^b_S/\m_0^{n+1} \A^b_S \quad \text{and} \quad \fU^b_{\zeta,n}=\fU^b_{\zeta}/\m_1^{n+1} \fU^b_{\zeta}.$$ 
Since $\fF'$ is $\k[\b]_{\hat{0}}$-linear, it yields equivalences
$$\fF':\ \A^b_n\mod^T\xs \fU^b_{\zeta,n}\mod^{\Lambda,0}_{S_n}, 
\quad \forall n\geq 0.$$ 
There are natural functors by specializations 
$$\tau_n: \A^b_S\mod^T \rightarrow \A^b_n\mod^T, 
\quad \tau_n:\fU^b_\zeta\mod^{\Lambda,0}_S \rightarrow \fU^b_{\zeta,n}\mod^{\Lambda,0}_{S_n},$$ 
and $\fF'$ is clearly compatible with $\tau_n$. 
Note that $\fF'(\A^b_n)$ is projective in $\fU^b_{\zeta,n}\mod^{\Lambda,0}_{S_n}$, the family $\{\fF'(\A^b_n)\}_n$ admits a lifting $P^b$ in $\Proj(\fU^b_\zeta\mod^{\Lambda,0}_S)$. 
We choose a family of isomorphisms $\fF'(\A^b_n)\cong \tau_n(P^b)$, $n\geq 0$ that are compatible with specializations. 
To proceed, we need the following lemmas. 
\begin{lem}\label{lem 4.2.1} 
Let $R=\bigoplus_{i\geq 0} R_i$ be a commutative graded ring that is finitely generated over a Noetherian complete local ring $R_0$. 
Let $I$ be a graded ideal of $R$ such that $I_0$ is the maximal ideal of $R_0$. 
Then for any finitely generated graded $R$-module $M$ that is complete over $R_0$, the natural maps 
$$M_i \rightarrow \varprojlim_n \big(M/I^nM\big)_i, \quad i\in \Z,$$ 
are isomorphisms. 
\end{lem}
\begin{proof}[Proof of Lemma \ref{lem 4.2.1}] 
Let $n_0$ be the minimal integer such that $M_{n_0}\neq 0$. 
By degree consideration, we have $(I^nM)_i\subseteq I_0^{n+n_0-i}M_i$ for any $n$ large enough. 
Hence 
$$\varprojlim_{n} \big(M/I^nM\big)_i=\varprojlim_{n} M_i/I_0^nM_i =M_i.\qedhere $$ 
\end{proof}

\begin{lem}\label{lem 4.2}
\begin{enumerate}
\item The natural maps 
$$\A^b_{S,\lambda} \rightarrow \varprojlim_{n} \A^b_{n,\lambda}, \quad \lambda\in \Lambda,$$ 
are isomorphisms. 
\item Let $P, M\in \fU^b_\zeta\mod^\Lambda_S$. 
Suppose that $P$ is projective and $M$ admits Verma flags, then the natural map 
\begin{equation}\label{equ 4.00}
\Hom(P,M)\rightarrow \varprojlim_{n} \Hom(\tau_n(P),\tau_n(M)) 
\end{equation}
is an isomorphism. 
\end{enumerate}
\end{lem}
\begin{proof}[Proof of Lemma \ref{lem 4.2}] 
(1) Since $\widetilde{\g}\rightarrow \g$ is a projective morphism to an affine scheme, $\A=\End^\op_{\widetilde{\g}}(\sE)$ is a finitely generated $\k[\g]$-module by Serre's Theorem. 
Hence $\A^b_S$ is a finitely generated $\k[\b_S]$-module. 
Note that $\A^b_S$ is moreover $\Lambda$-graded and complete over $S$. 
We view $\k[\b_S]$ as a $\Z_{\geq 0}$-graded ring via $\k[\b_S]_i=\bigoplus_{\mathrm{ht}(\eta)=i} \k[\b_S]_{-\eta}$, and consider the graded ideal $\m_0\k[\b_S]$, then our assertion follows from Lemma \ref{lem 4.2.1}. 

(2) Consider the module $P^\lambda= \fU^b_\zeta\otimes_{\fU^0_\zeta \otimes u^+_\zeta} (S_\lambda \otimes u^+_\zeta)$ in $\fU^b_\zeta\mod^\Lambda_S$, which represents the functor $M\mapsto M_\lambda$. 
Hence $P^\lambda$, $\lambda\in \Lambda$ form a generating family of projective modules in $\fU^b_\zeta\mod^\Lambda_S$. 
With loss of generality, we may assume $P=P^\lambda$. 
Then the RHS of (\ref{equ 4.00}) is $M_\lambda$, and the LHS of (\ref{equ 4.00}) is $\varprojlim_{n} \tau_n(M)_{\lambda}$. 
Since $M$ admits Verma flags, it is free of finite rank over $Z_\Fr^-\otimes S$. 
We view $Z_\Fr^-\otimes S$ as a $\Z_{\geq 0}$-graded ring via $(Z_\Fr^-\otimes S)_{i}=\bigoplus_{\mathrm{ht}(\eta)=i} (Z_{\Fr}^-)_{-l\eta}\otimes S$, and consider the graded ideal $\m_1(Z_\Fr^-\otimes S)$, then our assertion follows from Lemma \ref{lem 4.2.1}. 
\end{proof}

There are isomorphisms of $\Lambda$-graded algebras 
\begin{equation}\label{equ 4.0} 
\begin{aligned} 
\bigoplus_\nu \Hom\big(P^b,P^b\otimes \k_{l\nu})&=
\bigoplus_\nu \varprojlim_{n} \Hom\big(\tau_n(P^b),\tau_n(P^b\otimes \k_{l\nu})\big) \\ 
&\cong \bigoplus_\nu \varprojlim_{n} \Hom(\A^b_n,\A^b_n\otimes \k_\nu)\\ 
&=\bigoplus_\nu \varprojlim_{n} \A^b_{n,-\nu} =\bigoplus_\nu \A^b_{S,-\nu}=\A^b_S, 
\end{aligned} 
\end{equation} 
where the first and the second last equalities are by Lemma \ref{lem 4.2}. 
Note that the simple modules of $\fU^b_\zeta\mod^{\Lambda,0}_S$ are contained in $u_\zeta\mod^{\Lambda,0}_\k= \fF'(\A^b_0\mod^T)$, and therefore they admit a surjection from $\bigoplus_\nu \fF'(\A^b_1\otimes \k_\nu)=\bigoplus_\nu \tau_1(P^b\otimes \k_{l\nu})$. 
It implies that $\{P^b\otimes \k_{l\nu}\}_\nu$ forms a generating family of projective modules in $\fU^b_\zeta\mod^{\Lambda,0}_S$. 
Hence there is a Morita equivalence from $\fU^b_\zeta\mod^{\Lambda,0}_S$ to the category of $\Lambda$-graded modules of the algebra $\bigoplus_\nu \Hom\big(P^b,P^b\otimes \k_{l\nu})$. 
Now our desired equivalence (\ref{equ 4.2}) follows from the isomorphism (\ref{equ 4.0}). 

Finally, by Proposition \ref{prop 3.7}, the equivalence (\ref{equ 4.3}) is compatible with reflection functors and $\Lambda$-translations. 
The compatibility for (\ref{equ 4.2}) follows. 
\end{proof}

We set $\A_\n:=\A\otimes_{\k[\g]} \k[\n]$. 
\begin{corollary}
There is an equivalence of abelian categories 
\begin{equation}\label{equ 4}
\A_\n\mod^T\xs \fU^b_\zeta\mod^{\Lambda,0}_\k. 
\end{equation}
\end{corollary}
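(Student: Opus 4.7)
The plan is to obtain this corollary directly as a specialization of the $S$-linear equivalence $\fF^b$ established in Proposition \ref{prop 4.1}. The idea is to restrict $\fF^b$ to modules on which the deformation ring $S=\k[\t]_{\hat{0}}$ acts through its residue field $\k$, i.e.\ at the augmentation $S\twoheadrightarrow \k$ corresponding to $0\in \t_{\hat{0}}$.

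First I would verify that the two categories in the statement are recovered as these specialized subcategories. On the coherent side, since $\b_S=\b\times_\t \t_{\hat{0}}$ has fiber $\n$ over $0\in \t_{\hat{0}}$, one has
$$\A^b_S\otimes_S \k \;=\; \A\otimes_{\k[\g]}\k[\b]\otimes_{\k[\t]}\k \;=\; \A\otimes_{\k[\g]}\k[\n] \;=\; \A_\n,$$
and the full subcategory of $\A^b_S\mod^T$ consisting of objects annihilated by the maximal ideal of $S$ is precisely $\A_\n\mod^T$ (the $T$-equivariant structure on $\A_\n$ being inherited from that on $\A^b_S$, since $T$ preserves $\n\subset \b_S$). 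On the quantum side, $\fU^b_\zeta\mod^{\Lambda,0}_\k$ sits inside $\fU^b_\zeta\mod^{\Lambda,0}_S$ as the full subcategory of modules on which $S$ acts through $\k$; equivalently, those on which $K_\mu$ acts semisimply on the weight space $M_\nu$ as $\zeta^{(\mu,\nu)}$ with no unipotent part, matching the identification recalled just before Proposition \ref{prop 4.1}.

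Next I would invoke $S$-linearity of $\fF^b$: it implies that an object $M\in \A^b_S\mod^T$ is killed by the maximal ideal of $S$ if and only if $\fF^b(M)$ is, so $\fF^b$ restricts to an equivalence between the two full subcategories identified above. This yields the claimed equivalence (\ref{equ 4}). No real obstacle is anticipated at this step, as all the technical work (compatibility with completions, projective generation, and the Morita-type computation in (\ref{equ 4.0})) has already been carried out in the proof of Proposition \ref{prop 4.1}; the corollary is essentially a formal specialization. If desired, one can moreover note that reflection functors and $\Lambda$-translations descend to the specialized equivalence, by the corresponding assertion in Proposition \ref{prop 4.1}.
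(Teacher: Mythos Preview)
Your proposal is correct and matches the paper's approach: the paper states this corollary immediately after Proposition~\ref{prop 4.1} with no proof, treating it as the obvious specialization of the $S$-linear equivalence $\fF^b$ at the residue field $S\twoheadrightarrow\k$. Your identifications $\A^b_S\otimes_S\k=\A_\n$ and of the subcategory of $\fU^b_\zeta\mod^{\Lambda,0}_S$ on which $S$ acts via $\k$ with $\fU^b_\zeta\mod^{\Lambda,0}_\k$ are exactly what makes this immediate.
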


\subsection{Equivalence $\fG^b$ for Steinberg block}\label{subsect 4.2} 
Recall the equivalence (\ref{equ 3.2}). 
By the $\k[\g]_{\hat{0}}$-linearity it specializes to an equivalence 
$$M^{b}_{\chi_{-\rho}}\otimes_{\k[\b]_{\hat{0}}}-:\ 
\Coh_0^T(\b) \xs \fU^b_\zeta \mod^{\Lambda}_{(1,\chi_{-\rho})},$$ 
where $M^{b}_{\chi_{-\rho}}=M_{\chi_{-\rho}}\otimes_{\k[\g]_{\hat{0}}} \k[\b]_{\hat{0}}$. 
As in Proposition \ref{prop 4.1}, we can extend it to an equivalence 
\begin{equation}\label{equ 4.1} 
\fG^b:\ \Coh^T(\b_S)\xs \fU^b_\zeta\mod^{\Lambda,-\rho}_S. 
\end{equation} 
Let us describe $\fG^b$ more precisely. 
Note that $M^{b}_{\chi_{-\rho}}$ and $M(-\rho)_S\otimes_{\k[\b_S]} \k[\b]_{\hat{0}}$ are both (the unique) indecomposable projective modules of $(\fU^b_\zeta)_{\widehat{(1,\chi_{-\rho})}}$. 
Hence there is an isomorphism of $(\fU^b_\zeta)_{\widehat{(1,\chi_{-\rho})}}$-modules 
\begin{equation}\label{equ 4.4}
M^{b}_{\chi_{-\rho}}\cong M(-\rho)_S\otimes_{\k[\b_S]} \k[\b]_{\hat{0}}. 
\end{equation} 
We can adjust the $\Lambda$-grading on $M_{\chi_{-\rho}}$ such that the isomorphism (\ref{equ 4.4}) respects the $\Lambda$-gradings (in completion sense) on both sides. 
Therefore, the equivalence (\ref{equ 4.1}) is given by 
$$\fG^b=M(-\rho)_S\otimes_{\k[\b_S]}-.$$ 
In particular, we have an isomorphism $\fG^b(\k[\b_S]\otimes \C_\lambda)=M(-\rho+l\lambda)_S$ for any $\lambda\in \Lambda$. 

By (\ref{equ basechange}) the equivalence $\fG^b$ induces an equivalence 
$$\fG^b:\ \Coh^T(\b_R)\xs \fU^b_\zeta\mod^{\Lambda,-\rho}_R$$ 
for any commutative Noetherian $S$-algebra $R$, where $\b_R:=\b_S\times_{\Spec S} \Spec R$. 

The functor $\pi_*: \A\mod \rightarrow \Coh(\g)$ is represented by the $(\k[\g],\A)$-bimodule $\Gamma(\widetilde{\g},\sE)$. 
By $\k[\g]$-linearity and $G$-equivariance, it induces a functor for any subgroup scheme $K$ in $B$, 
$$\pi_*: \A_S^b\mod^K \rightarrow \Coh^K(\b_S)$$ 
which is represented by the $(\k[\b_S],\A_S^b)$-bimodule $\Gamma(\widetilde{\g},\sE)\otimes_{\C[\g]} \C[\b_S]$. 
The functor $\pi_*$ admits a right adjunction 
$$\pi^!:=\Hom_{\k[\b_S]}(\Gamma(\widetilde{\g},\sE)\otimes_{\C[\g]} \C[\b_S],-):\ \Coh^K(\b_S)\rightarrow \A_S^b\mod^K.$$
From Proposition \ref{prop 3.7} we deduce the following 

\begin{prop}\label{prop 4.3}
There are commutative diagrams 
\begin{equation}\label{equ new4.8}
\begin{tikzcd}
\A_S^b\mod^T\arrow[r,"\fF^b","\simeq"']\arrow[d,"\pi_*"] 
& \fU^b_\zeta\mod^{\Lambda,0}_S \arrow[d,"\rT^{-\rho}_0"]\\ 
\Coh^T(\b_S) \arrow[r,"\fG^b","\simeq"'] 
& \fU^b_\zeta\mod^{\Lambda,-\rho}_S, 
\end{tikzcd}
\end{equation}
and 
\begin{equation}\label{equ new4.9}
\begin{tikzcd}
\A_S^b\mod^T\arrow[r,"\fF^b","\simeq"'] 
& \fU^b_\zeta\mod^{\Lambda,0}_S \\ 
\Coh^T(\b_S) \arrow[r,"\fG^b","\simeq"'] \arrow[u,"\pi^!"'] 
& \fU^b_\zeta\mod^{\Lambda,-\rho}_S \arrow[u,"\rT_{-\rho}^0"']. 
\end{tikzcd}
\end{equation}
\end{prop} 

\subsection{Induction functors}\label{subsect 4.3new} 

Recall that by \cite[\textsection 3.1.2]{Situ2} there is a subalgebra $\fU^\hb_\zeta=Z_\Fr^{-}\otimes \fU^0_\zeta\otimes U^+_\zeta$ of $U^\hb_\zeta$, equipped with a Frobenius map 
$$\Fr:\ \fU^\hb_\zeta \rightarrow \k[B\times_T T]\rtimes U\n$$ 
which is compatible with the quantum Frobenius map $U^+_\zeta \rightarrow U\n$, and with the isomorphism $Z^{-}_\Fr\otimes \fU^0_\zeta \xs \k[B\times_T T]$ induced by (\ref{equ 2.0}), where $T\rightarrow T$ is by the $l$-th power. 
The map $\Fr$ is a homomorphism of $\Lambda$-graded algebras, where we view $\k[B\times_T T]\rtimes U\n$ as an $l\Lambda$-graded ring. 
In \textit{loc. cit.} we defined a functor for any deformation ring $R$ of $U^\hb_\zeta$, 
$$\I^\hb_1:\ (\k[B\times_T T]\rtimes U\n)\Mod^{\Lambda}_R \rightarrow U^\hb_\zeta\Mod^{\Lambda}_R,$$ 
$$\text{by}\qquad V \mapsto U^\hb_\zeta \otimes_{\fU^\hb_\zeta} \Fr^*(V\otimes \k_{-\rho}),$$ 
where $\k_{-\rho}$ is a one dimensional representation of $\k[B\times_T T]\rtimes U\n$ whose restriction on $\k[T]=\k\langle K_\lambda\rangle_{\lambda}$ is by the character $\zeta^{-\rho}\in T$. 

When $R$ is a commutative Noetherian $S$-algebra, we will identify 
$$(\k[B\times_T T]\rtimes U\n)\Mod^{l\Lambda}_R=(\k[\b_R]\rtimes U\n)\Mod^{T}.$$ 
Define an induction functor 
$$\ind_c:= (\k[\b_R]\rtimes U\n)\otimes_{\k[\b_R]} -:\ 
\QCoh^T(\b_R)\rightarrow (\k[\b_R]\rtimes U\n)\Mod^{T}.$$ 
Note that $\ind_c M=U\n \otimes M$ as vector spaces for any $\k[\b_R]$-module $M$. 
We also consider the induction 
$$\ind_{r}:=U^\hb_\zeta\otimes_{\fU^b_\zeta}-:\ \fU^b_\zeta\Mod^{\Lambda}_R \rightarrow U^\hb_\zeta\Mod^{\Lambda}_R.$$ 
Note that $\ind_{r}$ preserves the block decomposition in \textsection\ref{subsect 2.2.1}. 

\begin{prop}\label{prop 4.4}
Let $R$ be a commutative Noetherian $S$-algebra. 
There is a commutative diagram 
\begin{equation}\label{equ new4.10}
\begin{tikzcd}
\Coh^T(\b_R) \arrow[r,"\fG^b","\simeq"']\arrow[d,"\ind_c"] 
& \fU^b_\zeta\mod^{\Lambda,-\rho}_R \arrow[d,"\ind_r"]\\ 
(\k[\b_R]\rtimes U\n)\Mod^{T} \arrow[r,"\I^\hb_1"] 
& U^\hb_\zeta\Mod^{\Lambda}_R. 
\end{tikzcd}
\end{equation}
\end{prop}
\begin{proof}
On one hand, since $U\n=U^+_\zeta\otimes_{u^+_\zeta} \k$ as $U^+_\zeta$-modules, we have natural isomorphisms 
\begin{align*}
\I^\hb_1\circ \ind_c &= U^\hb_\zeta\otimes_{\fU^\hb_\zeta} \Fr^*\big(U\n\otimes - \otimes \k_{-\rho}\big) \\ 
&= U^\hb_\zeta\otimes_{Z^-_\Fr\otimes \fU^0_\zeta\otimes u^+_\zeta} (\k_{-\rho}\otimes -).  
\end{align*}
On the other hand, since $M(-\rho)_R=\fU^b_\zeta\otimes_{\fU^0_\zeta\otimes u^+_\zeta}R_{-\rho}$ as $\fU^b_\zeta$-modules, we have natural isomorphisms 
\begin{align*}
\ind_r \circ \fG^b &=
U^\hb_\zeta\otimes_{\fU^b_\zeta}(M(-\rho)_R\otimes_{\k[\b_R]}-)\\ 
&= U^\hb_\zeta\otimes_{Z_\Fr^-\otimes \fU^0_\zeta\otimes u_\zeta^+}(R_{-\rho}\otimes_R -).
\end{align*} 
We thus obtain a natural isomorphism $\I^\hb_1\circ \ind_c=\ind_r \circ \fG^b$. 
\end{proof}

Let $R$ be a commutative Noetherian $S$-algebra. 
Let $\nu\in \Lambda$. 
We denote the truncation functor on $U^\hb_\zeta\mod^{\Lambda}_R$ by the poset ideal $\{\lambda\in \Lambda|\lambda\leq \nu\}$ (see \textsection\ref{subsect 2.3.1}) as 
$$\tau_r^{\leq \nu}:\ U^\hb_\zeta\Mod^{\Lambda}_R \rightarrow U^\hb_\zeta\Mod^{\leq \nu}_R.$$ 

We denote by 
$$\C[\b_R]\Mod^{B,\leq \nu}$$ 
the category of modules in $(\C[\b_R]\rtimes U\n)\Mod^{T}$ whose $T$-weights are $\leq \nu$, which is a subcategory in $\C[\b_R]\Mod^{B}$ (since the $U\n$-action on a module with $T$-weights $\leq \nu$ is locally unipotent). 
By similar argument as in \textsection\ref{subsect 2.3.1}, there is a truncation functor 
$$\tau^{\leq \nu}_c:\ (\C[\b_R]\rtimes U\n)\Mod^T\rightarrow \C[\b_R]\Mod^{B,\leq \nu}$$ 
by taking the maximal quotient whose $T$-weights are $\leq \nu$, which is left adjoint to the natural inclusion. 
We abbreviate $\C[\b_R]\mod^{B,\leq \nu}:=\C[\b_R]\Mod^{B,\leq \nu}\cap \C[\b_R]\mod^{B}$. 

\begin{lem}
There is a natural isomorphism of functors 
\begin{equation}\label{equ new4.11.0}
\I^\hb_1 \circ \tau^{\leq \nu}_c = \tau^{\leq -\rho+l\nu}_r \circ \I^\hb_1. 
\end{equation}
from $(\C[\b_R]\rtimes U\n)\Mod^T$ to $U^\hb_\zeta\Mod^{\leq -\rho+l\nu}_R$. 
\end{lem}
\begin{proof}
By the right exactness of the functors, it is enough to construct natural isomorphism on the projective generators $\C[\b_R]\rtimes U\n\otimes \C_\lambda$, with $\lambda\in \Lambda$. 
We have an isomorphism of $\Lambda$-graded $U^\hb_\zeta\otimes R$-modules 
$$\I^\hb_1(\C[\b_R]\rtimes U\n\otimes \C_\lambda)=U^\hb_\zeta\otimes_{U^{\hb,\geq}_\zeta}(U\n \otimes R_{-\rho+l\lambda}),$$
where $U\n \otimes R_\mu$ is viewed as a $U^{\hb,\geq}_\zeta$-module via the map $U^{+}_\zeta\otimes \fU^0_\zeta \xrightarrow{\Fr\otimes \tau_\mu} U\n\otimes \fU^0_\zeta\rightarrow U\n \otimes R$, for any $\mu\in \Lambda$. 
Hence we have isomorphisms 
\begin{align*}
\I^\hb_1\big(\tau^{\leq \nu}_c(\C[\b_R]\rtimes U\n\otimes \C_\lambda)\big)
&= \I^\hb_1\big(\C[\b_R]\otimes \big(U\n/\bigoplus_{\eta\nleq \nu-\lambda}(U\n)_\eta\big) \otimes \C_\lambda\big)\\ 
&= U^\hb_\zeta\otimes_{U^{\hb,\geq}_\zeta}(U\n/\bigoplus_{\eta\nleq \nu-\lambda}(U\n)_\eta\big) \otimes R_{-\rho+l\lambda}) \\ 
&= \tau^{\leq -\rho+l\nu}_r \big(\I^\hb_1(\C[\b_R]\rtimes U\n\otimes \C_\lambda)\big). \qedhere 
\end{align*}
\end{proof}

\subsection{Functors $\V^b$}\label{subsect 4.2.2} 
We identify the rings 
$$\k[\t\times_{\t/W} \t]_{\hat{0}}=S\otimes_{\k[\t/W]}\k[\t]=S\otimes_{S^W} S.$$ 

Recall that $\A=\End^\op_{\widetilde{\g}}(\sE)$ for some tilting bundle $\sE$ on $\widetilde{\g}$. 
By \cite[Lem 2.5.3]{BM13}, $\sE$ has $\sO_{\widetilde{\g}}$ as a direct summand. 
It follows that $\A$ is an algebra over $\Gamma(\sO_{\widetilde{\g}})=\k[\g\times_{\t/W} \t]$, and that $\A$ has $\k[\g\times_{\t/W} \t]$ as a direct summand as a $\k[\g\times_{\t/W} \t]$-module. 
Moreover, the subalgebra $\k[\g\times_{\t/W} \t]$ is central in $\A$. 
Hence $\A_S^b$ contains a central subalgebra $\k[\b_S \times_{\t/W} \t]$. 
Note that $\k[\b_S \times_{\t/W} \t]^T=\k[\t\times_{\t/W} \t]_{\hat{0}}$. 
Therefore, we have an algebra homomorphism 
\begin{equation}\label{equ 4.6}
\k[\t\times_{\t/W} \t]_{\hat{0}} \rightarrow Z(\A_S^b\Mod^T).
\end{equation}

The functor $\pi_*: \A\mod \rightarrow \Coh(\g)$ factors through a functor (still denoted by $\pi_*$) $\A\mod\rightarrow \Coh(\g\times_{\t/W} \t)$. 
By base change along $\b_S\rightarrow \g$, we obtain a functor 
\begin{equation}\label{equ new4.11}
\V^b_c:\ \A_S^b\Mod^T \rightarrow \QCoh^T(\b_S\times_{\t/W} \t)
\end{equation}
that is represented by the $(\k[\b_S\times_{\t/W} \t],\A^b_S)$-bimodule $\Gamma(\widetilde{\g},\sE)\otimes_{\C[\g]} \C[\b_S]$. 

On the other hand, by (\ref{equ new2.7}) there is an algebra homomorphism 
\begin{equation}\label{equ 4.5} 
\k[\t\times_{\t/W} \t]_{\hat{0}} \rightarrow Z(\fU^b_\zeta\Mod^{\Lambda,0}_S).
\end{equation} 
Hence any module in $\fU^b_\zeta\Mod^{\Lambda,0}_S$ is naturally a module in $\fU^b_\zeta\Mod^{\Lambda,0}_{S\otimes_{\k[\t/W]} \k[\t]}$. 
Consider the following composition 
$$\V^b_r:\ \fU^b_\zeta\Mod^{\Lambda,0}_S \rightarrow 
\fU^b_\zeta\Mod^{\Lambda,0}_{S\otimes_{\k[\t/W]} \k[\t]} \xrightarrow{\rT^{-\rho}_0}
\fU^b_\zeta\Mod^{\Lambda,-\rho}_{S\otimes_{\k[\t/W]} \k[\t]}.$$ 
By the identification (\ref{equ 3.9}) and the compatibility of $\fF'$ and $\fF^b$, the two maps (\ref{equ 4.6}) and (\ref{equ 4.5}) coincide under the equivalence $\fF^b$. 
Therefore, we obtain the following enhancement of (\ref{equ new4.8})  
\begin{equation}\label{equ new4.12}
\begin{tikzcd}
\A_S^b\mod^T\arrow[r,"\fF^b","\simeq"']\arrow[d,"\V^b_c"] 
& \fU^b_\zeta\mod^{\Lambda,0}_S \arrow[d,"\V^b_r"]\\ 
\Coh^T(\b_S\times_{\t/W} \t) \arrow[r,"\fG^b","\simeq"'] 
& \fU^b_\zeta\mod^{\Lambda,-\rho}_{S\otimes_{\k[\t/W]} \k[\t]}. 
\end{tikzcd}
\end{equation}

\subsection{Verma modules}\label{subsect 4.3} 
In this section, we investigate the objects in $\A_S^b\mod^T$ corresponding to the Verma modules under the equivalence $\fF^b$, and then show that they admit natural $B$-equivariant structures. 

We define an algebra structure on $(U^{\leq}_\zeta)^*$ by the comultiplication of $U^{\leq}_\zeta$, and define an action of $U^{\leq}_\zeta$ on $(U^{\leq}_\zeta)^*$ by the right multiplication of $U^{\leq}_\zeta$ on itself. 
Consider the \textit{quantum coordinate ring} of $N^-$, 
$$\k_\zeta[N^-]:=\bigoplus_{\eta\leq 0} (U^-_{\zeta,\eta})^*.$$ 
We embed $\k_\zeta[N^-]$ into $(U^{\leq}_\zeta)^*$ by $\varphi(k u)=\varepsilon(k) \varphi(u)$, for any $u\in U^-_\zeta$ and $k\in U^0_\zeta$ (here $\varepsilon$ is the counit of $U_q$). 
Then $\k_\zeta[N^-]$ is an algebra of $U^{\leq}_\zeta$-modules. 

Set $e=B/B\in \sB$ and let $U_e=N^-B/B$ be the big open cell. 
By \cite[Proof of Prop 3.12]{Tan12}, there is an algebra isomorphism 
$$\Gamma(U_e, \sO_{\sB_\zeta})\simeq \k_\zeta[N^-],$$ 
which is compatible via the quantum Frobenius map with the identification 
$$\Gamma(U_e,\sO_\sB)=\k[N^-]=\bigoplus_{\eta\leq 0} \big((U\n^-)_\eta\big)^*.$$ 
The counit $\epsilon: \k_\zeta[N^-]\rightarrow \k$ by evaluating at $1\in U^{\leq}_\zeta$ defines an $\sO_{\sB_\zeta}$-module $\sO_\epsilon$ supported on $e$. 
As $\sO_{\sB}$-modules, $\sO_\epsilon=\sO_e$. 

Let $(\fU_\zeta^0)_{-2\rho}$ be the rank one $\fU_\zeta^0$-module given by the automorphism $\tau_{-2\rho}: \fU_\zeta^0\rightarrow \fU_\zeta^0$, and view it as a $\fU^{\geq}_\zeta$-module via the projection $\fU^{\geq}_\zeta\rightarrow \fU_\zeta^0$. 
Define the \textit{universal Verma module} $\M=\fU_\zeta\otimes_{\fU^{\geq}_\zeta} (\fU_\zeta^0)_{-2\rho}$.


\begin{lem}\label{lem 4.6} 
$R\Gamma(\widetilde{\sD}_\zeta\otimes_{\sO_{\sB_\zeta}} \sO_{\epsilon})=\M$ as $\fU_\zeta$-modules. 
\end{lem}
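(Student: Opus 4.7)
I would transport the classical Beilinson--Bernstein computation showing that $R\Gamma(\sB, D_{\sB}\otimes_{\sO_{\sB}}\sO_e)$ is the universal Verma module of weight $-2\rho$ to the enhanced quantum setting. The key inputs are: $\sO_\epsilon$ is supported at $e\in\sB$; the big cell $U_e\simeq N^-$ is affine; Tanisaki's explicit local description of $\widetilde{\sD}_\zeta$ on $U_e$; and the algebra map $\fU_\zeta\otimes \k[T]\to\Gamma(\widetilde{\sD}_\zeta)$ from (\ref{equ 3.99}).

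\emph{Step 1 (reduction to $U_e$, cohomology vanishing).} Since $\sO_\epsilon$ is set-theoretically supported at $e$, so is $\widetilde{\sD}_\zeta\otimes_{\sO_{\sB_\zeta}}\sO_\epsilon$ as an $\sO_\sB$-sheaf (via the right $\sO_{\sB_\zeta}$-structure on $\widetilde{\sD}_\zeta$). Its support is contained in the affine open $U_e$, hence all higher cohomology vanishes and $R\Gamma=\Gamma(U_e,-)$.

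\emph{Step 2 (local structure on $U_e$ and the tensor product).} By Tanisaki's construction (\cite{Tan05,Tan12,Tan21}), one has a PBW-type presentation
\[
\Gamma(U_e,\widetilde{\sD}_\zeta)\ \simeq\ \fU_\zeta^- \otimes \fU_\zeta^0 \otimes \k_\zeta[N^-]
\]
as vector spaces: the factor $\fU_\zeta^-\otimes \fU_\zeta^0$ arises from (\ref{equ 3.99}) via the triangular decomposition; $\k_\zeta[N^-]$ acts on the right as $\sO_{\sB_\zeta}$; and $\fU_\zeta^{\geq}$ maps into $\fU_\zeta^0\otimes \k_\zeta[N^-]$ through the geometric action on $U_e$ twisted by the character of the canonical bundle $\omega_\sB=\sO_\sB(-2\rho)$ built into the enhanced TDO. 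Tensoring over $\k_\zeta[N^-]$ with $\k_\epsilon$ kills the right factor and gives
\[
\Gamma(U_e,\widetilde{\sD}_\zeta)\otimes_{\k_\zeta[N^-]}\k_\epsilon \ \simeq\ \fU_\zeta^-\otimes \fU_\zeta^0\ =\ \fU_\zeta\otimes_{\fU_\zeta^{\geq}}(\fU_\zeta^0)_{-2\rho}\ =\ \M
\]
as $\fU_\zeta$-modules for the action coming from (\ref{equ 3.99}); the vacuum $1\otimes\epsilon$ is killed by $U_\zeta^+$ and acted on by $\fU_\zeta^0$ through the $\tau_{-2\rho}$-twist, producing the highest weight $-2\rho$.

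\emph{Main obstacle.} The substantive content lies in Step 2: pinning down Tanisaki's local model for $\widetilde{\sD}_\zeta|_{U_e}$ and, in particular, tracing the $-2\rho$ shift built into the enhanced quantum TDO. This parallels the classical case -- where the shift is produced by the canonical bundle $\omega_\sB=\sO_\sB(-2\rho)$ -- and is encoded in the constructions of \cite{Tan05,Tan12,Tan21}; extracting the precise $\fU_\zeta^{\geq}$-twist on $\k_\zeta[N^-]$ that matches $(\fU_\zeta^0)_{-2\rho}$ is careful bookkeeping rather than conceptually new work.
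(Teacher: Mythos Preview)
Your overall strategy coincides with the paper's: reduce to the affine big cell $U_e$ (using that $\sO_\epsilon$ is supported at $e$), identify $\Gamma(U_e,\widetilde{\sD}_\zeta)\otimes_{\k_\zeta[N^-]}\k$ via Tanisaki's local description, and match the result with the universal Verma module. Step~1 is exactly as in the paper.

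Where your sketch diverges from the paper is in how the $-2\rho$ shift is actually extracted. You invoke the classical canonical-bundle heuristic and call it ``careful bookkeeping,'' but the paper's argument is a genuine computation rather than a transcription of the classical picture. First, the local model is not quite $\fU_\zeta^-\otimes\fU_\zeta^0\otimes\k_\zeta[N^-]$: the relevant algebra map is from $(\k_\zeta[N^-]\otimes\fU_\zeta^{\leq})\otimes_{\k[N^-\times B]}\Gamma(U_e,\sO_\sV)$, and after tensoring down one lands in $\fU_\zeta^-\otimes Z_\Fr^0\otimes_{\k[T]}\k[T]$ by \cite[Lem~6.3]{Tan12}, with the extra $\k[T]$-factor coming from the enhancement $\sV$. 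The identification of this with $\fU_\zeta^-\otimes\fU_\zeta^0$ and the $\tau_{-2\rho}$-twist are then obtained by writing $K_\lambda^{-2}$ via \cite[Prop~4.13]{Tan12} in terms of dual PBW bases $\{x_r\},\{y_r\}$ and reducing modulo the augmentation ideal of $\k_\zeta[N^-]$; the scalar $\zeta^{4(\rho,\lambda)}$ appears precisely because the quantum Casimir element $\sum_r\mathrm{S}(x_r)y_r$ acts on the highest weight vector of $\bW(\lambda)$ by that scalar. This is the substantive step, and the canonical-bundle analogy does not by itself supply it.

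A second point you elide: the paper first obtains the isomorphism only as $\fU_\zeta^{\leq}$-modules, and then upgrades to $\fU_\zeta$-modules by observing that both sides are $\Lambda$-graded with matching weight spaces, which forces the $\fU_\zeta^+$-actions to agree. Your assertion that ``the vacuum is killed by $U_\zeta^+$'' would need separate justification.
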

\begin{proof} 
Note that $\widetilde{\sD}_\zeta\otimes_{\sO_{\sB_\zeta}} \sO_{\epsilon}$ is supported on $e\in \sB$. 
Since $U_e$ is affine, we have 
$$R\Gamma(\sB, \widetilde{\sD}_\zeta\otimes_{\sO_{\sB_\zeta}} \sO_{\epsilon})=\Gamma(U_e, \widetilde{\sD}_\zeta\otimes_{\sO_{\sB_\zeta}} \sO_{\epsilon})= \Gamma(U_e,\widetilde{\sD}_\zeta)\otimes_{\k_\zeta[N^-]} \k.$$ 
We define an algebra structure on $\k_\zeta[N^-]\otimes \fU^{\leq}_\zeta$ by 
$$u \otimes \varphi = u^{(1)}.\varphi \otimes u^{(2)}, \quad \forall u\in \fU^{\leq}_\zeta,\ \forall \varphi\in \k_\zeta[N^-].$$ 
There is an algebra homomorphism 
\begin{equation}\label{equ 4.16}
\big( \k_\zeta[N^-]\otimes \fU^{\leq}_\zeta \big)\otimes_{\k[N^-\times B]} \Gamma(U_e,\sO_\sV)\rightarrow \Gamma(U_e, \widetilde{\sD}_\zeta).
\end{equation} 

Under the projection $\sV\rightarrow G^*\times_T T$, the fiber of $e$ in $\sV$ is isomorphic to $B\times_T T$, where $B\rightarrow T$ is given by $nt\mapsto t^2$, $\forall n\in N$, $\forall t\in T$, and $T\rightarrow T$ is by taking $l$-th power. 
We write 
$$\k[B\times_T T]=Z^{\leq}_\Fr\otimes_{\k[T]} \k[T],$$ 
where $\k[T]\rightarrow Z^{\leq}_\Fr$ is given by $k_\mu\mapsto K^{2l}_\mu$, and $\k[T]\rightarrow \k[T]$ is given by $k_\mu\mapsto k^{l}_\mu$ (here we use $\{k_\mu\}_{\mu\in \Lambda}$ as the canonical basis of $\k[T]$). 
By \cite[Lem 6.3]{Tan12}, the map (\ref{equ 4.16}) induces an isomorphism of $\fU^-_\zeta\otimes Z^0_\Fr$-modules 
\begin{equation}\label{equ 4.17}
\fU^-_\zeta\otimes Z^0_\Fr\otimes_{\k[T]} \k[T] =\fU^-_\zeta\otimes_{\k[N]} \k[B\times_T T]\xs \Gamma(U_e, \widetilde{\sD}_\zeta)\otimes_{\k_\zeta[N^-]} \k.
\end{equation} 

We study the action of $\fU^0_\zeta$ on the LHS of (\ref{equ 4.17}). 
Fix basis $\{x_r\}_r$ in $U^+_\zeta$ and $\{y_r\}_r$ in $\fU^-_\zeta$ which are dual to each other under the canonical pairing between $U^+_\zeta$ and $\fU^-_\zeta$. 
By \cite[Prop 4.13]{Tan12}
\footnote{Because of our different conventions on the flag variety and the comultiplications of the quantum group, the formulas in \textit{loc. cit.} are adapted after exchanging $E_i\leftrightarrow F_i$ and $K_\lambda\leftrightarrow K^{-1}_\lambda$.}
, on the LHS of (\ref{equ 4.16}), there is an equality for any dominant $\lambda\in \Lambda$, 
$$K^{-2}_{\lambda}=\sum_r \varphi_r \otimes y_r \otimes k_{\lambda}, \quad 
\varphi_r:\ u\in U^-_\zeta \mapsto \langle1^*_\lambda, \mathrm{S}(ux_r).1_\lambda\rangle,$$ 
where $1_\lambda$ is a highest vector of $\bW(\lambda)$ (the coWeyl module of $U_\zeta$ of highest weight $\lambda$), and $1^*_\lambda\in \bW(\lambda)^*$ is given by $\langle 1^*_\lambda, 1_\lambda\rangle=1$ and $\langle 1^*_\lambda, \bW(\lambda)_{\mu}\rangle=0$ if $\mu\neq \lambda$. 
Modulo the augmentation ideal of $\k_\zeta[N^-]$ from the right, we have 
\begin{equation}\label{equ 4.50} 
\begin{aligned} 
K^{-2}_{\lambda}&=\sum_r y_r^{(2)} \cdot \mathrm{S}^{-1}(y_r^{(1)}).\varphi_r \otimes k_{\lambda}\\ 
&\equiv \sum_r \langle 1^*_\lambda, \mathrm{S}(x_r) y_r^{(1)}. 1_\lambda\rangle \cdot y_r^{(2)}\cdot k_{\lambda}\\ 
&= \sum_r \langle 1^*_\lambda, \mathrm{S}(x_r) y_r. 1_\lambda\rangle \cdot k_{\lambda}\\ 
&= \zeta^{4(\rho,\lambda)} k_{\lambda} \ , 
\end{aligned} 
\end{equation}
where the third equality is because $\Delta(y_r)=y_r\otimes 1+ \sum_{\text{deg}(y_r^{(2)})<0} y_r^{(1)}\otimes y_r^{(2)}$, and the last equality is because the \textit{quantum Casimir element} $\sum_r \mathrm{S}(x_r) y_r$ acts on $1_\lambda\in \bW(\lambda)$ by scalar $\zeta^{4(\rho,\lambda)}$. 
Finally, we identify 
$$Z^0_\Fr\otimes_{\k[T]} \k[T]=\fU^0_\zeta, \quad \text{by}\quad K^{l}_\mu\otimes k_\lambda \mapsto K_{l\mu+2\lambda}, \quad \forall \mu,\lambda\in \Lambda.$$ 
Then we have $\M= \Gamma(U_e, \widetilde{\sD}_\zeta)\otimes_{\k_\zeta[N^-]} \k$ as $\fU^\leq_\zeta$-modules. 
It is in fact an isomorphism of modules of $\fU_\zeta$, since the two modules are $\Lambda$-graded. 
\end{proof}

Recall the tilting bundle $\sE$ in $\widetilde{\g}$. 
Let $\sE^{\vee,b}$ be the restriction of the dual bundle $\sE^{\vee}$ on $\{e\}\times \b$ in $\widetilde{\g}$, and set $\sE^{\vee,b}_S=\sE^{\vee,b}\otimes_{\k[\t]} S$, which is an $\A^b_S$-module. 

\begin{prop}\label{prop 4.7}
We have $\fF^b(\sE^{\vee,b}_S)=M(-2\rho)_S$. 
\end{prop}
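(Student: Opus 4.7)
The plan is to extract the identification from Lemma \ref{lem 4.6} by transporting the universal Verma $\M$ across the localization/Morita equivalences to the $\A^b_S$ side. Lemma \ref{lem 4.6} realizes $\M = \fU_\zeta\otimes_{\fU^\geq_\zeta}(\fU^0_\zeta)_{-2\rho}$ as $R\Gamma(\widetilde{\sD}_\zeta\otimes_{\sO_{\sB_\zeta}}\sO_\epsilon)$. Via the splitting bundle $\fM_0$ of Theorem \ref{thm 3.4}, Morita equivalence sends $\widetilde{\sD}_\zeta\otimes\sO_\epsilon$ to $\fM_0^\vee|_e$ on $\FN(\sV_{(1,1)})=\tilde{\g}_{\FN(\sB)}$. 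The isomorphism $\widetilde{\sD}_\zeta|_\FN \cong \A|_\FN$ from Proposition \ref{prop 3.5}, together with $\widetilde{\sD}_\zeta|_\FN=\sEnd(\fM_0)$ and $\A=\sEnd^\op(\sE)$, pins down $\fM_0$ and $\sE$ as splitting bundles of the same Azumaya algebra; Tanisaki's normalization from \textsection \ref{subsect 3.2.3} matches them canonically, so that $\fM_0^\vee|_e$ is identified with $\sE^{\vee,b}$ on the formal neighborhood of $0$.

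Next I would carry out the two matching specializations. Since $E_i^l\in Z^+_\Fr$ is central in $\fU_\zeta$ and kills the highest-weight line of $\M$, the module $\M$ descends to a $\fU^b_\zeta$-module; projecting to the block $(1,\chi_0)$ and passing to the $S$-deformation via $\fU^0_\zeta\to S$ yields $M(-2\rho)_S$. On the coherent side, passing from $\fU_\zeta$ to $\fU^b_\zeta$ corresponds, via (\ref{equ 2.0}), to restricting from $\tilde{\g}_\FN$ to $\b_\FN$, and the $S$-deformation produces $\sE^{\vee,b}_S$. Because $\fF^b$ is by construction the $S$-linear extension of this localization equivalence (the proof of Proposition \ref{prop 4.1}), this yields $\fF^b(\sE^{\vee,b}_S) = M(-2\rho)_S$.

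As a consistency check, Proposition \ref{prop 4.3} gives $T^{-\rho}_0\fF^b(\sE^{\vee,b}_S) = \fG^b(\pi_*(\sE^{\vee,b}_S))$. The Morita equivalence $\A^b\mod\simeq \Coh(\b)$, valid because $\A^b=\sEnd^\op_{\sO_\b}(\sE^b)$ with $\sE^b = \sE|_\b$ a vector bundle, sends $\sE^{\vee,b}\mapsto \sO_\b$, so $\pi_*(\sE^{\vee,b}_S)=\sO_{\b_S}$ and hence $\fG^b(\sO_{\b_S}) = M(-\rho)_S$; independently, the translation principle gives $T^{-\rho}_0 M(-2\rho)_S = M(-\rho)_S$, the only contribution coming from the weight $\mu=\rho$ of $V(\rho)$ under our assumption that $l$ exceeds the Coxeter number. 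The main obstacle is the matching $\fM_0 \leftrightarrow \sE$ on $\tilde{\g}_\FN$ in the first step: it requires carefully tracking the $\Lambda$-equivariant normalizations from Proposition \ref{prop 3.7} and \cite{Tan21} against the $G\times\C^\times$-equivariant structure of $\sE$ from \cite{BM13} — this bookkeeping is where the specific shift $-2\rho$, rather than any other weight, is pinned down.
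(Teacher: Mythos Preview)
Your overall strategy—transport $\M$ from Lemma \ref{lem 4.6} through the localization and Morita equivalences to the $\A^b_S$ side—is indeed the paper's route, and your consistency check via $T^{-\rho}_0$ is correct. The genuine gap is the step you flag yourself as the ``main obstacle'': the claim that Proposition \ref{prop 3.5} furnishes an isomorphism $\widetilde{\sD}_\zeta|_{\FN} \cong \A|_{\FN}$, so that $\fM_0$ and $\sE$ are splitting bundles of the same Azumaya algebra, is not correct and cannot be fixed by bookkeeping normalizations. The algebra $\A = \sEnd^{\op}_{\tilde{\g}}(\sE)$ is a noncommutative resolution, not an Azumaya algebra; the equivalence (\ref{equ 3.3}) is the \emph{composition} of two separate Morita-type steps—first $R\sHom(\fM_0,-): \widetilde{\sD}_\zeta\mod \to \Coh_\sB(\tilde{\g})$, then $\sE^\vee\otimes-: \Coh_\sB(\tilde{\g}) \to \A\mod$—and Proposition \ref{prop 3.5} is precisely the nontrivial statement (from \cite{Tan21}) that this composition is t-exact. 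There is no direct identification $\fM_0 \leftrightarrow \sE$.

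What the paper actually computes, working at finite level $n$, is that the first step sends $\widetilde{\sD}_\zeta|_{(\epsilon,e^0_n)}$ to the \emph{structure sheaf} $\sO_{e_n}$ in $\Coh_\sB(\tilde{\g})$, not to $\sE^{\vee,b}$. This uses the explicit construction $\fM_0 = \sO^\rho_{\sB_\zeta}\otimes_{\sO_{\sB_\zeta}}\fM_{-\rho}$ from \textsection\ref{subsect 3.2.3}: the twist by $\sO^\rho_{\sB_\zeta}$ shifts the support from $e^0_n$ to $e^{-\rho}_n$, and since $\fM_{-\rho} = \varpi^* M_{\chi_{-\rho}}$ with $\varpi^*\fU_\zeta|_{\FN}\xs \widetilde{\sD}_\zeta|_{\FN}$ by (\ref{equ 3.0}), one reduces to $\End\big(M(-\rho)_S/\m_1^n M(-\rho)_S\big)$ via the identification (\ref{equ 4.4}), which equals $\sO_{e_n}$ by the equivalence $\fG^b$. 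Only then does the second step give $\sE^\vee\otimes \sO_{e_n} = \sE^{\vee,b}_S/\m_0^n\sE^{\vee,b}_S$, and one concludes by taking the limit over $n$. The shift $-2\rho$ is pinned down exactly by this detour through the Steinberg point together with the $\sO^\rho_{\sB_\zeta}$-twist, not by matching equivariant structures on $\fM_0$ and $\sE$.
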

\begin{proof} 
As a sheave on $\sV$, $\widetilde{\sD}_\zeta\otimes_{\sO_{\sB_\zeta}} \sO_{\epsilon}$ is supported on the fiber of $e\in \sB$ in $\sV$, which is identified with $B\times_T T$ in the proof of the lemma above. 
We denote by $e_n$ the $n$-th infinitesimal neighborhood of $0$ in $\b$. 
We denote by $e^{\lambda}_n$ the $n$-th infinitesimal neighborhood of $(1,\zeta^{2\lambda})$ in $B\times_T T$, and view it as a closed subscheme in $\sV$. 
Consider the subsheaf of algebras $\sO_{\sB_\zeta}\otimes_{\sO_\sB} \sO_\sV$ in $\widetilde{\sD}_\zeta$. 
The $\sO_{\sB_\zeta}$-module $\sO_\epsilon$ and the $\sO_\sV$-module $\sO_{e^{\lambda}_n}$ define a sheaf of module $\sO_{(\epsilon,e^{\lambda}_n)}$ of $\sO_{\sB_\zeta}\otimes_{\sO_\sB} \sO_\sV$. 
We abbreviate $\widetilde{\sD}_\zeta|_{(\epsilon,e^{\lambda}_n)}
=\widetilde{\sD}_\zeta \otimes_{(\sO_{\sB_\zeta}\otimes_{\sO_\sB} \sO_\sV)} 
\sO_{(\epsilon,e^{\lambda}_n)}$. 
By Lemma \ref{lem 4.6} and formula (\ref{equ 4.50}), we compute that 
\begin{equation}\label{equ 4.10} 
R\Gamma(\widetilde{\sD}_\zeta|_{(\epsilon,e^{\lambda}_n)})=\M/\sI_{e^{\lambda}_n} \M= M(-\lambda-2\rho)_S/\m_1^n M(-\lambda-2\rho)_S, 
\end{equation}
as $\fU_\zeta$-modules, where $\sI_{e^{\lambda}_n}$ is the ideal in $\k[B\times_T T]$ associated to $e^{\lambda}_n$, and $\m_1$ is the maximal ideal of $1\in B$ in $\k[B]$. 

In particular we have $R\Gamma(\widetilde{\sD}_\zeta|_{(\epsilon,e^{0}_n)})=M(-2\rho)_S/\m_1^n M(-2\rho)_S$ as $\fU_\zeta$-modules. 
Consider the equivalence 
$$R\sHom_{\widetilde{\sD}_\zeta|_{\FN(\sB)}}(\fM_{0},-): \Db(\widetilde{\sD}_\zeta\mod_{(1,1)})\xs \Db\Coh_{\sB}(\widetilde{\g}).$$ 
We claim that it sends $\widetilde{\sD}_\zeta|_{(\epsilon,e^0_n)}$ to $\sO_{e_n}$. 
Indeed, we have 
\begin{align*}
R\sHom_{\widetilde{\sD}_\zeta|_{\FN(\sB)}}
(\fM_{0},\widetilde{\sD}_\zeta|_{(\epsilon,e^0_n)})
&=\fM_{0}^{\vee}\otimes_{\widetilde{\sD}_\zeta|_{\FN(\sB)}}
\widetilde{\sD}_\zeta|_{(\epsilon,e^0_n)} \\ 
&=\fM_{0}^{\vee}\otimes_{(\sO_{\sB_\zeta}\otimes_{\sO_\sB} \sO_\sV)} 
\sO_{(\epsilon,e^0_n)}. 
\end{align*} 
By construction that $\fM_{0}=\sO^{\rho}_{\sB_\zeta} \otimes_{\sO_{\sB_\zeta}} \fM_{{-\rho}}$ (see \textsection \ref{subsect 3.2.3}), we obtain that 
$$\fM_{0}^{\vee}\otimes_{(\sO_{\sB_\zeta}\otimes_{\sO_\sB} \sO_\sV)} 
\sO_{(\epsilon,e^0_n)} 
=\fM_{{-\rho}}^{\vee}\otimes_{(\sO_{\sB_\zeta}\otimes_{\sO_\sB} \sO_\sV)} \sO_{(\epsilon,e^{-\rho}_n)}$$ 
as sheaves on $\sB$. 
Similarly we have 
\begin{align*}
\fM_{{-\rho}}^{\vee}\otimes_{(\sO_{\sB_\zeta}\otimes_{\sO_\sB} \sO_\sV)} \sO_{(\epsilon,e^{-\rho}_n)}
&= R\sHom_{\widetilde{\sD}_\zeta|_{\FN(\sB)}}
(\fM_{{-\rho}},\widetilde{\sD}_\zeta|_{(\epsilon,e^{-\rho}_n)}) \\ 
&= \sHom_{\widetilde{\sD}_\zeta|_{\FN(\sB)}}(\fM_{{-\rho}}|_{e^{-\rho}_n} ,\widetilde{\sD}_\zeta|_{(\epsilon,e^{-\rho}_n)}). 
\end{align*} 
By (\ref{equ 4.10}), we have $R\Gamma(\widetilde{\sD}_\zeta|_{(\epsilon,e^{-\rho}_n)})=M(-\rho)_S/\m_1^n M(-\rho)_S$. 
Using the isomorphisms (\ref{equ 3.0}) and (\ref{equ 4.4}), we have 
\begin{equation}\label{equ 4.11} 
\sHom_{\widetilde{\sD}_\zeta|_{\FN(\sB)}}(\fM_{{-\rho}}|_{e^{-\rho}_n} ,\widetilde{\sD}_\zeta|_{(\epsilon,e^{-\rho}_n)}) 
=\End_{(\fU_\zeta)_{\widehat{1,\chi_{-\rho}}}} \big(M(-\rho)_S/\m_1^n M(-\rho)_S\big).
\end{equation}
By the equivalence (\ref{equ 4.1}), the RHS of (\ref{equ 4.11}) is equal to $\sO_{e_n}$ (note that we identify the formal neighborhood of $0$ in $\b_S$ with the formal neighborhood of $(1,1)$ in $B\times_T T$). 

Finally, under the equivalence (\ref{equ 3.5}), the sheaf $\sO_{e_n}$ corresponds to the $\A$-module $\sE^\vee\otimes_{\sO_{\widetilde{\g}}} \sO_{e_n}=\sE^{\vee,b}_S/\m^n_{0} \sE^{\vee,b}_S$, where $\m_0$ is the maximal ideal of $0\in \b$ in $\k[\b]$. 
Hence we have 
$$\fF^b(\sE^{\vee,b}_S/\m_0^n \sE^{\vee,b}_S)=M(-2\rho)_S/\m_1^n M(-2\rho)_S, \quad \forall n\geq 0.$$ 
Taking the limits, we obtain that $\fF^b(\sE^{\vee,b}_S)=M(-2\rho)_S$. 
\end{proof}

\begin{definition} 
We define the \textit{Verma objects} in $\A_S^b\mod^T$ to be 
$$\sM_x=(\fF^{b})^{-1}(M(x\bullet_l 0)_S), \quad x\in W_{\ex}.$$ 
An object in $\A_S^b\Mod^T$ admits \textit{Verma flags} if it admits a filtration \textit{of finite length} with composition factors given by Verma objects. 
\end{definition}

For any $w\in W$, we choose a reduced expression $w=s_{1}\cdots s_{n}$ and set $\T^c_w=\T^c_{s_{1}}\cdots \T^c_{s_{n}}$. 

\begin{corollary}\label{cor 4.9} 
For any $w\in W$ and $\nu\in \Lambda$, we have 
\begin{equation}\label{equ 4.13} 
\sM_{t(\nu)w_0w^{-1}}=\T^c_w(\sE^{\vee,b}_S)\otimes \k_\nu, \quad \forall w\in W,\ \forall \nu\in \Lambda. 
\end{equation}
As a consequence, any Verma object in $\A_S^b\mod^T$ admits a natural $B$-equivariant structure. 
\end{corollary} 
\begin{proof} 
For any $w\in W$ and $s\in \I$ such that $ws<w$ in the Bruhat order, there is a short exact sequence (see e.g. \cite[Thm 7.14(a)]{Hum08}) 
$$0\rightarrow M(ws\bullet 0)_S \rightarrow \Theta^r_s(M(w\bullet 0))_S \xrightarrow{\text{counit}} M(w\bullet 0)_S \rightarrow 0,$$ 
which shows that $M(ws\bullet 0)_S=\T^r_s(M(w\bullet 0)_S)$. 
By Proposition \ref{prop 4.1} that the equivalence $\fF^b$ intertwines the reflection functors on both sides, we have 
\begin{equation}\label{equ 4.131} 
\sM_{ws}=\T^c_s(\sM_w), \quad \text{if}\quad ws<w . 
\end{equation} 
Applying (\ref{equ 4.131}) several times on $\sM_{w_0}=\sE^{\vee,b}_S$ (Proposition \ref{prop 4.7}) and using the $\Lambda$-translations, we obtain (\ref{equ 4.13}). 
Finally, since $\sE^{\vee,b}_S$ is naturally endowed with a $B$-equivariant structure and $\T^c_w$ is $B$-equivariant, any Verma object is $B$-equivariant. 
\end{proof}

\subsection{Fully-faithfulness for $\V^b$}\label{subsect 4.4} 
For any Noetherian commutative $S$-algebra $R$, one can define the functor 
$$\V^b_{r,R}:\ \fU^b_\zeta\Mod^{\Lambda,0}_R \rightarrow \fU^b_\zeta\Mod^{\Lambda,-\rho}_{R\otimes_{\k[\t/W]} \k[\t]}$$ 
as in \textsection \ref{subsect 4.2.2} (note that for $S=R$ we abbreviate $\V^b_r=\V^b_{r,S}$). 
Let $\K=\Frac(S)$ be the fraction field of $S$. 

\begin{lem}\label{lem 4.9} 
\begin{enumerate} 
\item The functor $\V^b_{r,R}$ is faithful on the objects admitting Verma flags. 
\item The functor $\V^b_{r,\K}$ is an equivalence of categories. 
\end{enumerate}
\end{lem}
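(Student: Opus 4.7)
The plan is to transfer both statements to the coherent side via Corollary~\ref{cor 4.4}. The equivalences $\fF^b$ and $\fG^b$ extend naturally along any Noetherian $S$-algebra $R$, so it suffices to establish the analogous assertions for
\begin{equation*}
\V^b_{c,R}\colon \A^b_R\mod^T \longrightarrow \Coh^T(\b_R \times_{\t/W} \t),
\end{equation*}
which is $\pi_*$ enhanced by the central inclusion $\k[\g \times_{\t/W}\t] \subset \A$ (available because $\sE$ contains $\sO_{\tilde{\g}}$ as a direct summand, by \cite[Lem 2.5.3]{BM13}).

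For part (1), I would argue by dévissage along a Verma flag. The main step is to show that for any two Verma objects $\sM_x$, $\sM_y$, the induced map $\Hom(\sM_x, \sM_y) \to \Hom(\V^b_{c,R}\sM_x, \V^b_{c,R}\sM_y)$ is injective. By Corollary~\ref{cor 4.9}, each $\sM_x$ is obtained from $\sM_{w_0}=\sE^{\vee,b}_S$ by applying reflection functors $\T^c_s$ and $\Lambda$-translations, which are intertwined with corresponding operations on the coherent target via $\pi_*$ (Theorem~\ref{thm 3.1}). A direct computation on $\sE^{\vee,b}_S$, whose support in $\tilde{\g}$ is $\{e\}\times \b$ mapping isomorphically onto $\b\subset\g$, identifies $\V^b_{c,R}(\sM_x)$ with an invertible sheaf on the graph in $\b_R \times_{\t/W}\t$ of the section attached to $x\bullet_l 0$, giving the required injectivity. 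Faithfulness on general Verma-flag objects then follows by induction on Verma flag length, using the exactness of $\V^b_{c,R}$ on short exact sequences of Verma-flag modules together with a diagram chase of five-lemma type.

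For part (2), the essential observation is that over $\K=\Frac(S)$ the map $\Spec S \to \Spec S^W$ becomes étale of degree $|W|$, so $\K \otimes_{\k[\t/W]_{\hat{0}}} \k[\t]_{\hat{0}}$ is a product of $|W|$ copies of $\K$. Equivalently, the base change of the Grothendieck resolution to the regular semisimple locus of $\g \times_{\t/W}\t$ is an isomorphism, hence $\sE$ becomes a line bundle there and $\pi_*$ becomes a Morita equivalence after generic base change. Combined with the observation that the projective generator $\A^b_R$ is sent by $\V^b_{c,\K}$ to a projective generator of the target, and with the refinement of part (1) giving full faithfulness on projectives, this yields that $\V^b_{c,\K}$ (and hence $\V^b_{r,\K}$) is an equivalence.

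The main obstacle is the explicit computation identifying $\V^b_{c,R}(\sM_x)$ as an invertible sheaf on the appropriate graph subscheme of $\b_R \times_{\t/W}\t$ for each $x\in W_{\ex}$, together with tracking the $T$-equivariant structure through the affine braid group action. Once this identification is in hand, the remaining arguments are standard homological algebra and generic flatness.
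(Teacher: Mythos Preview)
Your approach is different from the paper's and considerably more involved. The paper proves both parts entirely on the representation-theoretic side, without passing through $\fF^b$ or the coherent model.

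For part~(1), the paper's argument is one line: since $\V^b_{r,R}$ is $T^{-\rho}_0$ equipped with an extra $\k[\t]$-linear structure, there is an inclusion $\Hom(\V^b_{r,R}M_1,\V^b_{r,R}M_2)\hookrightarrow\Hom(T^{-\rho}_0 M_1,T^{-\rho}_0 M_2)$, and faithfulness of $T^{-\rho}_0$ on Verma-flag objects is already known from \cite[Lem~4.9(2)]{Situ2}. Your d\'evissage strategy can be made to work, but the base case---injectivity of $\Hom(\sM_x,\sM_y)\to\Hom(\V^b_{c,R}\sM_x,\V^b_{c,R}\sM_y)$---is not actually established by merely identifying the targets as invertible sheaves; you would still need to compute the source and the map, essentially reproducing the content of Lemma~\ref{lem 5.4} (which appears later) together with an explicit comparison of endomorphism rings. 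The paper's route avoids this entirely.

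For part~(2), the paper again stays on the module side: the isomorphism $\K\otimes_{\k[\t/W]}\k[\t]\simeq\prod_{w\in W}\K$ decomposes both source and target into $|W|$ blocks, and on each block $\V^b_{r,\K}$ is literally $T^{-\rho}_0$ restricted to a sub-block, with explicit quasi-inverse $\pr_w\circ T^0_{-\rho}$. Your geometric argument is morally the same splitting, but note that the assertion ``$\sE$ becomes a line bundle'' is incorrect: $\sE$ has rank~$>1$ in general. What is true is that over the regular semisimple locus $\tilde{\g}\to\g\times_{\t/W}\t$ becomes an isomorphism, so $\A$ becomes a sheaf of matrix algebras and $\pi_*$ is a Morita equivalence. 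With that correction your argument goes through.

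In short: your route is viable but requires computations (essentially Lemma~\ref{lem 5.4}) that the paper postpones, whereas the paper's proof of this lemma is a direct two-paragraph reduction to known properties of translation functors.
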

\begin{proof}
(1) By definition of $\V^b_{r,R}$ we have an inclusion 
$$\Hom(\V^b_{r,R} M_1,\V^b_{r,R} M_2)\hookrightarrow \Hom(\rT^{-\rho}_0 M_1,\rT^{-\rho}_0 M_2), \quad \forall M_i.$$ 
Hence the faithfulness of $\V^b_{r,R}$ (on objects admitting Verma flags) follows from the faithfulness of $\rT^{-\rho}_0$, see \cite[Lem 4.9(2)]{Situ2}. 

(2) There is an isomorphism 
\begin{equation}\label{equ 4.14}
\K\otimes_{\k[\t/W]} \k[\t]\simeq \prod_{w\in W} \K. 
\end{equation} 
It induces a decomposition 
\begin{equation}\label{equ 4.15} 
\fU^b_\zeta\Mod^{\Lambda,-\rho}_{\K \otimes_{\k[\t/W]} \k[\t]}= \bigoplus_{w\in W} \fU^b_\zeta\Mod^{\Lambda,-\rho}_\K.
\end{equation}
Moreover, the isomorphism (\ref{equ 4.14}) together with the natural map induced from (\ref{equ 4.5}) 
$$\K\otimes_{\k[\t/W]} \k[\t]\rightarrow Z(\fU^b_\zeta\Mod^{\Lambda,0}_\K)$$
yields a block decomposition 
\begin{equation}\label{equ 4.7}
\fU^b_\zeta\Mod^{\Lambda,0}_\K= \bigoplus_{w\in W} \fU^b_\zeta\Mod^{\Lambda,0,w}_\K,
\end{equation}
such that a Verma module $M(\lambda)_\K$ is contained in $\fU^b_\zeta\Mod^{\Lambda,0,w}_\K$ if and only if $\lambda-w\bullet 0 \in l\Lambda$. 
Under the decompositions (\ref{equ 4.15}) and (\ref{equ 4.7}), we can write 
$$\V^b_{r,\K}= \bigoplus_{w\in W} \rT^{-\rho}_0 \circ \iota_w,$$ 
where $\iota_w$ is the inclusion from the block labelled by $w$ in (\ref{equ 4.7}). 
The functor $\rT^{-\rho}_0 \circ \iota_w$ is an equivalence with quasi-inverse given by $\pr_w\circ \rT^0_{-\rho}$, where $\pr_w$ is the corresponding block projection. 
Hence $\V^b_{r,\K}$ is an equivalence of categories. 
\end{proof}

\begin{prop}\label{prop 4.11} 
The functors $\V^b_r$ and $\V^b_c$ are fully-faithful on the objects admitting Verma flags. 
\end{prop}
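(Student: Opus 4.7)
The plan is to first reduce the two assertions to a single one and then exploit the results of Lemma \ref{lem 4.9}. By Corollary \ref{cor 4.4}, the equivalences $\fF^b$ and $\fG^b$ fit into a commutative square conjugating $\V^b_c$ to $\V^b_r$, so it suffices to prove that $\V^b_r$ is fully-faithful on Verma-flag objects. Since Lemma \ref{lem 4.9}(1) already gives faithfulness, only fullness remains.

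Fix Verma-flag objects $M_1, M_2$ in $\fU^b_\zeta\mod^{\Lambda, 0}_S$ and write $\widetilde{S} := S \otimes_{\k[\t/W]} \k[\t]$. The map $\V^b_r$ induces an injection
$$
\Hom(M_1, M_2) \hookrightarrow \Hom(\V^b_r M_1, \V^b_r M_2)
$$
of $\widetilde{S}$-modules, where the source acquires its $\widetilde{S}$-structure through the central algebra map (\ref{equ 4.5}). Both sides are finitely generated over $\widetilde{S}$, using the finite length of Verma flags together with the weight-space finiteness condition. By Lemma \ref{lem 4.9}(2), $\V^b_{r,\K}$ is an equivalence, so the above map becomes an isomorphism after base change along $S \to \K = \Frac(S)$.

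To upgrade the generic isomorphism to an isomorphism over $\widetilde{S}$, the approach is to establish $S$-flatness (and in fact $S$-freeness) of both Hom-spaces and then to match their $S$-ranks. On the source side, $\Hom(M(\lambda)_S, M)$ for $M$ with a Verma flag reduces to the space of highest-weight vectors of weight $\lambda$ in $M$, which is $S$-flat by inspection (Verma modules being free over $\fU^-_\zeta \otimes S$ by the triangular decomposition); a devissage along the Verma filtration of $M_1$ then handles the general case. On the target side, the explicit description of $\V^b_r$ on a Verma module, accessible via Proposition \ref{prop 4.3} together with Proposition \ref{prop 4.7} and Corollary \ref{cor 4.9}, permits the parallel computation in $\fU^b_\zeta\mod^{\Lambda,-\rho}_{\widetilde{S}}$.

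The main obstacle is to conclude vanishing of the cokernel of the above injection: generic isomorphism combined with $S$-flatness and rank matching is not alone enough. The plan here is to verify the isomorphism directly at the closed point of $S$, after which Nakayama's lemma applied to the $\widetilde{S}$-modules will imply the isomorphism over $\widetilde{S}$. At the closed point, the source becomes a Hom computation in $\fU^b_\zeta\mod^{\Lambda,0}_\k$, which by the equivalence (\ref{equ 4}) transports to $\A_\n\mod^T$; the target likewise admits a transparent coherent description via $\pi_*$ over $\Coh^T(\n \times_{\t/W} \t)$, where fully-faithfulness on Verma objects can be verified by a direct geometric calculation combined with the compatibility of $\V^b_r$ with the reflection functors $\Theta^r_s$ from Proposition \ref{prop 4.1}.
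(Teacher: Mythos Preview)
Your reduction to $\V^b_r$ via Corollary \ref{cor 4.4} and your use of Lemma \ref{lem 4.9} match the paper's opening moves. The gap is in the final step: you propose to check the isomorphism at the closed point of $S$ and then apply Nakayama. But this is precisely what Remark \ref{rmk 4.12} rules out---the functor $\V^b_{r,\k}$ is \emph{not} fully faithful on Verma-flag objects in the non-deformed category $\fU^b_\zeta\mod^{\Lambda,0}_\k$. So the ``direct geometric calculation'' you sketch at the special fiber cannot succeed; the deformation is essential, not a convenience.

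The paper avoids the closed point entirely. Instead it localizes at each height-1 prime $\p$ of $S$, using the fact that for $S$-free modules $M_i$ one has $M_i = \bigcap_{\hgt(\p)=1} M_{i,S_\p}$ inside $M_{i,\K}$ (Matsumura), and hence the same intersection formula for the Hom spaces. At each such $\p$, $S_\p$ is a DVR, and the argument runs as follows: both Hom spaces are torsion-free (hence free) over $S_\p$; the map $\varphi$ becomes an isomorphism over $\K$ by Lemma \ref{lem 4.9}(2) together with a base-change lemma for Hom (Lemma \ref{lem 4.13}), so the two sides have equal rank; and $\varphi\otimes\Bbbk(\p)$ is shown to be \emph{injective} (not surjective!) by embedding both specialized Hom spaces into Hom spaces over $\Bbbk(\p)$ and invoking Lemma \ref{lem 4.9}(1) there. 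Injectivity between finite-dimensional spaces of equal dimension gives surjectivity, and Nakayama finishes. The point is that only faithfulness is needed at the residue field, never fullness---and faithfulness does hold at every specialization by Lemma \ref{lem 4.9}(1).
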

\begin{proof} 
By (\ref{equ new4.12}) we only have to prove the assertion for $\V^b_r$. 
It follows standardly from Lemma \ref{lem 4.9} by the general theory of highest weight categories, see e.g. \cite[Prop 2.18]{RSVV}. 
For readers' convenience, we present the proof as follows. 

We abbreviate $\V=\V^b_{r,R}$ and $M_R=M\otimes_S R$ for any $S$-module $M$ and any $S$-algebra $R$. 
Let $M_1,M_2$ be objects in $\fU^b_\zeta\mod^{\Lambda,0}_S$ admitting Verma flags. 
In particular they are free as $S$-modules. 
Hence the base change yields an embedding $\Hom(M_1,M_2)\subseteq \Hom(M_{1,\K},M_{2,\K})$. 
By \cite[Thm 38]{Mat1970}, we have $M_i=\bigcap\limits_{\hgt(\p)=1} M_{i,S_\p}$ as subspaces in $M_{i,\K}$ (the intersection is taken for height 1 prime ideals $\p$ of $S$). 
Hence we can identify 
\begin{equation}\label{equ 4.8}
\Hom(M_1,M_2)=\bigcap_{\hgt(\p)=1}\Hom(M_{1,S_\p},M_{2,S_\p}) 
\end{equation}
as subspaces in $\Hom(M_{1,\K},M_{2,\K})$. 
Similarly, since $\V M_i=T^{-\rho}_0 M_i$ are free $S$-modules, we have 
\begin{equation}\label{equ 4.9} 
\Hom(\V M_1,\V M_2)=\bigcap_{\hgt(\p)=1}\Hom(\V M_{1,S_\p},\V M_{2,S_\p})
\end{equation}
as subspaces in $\Hom(\V M_{1,\K},\V M_{2,\K})$. 

By (\ref{equ 4.8}) and (\ref{equ 4.9}), it suffices to prove the isomorphism 
$$\varphi: \Hom(M_{1,S_\p},M_{2,S_\p})\xs \Hom(\V M_{1,S_\p},\V M_{2,S_\p}),$$ 
for any height $1$ prime ideal $\p$. 
Note that $S_\p$ is a D.V.R., since it is a regular local ring of dimension 1. 
Lemma \ref{lem 4.9}(1) shows that $\varphi$ is an injection. 
For the surjectivity, by Nakayama's lemma, we need to show the surjectivity for the specialization $\varphi\otimes_{S_\p} \Bbbk(\p)$ at $\Bbbk(\p)=S_\p/\p S_\p$. 
Consider the following commutative diagram 
$$\begin{tikzcd} 
\Hom(M_{1,S_\p},M_{2,S_\p})\otimes_{S_\p} \Bbbk(\p) \arrow[r,"\varphi\otimes_{S_\p} \Bbbk(\p)"] \arrow[d] 
& {\Hom(\V M_{1,S_\p},\V M_{2,S_\p})}\otimes_{S_\p} \Bbbk(\p) \arrow[d]\\ 
\Hom(M_{1,\Bbbk(\p)}, M_{2,\Bbbk(\p)}) \arrow[r,hook] 
& \Hom(\V M_{1,\Bbbk(\p)}, \V M_{2,\Bbbk(\p)}), 
\end{tikzcd}$$ 
where the lower inclusion is by Lemma \ref{lem 4.9}(1). 
We claim that the vertical maps are injective. 
Indeed, for any $f\in \Hom(M_{1,S_\p},M_{2,S_\p})$ vanishing at $\Bbbk(\p)$, we have $f(M_{1,S_\p})\subset u M_{2,S_\p}$ (where $u$ is a uniformizer of $S_\p$). 
Since $M_{2,S_\p}$ is free over $S_\p$, there is $f'\in \Hom(M_{1,S_\p},M_{2,S_\p})$ such that $f=uf'$. 
It shows the injectivity for the left vertical map, and the same argument works for the right one. 
It follows that $\varphi\otimes_{S_\p} \Bbbk(\p)$ is an injection. 
To proceed, we need the following lemma. 

\begin{lem}\label{lem 4.13} 
Let $R\rightarrow R'$ be a homomorphism of Noetherian commutative $S$-algebras. 
Suppose that $R'$ is flat over $R$. 
For any $M,M'\in \fU^b_\zeta\mod^{\Lambda}_R$, there is a natural isomorphism 
$$\Hom(M,M')\otimes_R R'\simeq \Hom(M\otimes_R R',M'\otimes_R R').$$ 
\end{lem}
\begin{proof}[Proof of Lemma \ref{lem 4.13}] 
Choose a projective resolution $P_1\rightarrow P_2\rightarrow M\rightarrow 0$, we have a commutative diagram with exact rows 
$$\begin{tikzcd}
0\arrow[r] & \Hom(M,M')_{R'} \arrow[d]\arrow[r] 
& \Hom(P_2,M')_{R'} \arrow[d] \arrow[r] 
& \Hom(P_1,M')_{R'} \arrow[d]\\ 
0\arrow[r] & \Hom(M_{R'},M'_{R'}) \arrow[r] 
& \Hom(P_{2,R'},M'_{R'}) \arrow[r] 
& \Hom(P_{1,R'},M'_{R'}), 
\end{tikzcd}$$ 
where the subscript $R'$ means $-\otimes_R R'$. 
By \cite[Prop 2.4]{Fie03}, the right two vertical maps are isomorphisms, so is the left one. 
\end{proof}
\noindent 
Consider the commutative diagram 
$$\begin{tikzcd}
\Hom(M_{1,S_\p},M_{2,S_\p})\otimes_{S_\p} \K \arrow[r,"\varphi\otimes_{S_\p} \K"] \arrow[d,"\simeq"] 
& {\Hom(\V M_{1,S_\p},\V M_{2,S_\p})}\otimes_{S_\p} \K \arrow[d,"\simeq"]\\ 
\Hom(M_{1,\K}, M_{2,\K}) \arrow[r,"\simeq"] 
& \Hom(\V M_{1,\K}, \V M_{2,\K}). 
\end{tikzcd}$$ 
where the vertical isomorphisms are by Lemma \ref{lem 4.13} applied on the tuple $(M,M',R,R')$ $=(M_{1,S_\p},M_{2,S_\p},S_\p,\K)$ or $(\V M_{1,S_\p}$, $\V M_{2,S_\p}$, $S_\p\otimes_{\k[\t/W]} \k[\t]$, $\K\otimes_{\k[\t/W]} \k[\t])$, and the lower horizontal isomorphism is by Lemma \ref{lem 4.9}(2). 
Hence $\varphi\otimes_{S_\p} \K$ is an isomorphism. 
We conclude that $\varphi$ is an isomorphism. 
\end{proof}

\begin{rmk}\label{rmk 4.12} 
For the non-deformed category $\fU^b_\zeta\Mod^{\Lambda,0}_\k$, the functor $\V^b_{r,\k}$ fails to be fully-faithful on objects admitting Verma flags. 
\end{rmk}

\section{Main result: equivalence $\fF$}\label{sect 5} 

\subsection{Functors $\V$}\label{subsect 5.1} 
We construct the functors $\V_r$ and $\V_c$, and discuss their compatibility with reflection functors associated to $\rI$. 

\subsubsection{Construction of $\V$} 
Recall that in (\ref{equ new2.7}) we construct an algebra homomorphism 
$$\k[\t\times_{\t/W}\t]_{\hat{0}}\rightarrow Z(U^\hb_\zeta\Mod^{\Lambda,0}_S).$$ 
By similar discussions as in \textsection\ref{subsect 4.2.2}, the translation functor $\rT^{-\rho}_0: U^\hb_\zeta\Mod^{\Lambda,0}_S \rightarrow U^\hb_\zeta\Mod^{\Lambda,-\rho}_S$ can be enhanced to a functor 
$$\V_r:\ U^\hb_\zeta\Mod^{\Lambda,0}_S \rightarrow U^\hb_\zeta\Mod^{\Lambda,-\rho}_{S\otimes_{\k[\t/W]}\k[\t]}.$$ 
It restricts to a functor 
$$\V_r: \sO_S^{0} \rightarrow \sO^{-\rho}_{S\otimes_{\k[\t/W]}\k[\t]}.$$ 

\begin{rmk}
In fact, $\V_r$ is equivalent to the quotient functor $\bigoplus\limits_{\lambda\in \Lambda}\Hom(Q(w_0\bullet 0+l\lambda)_S,-)$. 
We will not use this fact below. 
\end{rmk}

Recall the functor $\V^b_c$ defined in (\ref{equ new4.11}). 
By $U\n$-equivariance it upgrades to a functor 
$$\V_c:\ (\A_S^b\rtimes U\n)\Mod^T \rightarrow (\k[\b_S\times_{\t/W} \t]\rtimes U\n)\Mod^T, $$ 
which restricts to a functor 
$$\V_c:\ \A_S^b\Mod^B \rightarrow \QCoh^B(\b_S\times_{\t/W} \t).$$ 
It is automatic that $\V_r$ and $\V_c$ are compatible with $\Lambda$-translations. 

\subsubsection{Compatibility with reflection functors} 
We show that $\V_r$ and $\V_c$ are compatible with reflection functors associated with $s\in \I$. 
The results of this subsection will not be used in the sequel. 

Let $\omega \in \Xi$. 
Consider the composition of functors 
$$\sO^\omega_S \xrightarrow{\rT_{\omega}^0} \sO^0_S \rightarrow \sO^0_{S\otimes_{\k[\t/W]} \k[\t]} \xrightarrow{\rT^{\omega}_0} \sO^\omega_{S\otimes_{\k[\t/W]} \k[\t]},$$ 
where the middle one is induced by the homomorphism $S\otimes_{\k[\t/W]} \k[\t]\rightarrow Z(\sO^0_S)$. 

\begin{lem}[{\cite[Prop B.9]{Situ1}}] 
There is a natural isomorphism 
$$\rT_{0}^\omega\circ \rT_{\omega}^0\simeq -\otimes_{S^{W_\omega}} S$$ 
of functors from $\sO^\omega_S$ to $\sO^\omega_{S\otimes_{S^W} S}$. 
\end{lem}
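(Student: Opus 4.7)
The plan is to build a canonical natural transformation and then verify it is an isomorphism by reducing to Verma modules. For the construction: by \textsection\ref{subsect 2.2.1}, every object of $\sO^0_S$ carries a canonical extension of its $S$-structure to an action of $S \otimes_{\k[\t/W]} \k[\t]$ via the Harish-Chandra map. Since the translation $T^\omega_0$ is linear for central functorial endomorphisms, this extra $\k[\t]$-action propagates through $T^\omega_0$ and lifts $T^\omega_0 T^0_\omega M$ to an object of $\sO^\omega_{S \otimes_{S^W} S}$. On the other side, $M \in \sO^\omega_S$ carries the Harish-Chandra action of $S^{W_\omega} \subset Z(\sO^\omega_S)$, so $M \otimes_{S^{W_\omega}} S$ naturally lies in $\sO^\omega_{S \otimes_{S^W} S}$. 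The adjunction unit $M \to T^\omega_0 T^0_\omega M$ is $S^{W_\omega}$-linear by centrality of the Harish-Chandra characters, and the universal property of the tensor product yields the candidate natural transformation
\begin{equation*}
\bar{\eta}_M \colon M \otimes_{S^{W_\omega}} S \longrightarrow T^\omega_0 T^0_\omega M.
\end{equation*}

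The second step reduces verification of $\bar{\eta}_M$ to the case of Verma modules. Both sides are exact functors on $\sO^\omega_S$: the tensor functor because $S$ is free of rank $|W_\omega|$ over $S^{W_\omega}$ (Chevalley--Shephard--Todd applied to the reflection group $W_\omega$), and $T^\omega_0 T^0_\omega$ because each translation is exact. Both commute with truncations and send modules with Verma flags to modules with Verma flags. Since by Lemma \ref{lem 2.2} the projective covers $Q(\lambda)_S^{\leq \nu}$ in $\sO^{\leq \nu, \omega}_S$ admit Verma flags and form a generating family, it suffices to check that $\bar{\eta}_M$ is an isomorphism for $M = M(x \bullet_l \omega)_S$, $x \in W_\ex$.

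The final step is the explicit Verma calculation. Writing $\nu$ for the dominant weight in $W \cdot \omega$, one unfolds
\begin{equation*}
T^\omega_0 T^0_\omega M(x \bullet_l \omega)_S = \pr_\omega\bigl( \pr_0(M(x \bullet_l \omega)_S \otimes V(\nu)^*) \otimes V(\nu) \bigr),
\end{equation*}
using the standard Verma filtration on $M \otimes V$ indexed by the $T$-weights of the Weyl module $V$ and the criterion that $M(\mu)_S$ lies in the block $\omega'$ iff $\mu \in W_\ex \bullet_l \omega'$. This identifies $T^\omega_0 T^0_\omega M$ as an $S$-free module whose weight-space ranks are $|W_\omega|$ times those of $M$, matching the $S$-structure of $M \otimes_{S^{W_\omega}} S$; the remaining bookkeeping of the $S \otimes_{S^W} S$-action on each side (one copy of $S$ coming from the deformation, the other from the Harish-Chandra enhancement at the intermediate block) then forces $\bar{\eta}_M$ to be surjective, hence an isomorphism by the rank count. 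The main obstacle is this final identification of the two $S$-module structures on $T^\omega_0 T^0_\omega M$; it mirrors Soergel's and Fiebig's analysis of translation functors for the classical deformed category $\sO$, which gives confidence the strategy succeeds in the present quantum setting.
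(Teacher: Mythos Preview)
The paper does not prove this lemma; it is quoted verbatim from \cite[Prop B.9]{Situ1}, so there is no in-paper argument to compare against. Your outline follows the standard Soergel--Fiebig template for such statements, and the first two steps (construction of $\bar\eta_M$ via the adjunction unit and the $S^{W_\omega}$-linearity of the Harish-Chandra action, and the reduction to Verma modules by exactness) are sound.

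The gap is in the third step, and you effectively concede it yourself: after matching the $S$-ranks of the weight spaces you write that ``the remaining bookkeeping \dots\ forces $\bar\eta_M$ to be surjective,'' and then call this identification ``the main obstacle,'' resolved only by analogy with the classical case. A rank match does not give surjectivity. What is actually needed is to show that the $\lambda$-weight space of $T^\omega_0 T^0_\omega M(\lambda)_S$, viewed as a module over the second copy of $S$ (the Harish-Chandra enhancement coming from the intermediate regular block), is cyclic, generated by the image of the highest weight vector under $\eta$. Equivalently, one must check that the Verma factors of $T^0_\omega M(\lambda)_S$ are exactly $M(\mu)_S$ for $\mu$ running over the $W_\omega$-packet above $\lambda$, each with multiplicity one, and that under $T^\omega_0$ the resulting extension is governed by the regular $S$-action on the highest weight line. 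This is the substantive computation in the classical proofs (it uses the explicit weight multiplicities of the extremal weights of $V(\nu)$ together with the block criterion), and it does not come for free from the rank count. Until that step is written out---or replaced by a base-change argument to $\K=\Frac(S)$ combined with a Nakayama-type surjectivity check at the closed point---the proposal remains a plausible strategy rather than a proof.
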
 

For any $s\in \I$, one can define the \textit{Soergel bimodule} 
$$B_s=\big(\k[\t]\otimes_{\k[\t]^s} \k[\t]\big)_{\hat{0}}$$ 
in $\Coh\big((\t\times_{\t/W} \t)_{\hat{0}}\big)$, where $\hat{0}$ represents the completion at $0\in \t/W$. 
Tensoring with $B_s$ defines a functor on $\Coh^B(\b_S\times_{\t/W} \t)$ and on $\sO^{-\rho}_{S\otimes_{\k[\t/W]}\k[\t]}$. 

\begin{prop} 
For any $s\in \I$, there is a natural isomorphism 
$$\V_r\circ \Theta^r_s= (-\otimes_S B_s) \circ \V_r$$ 
of functors from $\sO^0_S$ to $\sO^{-\rho}_{S\otimes_{\k[\t/W]}\k[\t]}$. 
\end{prop}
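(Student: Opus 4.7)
The approach is to reduce the statement to the preceding lemma by a standard manipulation of translation functors. Expanding definitions, $\V_r \circ \Theta^r_s$ is the composition $T_0^{-\rho} \circ T_{\omega_s}^0 \circ T_0^{\omega_s}$, together with the enhanced central $\k[\t \times_{\t/W} \t]_{\hat{0}}$-action inherited from the construction of $\V_r$.

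The key step applies the preceding lemma to the object $T_0^{\omega_s}(M) \in \sO^{\omega_s}_S$ for $M \in \sO^0_S$. Using that $T_{\omega_s}^0 \circ T_0^{\omega_s} = \Theta^r_s$ on $\sO^0_S$, the lemma produces a natural isomorphism
$$T_0^{\omega_s}(\Theta^r_s M) = T_0^{\omega_s} \circ T_{\omega_s}^0 \circ T_0^{\omega_s}(M) \simeq T_0^{\omega_s}(M) \otimes_{S^s} S$$
in $\sO^{\omega_s}_{S \otimes_{S^W} S}$. One then applies the translation functor $T_{\omega_s}^{-\rho}$ to both sides. Combining this with a transitivity isomorphism $T_{\omega_s}^{-\rho} \circ T_0^{\omega_s} \simeq T_0^{-\rho}$ (which reflects the chain of stabilizers $\{1\} \subset W_{\omega_s} = \{1,s\} \subset W_{-\rho} = W$, so that $\omega_s$ lies in the closure of the facet through which $0$ translates to $-\rho$), together with the $S$-linearity of translation, yields
$$T_0^{-\rho}(\Theta^r_s M) \simeq T_0^{-\rho}(M) \otimes_{S^s} S = \V_r(M) \otimes_S B_s,$$
where the last identification uses $B_s = S \otimes_{S^s} S$. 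Naturality in $M$ propagates through each constituent step.

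The main technical obstacle is justifying the transitivity $T_{\omega_s}^{-\rho} \circ T_0^{\omega_s} \simeq T_0^{-\rho}$ in the hybrid quantum group setting at roots of unity: this is the quantum translation principle, and it requires decomposing the relevant tensor product of Weyl modules and verifying that after the successive block projections to $\omega_s$ and then to $-\rho$ one recovers the single translation $T_0^{-\rho}$ with multiplicity one and no extra summands. A secondary subtlety, to be handled by bookkeeping, is checking that the enhanced $\k[\t \times_{\t/W} \t]_{\hat{0}}$-module structures on both sides of the final isomorphism agree, so that the $s$-twist encoded by $\Theta^r_s$ on the center corresponds precisely to the Soergel-bimodule tensor $-\otimes_S B_s$ on $\V_r(M)$.
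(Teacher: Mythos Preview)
Your proposal is correct and follows essentially the same route as the paper's own proof: both arguments expand $T_0^{-\rho}\circ\Theta^r_s$, use the transitivity $T_0^{-\rho}\simeq T_{\omega_s}^{-\rho}\circ T_0^{\omega_s}$, apply the preceding lemma in the form $T_0^{\omega_s}\circ T_{\omega_s}^0\simeq -\otimes_{S^s}S$ on $\sO^{\omega_s}_S$, and then collapse back using the same transitivity and the $S$-linearity of translation. The two ``obstacles'' you flag (transitivity of translations, and compatibility of the enhanced central action) are exactly the points the paper also takes for granted; the paper simply asserts the natural isomorphism $T_0^{-\rho}=T_s^{-\rho}\circ T_0^s$ without further comment.
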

\begin{proof}
From the lemma above we deduce that 
\begin{align*}
\rT^{-\rho}_0\circ \Theta^r_s &=\rT^{-\rho}_s\circ \rT^s_0\circ \rT^0_s\circ \rT_0^s\\
&=\rT^{-\rho}_s \big(\rT_0^s (-) \otimes_{S^{s}} S \big)\\ 
&=\Theta^r_s(-) \otimes_{S^{s}} S, 
\end{align*}
where for the first and the last equality we use a natural isomorphism $\rT_{0}^{-\rho}=\rT_s^{-\rho}\circ \rT_0^s$. 
\end{proof} 

\begin{prop}\label{prop 5.3} 
For any $s\in \I$, there is a natural isomorphism 
$$\V_c\circ \Theta^c_s = (-\otimes_S B_s) \circ \V_c,$$ 
such that the counit $\Theta^c_s\rightarrow 1$ corresponds to the bimodule homomorphism $B_s\rightarrow S$ by multiplication. 
\end{prop}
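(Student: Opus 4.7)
The plan is to prove the isomorphism via flat base change on the Grothendieck--Springer square for the pair $(B, P_s)$. The key geometric input is the standard identification $\tilde{\g} \cong \tilde{\g}_s \times_{\t/W_s} \t$, which encodes that the two Borels of $P_s$ containing a regular element correspond to the two preimages in $\t$ of its image in $\t/W_s$. Enhancing by the Chevalley maps, this gives a Cartesian diagram
\[
\begin{tikzcd}
\tilde{\g} \arrow[r,"\pi_s"] \arrow[d,"\pi"'] & \tilde{\g}_s \arrow[d,"p_s"] \\
\g\times_{\t/W}\t \arrow[r,"q"'] & \g\times_{\t/W}\t/W_s,
\end{tikzcd}
\]
where $p_s$ is the natural enhancement of $\tilde{\g}_s\to \g$ recording the semisimple class in $\t/W_s$, and $q$ is the base change of the finite flat degree-$2$ cover $\t\to \t/W_s$.

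Given this, since $q$ is flat and $\pi_s$ is smooth, flat base change yields a natural isomorphism $\pi_*\circ\pi_s^*\cong q^*\circ p_{s*}$. Combined with the identity $p_s\circ\pi_s=q\circ\pi$, which gives $p_{s*}\pi_{s*}=q_*\pi_*$, one obtains
\[
\pi_*\circ \Theta^c_s \;=\; \pi_*\,\pi_s^*\,\pi_{s*} \;\cong\; q^*\,p_{s*}\,\pi_{s*} \;=\; q^*\,q_*\,\pi_*.
\]
I would then pull back the Cartesian square along $\b_S\to \g$ and pass to $B$-equivariants. The pulled-back $q\colon \b_S\times_{\t/W}\t \to \b_S\times_{\t/W}\t/W_s$ is still finite flat of degree $2$, and because the $W_s$-action only affects the $\t$-factor (completed at $0$), the functor $q^*q_*$ becomes tensor product with $S\otimes_{S^s} S = B_s$ over $S$. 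This gives the desired isomorphism $\V_c\circ \Theta^c_s \cong (-\otimes_S B_s)\circ \V_c$. For the counit: the counit $\Theta^c_s\to 1$ is, by construction, the counit of $(\pi_s^*,\pi_{s*})$; under the base-change isomorphism it intertwines with the counit of $(q^*,q_*)$, which for the finite flat cover $q$ is the multiplication map $\sO\otimes_{\sO^{W_s}}\sO\to \sO$. Under the identification with $-\otimes_S B_s$, this is precisely the bimodule map $B_s\to S$ by multiplication.

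The main obstacle will be checking that base change applies cleanly at the level of abelian categories of $\A$-modules (rather than on $D^b\Coh$). This is handled by noting that all the functors involved are t-exact for the relevant t-structures: $\pi_*$ and $\pi_{s*}$ are t-exact for the exotic t-structure by Theorem \ref{thm BM} and the discussion following it, $\pi_s^*$ is t-exact (being a smooth pullback along a $\P^1$-bundle), and $q^*$ is exact (since $q$ is flat). Consequently the derived base-change isomorphism descends to the hearts, and all equalities above hold at the level of $\A^b_S\mod^B$ and $\Coh^B(\b_S\times_{\t/W}\t)$. A minor secondary point to verify is that $p_s$ factors through $\g\times_{\t/W}\t/W_s$, i.e.\ the two maps $\tilde{\g}_s\to \t/W$ (via $\g$ and via $\t/W_s$) agree; this is the usual compatibility of Chevalley quotients for $G$ and its Levi.
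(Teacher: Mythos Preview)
Your argument has a genuine gap: the square you write down is \emph{not} Cartesian. The map $\tilde{\g}\to \tilde{\g}_s\times_{\t/W_s}\t$ is only a proper birational morphism (a resolution of singularities of the right-hand side), not an isomorphism. Your own heuristic (``two Borels of $P_s$ containing a \emph{regular} element'') already signals this: the identification holds over the regular locus only. For a concrete failure, take $G=SL_2$, so that $P_s=G$ and your claim becomes $\tilde{\g}\cong \g\times_{\t/W}\t$; but over $0\in\g$ the left-hand side has fibre $\sB\cong\mathbb{P}^1$ while the right-hand side has a single (non-reduced) point. Relatedly, $\pi_s\colon \tilde{\g}\to\tilde{\g}_s$ is \emph{not} a $\mathbb{P}^1$-bundle (both spaces have dimension $\dim G$); it is generically $2$-to-$1$.

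Because the square is not Cartesian, flat base change does not apply directly, and the step $\pi_*\pi_s^*\cong q^*p_{s*}$ is unjustified. It \emph{is} true, but proving it requires exactly the cohomological input you are trying to bypass: one must know that the comparison map $\alpha\colon \tilde{\g}\to \tilde{\g}_s\times_{\t/W_s}\t$ satisfies $R\alpha_*\sO_{\tilde{\g}}=\sO$, equivalently that $R\pi_{s*}\sO_{\tilde{\g}}=\sO_{\tilde{\g}_s}\otimes_{\k[\t]^{W_s}}\k[\t]$. The paper's proof uses this fact head-on via the projection formula, obtaining $\pi_{s*}\pi_s^*\simeq -\otimes_{\k[\t/W_s]}\k[\t]$ on $D^b\Coh(\tilde{\g}_s)$ and then composing with $\pi^s_*$. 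Once you supply this input, your route and the paper's become two rearrangements of the same computation; but as written, the claimed Cartesian diagram is the load-bearing step and it is false.
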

\begin{proof} 
Let $P$ be any (standard) parabolic subgroup of $G$, and let $W_P$ be the corresponding parabolic subgroup in $W$. 
Denote by $\pi^P: \widetilde{\g}_P \rightarrow \g$ the natural projection. 
There is a ($G$-equivariant) natural isomorphism 
$$\pi_{P,*}\pi_P^*\simeq -\otimes_{\k[\t/W_P]}\k[\t]  \qquad \text{on}\quad \Db\Coh(\widetilde{\g}_P).$$ 
Hence we have the following ($G$-equivariant) natural isomorphisms 
\begin{align*} 
\pi_*\circ \Theta^c_P &=\pi^P_*(\pi_{P,*}\pi_P^*) \pi_{P,*}\\ 
&\simeq \pi^P_* \circ (-\otimes_{\k[\t/W_P]}\k[\t])\circ \pi_{P,*}\\ 
&=\pi_{*}(-) \otimes_{\k[\t/W_P]}\k[\t]  
\end{align*} 
of functors from $\Db\Coh(\widetilde{\g})$ to $\Db\Coh(\g\times_{\t/W}\t)$. 
Consider the special case when $P=P_s$, and then it implies our desired isomorphism by base change along $\b_S\rightarrow \g$. 
The second assertion is easy to verify. 
\end{proof}

Denote by ${}^wS=S$ the $S\otimes_{\k[\t/W]}\k[\t]$-module given by 
$$S\otimes_{\k[\t/W]}\k[\t]\rightarrow S,\quad f\otimes g\mapsto fw(g), \quad \forall f\in S,\ \forall g\in \k[\t].$$ 
Recall that the Soergel bimodule $B_s$ admits a short exact sequence 
\begin{equation}\label{equ 5.6}
0\rightarrow {}^sS \rightarrow B_s \rightarrow S \rightarrow 0,
\end{equation} 
where the first map is induced by $1\mapsto \alpha_s\otimes 1-1\otimes \alpha_s$ and the second map is by multiplication. 

\begin{lem}\label{lem 5.4} 
For $x=t(\lambda)w_0w \in W_\ex$ with $w\in W$ and $\lambda\in \Lambda$, we have an isomorphism 
$$\V_c(\sM_{x})=\sO_{\b_S}\otimes_S {}^wS \otimes \k_{\lambda}.$$ 
\end{lem}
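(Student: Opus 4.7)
The plan is to reduce to computing $\V_c(\sM_{w_0})$ and then propagate through the orbit of $w_0$ under reflection functors and $\Lambda$-translations, using formula (4.13): $\sM_x = \T^c_{w^{-1}}(\sM_{w_0})\otimes \k_\lambda$ for $x=t(\lambda)w_0 w$. Since $\V_c$ clearly commutes with tensoring by the $T$-character $\k_\lambda$, the task reduces to showing
$$\V_c\big(\T^c_{w^{-1}}(\sM_{w_0})\big)=\sO_{\b_S}\otimes_S{}^w S.$$

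For the base case, by Proposition \ref{prop 4.7} we have $\sM_{w_0}=\sE^{\vee,b}_S$. Under the tilting equivalence $\A\mod\simeq \Coh(\tilde{\g})$, this $\A$-module corresponds to the structure sheaf $\sO_{\{e\}\times \b_S}$ of the closed subscheme $\{e\}\times \b_S\hookrightarrow \tilde{\g}$: indeed $R\sHom_{\tilde{\g}}(\sE,\sO_{\{e\}\times \b_S})$ equals the dual of the restriction $\sE|_{\{e\}\times \b_S}$, which is $\sE^{\vee,b}_S$. The composite map $\{e\}\times \b_S \hookrightarrow \tilde{\g}\to \g\times_{\t/W}\t$ is the closed embedding $(x)\mapsto (x,\bar x)$, where $\bar x\in\t$ denotes the image of $x\in \b$ in $\t=\b/\n$. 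Consequently $\V_c(\sM_{w_0})=\pi_*(\sO_{\{e\}\times \b_S})=\sO_{\b_S}$, viewed as an object of $\Coh^B(\b_S\times_{\t/W}\t)$ via this graph embedding. This agrees with the claimed formula for $w=1$, $\lambda=0$.

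For the inductive step, I claim that $\V_c(\T^c_s \sM_w)=\V_c(\sM_w)\otimes_S{}^s S$ whenever $ws<w$. The key inputs are: (i) the distinguished triangle $\T^c_s \to \Theta^c_s\to 1 \xrightarrow{+1}$ from Theorem \ref{thm 3.1}, which under $\fF^b$ matches the Verma short exact sequence $0\to M(ws\bullet 0)_S \to \Theta^r_s M(w\bullet 0)_S \to M(w\bullet 0)_S \to 0$ used in (\ref{equ 4.131}), hence becomes an honest short exact sequence $0\to \T^c_s\sM_w\to \Theta^c_s\sM_w\to \sM_w\to 0$; (ii) the exactness of $\V_c$ on $\A^b_S\mod^B$, since $\pi_*$ is t-exact for the exotic t-structure on the heart (Theorem \ref{thm BM}); (iii) Proposition \ref{prop 5.3}, which identifies $\V_c(\Theta^c_s-)=\V_c(-)\otimes_S B_s$ with counit given by the multiplication $B_s\to S$, whose kernel is ${}^s S$ by (\ref{equ 5.6}). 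Combining these, applying $\V_c$ yields $0\to \V_c(\T^c_s\sM_w) \to \V_c(\sM_w)\otimes_S B_s \to \V_c(\sM_w)\to 0$, which identifies the left term with $\V_c(\sM_w)\otimes_S{}^s S$.

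Fixing a reduced expression $w^{-1}=s_{j_1}\cdots s_{j_k}$, so $w=s_{j_k}\cdots s_{j_1}$, the sequence $w_0,w_0s_{j_k},\ldots,w_0s_{j_k}\cdots s_{j_1}=w_0w$ is strictly decreasing in Bruhat order, so (\ref{equ 4.131}) applies at each step and gives $\sM_{w_0w}=\T^c_{s_{j_1}}\cdots\T^c_{s_{j_k}}(\sM_{w_0})$. Iterating the previous step,
$$\V_c(\sM_{w_0w})=\sO_{\b_S}\otimes_S {}^{s_{j_k}}S\otimes_S \cdots \otimes_S {}^{s_{j_1}}S=\sO_{\b_S}\otimes_S{}^{s_{j_k}\cdots s_{j_1}}S=\sO_{\b_S}\otimes_S{}^w S,$$
using the standard bimodule identity ${}^u S\otimes_S {}^v S\cong {}^{uv}S$. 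Twisting by $\k_\lambda$ gives the stated formula. The main delicate point is the base case identification of $\sE^{\vee,b}_S$ with $\sO_{\{e\}\times\b_S}$ under the tilting equivalence; once that and the exactness of the reflection triangle on Verma objects are established, the remainder is a direct iteration.
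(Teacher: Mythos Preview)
Your proof is correct and follows essentially the same strategy as the paper: compute the base case $\V_c(\sM_{w_0})=\sO_{\b_S}$, then use Proposition~\ref{prop 5.3} together with the exact sequence~(\ref{equ 5.6}) to identify $\V_c\circ\T^c_s$ with $\V_c(-)\otimes_S{}^sS$, and iterate via~(\ref{equ 4.13}). The only difference is that the paper states $\V_c\circ\T^c_s\simeq\V_c(-)\otimes_S{}^sS$ as a natural isomorphism of functors (implicitly at the derived level), whereas you verify it on each Verma object separately using the short exact sequence coming from~(\ref{equ 4.131}); both routes are valid and lead to the same computation.
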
 
\begin{proof} 
Firstly we have 
$$\V_c(\sM_{w_0})=\V_c(\sE^{\vee,b}_S)=\pi_*\sE^{\vee,b}_S=\sO_{\b_S},$$ 
on which the $S\otimes_{\k[\t/W]}\k[\t]$-action factors through the multiplication $S\otimes_{\k[\t/W]}\k[\t] \rightarrow S$. 
By the short exact sequence (\ref{equ 5.6}) and Proposition \ref{prop 5.3}, we have a natural isomorphism $\V_c\circ \T^c_s=\V_c(-)\otimes_S {}^sS$ for any $s\in \I$. 
It follows that 
$$\V_c\circ \T^c_w =\V_c(-)\otimes_S {}^{w^{-1}}S, \quad \forall w\in W.$$ 
Now we use (\ref{equ 4.13}) to obtain our desired formula.  
\end{proof}

\subsection{Truncated categories}\label{subsect 5.2} 
In this subsection, we review the truncated subcategories of $\sO^{0}_S$ studied in \cite[\textsection4.2]{Situ2}, which is a refinement of the truncations discussed in \textsection\ref{subsect 2.3.1}. 
Next, we define the corresponding truncated subcategories in $\A^b_S\mod^B$. 
The advantage of the new truncations is their compatibility with the functors $\V$, see Lemmas~\ref{lem new5.18} and \ref{lem 5.19new}. 

\subsubsection{Truncation in $\sO^{0}_S$}

\begin{definition}\label{defn 5.6}
The \textit{semi-infinite order} $\leq^{\frac{\infty}{2}}$ on $W_\ex$ is the partial order generated by 
$$x\leq^{\frac{\infty}{2}} tx \quad \text{if}\quad x\bullet_l 0\leq tx\bullet_l 0,$$ 
for $x\in W_\ex$ and reflection $t\in \bigcup_{x\in W_\af} x\I_\af x^{-1}$ in $W_\af$. 
\end{definition}

\begin{rmk}
\begin{enumerate}
\item The order $\leq^{\frac{\infty}{2}}$ is invariant under $\Lambda$-translation from the left. 
Under the decomposition $W_\ex=W_\af \rtimes \Lambda/\rQ$, two elements $(x,\gamma)$ and $(x',\gamma')$ (with $x,x'\in W_\af$, $\gamma,\gamma'\in \Lambda/\rQ$) are comparable only if $\gamma=\gamma'$. 
And we have $(x,\gamma)\leq^\semi (x',\gamma)$ if and only if $x\leq^\semi x'$. 

\item The order $\leq^{\frac{\infty}{2}}$ coincides with the order ``$\prec$" defined in \cite{Soe97}. 
More precisely, let $\mathrm{Alc}$ be the set of alcoves associated to the $W_\af$-action on $\R\otimes_\Z \Lambda$, equipped with an order $\prec$ (see \cite[\textsection 4]{Soe97} for details). 
There is a fundamental alcove $A_{\mathrm{fun}}\in \mathrm{Alc}$ given by 
$$A_{\mathrm{fun}}=\{\lambda \in \R\otimes_\Z \Lambda\ |\ 0< \langle \lambda, \check{\alpha} \rangle <1,\ \forall \alpha\in \Phi^+ \}.$$ 
We have a bijection $W_\af\xs \mathrm{Alc}$, $x\mapsto xA_{\mathrm{fun}}$. 
Then the order $\leq^{\frac{\infty}{2}}$ on $W_\af$ corresponds to $\prec$ on $\mathrm{Alc}$. 
In particular, the order $\leq^{\frac{\infty}{2}}$ is independent of $l$. 
\end{enumerate}
\end{rmk}

Under the identification $W_{\ex}\simeq W_\ex\bullet_l 0$, the order $\leq^\semi$ on $W_\ex$ coincides with the order $\uparrow$ introduced in \cite[\textsection 2.4.2]{Situ2} restricted on $W_\ex\bullet_l 0$. 

\begin{lem}[{\cite[Lem 4.6(1)]{Situ2}}]
\label{lem new5.10}
Let $\nu,\mu \in \Lambda$ and $w\in W$. 
We have $t(\mu)w\leq^\semi t(\nu)$ if and only if $\mu\leq \nu$. 
Namely, we have $\{x\in W_\ex|x\leq^\semi t(\nu)\}= \{t(\lambda)w\}_{\lambda\leq \nu, w\in W}$. 
\end{lem}

Let $\nu\in \Lambda$. 
In \cite[\textsection4.2]{Situ2} we introduce a truncated subcategory of $\sO^0_S$ (denoted by $\sO_S^{\uparrow l\nu}$ in the \textit{loc. cit.}) 
$$\sO^{0,\leq^\semi t(\nu)}_S,$$ 
which is the the full subcategory of modules $M$ in $\sO^{0}_S$ that admit a surjection $Q\twoheadrightarrow M$ from a module $Q$ admitting a Verma flag with factors $M(x \bullet_l 0)_S$ with $x\leq^\semi t(\nu)$. 
Since $W_\ex$ is covered by the poset ideals of the form $\{x\in W_\ex|x\leq^\semi t(\nu)\}$, any module in $\sO^{0}_S$ is a finite direct sum of modules in $\sO^{0,\leq^\semi t(\nu)}_S$ for some $\nu$. 

Recall the induction functor 
$$\ind_{r}=U^\hb_\zeta\otimes_{\fU^b_\zeta}-:\ 
\fU^b_\zeta\Mod^{\Lambda,0}_S \rightarrow U^\hb_\zeta\Mod^{\Lambda,0}_S.$$ 

\begin{prop}[{\cite{Situ2}}]
\label{lem new5.17}
\begin{enumerate}
\item There is a truncation functor 
$$\tau_r^{\leq^\semi t(\nu)}:\ U^\hb_\zeta\mod^{\Lambda}_S \rightarrow \sO^{0,\leq^\semi t(\nu)}_S$$ 
by taking the maximal quotient in $\sO^{0,\leq^\semi t(\nu)}_S$, which is left adjoint to the natural inclusion. 
\item The family $\{\tau_r^{\leq^\semi t(\nu)}(\ind_r(P))\}$ with $P$ running over a generating family of projective objects in $\fU^b_\zeta\mod^{\Lambda,0}_S$ forms a generating family of projective objects in $\sO^{0,\leq^\semi t(\nu)}_S$. 
Moreover, any projective object in $\sO^{0,\leq^\semi t(\nu)}_S$ admits a Verma flag, with factors of the form $M(x\bullet_l 0)_S$ with ${x\leq^\semi t(\nu)}$. 
\item The category $\sO^{0,\leq^\semi t(\nu)}_S$ is a Serre subcategory in $\sO_S$. 
\end{enumerate}
\end{prop}
\begin{proof}
Part (1) is \cite[Lem 4.4]{Situ2}, where the truncation functor is denoted by $\tau^{\uparrow l\nu}$. 
Part (2) and (3) are by \cite[Lem 4.5(2)\&(3)]{Situ2}. 
\end{proof}

\begin{lem}\label{lem new5.18} 
The functor $\V_r$ restricts to 
$$\V_r:\ \sO^{0,\leq^\semi t(\nu)}_S\rightarrow \sO^{-\rho,\leq -\rho+l\nu}_{S\otimes_{\k[\t/W]}\k[\t]}.$$ 
There is a natural isomorphism of functors from $U^\hb_\zeta\mod^{\Lambda,0}_S$ to $\sO^{-\rho}_{S\otimes_{\k[\t/W]}\k[\t]}$, 
\begin{equation}\label{equ new5.8}
\V_r\circ \tau_r^{\leq^\semi t(\nu)}=\tau_r^{\leq -\rho+l\nu}\circ \V_r
\end{equation}
\end{lem}
\begin{proof} 
It is an immediate enhancement of \cite[Lem 4.7]{Situ2}. 
\end{proof} 

\subsubsection{Truncation by a subset}\label{subsect 5.2.2} 
We now consider truncations for the category $\A^b_S\mod^B$. 
We start by studying truncations by a subset $\Omega\subset \Lambda$, as \textsection\ref{subsect 2.3.1} for the representation side. 

We extends the equivalence $\fF^b$ to the equivalence of ind-completions of both sides 
$$\fF^b:\ \A^b_S\Mod^T\ \simeq\ \fU^b_\zeta\Mod^{\Lambda,0}_S.$$ 
Define the truncation of $\A^b_S\Mod^T$ by a subset $\Omega\subset \Lambda$ as 
$$\A^b_S\Mod^{T,\Omega} :=(\fF^{b})^{-1}(\fU^b_\zeta\Mod^{\Omega,0}_S),$$ 
which is a Serre subcategory of $\A^b_S\Mod^T$. 
We denote the truncation functor by $\Omega$ on $\A^b_S\Mod^T$ corresponding to the one on $\fU^b_\zeta\Mod^{\Lambda,0}_S$ (introduced in \textsection\ref{subsect 2.3.1}) by 
$$\tau^{T,\Omega}:\ \A^b_S\Mod^T\rightarrow \A^b_S\Mod^{T,\Omega}.$$
We abbreviate $\A^b_S\mod^{T,\Omega}:=\A^b_S\Mod^{T,\Omega}\cap \A^b_S\mod^{T}$. 
The category $\A^b_S\mod^T$ is the union of $\A^b_S\mod^{T,\Omega}$, with $\Omega$ running over all bounded above poset ideals in $\Lambda$. 
We define 
$$(\A^b_S\rtimes U\n) \Mod^{T,\Omega}$$ 
to be the full subcategory of modules in $(\A^b_S\rtimes U\n) \Mod^{T}$ which are contained in $\A^b_S\Mod^{T,\Omega}$. 
It is a Serre subcategory in $(\A^b_S\rtimes U\n) \Mod^{T}$. 
We abbreviate 
$$\A^b_S\Mod^{B,\Omega}:=\A^b_S\Mod^{B}\cap (\A^b_S\rtimes U\n) \Mod^{T,\Omega}.$$ 
We define $(\A^b_S\rtimes U\n)\mod^{T,\Omega}$ and $\A^b_S\mod^{B,\Omega}$ similarly. 
The category $\A^b_S\mod^{B}$ is the union of $\A^b_S\mod^{B,\Omega}$, with $\Omega$ running over all bounded above poset ideals in $\Lambda$. 

\begin{lem}\label{lem new5.5}
If $\Omega\subset \Lambda$ is a 	bounded above poset ideal, then objects in $(\A^b_S\rtimes U\n) \Mod^{T,\Omega}$ (resp. $(\A^b_S\rtimes U\n) \mod^{T,\Omega}$) are contained in $\A^b_S\Mod^{B,\Omega}$ (resp. $\A^b_S\mod^{B,\Omega}$). 
\end{lem}
\begin{proof}
Since $\Omega$ is bounded above, there is a finite subset $\Omega'\subset \Lambda$ such that $\Omega\subset -2\rho+l\Omega'-\sum_{s\in \I} \N \alpha_s$. 
For any $\lambda\in \Lambda$ and $w\in W$, if $t(\lambda)w\bullet_l 0=l\lambda +w\bullet 0\in \Omega$, then $\lambda\in \Omega'-\sum_{s\in \I} \N \alpha_s$. 
Any module in $\fU^b_\zeta\Mod^{\Omega,0}_S$ is a quotient by a module composed by $M(x\bullet_l 0)_S$ for some $x\in W_\ex$ such that $x\bullet_l 0\in \Omega$. 
Hence any module in $\A^b_S\Mod^{T,\Omega}$ is a quotient of a module composed by $\sM_w\otimes \C_\lambda$ with $w\in W$ and $\lambda\in \Omega'-\sum_{s\in \I} \N \alpha_s$. 
Note that $\sM_w$ is a finitely generated $\A^b_S$-module and $\A^b_S$ is finite over $\C[\b_S]$, thus $\sM_w$ is finitely generated over $\C[\b_S]$, and in particular its $T$-weights are contained in $\Omega(w)-\sum_{s\in \I} \N \alpha_s$, for a finite subset $\Omega(w)\subset \Lambda$. 
Hence the $T$-weights of any module in $\A^b_S\Mod^{T,\Omega}$ are contained in $\Omega'+\bigcup_{w\in W} \Omega(w)-\sum_{s\in \I} \N \alpha_s$, which is a bounded above subset in $\Lambda$. 
In particular, the $U\n$-action on any module in $(\A^b_S\rtimes U\n) \Mod^{T,\Omega}$ is locally unipotent, which induces a $B$-action. 
\end{proof}

\begin{lem}\label{lem new5.6} 
Let $\Omega$ be a subset in $\Lambda$. 
There is a truncation functor 
$$\tau^{U\n,\Omega}:\ (\A^b_S\rtimes U\n) \Mod^{T}\rightarrow (\A^b_S\rtimes U\n) \Mod^{T,\Omega}$$
by taking the maximal quotient in $(\A^b_S\rtimes U\n) \Mod^{T,\Omega}$, which is left adjoint to the natural inclusion. 
\end{lem}
\begin{proof}
For any $M\in (\A^b_S\rtimes U\n) \Mod^{T}$, we define $\tau^{U\n,\Omega}(M)$ as the quotient of $M$ by the $\A^b_S\rtimes U\n$-submodule generated by $\ker(M\rightarrow \tau^{T,\Omega}(M))$. 
Then $\tau^{U\n,\Omega}(M)\in (\A^b_S\rtimes U\n) \Mod^{T,\Omega}$ since it is a quotient of $\tau^{T,\Omega}(M)$ as a $T$-equivariant $\A^b_S$-module. 
Any morphism from $M$ to a module in $(\A^b_S\rtimes U\n) \Mod^{T,\Omega}$ necessarily factors through $\tau^{U\n,\Omega}(M)$. 
Hence $\tau^{U\n,\Omega}(M)$ is the maximal quotient of $M$ in $(\A^b_S\rtimes U\n) \Mod^{T,\Omega}$. 
It gives a functor that is left adjoint to the inclusion. 
\end{proof}
\noindent 
The functor $\tau^{U\n,\Omega}$ restricts to a functor 
$$\tau^{B,\Omega}: \A^b_S\Mod^{B}\rightarrow \A^b_S\Mod^{B,\Omega}.$$ 

Consider the functor $\coind_T^B:\Rep(T)\rightarrow \Rep(B)$ that is right adjoint to the forgetful functor $\Rep(B)\rightarrow \Rep(T)$. 
Since $B/T$ is affine, the functor $\coind_T^B$ is exact. 
For any $M\in \A^b_S\Mod^{T}$, the action map $\A^b_S\otimes M\rightarrow M$ induces a homomorphism 
$$\A^b_S\otimes \coind_T^B(M)= \coind_T^B(\A^b_S\otimes M)\rightarrow \coind_T^B(M),$$
where the first equality is by tensor identity (since $\A^b_S$ is a $B$-module). 
It yields a $B$-equivariant $\A^b_S$-module structure on $\coind_T^B(M)$, which defines a functor 
$$\coind_T^B:\ \A^b_S\Mod^{T} \rightarrow \A^b_S\Mod^{B},$$ 
which is exact and is right adjoint to the forgetful functor. 

\begin{lem}\label{lem new5.7}
Let $\Omega$ be a poset ideal in $\Lambda$. 
\begin{enumerate}
\item The functor $\coind_T^B$ sends $\A^b_S\Mod^{T,\Omega}$ to $\A^b_S\Mod^{B,\Omega}$. 
\item There is a commutative diagram 
\begin{equation}\label{equ new5.2}
\begin{tikzcd} 
\A^b_S\Mod^{B} \arrow[r,"\tau^{B,\Omega}"]\arrow[d,"\for"'] 
& \A^b_S\Mod^{B,\Omega} \arrow[d,"\for"]\\ 
\A^b_S\Mod^{T} \arrow[r,"\tau^{T,\Omega}"] 
& \A^b_S\Mod^{T,\Omega}.
\end{tikzcd}
\end{equation}
\end{enumerate}
\end{lem}
\begin{proof}
(1) Since any module in $\A^b_S\Mod^{T,\Omega}$ is a quotient by a module that is composed by $\sM_x$ with $x\in W_\ex$ such that $x\bullet_l 0\in \Omega$, by exactness it is enough to consider $\coind_T^B(\sM_x)$. 
Since $\sM_x$ is $B$-equivariant by Corollary \ref{cor 4.9}, we have an isomorphism $\coind_T^B(\sM_x)=\sM_x\otimes \C[B/T]$, where $B$ acts on $\sM_x\otimes \C[B/T]$ diagonally. 
Hence $\coind_T^B(\sM_x)$ is composed by $\sM_{x}\otimes \C_{\eta}$ with $\eta\leq 0$, which shows that $\coind_T^B(\sM_x)\in \A^b_S\Mod^{B,\Omega}$. 

(2) By (1) we have a commutative diagram 
$$\begin{tikzcd} 
\A^b_S\Mod^{B} & \A^b_S\Mod^{B,\Omega} \arrow[l,hook']  \\ 
\A^b_S\Mod^{T} \arrow[u,"\coind_T^B"']
& \A^b_S\Mod^{T,\Omega} \arrow[l,hook'] \arrow[u,"\coind_T^B"'] ,
\end{tikzcd}$$ 
from which one obtains the diagram (\ref{equ new5.2}) by taking the left adjoint functors. 
\end{proof}

\begin{lem}\label{lem new5.8}
For any object in $\A^b_S\mod^{B}$ admitting a Verma flag in $\A^b_S\mod^{T}$, it also admits a Verma flag in $\A^b_S\mod^{B}$. 
\end{lem}
\begin{proof}
Let $P\in \A^b_S\mod^{B}$ admitting a Verma flag in $\A^b_S\mod^{T}$. 
By (\ref{equ new2.4}) and \eqref{equ 4}, there is a short exact sequence $0\rightarrow M \rightarrow P\rightarrow P'\rightarrow 0$ in $\A^b_S\mod^{T}$, where $M\simeq \sM_x^{\oplus n}$ ($n\geq 1$) as $T$-equivariant $\A^b_S$-modules and $P'$ is composed by $\sM_y$ for some $y\in W_\ex$ such that $y\bullet_l 0\ngeq x\bullet_l 0$. 
By adjunction we have 
$$\Hom_{\A^b_S\mod^{T}}(\sM_x^{\oplus n},P)=\Hom_{\A^b_S\mod^{B}}(\sM_x^{\oplus n},P\otimes \C[B/T]),$$
where we use the fact that $\coind_T^B(P)=P\otimes \C[B/T]$ since $P$ is $B$-equivariant. 
There is a short exact sequence in $\A^b_S\mod^{B}$, 
$$0\rightarrow P\rightarrow P\otimes \C[B/T]\rightarrow P\otimes \C[B/T]_{<0} \rightarrow 0,$$ 
where $\C[B/T]_{<0}$ is the quotient module of $\C[B/T]$ by the zero weight component. 
Since $P\otimes \C[B/T]_{<0}$ is composed by $\sM_y\otimes \C_\eta$ with $\eta<0$ and $y\bullet_l 0\ngtr x\bullet_l 0$, by (\ref{equ new2.3}) and \eqref{equ 4} any $T$-equivariant (hence any $B$-equivariant) $\A^b_S$-homomorphism from $\sM_x^{\oplus n}$ to $P\otimes \C[B/T]$ factors through $M\subset P\subset P\otimes \C[B/T]$. 
Hence any $T$-equivariant $\A^b_S$-homomorphism 
$$\sM_x^{\oplus n}\xs M\hookrightarrow P$$ is $B$-equivariant. 
It shows that $M$ is a $B$-equivariant submodule of $P$ isomorphic to $\sM_x^{\oplus n}$. 
By induction on the length of Verma flag, $P'$ admits a Verma flag in $\A^b_S\mod^{B}$. 
It completes the proof. 
\end{proof}

Consider the induction functor (which should not be confused by the induction $\ind_c$ for $\C[\b]_S\Mod^T$ defined earlier) 
$$\ind_c:=(\A^b_S\rtimes U\n) \otimes_{\A^b_S}- :\ \A^b_S\Mod^T\rightarrow (\A^b_S\rtimes U\n)\Mod^T,$$ 
which is left adjoint to the forgetful functor. 
Note that $\ind_c (M)=U\n \otimes M$ as vector spaces, for any $\A^b_S$-module $M$. 

\begin{lem}\label{lem new5.9}
Let $\Omega$ be a bounded above poset ideal in $\Lambda$. 
The family $\{\tau^{U\n,\Omega}(\ind_c(P))\}$ with $P$ running over a generating family of projective objects in $\A^b_S\mod^T$ forms a generating family of projective objects in $\A^b_S\mod^{B,\Omega}$. 
Moreover, any projective object in $\A^b_S\mod^{B,\Omega}$ admits a Verma flag. 
\end{lem}
\begin{proof}
The module $\tau^{U\n,\Omega}(\ind_c(P))$ is contained in $\A^b_S\mod^{B,\Omega}$ by Lemma \ref{lem new5.5}. 
Since $\tau^{U\n,\Omega}\circ \ind_c$ is left adjoint to the exact forgetful functor from $\A^b_S\mod^{B,\Omega}$ to $\A^b_S\mod^T$, 
it sends projective objects to projective objects. 
For any $M\in \A^b_S\mod^{B,\Omega}$, we choose a surjection $P\twoheadrightarrow M$ from a projective module $P$ in $\A^b_S\mod^T$. 
It induces surjections $\ind_c(P)\rightarrow \tau^{U\n,\Omega}(\ind_c(P))\rightarrow M$. 
Hence we obtain a generating family of projective objects in $\A^b_S\mod^{B,\Omega}$. 

For the second assertion, by Lemma \ref{lem new5.7}(1) the forgetful functor $\A^b_S\Mod^{B,\Omega}\rightarrow \A^b_S\Mod^{T,\Omega}$ admits an exact right adjunction. 
Hence a projective object in $\A^b_S\mod^{B,\Omega}$ is projective in $\A^b_S\mod^{T,\Omega}$. 
Since (by the corresponding statement for $\fU^b_\zeta\mod^{\Omega,0}_S$) any projective object in $\A^b_S\mod^{T,\Omega}$ admits a Verma flag, our assertion follows from Lemma \ref{lem new5.8}. 
\end{proof}

Till the end of the subsection, we will only consider the truncation by $\Omega=\{\lambda\in \Lambda|\lambda\leq l\nu\}$, and we will abbreviate ``$\Omega$" by ``$\leq l\nu$" in the superscripts above. 

\subsubsection{Truncation by $\leq^\semi$} 
We now consider the truncation of $\A^b_S\mod^B$ by semi-infinite order $\leq^\semi$. 
We have the following proposition, whose proof is postponed to \textsection\ref{subsect 5.2.1}. 
\begin{prop}\label{prop 5.4} 
We have 
\begin{equation}\label{equ new5.9}
\Ext^i_{\A^b_S\mod^B}(\sM_{x},\sM_{x'})\neq 0 \quad \text{only if}\quad 
x\leq^{\frac{\infty}{2}} x'.
\end{equation}
\end{prop} 

For a module $Q\in \A^b_S\mod^B$ admitting a Verma flag, by (\ref{equ new5.9}) we can form the quotient 
$$\tau_c^{\leq^\semi t(\nu)}(Q)$$ 
of $Q$ by the submodule that is composed by the Verma factors $\sM_x$ with $x\nleq^\semi t(\nu)$. 
We define
$$\A^b_S\mod^{B,\leq^\semi t(\nu)}$$ 
as the the full subcategory of modules in $\A^b_S\mod^{B}$ that admit a surjection $Q\twoheadrightarrow M$ from a module $Q$ admitting a Verma flag with factors $\sM_x$ with $x\leq^\semi t(\nu)$. 
Since $W_\ex$ is covered by the poset ideals of the form $\{x\in W_\ex|x\leq^\semi t(\nu)\}$, any object in $\A^b_S\mod^{B}$ is a finite direct sum of objects in $\A^b_S\mod^{B,\leq^\semi t(\nu)}$ for some $\nu\in \Lambda$. 

\begin{lem}\label{lem new5.12}
There is a truncation functor 
$$\tau_c^{\leq^\semi t(\nu)}:\ (\A^b_S\rtimes U\n)\mod^{T}\rightarrow \A^b_S\mod^{B,\leq^\semi t(\nu)}$$ 
that is left adjoint to the natural inclusion. 
\end{lem}
\begin{proof}
Since $\A^b_S\mod^{B,\leq^\semi t(\nu)}$ is contained in $\A^b_S\mod^{B,\leq l\nu}$, any morphism from $M\in (\A^b_S\rtimes U\n)\mod^{T}$ to an object in $\A^b_S\mod^{B,\leq^\semi t(\nu)}$ factors through $\tau^{B,\leq l\nu}M$. 
Hence it is enough to define the functor 
\begin{equation}\label{equ new5.5.0}
\tau_c^{\leq^\semi t(\nu)}:\ \A^b_S\mod^{B,\leq l\nu} \rightarrow \A^b_S\mod^{B,\leq^\semi t(\nu)}.
\end{equation}
Let $Q'\in \A^b_S\mod^{B}$ admitting a Verma flag with factors in $\{M(x\bullet_l 0)_S\}_{x\leq^\semi t(\nu)}$, and let $Q'\twoheadrightarrow M'$ be a surjection. 
Let $Q\in \A^b_S\mod^{B,\leq l\nu}$ be a projective object. 
Then any morphism from $Q$ to $M'$ can be lifted to $Q'$. 
By \eqref{equ new5.9} any morphism from $Q$ to $Q'$ factors through $\tau_c^{\leq^\semi t(\nu)}(Q)$. 
Hence any morphism from $Q$ to $M'$ factors through $\tau_c^{\leq^\semi t(\nu)}(Q)$, which then is the maximal quotient of $Q$ in $\A^b_S\mod^{B,\leq^\semi t(\nu)}$. 

In general, let $M\in \A^b_S\mod^{B,\leq l\nu}$ and choose a projective resolution $Q_2\rightarrow Q_1\rightarrow M\rightarrow 0$ in $\A^b_S\mod^{B,\leq l\nu}$. 
Then we set 
$$\tau_c^{\leq^\semi t(\nu)}(M):=\mathrm{coker}\big(\tau_c^{\leq^\semi t(\nu)}(Q_2)\rightarrow \tau_c^{\leq^\semi t(\nu)}(Q_1)\big).$$ 
Then $\tau_c^{\leq^\semi t(\nu)}(M)$ is contained in $\A^b_S\mod^{B,\leq^\semi t(\nu)}$. 
For any $M'\in \A^b_S\mod^{B,\leq^\semi t(\nu)}$, we have a commutative diagram with exact rows 
$$\begin{tikzcd}
0\arrow[r] & \Hom(\tau_c^{\leq^\semi t(\nu)}(M),M') \arrow[d]\arrow[r] 
& \Hom(\tau_c^{\leq^\semi t(\nu)}(Q_1),M') \arrow[d,equal] \arrow[r] 
& \Hom(\tau_c^{\leq^\semi t(\nu)}(Q_2),M') \arrow[d,equal]\\ 
0\arrow[r] & \Hom(M,M') \arrow[r] 
& \Hom(Q_1,M') \arrow[r] 
& \Hom(Q_2,M'). 
\end{tikzcd}$$ 
Hence the left vertical map is an isomorphism, which shows that $\tau_c^{\leq^\semi t(\nu)}(M)$ is the maximal quotient of $M$ in $\A^b_S\mod^{B,\leq^\semi t(\nu)}$. 
It gives the desired functor. 
\end{proof} 

\begin{lem}\label{lem new5.14}
\begin{enumerate}
\item The family $\{\tau_c^{\leq^\semi t(\nu)}(\ind_c(P))\}$ with $P$ running over a generating family of projective objects in $\A^b_S\mod^T$ forms a generating family of projective objects in $\A^b_S\mod^{B,\leq^\semi t(\nu)}$. 
Moreover, any projective object in $\A^b_S\mod^{B,\leq^\semi t(\nu)}$ admits a Verma flag (with factors in $\{\sM_x\}_{x\leq^\semi t(\nu)}$). 
\item The category $\A^b_S\mod^{B,\leq^\semi t(\nu)}$ is a Serre subcategory of $\A^b_S\mod^{B}$. 
\end{enumerate}
\end{lem}
\begin{proof}
(1) The proof of the first assertion is similar to the one of Lemma~\ref{lem new5.9}. 
For the second assertion, for any projective object $P\in \A^b_S\mod^T$, by construction $\tau_c^{\leq^\semi t(\nu)}(\ind_c(P))=\tau_c^{\leq^\semi t(\nu)}\circ \tau^{B,\leq l\nu}(\ind_c(P))$ admits a Verma flag (with factors in $\{\sM_x\}_{x\leq^\semi t(\nu)}$). 
Hence any projective object $Q\in \A^b_S\mod^{B,\leq^\semi t(\nu)}$ is a direct summand of an object admitting a Verma flag. 
Under the equivalence \eqref{equ 4.2}, the image of $Q$ is a direct summand of a module admitting a Verma flag in $\fU^b_\zeta\mod^{\Lambda,0}_S$, which thus admits a Verma flag by a standard argument (see e.g. \cite[Lem 2.5]{Fie03}), and so does $Q$ in $\A^b_S\mod^T$. 
Now the assertion follows from Lemma~\ref{lem new5.8}. 

(2) By definition $\A^b_S\mod^{B,\leq^\semi t(\nu)}$ is closed under taking quotient objects. 
Let $M$ be a subobject of $M'\in \A^b_S\mod^{B,\leq^\semi t(\nu)}$. 
Then the inclusion $M\hookrightarrow M'$ factors through the quotient $\tau_c^{\leq^\semi t(\nu)}(M)$, hence $M=\tau_c^{\leq^\semi t(\nu)}(M)$. 
So $\A^b_S\mod^{B,\leq^\semi t(\nu)}$ is also closed under taking subobjects. 

Now we show that $\A^b_S\mod^{B,\leq^\semi t(\nu)}$ is closed under extension. 
Let $0\rightarrow M_1\rightarrow M\rightarrow M_2\rightarrow 0$ be a short exact sequence in $\A^b_S\mod^{B}$ with $M_1,M_2\in \A^b_S\mod^{B,\leq^\semi t(\nu)}$. 
Then $M\in \A^b_S\mod^{B,\leq l\nu}$, and we can choose a surjection $Q\twoheadrightarrow M$ from a projective object $Q\in \A^b_S\mod^{B,\leq l\nu}$. 
We have short exact sequence $0\rightarrow Q'\rightarrow Q\rightarrow \tau_c^{\leq^\semi t(\nu)}(Q)\rightarrow 0$, where $Q'$ is the subobject of $Q$ composed by Verma factors $\sM_x$ with ${x\nleq^\semi t(\nu)}$. 
Then $\tau_c^{\leq^\semi t(\nu)}(Q')=0$, so $\Hom(Q',M_i)=0$ ($i=1,2$). 
It follows that $\Hom(Q',M)=0$. 
Hence the surjection $Q\twoheadrightarrow M$ factors through $\tau_c^{\leq^\semi t(\nu)}(Q)\twoheadrightarrow M$, which implies that $M\in \A^b_S\mod^{B,\leq^\semi t(\nu)}$. 
\end{proof}

Recall the truncation functor by $\nu\in \Lambda$ (for any commutative Noetherian $S$-algebra $R$) 
$$\tau^{\leq \nu}_c:\ (\C[\b_{R}]\rtimes U\n) \Mod^T\rightarrow \C[\b_{R}]\Mod^{B,\leq \nu}.$$ 

\begin{lem}\label{lem 5.19new}
The functor $\V_c$ restricts to 
$$\V_c:\ \A^b_S\mod^{B,\leq^\semi t(\nu)}\rightarrow \C[\b_S\times_{\t/W} \t]\mod^{B,\leq \nu}.$$ 
We have a natural isomorphism of functors from $(\A^b_S\rtimes U\n)\mod^{T}$ to $\C[\b_S\times_{\t/W} \t]\mod^{B,\leq \nu}$, 
\begin{equation}\label{equ new5.5}
\V_c\circ \tau_c^{\leq^\semi t(\nu)}=\tau_c^{\leq \nu}\circ \V_c. 
\end{equation}
\end{lem}
\begin{proof}
Recall the functor $\pi_*: \A^b_S\mod^T\rightarrow \C[\b_S]\mod^T$ and its right adjunction $\pi^!$. 
They induces functors (still denoted by $\pi_*$ and $\pi^!$) for $U\n$-equivariant or $B$-equivariant modules. 

We show the first assertion. 
By the exactness of $\V_r$, it only needs to show that $\V_r \sM_x$ is contained in $\C[\b_S\times_{\t/W} \t]\mod^{B,\leq \nu}$, for $x\leq^\semi t(\nu)$. 
By forgetting $S\otimes_{\k[\t/W]}\k[\t]$-action to $S$-action and forgetting the $B$-equivariant structure, it is equivalent to show that $\pi_* \sM_x$ is contained in $\C[\b_S]\mod^{T,\leq \nu}$. 
Indeed, by \cite[Lem 3.8(2)]{Situ1}, for any $w\in W$ and $\lambda\in \Lambda$, in $\sO_S$ we have $\rT^{-\rho}_0M(w\bullet 0+l\lambda)_S=M(-\rho+l\lambda)_S$. 
By (\ref{equ new4.8}) we deduce that $\pi_*\sM_{t(\lambda)w}=\C[\b_S]\otimes \C_\lambda$ (as $T$-equivariant $\C[\b_S]$-modules), which proves the claim. 

We then obtain a natural transformation $\tau_c^{\leq \nu}\circ \V_c\rightarrow \V_c\circ \tau_c^{\leq^\semi t(\nu)}$. 
To show that it is an isomorphism, by forgetting $S\otimes_{\k[\t/W]}\k[\t]$-action to $S$-action, it is enough to show the natural isomorphism 
\begin{equation}\label{equ new5.11}
\tau_c^{\leq \nu}\circ \pi_* \rightarrow \pi_* \circ \tau_c^{\leq^\semi t(\nu)}
\end{equation}
By \cite[Lem 3.8(2)]{Situ1} again, the module $\rT_{-\rho}^0 M(-\rho+l\lambda)_S$ is composed by $M(w\bullet 0+l\lambda)_S$ with $w\in W$. 
By (\ref{equ new4.9}) and Lemma \ref{lem new5.8}, the object $\pi^!\C[\b_S]\otimes \C_\lambda$ is composed by $\sM_{t(\lambda)w}$ with $w\in W$, as $T$-equivariant $\A^b_S$-module. 
Any object in $\C[\b_S]\mod^{B,\leq \nu}$ is a quotient of an object composed by $\C[\b_S]\otimes \C_\lambda$ with $\lambda\leq \nu$. 
By the exactness of $\pi^!$ (it is exact as a functor for $T$-equivariant modules, by (\ref{equ new4.9}) again), we deduce that $\pi^!$ sends $\C[\b_S]\mod^{B,\leq \nu}$ to $\A^b_S\mod^{B,\leq^\semi t(\nu)}$. 
Hence we have a commutative diagram 
$$\begin{tikzcd}
(\A^b_S\rtimes U\n)\mod^{T} & \A^b_S\mod^{B,\leq^\semi t(\nu)} \arrow[l,hook']  \\ 
(\C[\b_S]\rtimes U\n)\mod^{T} \arrow[u,"\pi^!"] 
& \C[\b_S]\mod^{B,\leq \nu} \arrow[l,hook'] \arrow[u,"\pi^!"]. 
\end{tikzcd}$$ 
Now (\ref{equ new5.11}) follows by taking the left adjunctions. 
\end{proof}

\subsubsection{Proof of Proposition \ref{prop 5.4}}\label{subsect 5.2.1} 
We translate our problem to a problem of computing the extensions of some constructible sheaves on the affine flag variety, using Bezrukavnikov's equivalence \cite{Bezru16}. 

Let $\A_{(\t/W)_{\hat{0}}}$ be the completion of $\A$ at $0\in \t/W$. 
Since the restriction of $\pi^* \A_{(\t/W)_{\hat{0}}}$ on $\{e\}\times \b\hookrightarrow \widetilde{\g}$ is $\A^b_S$, we can identify 
\begin{equation}\label{equ 5.3} 
\A^b_S\mod^B=\Coh^G(\pi^* \A_{(\t/W)_{\hat{0}}})  
\end{equation}
by induction from $B$ to $G$. 
The $\A^b_S$-module $\sM_{w_0}=\sE^{\vee,b}_S$ corresponds to $\sE^{\vee}_{\hat{0}}:=\sE^{\vee}\otimes_{\k[\t/W]} \k[\t/W]_{\hat{0}}$ under (\ref{equ 5.3}). 
Let $\St=\widetilde{\g}\times_{\g} \widetilde{\g}$ be the Steinberg variety, and let $\widehat{\mathrm{St}}$ be its completion at $0\in \t/W$. 
Let $\pi_2: \widehat{\mathrm{St}}\rightarrow \widetilde{\g}\times_{\t/W} (\t/W)_{\hat{0}}$ be the projection to the second factor. 
There is an equivalence 
$$\pi_2^*\sE_{{\hat{0}}}\otimes^L_{\pi^* \A_{(\t/W)_{\hat{0}}}}-:\ 
\Db\Coh^G(\pi^* \A_{(\t/W)_{\hat{0}}}) \xs \Db\Coh^G(\widehat{\mathrm{St}}), $$ 
sending $\sE^{\vee}_{\hat{0}}$ to the structure sheaf $\sO_{\widetilde{\g}_{\Delta}}$ of the diagonal $\widetilde{\g}\times_{\t/W} (\t/W)_{\hat{0}}$ in $\widehat{\mathrm{St}}$. 

Let $\check{G}$ be the Langlands dual group of $G$. 
Let $L\check{G}$ be the loop group of $\check{G}$. 
Let $\check{I}$ be the Iwahori subgroup and let $\check{I}_0$ be its pro-unipotent radical. 
The enhanced affine flag variety is $\widetilde{\Fl}=L\check{G}/\check{I}_0$. 
By \cite{Bezru16}, there is an equivalence of triangulated monoidal categories 
\begin{equation}\label{equ 5.5} 
\Db\Coh^G(\widehat{\mathrm{St}}) \simeq \widehat{D}_{\check{I}_0}(\widetilde{\Fl}), 
\end{equation} 
where $\widehat{D}_{\check{I}_0}(\widetilde{\Fl})$ is a completion of the category of $\check{I}_0$-equivariant monodromic complexes on $\widetilde{\Fl}$. 
There are standard and costandard sheaves $j_{x!}$, $j_{x*}$ in $\widehat{D}_{\check{I}_0}(\widetilde{\Fl})$ associated to the embedding $j_x$ from the Schubert cell labelled by $x\in W_\ex$ to $\widetilde{\Fl}$. 
The equivalence (\ref{equ 5.5}) sends $\sO_{\widetilde{\g}_{\Delta}}$ to $j_{e!}$, and in general $\sO_{\widetilde{\g}_{\Delta}}(w_0\lambda)$
to the \textit{Wakimoto sheaf} $\sJ_{\lambda}$ (see \cite[\textsection 3.3]{Bezru16}), for any $\lambda\in \Lambda$ (In \textit{loc. cit.} the line bundle labelled by $\lambda$ is sent to $\sJ_\lambda$, and here our correspondence is due to the different convention that $\sO_{\sB}(\lambda)$ is semi-ample if and only if $\lambda$ is anti-dominant). 

We view $\widehat{\mathrm{St}}$ as the base change of $\pi:\widetilde{\g}\rightarrow \g$ (as the second factor) by $\widetilde{\g}\times_{\t/W} (\t/W)_{\hat{0}} \rightarrow \g$ (as the first factor), then by Theorem \ref{thm 3.1} there are functors $\T^c_s$, $s\in \I$ on $\Db\Coh^G(\widehat{\mathrm{St}})$. 
By \cite{Bezru16} the equivalence (\ref{equ 5.5}) intertwines the functor $\T^c_s$ with the convolution with $j_{s!}$ from the right. 
Therefore, by (\ref{equ 4.13}) the composed equivalence 
$$\Db(\A^b_S\mod^B)\simeq \widehat{D}_{\check{I}_0}(\widetilde{\Fl})$$ 
sends 
$$\sM_{t(\nu)w} \mapsto \sJ_{w_0\nu}*j_{w_0w!}, \quad \forall w\in W,\ \forall \nu\in \Lambda.$$

\begin{proof}[Proof of Proposition \ref{prop 5.4}] 
We suppose $w_0x=t(\lambda)w$ and $w_0x'=t(\lambda')w'$, where $\lambda,\lambda'\in \Lambda$ and $w,w'\in W$. 
Then we have 
\begin{align*}
\Ext^i(\sM_{x},\sM_{x'}) 
&=\Ext^i(\sJ_\lambda* j_{w!},\sJ_{\lambda'}* j_{w'!})\\ 
&=\Ext^i(\sJ_{\lambda+\mu}* j_{w!}, \sJ_{\lambda'+\mu}* j_{w'!}), \quad \forall \mu\in \Lambda.
\end{align*} 
Let $\mu$ be dominant enough, then we have $\sJ_{\lambda+\mu}=j_{t(\lambda+\mu)*}$ (see \cite[\textsection 3.3]{Bezru16}). 
Since $t(\lambda)$ is maximal in the Bruhat order on $t(\lambda)W$ if $\lambda$ is dominant, we have 
$\sJ_{\lambda+\mu}* j_{w!}=j_{t(\lambda+\mu)*}*j_{w!}=j_{t(\lambda+\mu)w*}$. 
Note that 
$$\Ext^i(j_{t(\lambda+\mu)w*},j_{t(\lambda'+\mu)w'*}) = j^*_{t(\lambda'+\mu)w'} j_{t(\lambda+\mu)w*}$$ 
which is nonzero only if $t(\lambda'+\mu)w'\leq t(\lambda+\mu)w$ in the Bruhat order. 
Since $t(\lambda'+\mu)w'$ and $t(\lambda+\mu)w$ translate the fundamental alcove to the dominant alcoves, the Bruhat order on them coincides with the semi-infinite order (see e.g. \cite[Claim 4.14]{Soe97}). 
We deduce that 
$$\Ext^i(\sM_{x},\sM_{x'})\neq 0$$ 
only if $t(\lambda'+\mu)w'\leq^{\frac{\infty}{2}} t(\lambda+\mu)w$, equivalently $t(\lambda')w'\leq^{\frac{\infty}{2}} t(\lambda)w$ and $x \leq^{\frac{\infty}{2}}x'$. 
\end{proof}

\subsection{Proof of the main result}\label{subsect 5.3} 
In this section, we prove our main result. 

\begin{lem}\label{lem 5.7} 
There is a natural isomorphism 
\begin{equation}\label{equ new5.14}
\V_r\circ \ind_{r} \simeq \ind_{r} \circ \V^b_r
\end{equation}
of functor from $\fU^b_\zeta\mod^{\Lambda,0}_S$ to $U^\hb_\zeta\mod^{\Lambda,-\rho}_{S\otimes_{\C[\t/W]}\C[\t]}$, and a natural isomorphism 
\begin{equation}\label{equ new5.12}
\V_c\circ \ind_c \simeq \ind_c \circ \V^b_c
\end{equation}
of functors from $\A^b_S\mod^T$ to $(\C[\b_{S}\times_{\t/W} \t]\rtimes U\n)\mod^T$. 
\end{lem}
\begin{proof} 
For any $M$ in $\fU^b_\zeta\mod^{\Lambda,0}_S$, we have the following natural isomorphisms of $U^\hb_\zeta$-modules 
\begin{align*}
\V_r\circ \ind_{r}(M) 
&= \pr_{-\rho}\big( (U^\hb_\zeta\otimes_{\fU^b_\zeta} M) \otimes V(\rho)^*\big)\\ 
&\simeq \pr_{-\rho}\big( U^\hb_\zeta\otimes_{\fU^b_\zeta} (M \otimes V(\rho)^*)\big)\\ 
&= U^\hb_\zeta\otimes_{\fU^b_\zeta} \big(\pr_{-\rho}(M \otimes V(\rho)^*)\big)= \ind_{r} \circ \V^b_r(M) , 
\end{align*} 
where the second isomorphism is by tensor formula. 
Since the action of $Z_\HC$ on $U^\hb_\zeta\otimes_{\fU^b_\zeta} M$ coincides with the one induced by the $Z_\HC$-action on $M$, the equality $\V_r\circ \ind_{r}(M)=\ind_{r} \circ \V^b_r(M)$ is also an isomorphism of $S\otimes_{\k[\t/W]} \k[\t]$-modules. 
It implies (\ref{equ new5.14}). 

The isomorphism (\ref{equ new5.12}) is straight forward to check. 
\end{proof}

\begin{prop}\label{prop 5.8} 
The functors $\V_r$ and $\V_c$ are fully-faithful on the objects admitting Verma flags. 
\end{prop}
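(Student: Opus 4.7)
The plan is to reduce fully-faithfulness to the case of projective generators in truncated subcategories, and then to transfer from the established fully-faithfulness of $\V^b_r$ and $\V^b_c$ (Proposition \ref{prop 4.11}) via adjunction along the induction functors $\ind_r$ and $\ind_c$.

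I would first treat $\V_r$. Given $M, N \in \sO^0_S$ admitting Verma flags, both lie in some truncated block $\sO^{0,\leq l\nu}_S$. By Lemma \ref{lem 2.2}, the module $M$ admits a two-step projective presentation $Q_1 \to Q_2 \to M \to 0$ with each $Q_i = \tau^{\leq l\nu}\ind_r(P^b(\lambda_i)_S)$. Since $\V_r$ is exact (it is an enhanced translation functor), a contravariant five-lemma argument applied to this presentation reduces the claim to showing that
$$\Hom(Q_i, N) \longrightarrow \Hom(\V_r Q_i, \V_r N)$$
is an isomorphism for each $i$. By Lemmas \ref{lem 5.9} and \ref{lem 5.7}, one has $\V_r Q_i = \tau^{\leq l\nu}\ind_r(\V^b_r P^b(\lambda_i)_S)$. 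Applying the adjunction $\ind_r \dashv \for$ on both sides, and absorbing the truncations into $N$ and $\V_r N$, the two Hom-spaces identify with $\Hom(P^b(\lambda_i)_S, \for N)$ and $\Hom(\V^b_r P^b(\lambda_i)_S, \for \V_r N)$ respectively. A direct check yields $\for \V_r N \simeq \V^b_r \for N$: both are the tensor product with $V(\rho)^*$ composed with projection to the $-\rho$-block, and the enhanced $S \otimes_{\k[\t/W]} \k[\t]$-structure transports accordingly. Since Verma modules for $U^\hb_\zeta$ restrict to Verma modules for $\fU^b_\zeta$, the module $\for N$ admits a Verma flag, so Proposition \ref{prop 4.11} supplies the required isomorphism.

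For $\V_c$ the argument is strictly parallel. Projective generators of $\A^b_S\mod^{B,\leq \nu}$ have the form $\ind_c(P)^{\leq \nu}$ with $P$ projective in $\A^b_S\mod^T$ by Lemma \ref{lem 5.6}; Lemma \ref{lem 5.6*} ensures that such inductions admit Verma flags, and Lemmas \ref{lem 5.7}, \ref{lem 5.8} furnish the required commutativities of $\V_c$ with induction and truncation. Combining this with the adjunction $\ind_c \dashv \for$ between the $B$-equivariant and $T$-equivariant categories, and using that $\V_c$ arises from $G$-equivariant base change of $\pi_*$ and hence commutes with this forgetful functor, one reduces to the fully-faithfulness of $\V^b_c$ from Proposition \ref{prop 4.11}.

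The main obstacle is the verification of the compatibility $\for \circ \V_{r/c} \simeq \V^b_{r/c} \circ \for$ with the various enhanced central actions carried along; granted this, the remainder is a routine diagram chase combining projective presentations, adjunction, and truncation.
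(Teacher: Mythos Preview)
Your proposal is correct and follows essentially the same approach as the paper's proof: reduce via a two-step projective presentation in a truncated block to the case where the source object is a truncated induction of a projective, then use the commutation of $\V_{r/c}$ with induction and truncation (Lemmas \ref{lem 5.7}, \ref{lem 5.8}, \ref{lem 5.9}) together with the induction--forgetful adjunction to land in the setting of Proposition \ref{prop 4.11}. The only organizational differences are that the paper treats $\V_c$ first and remarks that $\V_r$ can alternatively be done by the direct localization argument of Proposition \ref{prop 4.11}, whereas you treat $\V_r$ first; and the paper isolates the ``special case'' (source a truncated induction) as a separate step before invoking the projective resolution, while you fold it directly into the resolution argument.
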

\begin{proof} 
We firstly prove the assertion for $\V_c$ in a special case: let $M\in \A^b_S\mod^{B,\leq^\semi t(\nu)}$ admitting Verma flags, and let $Q=\tau_c^{\leq^\semi t(\nu)}(\ind_c(M'))$ with $M'\in \A^b_S\mod^{T}$ admitting Verma flags. 
We show that the natural map 
$$\Hom(Q,M)\rightarrow \Hom(\V_c Q,\V_c M)$$ 
is an isomorphism. 
To that end, consider the following commutative diagram 
$$\begin{tikzcd}
\Hom(Q,M) \arrow[d,"\V_c"]\arrow[r,"\simeq"] 
& \Hom(\ind_c(M'),M) \arrow[d,"\V_c"] \arrow[r,"\simeq"] 
& \Hom(M',M) \arrow[d,"\V_c^b"]\\ 
\Hom (\V_c Q,\V_c M) \arrow[r,"\simeq"] 
& \Hom (\V_c(\ind_c(M')),\V_c M) \arrow[r,"\simeq"] 
& \Hom(\V_c^b M',\V_c^b M), 
\end{tikzcd}$$ 
where the horizontal isomorphisms follow from the adjunctions for $\tau_c^{\leq^\semi t(\nu)}$, $\tau_c^{\leq \nu}$ and $\ind_c$, and the natural isomorphisms (\ref{equ new5.5}) and (\ref{equ new5.12}). 
It is now equivalent to show that the right vertical map is an isomorphism, which is true by Proposition \ref{prop 4.11}. 

In general, let $M_1,M_2\in \A^b_S\mod^{B,\leq^\semi t(\nu)}$ admitting Verma flags. 
By Lemma \ref{lem new5.14}, there is a resolution $Q_2\rightarrow Q_1\rightarrow M_1\rightarrow 0$ with $Q_i=\tau_c^{\leq^\semi t(\nu)}(\ind_c(P_i))$ for some projective objects $P_i$ in $\A^b_S\mod^{T}$. 
We have a commutative diagram with exact rows 
$$\begin{tikzcd}
0\arrow[r] & \Hom(M_1,M_2) \arrow[d]\arrow[r] 
& \Hom(Q_1,M_2) \arrow[d,equal] \arrow[r] 
& \Hom(Q_2,M_2) \arrow[d,equal]\\ 
0\arrow[r] & \Hom(\V_c M_1,\V_c M_2) \arrow[r] 
& \Hom(\V_c Q_1,\V_c M_2) \arrow[r] 
& \Hom(\V_c Q_2,\V_c M_2), 
\end{tikzcd}$$ 
where the vertical isomorphisms are by our previous result in the special case. 
It shows that $\V_c$ is fully-faithful on $M_1$ and $M_2$. 

The assertion for $\V_r$ can be proved similarly, or more directly as in Proposition \ref{prop 4.11}. 
\end{proof}

Recall the following result 
\begin{thm}[{\cite[Thm 3.6]{Situ2}}] 
The functor 
$$\I^\hb_1:\ (\k[\b_S]\rtimes U\n)\Mod^{T} \rightarrow U^\hb_\zeta\Mod^{\Lambda}_S$$ 
induces an equivalence of abelian categories 
$$\mathfrak{G}:\ \Coh^B(\b_S)\xs \sO^{-\rho}_{S},$$ 
sending $\k[\b_S]\otimes \k_\lambda$ to $M(-\rho+l\lambda)_S$, for any $\lambda\in \Lambda$. 
\end{thm}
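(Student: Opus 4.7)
The plan is to construct $\mathfrak{G}$ as the restriction of $\I^\hb_1$ to the full subcategory $\Coh^B(\b_S) \subset (\k[\b_S]\rtimes U\n)\Mod^T$, building on the equivalence $\fG^b: \Coh^T(\b_S) \xrightarrow{\sim} \fU^b_\zeta\mod^{\Lambda,-\rho}_S$ of (\ref{equ 4.1}) together with the commutative square of Proposition~\ref{prop 4.4}. The organizing identification is that $\Coh^B(\b_S)$ embeds fully faithfully in $(\k[\b_S]\rtimes U\n)\Mod^T$ as the subcategory of finitely generated, $T$-graded modules on which $U\n$ acts locally nilpotently (so the action integrates to an algebraic $N$-action), while $\sO^{-\rho}_S$ is by definition the analogous locally $U^+_\zeta$-unipotent subcategory of $U^\hb_\zeta\Mod^{\Lambda,-\rho}_S$.

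First I would verify that $\I^\hb_1$ sends $\Coh^B(\b_S)$ into $\sO^{-\rho}_S$. For $\sF \in \Coh^B(\b_S)$, the local $U^+_\zeta$-unipotence of $\I^\hb_1(\sF)$ follows because $U^+_\zeta$ is generated by the finite-dimensional $u^+_\zeta$ (automatically locally nilpotent on any module) together with divided powers whose quantum Frobenius images lie in $U\n$ and thus act locally nilpotently on $\sF$; finite generation of $\I^\hb_1(\sF)$ over $U^\hb_\zeta \otimes S$ is inherited from that of $\sF$ over $\k[\b_S]$ via the PBW-type factorization $U^\hb_\zeta = u^-_\zeta \otimes \fU^\hb_\zeta$ coming from \cite{DeCKP92}. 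Next I would compute the image on generators: a direct unwinding using $\I^\hb_1 = U^\hb_\zeta \otimes_{\fU^\hb_\zeta} \Fr^*(- \otimes \k_{-\rho})$, combined with the identifications $Z_\Fr^- \cong \k[N]$ and $\Fr: U^+_\zeta \twoheadrightarrow U\n$, shows that $\I^\hb_1(\k[\b_S] \otimes \k_\lambda) = M(-\rho + l\lambda)_S$, with the $-\rho$-twist built into $\I^\hb_1$ precisely accounting for the shift between the coherent weight $\lambda$ and the quantum highest weight $-\rho + l\lambda$.

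Finally, to prove that $\mathfrak{G}$ is an equivalence, I would verify full-faithfulness on the generating family $\{\k[\b_S]\otimes \k_\lambda\}_{\lambda \in \Lambda}$ by matching
\[
\Hom_{\Coh^B(\b_S)}(\k[\b_S]\otimes \k_\lambda, \sF) = (\sF^N)_\lambda
\quad \text{and} \quad
\Hom_{\sO^{-\rho}_S}(M(-\rho + l\lambda)_S, \I^\hb_1(\sF)) = \bigl(\I^\hb_1(\sF)^{U^+_\zeta}\bigr)_{-\rho + l\lambda},
\]
and identifying both with the $N$-invariants of $\sF_\lambda$ via the quantum Frobenius. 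Essential surjectivity then follows since the Verma modules $M(-\rho + l\lambda)_S$ are standard objects whose projective covers generate $\sO^{-\rho}_S$ in the highest-weight-category sense, and their preimages under $\mathfrak{G}$ form a generating family of $\Coh^B(\b_S)$.

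The principal obstacle is the weight-precise identification of the $U^+_\zeta$-invariants of $\I^\hb_1(\sF)$ with the $N$-invariants of $\sF$ through the $-\rho$-shifted Frobenius twist, which requires careful bookkeeping between the $u^+_\zeta$-invariance (automatic for the highest weight vector of a Verma) and the $U\n$-invariance coming from the coherent side. One can alternatively sidestep this by first proving full-faithfulness on objects admitting Verma flags directly from Proposition~\ref{prop 4.4} together with a standard highest-weight-category argument in the spirit of Proposition~\ref{prop 4.11}, and then extending to all of $\Coh^B(\b_S)$ by presenting arbitrary objects as cokernels of maps between objects with Verma flags.
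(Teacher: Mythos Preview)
The paper does not prove this statement; it is quoted verbatim from \cite[Thm 3.6]{Situ2} and used as a black box in the proof of Theorem~\ref{thm 5.13}. So there is no ``paper's own proof'' to compare against here, and your task is really to supply an independent argument.

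Your main line of attack has a genuine gap. You propose to check full-faithfulness on the objects $\k[\b_S]\otimes \k_\lambda$ and then propagate. But these objects are \emph{not} projective in $\Coh^B(\b_S)$: the functor $\Hom(\k[\b_S]\otimes\k_\lambda,-)=(-)^N_\lambda$ involves $N$-invariants, and even in characteristic~0 the functor of rational $N$-invariants is not exact (already $\Ext^1_N(\k,\k)\cong\n^*\neq 0$). Consequently, knowing $\mathfrak G$ is fully faithful on maps \emph{out of} the $\k[\b_S]\otimes\k_\lambda$ does not let you resolve an arbitrary object by them and conclude full-faithfulness in general. Your essential-surjectivity sentence is also circular as written: you speak of ``preimages under $\mathfrak G$'' before $\mathfrak G$ is known to be an equivalence.

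Your alternative at the end is the right direction and is essentially how the present paper handles the analogous problem for the principal block (Theorem~\ref{thm 5.13}): work in truncations, use that the truncated projectives on both sides arise as $\ind_c(P)^{\leq\nu}$ and $\ind_r(P')^{\leq l\nu}$ from $T$-level projectives, invoke Proposition~\ref{prop 4.4} to match their images, and appeal to a full-faithfulness statement on Verma-flag objects. For the Steinberg block this last step is actually easier than Proposition~\ref{prop 5.8}, since there is no further ``$\V$'' to pass through; one can argue directly that $\I^\hb_1$ is fully faithful on $\ind_c$-images by reducing to $\fG^b$ via the adjunction $(\ind_c,\for)$ and the identification in Proposition~\ref{prop 4.4}. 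If you flesh out this route carefully---in particular, setting up the truncations on $\Coh^B(\b_S)$ and checking that $\I^\hb_1$ respects them---you obtain a valid proof that is logically independent of \cite{Situ2} within the framework of the present paper.
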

\noindent 
By (\ref{equ basechange}), for any commutative Noetherian $S$-algebra $R$, the equivalence $\mathfrak{G}$ induces an equivalence of the categories on both sides after base changed to $R$. 
In particular, we have 
$$\mathfrak{G}:\ \Coh^B(\b_S\times_{\t/W} \t)\xs \sO^{-\rho}_{S\otimes_{\k[\t/W]}\k[\t]}.$$ 

The following statement is our main result. 

\begin{thm}\label{thm 5.13}
There is an equivalence of $S\otimes_{\k[\t/W]}\k[\t]$-linear abelian categories 
$$\fF:\ \Coh^G(\pi^* \A_{(\t/W)_{\hat{0}}}) \xs \sO_S^{0}, $$ 
such that 
$$\fF\big(\T^c_w(\sE^{\vee}_{\hat{0}})\otimes \sO_{\widetilde{\g}}(\nu)\big) = M(w_0w^{-1}\bullet 0+l\nu)_S , \quad \forall w\in W,\ \forall \nu\in \Lambda. $$ 
\end{thm}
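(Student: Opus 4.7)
\emph{Proof sketch.} Via the identification (\ref{equ 5.3}), we work with $\A^b_S\mod^B$ in place of $\Coh^G(\pi^*\A_{(\t/W)_{\hat{0}}})$. Since any object in $\A^b_S\mod^B$ (resp.\ $\sO^0_S$) is supported in some truncation $\A^b_S\mod^{B,\leq\nu}$ (resp.\ $\sO^{0,\leq l\nu}_S$), the strategy is to build, for each $\nu\in\Lambda$, an equivalence $\fF_\nu:\A^b_S\mod^{B,\leq\nu}\xs\sO^{0,\leq l\nu}_S$ compatibly in $\nu$, and then pass to the colimit.

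On each side there is an explicit projective generating family. By Lemma~\ref{lem 2.2}(2), the modules $Q(\lambda)_S^{\leq l\nu}=\ind_r(P^b(\lambda)_S)^{\leq l\nu}$ generate $\sO^{0,\leq l\nu}_S$; by Lemma~\ref{lem 5.6}, the modules $\ind_c(P_\lambda)^{\leq\nu}$ with $P_\lambda:=(\fF^b)^{-1}(P^b(\lambda)_S)$ generate $\A^b_S\mod^{B,\leq\nu}$. Combining Corollary~\ref{cor 4.4}, Proposition~\ref{prop 4.4} and Lemma~\ref{lem 5.7} yields the chain of natural isomorphisms
\[\mathfrak{G}\circ\V_c\circ\ind_c\;\cong\;\mathfrak{G}\circ\ind_c\circ\V^b_c\;\cong\;\ind_r\circ\fG^b\circ\V^b_c\;\cong\;\ind_r\circ\V^b_r\circ\fF^b,\]
and Lemmas~\ref{lem 5.8}, \ref{lem 5.9} propagate this through the truncations on both sides. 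Evaluated at $P_\lambda$, this gives $\mathfrak{G}\,\V_c(\ind_c(P_\lambda)^{\leq\nu})\cong\V_r(Q(\lambda)_S^{\leq l\nu})$, naturally in $\lambda$. Proposition~\ref{prop 5.8} (full-faithfulness of $\V_c$, $\V_r$ on objects admitting Verma flags) together with the equivalence $\mathfrak{G}$ then induces an isomorphism between the endomorphism algebras of the two projective generators, and Morita theory produces $\fF_\nu$.

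Since $\A^b_S\mod^{B,\leq\nu}$ embeds as a Serre subcategory of $\A^b_S\mod^{B,\leq\nu'}$ for $\nu\leq\nu'$, and likewise on the module side, and since all the natural isomorphisms above are compatible with these embeddings, $\fF_{\nu'}$ restricts to $\fF_\nu$; the $\fF_\nu$ assemble into an $S\otimes_{\k[\t/W]}\k[\t]$-linear equivalence $\fF$. For the Verma formula, Proposition~\ref{prop 4.7} gives $\fF^b(\sE^{\vee,b}_S)=M(-2\rho)_S=M(w_0\bullet 0)_S$, which lifts via the induction $\ind_c\leftrightarrow\ind_r$ to $\fF(\sE^\vee_{\hat{0}})=M(w_0\bullet 0)_S$. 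The formula for general $x=t(\nu)w_0w^{-1}$ then follows from (\ref{equ 4.13}) combined with the compatibility of $\fF$ with the reflection functors $\T^c_s$ (inherited from Proposition~\ref{prop 4.1}) and with the $\Lambda$-translations.

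The main technical obstacle is coherently stitching together the web of functorial compatibilities---induction, truncation, the Steinberg equivalence $\mathfrak{G}$, the $\V$-functors, and $\fF^b$---and in particular verifying that $\mathfrak{G}$ respects the truncation structures in a manner compatible with the $\V$-functors, so that the endomorphism-ring isomorphisms assemble coherently as $\nu$ increases.
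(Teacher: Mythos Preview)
Your construction of the equivalence $\fF$ is essentially the paper's: match the projective generators $\ind_c(P_\lambda)^{\leq\nu}$ and $\ind_r(P^b(\lambda)_S)^{\leq l\nu}$ through the functors $\V_c$, $\V_r$, and $\fG$, invoke Proposition~\ref{prop 5.8} for full-faithfulness on Verma-filtered objects, and pass from projectives to the abelian category. The chain of compatibilities you list is exactly what the paper assembles in the first paragraph of its proof.

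The gap is in your argument for the Verma formula. You claim the general case follows from the base case $\fF(\sE^\vee_{\hat{0}})=M(w_0\bullet 0)_S$ together with ``the compatibility of $\fF$ with the reflection functors $\T^c_s$ (inherited from Proposition~\ref{prop 4.1})''. But Proposition~\ref{prop 4.1} concerns $\fF^b$, not $\fF$; the compatibility of $\fF$ itself with reflection functors is Corollary~\ref{cor 6.8}, which is proved \emph{after} Theorem~\ref{thm 5.13} and uses it. So your argument is circular. Moreover, your base case---``lifts via the induction $\ind_c\leftrightarrow\ind_r$''---is not a proof: $\sM_{w_0}=\sE^{\vee,b}_S$ is not of the form $\ind_c(P)$, so the induction compatibility does not directly apply.

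The paper avoids both issues by observing that, by construction, $\fF$ fits into a commutative square with the forgetful functors and $\fF^b$. Hence $\fF(\sM_x)$, viewed as a $\Lambda$-graded $\fU^b_\zeta\otimes S$-module, is $\fF^b(\sM_x)=M(x\bullet_l 0)_S$. Since this object lies in $\sO^0_S$ and has highest weight $x\bullet_l 0$ with all other weights strictly smaller, the $U^\hb_\zeta$-structure is forced: the top weight vector is killed by $U^+_\zeta$, so the module must be the Verma module. The formula then follows directly from (\ref{equ 4.13}), which identifies $\T^c_w(\sE^\vee_{\hat{0}})\otimes\sO_{\tilde{\g}}(\nu)$ with $\sM_{t(\nu)w_0w^{-1}}$ purely on the coherent side---no reflection compatibility of $\fF$ needed.
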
 
\begin{proof}
Let $P$ be a projective module in $\fU^b_\zeta\mod^{\Lambda,0}_S$, and set $\sP=(\fF^{b})^{-1}(P)$ in $\A^b_S\mod^T$. 
Set $Q=\ind_r(P)$ and $\sQ=\ind_c(\sP)$. 
We have the following isomorphisms by (\ref{equ new5.8}) and (\ref{equ new5.14}), 
$$\V_r(\tau^{\leq^\semi t(\nu)}_r Q)
=\tau^{\leq -\rho+l\nu}_r(\V_r Q)
=\tau^{\leq -\rho+l\nu}_r(\ind_r \V^b_r(P)),$$ 
and similarly by (\ref{equ new5.5}) and (\ref{equ new5.12}) we have 
$$\V_c(\tau^{\leq^\semi t(\nu)}_c \sQ)
=\tau^{\leq \nu}_c (\V_c \sQ)
=\tau^{\leq \nu}_c (\ind_c \V^b_c(\sP)).$$ 
By (\ref{equ new4.11.0}), (\ref{equ new4.10}) and (\ref{equ new4.12}), we have isomorphisms 
\begin{align*}
\mathfrak{G}\big(\tau^{\leq \nu}_c (\ind_c \V^b_c(\sP))\big) 
&=\tau^{\leq -\rho+l\nu}_r\circ \I^\hb_1 \circ \ind_c (\V^b_c(\sP))\\ 
&=\tau^{\leq -\rho+l\nu}_r \circ \ind_r \big(\mathfrak{G}^b (\V^b_c(\sP))\big)\\
&=\tau^{\leq -\rho+l\nu}_r(\ind_r \V^b_r(P)). 
\end{align*}
It follows that $\mathfrak{G}\big(\V_c(\tau^{\leq^\semi t(\nu)}_c \sQ)\big)=\V_r(\tau^{\leq^\semi t(\nu)}_r Q)$. 
By Lemmas \ref{lem new5.17}(1) and \ref{lem new5.14}(1), we obtain identifications 
$$\fG \circ \V_c\big(\Proj(\A^b_S\mod^{B,\leq^\semi t(\nu)})\big)= \V_r\big(\Proj(\sO^{0,\leq^\semi t(\nu)}_S)\big), \quad \forall \nu \in \Lambda ,$$ 
of subcategories in $\sO^{-\rho}_{S\otimes_{\k[\t/W]} \k[\t]}$. 
By Proposition \ref{prop 5.8}, it induces an equivalence of additive categories 
$$\Proj(\A^b_S\mod^{B,\leq^\semi t(\nu)}) \xs \Proj(\sO^{0,\leq^\semi t(\nu)}_S), \quad \forall \nu \in \Lambda,$$ 
which extends to an equivalence of abelian categories 
$$\fF:\ \A^b_S\mod^{B}\xs \sO^{0}_S.$$ 
We then identify $\A^b_S\mod^{B}=\Coh^G(\pi^* \A_{(\t/W)_{\hat{0}}})$ by induction from $B$ to $G$, to get the desired equivalence. 

For the second assertion, we consider the following commutative diagram by construction 
$$\begin{tikzcd} 
\A^b_S\mod^{B} \arrow[r,"\fF","\simeq"']\arrow[d,"\for"'] 
& \sO^{0}_S \arrow[d,"\for"]\\ 
\A^b_S\mod^{T} \arrow[r,"\fF^b","\simeq"'] 
& \fU^b_\zeta\mod^{\Lambda,0}_{S}. 
\end{tikzcd}$$ 
Hence $\fF(\sM_x)$ ($x\in W_\ex$) is isomorphic to $M(x\bullet_l 0)_S$ as a $\Lambda$-graded $\fU^b_\zeta\otimes S$-module. 
It forces that $\fF(\sM_x)= M(x\bullet_l 0)_S$ as $\Lambda$-graded $U^\hb_\zeta\otimes S$-modules. 
Now the assertion follows from (\ref{equ 4.13}). 
\end{proof} 

\begin{rmk}\label{rmk 5.14} 
The assumption that $l$ is a prime power is used in the results in \textsection\ref{subsect 3.2} concerning the localization theorem for $\fU_\zeta$. 
However, it is expected that this assumption is not necessary (see a proof for type $A$ in \cite{Tan22}). 
For Theorem \ref{thm 5.13}, one may remove this assumption by showing that the equivalence class of the category $\sO^0_S$ is independent of $\zeta$ (note that similar independence for small quantum groups appeared in \cite{AJS94}.) 
\end{rmk}

Let $\pi_{_{\widetilde{\sN}}}: \widetilde{\sN}=G\times^B \n \rightarrow \g$ be the Springer resolution. 
We set $\A'$ be the specialization of $\A$ at $0\in \t$, which satisfies an equivalence \cite{BM13} 
$$\Db\Coh(\widetilde{\sN})\ \simeq\ \Db(\A'\mod),$$ 
and therefore is called the \textit{non-commutative Springer resolution}. 
Recall that $\A_0$ is the specialization of $\A$ at $0\in \g$. 
We abbreviate $\sO^{\chi_0}_\C$ as the full subcategory of modules in $\sO_\C$ on which the action of $Z_\HC$ factors through $\chi_0: Z_\HC\rightarrow \C$. 

\begin{corollary}\label{cor 5.14}
There is an equivalences of abelian categories 
\begin{equation}\label{equ 5.7}
\Coh^G(\pi_{_{\widetilde{\sN}}}^* \A)=\A_\n\mod^B \xs \sO_\k^{0}, 
\end{equation}
which by restrictions on $0\in \t$ and $0\in \g$ induces equivalences 
$$\Coh^G(\pi_{_{\widetilde{\sN}}}^* \A') \xs \sO_\k^{{\chi_0}}, \quad \text{and}\quad \A_0\mod^B \xs \sO^{u_\zeta U^+_\zeta,0}_\k.$$ 
\end{corollary}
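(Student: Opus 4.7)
My plan is to deduce all three equivalences from Theorem~\ref{thm 5.13} by successive specializations, exploiting the $S\otimes_{\C[\t/W]}\C[\t]$-linearity of $\fF$. For the first equivalence, I would specialize along $S = \C[\t]_{\hat{0}} \to \C$ at the unique closed point. The fiber of $\b_S = \b\times_\t \t_{\hat{0}}$ at $0$ is $\n$, so under the identification $\Coh^G(\pi^*\A_{(\t/W)_{\hat{0}}}) = \A^b_S\mod^B$ from (\ref{equ 5.3}), the coherent side specializes to $\A_\n\mod^B = \Coh^G(\pi_{_{\widetilde{\sN}}}^*\A)$, since $\widetilde{\sN}$ is the fiber of $\tilde{\g}\to\t$ at $0$. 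On the representation side, $\sO^0_S\otimes_S \C \simeq \sO^0_\C$; for each $\nu\in\Lambda$ the truncation $\sO^{0,\leq\nu}_S$ is Morita-equivalent to modules over $\End_{\sO^0_S}\big(\bigoplus_\lambda Q(\lambda)_S^{\leq\nu}\big)$, which base-changes correctly by Lemma~\ref{lem 4.13} to $\End_{\sO^0_\C}\big(\bigoplus_\lambda Q(\lambda)_\C^{\leq\nu}\big)$ using the compatibility of projective covers in Lemma~\ref{lem 2.2}.

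For the second equivalence, I would restrict the first equivalence at $0\in\t$ (the $\t$-factor coming from the defining data of $\A$). On the coherent side, $\A_\n$ lives over $\n\times_{\t/W}\t$, and restricting at $0\in\t$ yields $\A'\otimes_{\C[\g]}\C[\n]$, whose $B$-equivariant module category identifies with $\Coh^G(\pi_{_{\widetilde{\sN}}}^*\A')$. On the representation side, the block $\sO^0_\C$ carries a residual $\C[\t]_{\hat{0}}$-action through the Harish-Chandra center (using (\ref{equ 2.1}) and the fact that $W_0 = \{1\}$ under our assumptions on $l$); specializing at $0\in\t$ extracts the sub-block on which $Z_\HC$ acts by the character $\chi_0$, yielding $\sO^{\chi_0}_\C$. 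For the third equivalence, I would analogously restrict the first equivalence at $0\in\g$: on the coherent side $\A_\n\otimes_{\C[\n]}\C = \A\otimes_{\C[\g]}\C = \A_0$, while on the representation side this specialization forces the Frobenius center $Z_\Fr \simeq \C[G^*]$ to act via the augmentation character at $1\in G^*$, so the $U^\hb_\zeta$-module factors through the natural quotient to $u_\zeta U^+_\zeta$, giving $\sO^{u_\zeta U^+_\zeta, 0}_\C$.

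The main technical obstacle is to verify that specializing the abelian category $\sO^0_S$ along a ring quotient genuinely recovers the correspondingly specialized category of representations, rather than something larger. This reduces to a Morita-theoretic comparison of projective generators of the truncated sub-categories $\sO^{0,\leq\nu}$, for which the key input is the base-change Lemma~\ref{lem 4.13} for Hom-spaces between Verma-filtered objects, combined with Lemma~\ref{lem 2.2} ensuring that $Q(\lambda)^{\leq\nu}_S$ specializes to $Q(\lambda)^{\leq\nu}_\C$; passing to the filtered colimit over $\nu\in\Lambda$ then yields the desired equivalences on the full categories.
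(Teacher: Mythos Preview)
Your approach is essentially the same as the paper's: the paper treats Corollary~\ref{cor 5.14} as an immediate consequence of Theorem~\ref{thm 5.13} via the $S\otimes_{\C[\t/W]}\C[\t]$-linearity of $\fF$ (as stated in the introduction, ``the equivalence (\ref{equ 1}) follows from (\ref{equ 1.4}) by $S$-linearity''), without spelling out details. Your proposal fills in those details correctly.

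One minor technical point: you invoke Lemma~\ref{lem 4.13} to base-change the endomorphism algebra of projective generators along $S\to\C$, but that lemma is stated for $\fU^b_\zeta\mod^\Lambda_R$ and, more importantly, assumes $R'$ is \emph{flat} over $R$, which fails for $S\to\C$. What you actually need is the underlying input from \cite[Prop~2.4]{Fie03} (used inside the proof of Lemma~\ref{lem 4.13}): for $P$ projective in a truncated deformed category $\sO$, the map $\Hom(P,M)\otimes_R R'\to\Hom(P_{R'},M_{R'})$ is an isomorphism for \emph{any} $R\to R'$, because $\Hom(P,-)$ is a direct summand of a weight-space functor. Combined with Lemma~\ref{lem 2.2}(2), this gives the Morita comparison you want. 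The flatness hypothesis in Lemma~\ref{lem 4.13} is only needed to extend base-change to non-projective first arguments via a resolution, which you do not require here.
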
 

\begin{rmk}
Our construction can be applied to obtain a new approach to a similar equivalence for modular analogue of BGG category $\sO$, which is previously established by Losev \cite[Equ (1.3) in Thm 1.7]{Los23}. 
To be more precise, assume moreover that $l$ is prime and set $\F$ be an algebraically closed field of characteristic $l$. 
Let $G_\F$ be a connected semisimple algebraic group over $\F$, with a Borel subgroup $B_\F$ and a Cartan subgroup $T_\F$. 
Let $N_\F$ be the unipotent radical of $B_\F$, and let $\g_\F=\Lie(G_\F)=\n^-_\F \oplus \t_\F \oplus \n_\F$ be the corresponding triangular decomposition with $\t_\F=\Lie(T_\F)$ and $\n_\F=\Lie(N_\F)$. 
The modular analogue of BGG category $\sO$ is the category 
$$\sO_\F:=\g_\F\mod^{B_\F}$$ 
of $B_\F$-equivariant finitely generated $\g_\F$-modules on which the actions of $\b_\F$ by  restriction of $\g_\F$-action and by differential of $B_\F$-action coincide. 
The category $\sO_\F$ can be interpreted as the category $\sO$ (with integral weights) associated to the modular analogue $U^\hb_\F$ of $U^\hb_\zeta$, which is an algebra with a triangular decomposition 
$$U^\hb_\F=U\n^-_\F \otimes U\t_\F \otimes \mathrm{Dist}(N_\F),$$ 
where $\mathrm{Dist}(N_\F)$ is the distribution algebra of $N_\F$. 
Let $\A_\F$ be the non-commutative Grothendieck resolution of $\g^{(1)}_\F$ (the Frobenius twist of $\g_\F$). 
Let $\pi_{\widetilde{\sN}^{(1)}_\F}: \widetilde{\sN}^{(1)}_\F\rightarrow \g^{(1)}_\F$ be the Springer resolution. 
Then there is an equivalence of categories
$$\Coh^{G_\F^{(1)}}(\pi^{*}_{\widetilde{\sN}^{(1)}_\F} \A_\F) \ \simeq\ \sO^0_\F,$$
where $\sO^0_\F$ is the principal block of $\sO_\F$. 
To adapt our construction to obtain the equivalence above, one may replace Tanisaki's localization theorem in \textsection\ref{subsect 3.2} by the result for $U\g_\F$ by Bezrukavnikov--Mirkovi\'{c}--Rumynin \cite{BMR08} and replace Bezrukavnikov's equivalence (used in \textsection\ref{subsect 5.2.1}) by its modular analogue recently proved by Bezrukavnikov--Riche \cite{BR24}. 
\end{rmk}

\section{$\fF$ as equivariantization of $\fF^b$}\label{sect 6} 
In this section we will use Theorem \ref{thm 5.13} to show that the equivalence $\fF^b$ intertwines rational $B$-actions on both categories, so that $\fF$ is the equivalence of their equivariantizations. 
This essentially follows the idea in the work of Arkhipov--Gaitsgory \cite{AG03}. 
As an application, we show that $\fF$ intertwines reflection functors $\Theta^r_s$ and $\Theta^c_s$, for all $s\in \I_\af$. 
For simplicity, we only consider the non-deformed versions of $\fF^b$ and $\fF$. 

\subsection{Rational group action}\label{subsect 6.1} 
In this section, we recall some notions about rational action of a linear algebraic group on a category, following \cite{Ne21}. 

Let $K$ be a linear algebraic group over $\k$. 
Let $\sD$ be a cocomplete $\k$-linear abelian category. 
For any commutative $\k$-algebra $R$, we denote by $\sD_R$ the category of $R$-linear objects in $\sD$, namely the category of pairs $(M, R\rightarrow \End_\sD(M))$ with $M\in \sD$. 
For any commutative $\k$-algebra $R'$ with a ring homomorphism $\iota: R\rightarrow R'$, we have a functor $\iota_*:=R'\otimes_R-$ from $\sD_{R}$ to $\sD_{R'}$ (it is well-defined by the cocompleteness). 
Note that $\iota\otimes_R -$ gives a natural morphism $\id\rightarrow \iota_*$ of endo-functors on $\sD_{R}$. 

Consider the coordinate ring $\k[K]$ of $K$ with comultiplication $\Delta$ and counit $\varepsilon$. 
A \textit{rational action of $K$ on $\sD$} is the data of 
\begin{itemize}
\item a functor $\psi: \sD\rightarrow \sD_{\k[K]}$; 
\item a coassociative natural isomorphism $\sigma: \Delta_*\circ \psi\xs \psi^2$; 
\item a natural isomorphism $\eta: \varepsilon_*\circ \psi\xs \id_\sD$. 
\end{itemize} 
Any $R$-point $\iota:\k[K] \rightarrow R$ of $K$ associates a functor $\iota_*\circ \psi: \sD \rightarrow \sD_R$, which defines an action of the abstract group $K(R)$ on $\sD$. 

Given a rational $K$-action on $\sD$, we define the \textit{equivariantization} $\sD^K$ as the category of $K$-equivariant objects in $\sD$, namely the category of pairs $(M, M\xrightarrow{\rho_M} \psi(M))$ with $M\in \sD$ and $\rho_M$ coassociative and counital, i.e. $\rho_M$ satisfies 
$$\psi(\rho_M)\circ \rho_M= \sigma_M \circ (\Delta\otimes_{\k[K]}-) \circ \rho_M, 
\quad \id_M=\eta_M\circ (\varepsilon\otimes_{\k[K]}-) \circ \rho_M.$$ 

Now we consider the action of the monoidal category $\Rep(K)$ on an abelian category $\sC$. 
By definition, it is an exact bifunctor 
$$-\otimes -:\ \sC \times \Rep(K) \rightarrow \sC$$ 
with compatibility of the monoidal structure on $\Rep(K)$. 
Given an action of $\Rep(K)$ on $\sC$, the \textit{de-equivariantization} $\sC_{K}$ is defined as the category of $\k[K]$-modules in $\sC$, namely the category of pairs $(M,\ M\otimes \k[K]\xrightarrow{a_M} M)$ with $M\in \sC$ and $a_M$ associative and unital. 
Here the $K$-module structure on $\k[K]$ is induced from the right multiplication of $K$. 

Suppose $\sC$ is cocomplete. 
Then there is a canonical rational $K$-action on $\sC_{K}$, via the functor 
$$\psi: \sC_{K}\rightarrow (\sC_{K})_{\underline{\k[K]}}, \quad M \mapsto \underline{\k[K]}\otimes M,$$ 
where $\underline{\k[K]}$ is the algebra $\k[K]$ with trivial $K$-action, and $\k[K]$ acts on $\underline{\k[K]}\otimes M$ via comultiplication. 
We have the following lemma 
\begin{lem}[{\cite[Prop A.2]{Ne21}}] \label{lem 6.1a} 
Let $\sC$ be a cocomplete abelian category, equipped with an action of $\Rep(K)$. 
There is an equivalence 
$$\sC\xs (\sC_{K})^K, \quad M\mapsto M\otimes \k[K],$$ 
where $M\otimes \k[K]$ is an object in $(\sC_{K})^K$ via the $\k[K]$-action and the $\underline{\k[K]}$-coaction on $\k[K]$. 
\end{lem}
 
\subsection{Coinduction functor}\label{subsect 6.2} 
We define $\U_\zeta$, $\mathbf{u}_\zeta$ as the quotients of $U_\zeta$, $u_\zeta$ by the relations $K_\lambda^l-1$, $\lambda\in \Lambda$. 
In this section, we will work with the quantum groups $\U^{\hb}_\zeta$ and $\fU^n_\zeta$ with triangular decompositions 
\begin{equation}\label{equ 6.1}
\U^{\hb}_\zeta=\fU^-_\zeta\otimes \U^0_\zeta\otimes U^+_\zeta, \quad
 \fU^n_\zeta=\fU^-_\zeta\otimes \u^0_\zeta\otimes u^+_\zeta. 
\end{equation}
Note that $\fU^n_\zeta$ is a subalgebra of $\U^{\hb}_\zeta$. 

The category $\sO_\k$ can be interpreted as the category of finitely generated $\U^{\hb}_\zeta$-modules integrable over $\U^\geq_\zeta$. 
We define $\widehat{\sO}_\k$ in the same way as $\sO_\k$ but removing the condition of being finitely generated. 
Then $\widehat{\sO}_\k$ is the ind-completion of $\sO_\k$, and the equivalence (\ref{equ 5.7}) induces an equivalence 
$$\widehat{\fF}:\ \A_\n\Mod^{B} \xs \widehat{\sO}^0_\k.$$ 

Similarly we have $\fU^b_\zeta\mod^\Lambda_\k=\fU^n_\zeta\mod^\Lambda_\k$. 
The equivalence (\ref{equ 4}) induces an equivalence 
$$\A_\n\mod \xs \fU^n_\zeta\mod_{\chi_0}$$ 
by forgetting the $\Lambda$-grading, which further induces an equivalence 
$$\widehat{\fF}^n:\ \A_\n\Mod\xs \fU^n_\zeta\Mod_{\chi_0}.$$ 
Note that $\widehat{\fF}$ and $\widehat{\fF}^n$ are compatible with the forgetful functor. 

For a $\fU^n_\zeta$-module $M$, we endow the space $\Hom_{\fU^n_\zeta}(\U^{\hb}_\zeta, M)$ with a natural $\U^{\hb}_\zeta$-module structure induced by the right multiplication of $\U^{\hb}_\zeta$. 
We denote by $\coind_r(M)$ the maximal submodule of $\Hom_{\fU^n_\zeta}(\U^{\hb}_\zeta, M)$ that is integrable over $\U^\geq_\zeta$. 
It defines a functor 
$$\coind_r:\ \fU^n_\zeta\Mod\rightarrow \widehat{\sO}_\k.$$ 
The functor $\coind_r$ is compatible with block decompositions, namely we have 
$$\coind_r:\ \fU^n_\zeta\Mod_{\chi_\omega}\rightarrow \widehat{\sO}^{\omega}_\k, \quad \forall \omega\in \Xi.$$ 
The following lemma is standard. 

\begin{lem}\label{lem 6.1} 
The functor $\coind_r$ is right adjoint to the forgetful functor $\widehat{\sO}_\k\rightarrow \fU^n_\zeta\Mod$. 
More precisely, for any $M_1\in \widehat{\sO}_\k$ and any $\fU^n_\zeta$-module $M_2$, there is a natural isomorphism 
$$\Hom_{\U^{\hb}_\zeta}(M_1,\coind_r(M_2))\simeq \Hom_{\fU^n_\zeta}(M_1, M_2),$$ 
which maps $\varphi$ on the LHS to the homomorphism $m_1\in M_1 \mapsto \varphi(m_1)(1)$, and maps $\phi$ on the RHS to the homomorphism $m_1\in M_1 \mapsto \big(u\in \U^{\hb}_\zeta \mapsto \phi(um_1)\big)$. 
\end{lem}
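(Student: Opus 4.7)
The plan is to establish this as a variant of the standard restriction-coinduction (Frobenius reciprocity) adjunction, with the only wrinkle being that the target is cut down to its maximal $\U^{\geq}_\zeta$-integrable submodule $\coind_r(M_2) \subseteq \Hom_{\fU^n_\zeta}(\U^{\hb}_\zeta, M_2)$. I would first write down the general (non-integrable) adjunction
\[
\Phi:\ \Hom_{\U^{\hb}_\zeta}\big(M_1,\Hom_{\fU^n_\zeta}(\U^{\hb}_\zeta, M_2)\big) \rightleftarrows \Hom_{\fU^n_\zeta}(M_1, M_2)\ :\Psi
\]
via the explicit formulas in the statement, and then promote it to the integrable setting.

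Concretely, I would proceed in three steps. First, for $\varphi$ on the LHS define $\Phi(\varphi)(m_1) := \varphi(m_1)(1)$ and check $\fU^n_\zeta$-linearity using that $\varphi$ is $\U^{\hb}_\zeta$-linear (hence $\fU^n_\zeta$-linear) together with the definition of the right-multiplication action $(u.f)(u'')=f(u''u)$ on $\Hom_{\fU^n_\zeta}(\U^{\hb}_\zeta,M_2)$: for $u'\in\fU^n_\zeta$,
\[
\Phi(\varphi)(u' m_1)=\varphi(u'm_1)(1)=(u'.\varphi(m_1))(1)=\varphi(m_1)(u')=u'\varphi(m_1)(1)=u'\Phi(\varphi)(m_1).
\]
Second, for $\phi$ on the RHS, define $\Psi(\phi)(m_1)(u):=\phi(um_1)$; verify that $\Psi(\phi)(m_1)$ is $\fU^n_\zeta$-linear in $u$ (using $\fU^n_\zeta$-linearity of $\phi$), and that $\Psi(\phi)$ is $\U^{\hb}_\zeta$-linear in $m_1$ (immediate from the definition of the right action). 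A direct computation confirms $\Phi\circ\Psi=\id$ and $\Psi\circ\Phi=\id$.

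The third and only nontrivial point is to show that when $M_1\in\widehat{\sO}_\k$, the image of $\Psi(\phi)$ lands in $\coind_r(M_2)$, and dually that $\Phi$ is still defined on maps with target $\coind_r(M_2)$ (which is trivial since $\coind_r(M_2)\subseteq\Hom_{\fU^n_\zeta}(\U^{\hb}_\zeta,M_2)$). For the first claim: $\Psi(\phi)(M_1)$ is the $\U^{\hb}_\zeta$-linear image of a $\U^{\geq}_\zeta$-integrable module, hence is itself a $\U^{\geq}_\zeta$-integrable submodule of $\Hom_{\fU^n_\zeta}(\U^{\hb}_\zeta, M_2)$; by maximality of $\coind_r(M_2)$ among such submodules, $\Psi(\phi)(M_1)\subseteq \coind_r(M_2)$.

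I do not expect any real obstacle: the identities are formal and the integrability step reduces entirely to the definition of $\coind_r(M_2)$ as the \emph{maximal} integrable submodule. The compatibility with blocks announced just before the statement is then automatic, since $\coind_r$ is right adjoint to an exact functor preserving central characters.
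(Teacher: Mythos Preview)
Your proof is correct; this is exactly the standard Frobenius reciprocity argument, with the one extra step (your third paragraph) handling the integrability condition by appealing to the maximality in the definition of $\coind_r$. The paper does not actually supply a proof---it simply introduces the lemma with ``The following lemma is standard''---so there is nothing further to compare.
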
 

We define $\sC(\U^\geq_\zeta)$ as the category of integrable $\U^\geq_\zeta$-modules. 
Similarly, we have a functor 
$$\coind^b_r:\ \u_\zeta\Mod \rightarrow \sC(\U^\geq_\zeta)$$ 
right adjoint to the forgetful functor $\sC(\U^\geq_\zeta)\rightarrow \u_\zeta\Mod$. 
By (\ref{equ 6.1}), we have isomorphisms 
$$\Hom_{\fU^n_\zeta}(\U^{\hb}_\zeta, M)=\Hom_{\u^\geq_\zeta}(\U^\geq_\zeta,M), \quad \forall M\in \fU^n_\zeta\Mod,$$ 
of $\U^\geq_\zeta$-modules. 
Hence $\coind_r(M)=\coind^b_r(M)$ as $\U^\geq_\zeta$-modules. 

There is a Hopf algebra structure on $\Hom_{\fU^n_\zeta}(\U^{\hb}_\zeta, \k)$ induced by the Hopf algebra structure on $\U^{\hb}_\zeta$, making $\coind_r(\k)$ as a Hopf subalgebra. 
Moreover $\coind_r(\k)$ is an algebra in the monoidal category $\widehat{\sO}_\k$. 
We have 
$$\coind_r(\k)=\coind^b_r(\k)\subset \Hom_{\u^\geq_\zeta}(\U^\geq_\zeta,\k)=(U\b)^*.$$
The natural pairing between $\k[B]$ and $U\b$ identifies $\coind_r(\k)=\k[B]$ as Hopf algebras. 
Therefore, any module $\coind_r(M)$ is equipped with a (right) action from $\k[B]=\coind_r(\k)$. 

In the notations in \textsection\ref{subsect 6.1}, $(\widehat{\sO}_{\k})_B$, $\sC(\U^\geq_\zeta)_{B}$ are the categories of $\k[B]$-modules in $\widehat{\sO}_\k$, $\sC(\U^\geq_\zeta)$, respectively. 
Since $\k[B]$ is trivial as a $\fU^n_\zeta$-module, for any $M\in (\widehat{\sO}_{\k})_B$, the quotient $M\otimes_{\k[B]} \k$ admits an action from $\fU^n_\zeta$. 

\begin{prop}\label{prop 6.3}
There is an equivalence 
$$\coind_r:\ \fU^n_\zeta\Mod\xs (\widehat{\sO}_{\k})_B,$$ 
with inverse functor $-\otimes_{\k[B]} \k$. 
\end{prop}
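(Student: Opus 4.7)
The plan is to exhibit $\coind_r$ and $-\otimes_{\k[B]}\k$ as mutually inverse functors arising from an adjunction $(-\otimes_{\k[B]}\k)\dashv\coind_r$. First I check that $\coind_r$ factors through $(\widehat{\sO}_\k)_B$: since $\k[B]=\coind_r(\k)$ is a Hopf algebra object in the monoidal category $\widehat{\sO}_\k$, each $\coind_r(M)$ carries a natural right $\k[B]$-action defined by $(\phi\cdot\psi)(u)=\sum\phi(u^{(1)})\psi(u^{(2)})$, using the coproduct of $\U^\hb_\zeta$; a direct computation with the coproduct, combined with the $\fU^n_\zeta$-linearity constraint, shows that this action is well defined and $\U^\hb_\zeta$-equivariant.

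To set up the adjunction, I apply Lemma \ref{lem 6.1}: a $\U^\hb_\zeta$-linear map $\phi\colon N\to\coind_r(M)$ corresponds to a $\fU^n_\zeta$-linear map $\bar\phi\colon N\to M$ via $\bar\phi(n)=\phi(n)(1)$. Evaluating $(\phi\cdot\psi)(1)=\phi(1)\psi(1)$ shows $\phi$ is moreover $\k[B]$-linear iff $\bar\phi$ vanishes on $N\cdot I_B$ (with $I_B=\ker(\k[B]\xrightarrow{\mathrm{ev}_1}\k)$), yielding the natural bijection
$$\Hom_{(\widehat{\sO}_\k)_B}(N,\coind_r M)\cong\Hom_{\fU^n_\zeta}(N\otimes_{\k[B]}\k,M).$$
Next, the counit $\coind_r(M)\otimes_{\k[B]}\k\to M$ is an isomorphism: a PBW-type argument writes $\U^\hb_\zeta\cong\fU^n_\zeta\otimes U\b$ as a free left $\fU^n_\zeta$-module (via a section of the quantum Frobenius), so $\coind_r(M)=\Hom_{\fU^n_\zeta}(\U^\hb_\zeta,M)\cong M\otimes\k[B]$ with $\k[B]$ acting on its own factor by the right regular representation; hence $\coind_r(M)\otimes_{\k[B]}\k=M$ canonically. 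In particular, $\coind_r$ is fully faithful.

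The hard part is showing the unit $\eta_N\colon N\to\coind_r(N\otimes_{\k[B]}\k)$ is an isomorphism for every $N\in(\widehat{\sO}_\k)_B$---equivalently, $\coind_r$ is essentially surjective, or equivalently (combined with the counit being iso) that $-\otimes_{\k[B]}\k$ is conservative. Conservativity is a Nakayama-style claim: if $N=N\cdot I_B$ then $N=0$. The main obstacle is that objects in the ind-category $\widehat{\sO}_\k$ need not be finitely generated over $\k[B]$, so classical Nakayama does not apply directly. I plan to exploit the interplay between the $\k[B]$-module structure on $N$ and the $\U^\hb_\zeta$-action with its integrability over $\U^\geq_\zeta$: geometrically, the $\k[B]$-action together with the $U\b$-action coming from the quantum Frobenius direction of $\U^\hb_\zeta$ endows $N$ with the structure of a $B$-equivariant quasi-coherent sheaf on $B$, so vanishing of the fiber at the identity propagates by $B$-equivariance and forces $N=0$.
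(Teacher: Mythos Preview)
Your overall strategy is sound and close in spirit to the paper's, but there is a genuine gap in your counit argument, and your unit argument remains a sketch rather than a proof.

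For the counit you write $\coind_r(M)=\Hom_{\fU^n_\zeta}(\U^\hb_\zeta,M)\cong M\otimes\k[B]$. Neither equality is correct as stated. By definition $\coind_r(M)$ is only the $\U^\geq_\zeta$-\emph{integrable} part of $\Hom_{\fU^n_\zeta}(\U^\hb_\zeta,M)$. And even granting a PBW splitting $\U^\hb_\zeta\cong\fU^n_\zeta\otimes U\b$, one gets $\Hom_{\fU^n_\zeta}(\U^\hb_\zeta,M)\cong\Hom_\k(U\b,M)$, i.e.\ $M$ tensored with the \emph{full} linear dual $(U\b)^*$, not with $\k[B]$. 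The identification of the integrable part with $M\otimes\k[B]$ is precisely the nontrivial point, and you have not addressed it. Likewise, your unit argument (``$B$-equivariant sheaf on $B$, so the fiber at $1$ determines everything'') is a correct heuristic, but you still need to verify that the $\k[B]$-action and the rational $B$-action (through the quantum Frobenius on $\U^\geq_\zeta$) really assemble into a $B$-equivariant quasi-coherent sheaf on $B$; this compatibility is again a Hopf-algebraic statement that you have only asserted.

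The paper closes both gaps at once by a different reduction. It first observes that $\coind_r(M)=\coind^b_r(M)$ as $\U^\geq_\zeta$-modules (since $\Hom_{\fU^n_\zeta}(\U^\hb_\zeta,M)=\Hom_{\u^\geq_\zeta}(\U^\geq_\zeta,M)$ by the triangular decompositions). This means that checking the unit and counit of your adjunction are isomorphisms can be done after forgetting to $\U^\geq_\zeta$-modules, where the question becomes: is
\[
\coind^b_r:\ \u^\geq_\zeta\Mod\ \longrightarrow\ \sC(\U^\geq_\zeta)_B
\]
an equivalence with inverse $-\otimes_{\k[B]}\k$? This is now a purely Hopf-algebraic statement about the exact sequence $\u^\geq_\zeta\hookrightarrow\U^\geq_\zeta\twoheadrightarrow U\b$, and the paper simply invokes \cite[Thm~2.8]{AG03}. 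That reference contains exactly the integrability/restricted-dual bookkeeping you are missing, as well as the equivariant-sheaf-on-$B$ argument you sketched. If you prefer to argue directly, you should isolate the positive-Borel statement as above and then prove it by hand; your current write-up mixes the $\fU^-_\zeta$ direction into the discussion unnecessarily and obscures where the actual work lies.
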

\begin{proof} 
One checks that the adjunction in Lemma \ref{lem 6.1} induces an adjunction 
$$-\otimes_{\k[B]} \k:\ (\widehat{\sO}_{\k})_B\ \rightleftarrows\ \fU^n_\zeta\Mod\ : \coind_r.$$ 
We have natural morphisms 
\begin{equation}\label{equ 6.2}
\coind_r(-)\otimes_{\k[B]}\k \rightarrow \id, \quad \id\rightarrow \coind_r(-)\otimes_{\k[B]}\k.
\end{equation} 
Recall that for any $\fU^n_\zeta$-module $M$, $\coind_r(M)=\coind^b_r(M)$ as $\U^\geq_\zeta$-modules. 
Therefore, to show that (\ref{equ 6.2}) are isomorphisms, it is enough to show that $\coind^b_r$ induces an equivalence 
$$\coind^b_r:\ \u^\geq_\zeta\Mod\xs \sC(\U^\geq_\zeta)_{B}$$ 
with inverse functor $-\otimes_{\k[B]} \k$. 
This follows from \cite[Thm 2.8]{AG03} applied to the sequence of Hopf algebras $\u^\geq_\zeta\hookrightarrow \U^\geq_\zeta \twoheadrightarrow U\b$. 
\end{proof} 

By Lemma \ref{lem 6.1a}, we have 
\begin{corollary}\label{cor 6.4}
The category $\fU^n_\zeta\Mod$ admits a natural rational $B$-action with an equivalence 
$$\widehat{\sO}_{\k}\xs (\fU^n_\zeta\Mod)^B.$$ 
\end{corollary}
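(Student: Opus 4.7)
The plan is to combine Proposition \ref{prop 6.3} with the formal yoga of (de)equivariantization recalled in \textsection\ref{subsect 6.1}, in particular Lemma \ref{lem 6.1}. The key structural observation is that $\widehat{\sO}_\k$ carries a natural action of the monoidal category $\Rep(B)$: the quantum Frobenius map $\U^\geq_\zeta\twoheadrightarrow U\b$ allows one to pull back any $B$-representation to an integrable $\U^\geq_\zeta$-module, and the bifunctor
$$-\otimes -:\ \widehat{\sO}_\k \times \Rep(B) \rightarrow \widehat{\sO}_\k,\quad (M,V)\mapsto M\otimes V,$$
endowed with the standard $\U^{\hb}_\zeta$-action via comultiplication, is exact and monoidal-compatible. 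I would check briefly that the resulting tensor product lands in $\widehat{\sO}_\k$ (the weight and local finiteness conditions are preserved because $V$ is a finite dimensional integrable $\U^\geq_\zeta$-module).

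Next I would identify the de-equivariantization $(\widehat{\sO}_{\k})_B$ with $\fU^n_\zeta\Mod$. By definition $(\widehat{\sO}_{\k})_B$ consists of pairs $(M,a_M:M\otimes\k[B]\rightarrow M)$ with $a_M$ associative and unital, which is precisely the category denoted $(\widehat{\sO}_{\k})_B$ appearing in Proposition \ref{prop 6.3}. That proposition supplies an equivalence $\coind_r:\fU^n_\zeta\Mod\xs (\widehat{\sO}_{\k})_B$ with quasi-inverse $-\otimes_{\k[B]}\k$.

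Since $\widehat{\sO}_{\k}$ is cocomplete (being the ind-completion of $\sO_\k$), the general formalism of \textsection\ref{subsect 6.1} equips the de-equivariantization $(\widehat{\sO}_{\k})_B$ with a canonical rational $B$-action, namely the functor $M\mapsto \underline{\k[B]}\otimes M$ with the $\k[B]$-action by comultiplication. Transporting this along the equivalence of Proposition \ref{prop 6.3} yields the desired rational $B$-action on $\fU^n_\zeta\Mod$. Finally, Lemma \ref{lem 6.1} gives a canonical equivalence $\widehat{\sO}_\k\xs ((\widehat{\sO}_\k)_B)^B$ sending $M\mapsto M\otimes \k[B]$, and composing with the equivariantization of Proposition \ref{prop 6.3} produces the stated equivalence $\widehat{\sO}_{\k}\xs (\fU^n_\zeta\Mod)^B$.

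The only nontrivial point is verifying that the $\Rep(B)$-action on $\widehat{\sO}_\k$ and the $\k[B]$-coaction provided by $\coind_r$ are mutually compatible, so that the two constructions match under the equivalence of Proposition \ref{prop 6.3}; this amounts to unwinding the Hopf-algebraic definition of $\coind_r(\k)=\k[B]$ and the comodule structure on $\coind_r(M)$, and should follow directly from the naturality of the adjunction in Lemma \ref{lem 6.1}. Apart from this bookkeeping, everything is a formal consequence of Proposition \ref{prop 6.3} and the general (de)equivariantization picture.
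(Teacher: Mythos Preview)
Your proposal is correct and follows essentially the same approach as the paper, which deduces the corollary directly from Lemma \ref{lem 6.1} applied to $\sC=\widehat{\sO}_\k$ with its $\Rep(B)$-action, together with the identification $(\widehat{\sO}_\k)_B\simeq \fU^n_\zeta\Mod$ from Proposition \ref{prop 6.3}. One minor slip: in verifying $M\otimes V\in\widehat{\sO}_\k$ you should not assume $V$ is finite dimensional (e.g.\ $\k[B]$ is not), but local finiteness of rational $B$-representations is enough.
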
 
\begin{rmk}
Intuitively, the $B$-action on $\fU^n_\zeta\Mod$ is induced from the adjoint action of $\U^{\geq}_\zeta$ on $\fU^n_\zeta$. 
Indeed, $\u^\geq_\zeta$ acts via inner automorphisms in $\fU^n_\zeta$, and therefore acts trivially on the category $\fU^n_\zeta\Mod$. 
Hence the action of $\U^{\geq}_\zeta$ on $\fU^n_\zeta\Mod$ factors through the quantum Frobenius map. 
\end{rmk}

Similarly, we consider the functor 
$$\coind_c:\ \A_\n\Mod\rightarrow \A_\n\Mod^B, \quad M\mapsto M\otimes \k[B].$$ 
It is right adjoint to the forgetful functor $\A_\n\Mod^B\rightarrow \A_\n\Mod$, and induces an equivalence 
\begin{equation}\label{equ 6.3}
\coind_c:\ \A_\n\Mod\xs (\A_\n\Mod^B)_{B}.
\end{equation}

\subsection{$\widehat{\fF}$ as equivariantization of $\widehat{\fF}^n$}\label{subsect 6.3} 
\begin{lem}\label{lem 6.5}
The equivalence $\fF$ (resp. $\widehat{\fF}$) is compatible with actions from $\rep(B)$ (resp. $\Rep(B)$). 
\end{lem}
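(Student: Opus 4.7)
The plan is to describe both $\Rep(B)$-actions concretely and then verify the intertwining by reducing to one-dimensional representations and Verma objects. On the coherent side, $V \in \Rep(B)$ acts on $\A_\n\Mod^B$ by $M \mapsto V\otimes_\k M$ with the diagonal $B$-action and the $\A_\n$-action on the second factor. On the quantum side, I would view $V \in \Rep(B)$ as an integrable $\U^\geq_\zeta$-module via the quantum Frobenius $\U^\geq_\zeta\to U\b$; then $V\otimes M$ carries a $\U^\hb_\zeta$-module structure via the coproduct, and since the weights of $V$ viewed through Frobenius lie in $l\Lambda$, the tensor product remains in the principal block $\widehat{\sO}^0_\k$.

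I would first treat the case $V=\k_\mu$ one-dimensional. On the coherent side this is the $\Lambda$-translation $M\mapsto M\otimes \k_\mu$, and on the quantum side it is the $\Lambda$-translation $M \mapsto M\otimes \k_{l\mu}$. The compatibility of $\fF^b$ with $\Lambda$-translations, established in Proposition \ref{prop 4.1} and inherited by $\fF$ in Theorem \ref{thm 5.13}, yields the natural isomorphism $\widehat{\fF}(\k_\mu\otimes M)\cong \k_\mu\otimes \widehat{\fF}(M)$. For general $V\in \rep(B)$, any filtration of $V$ by characters combined with the exactness of both $\Rep(B)$-actions and of $\widehat{\fF}$ propagates the natural isomorphism to arbitrary $V$ by d\'evissage. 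The coherence of these isomorphisms (with respect to tensor products of representations and the unit constraint) would then be verified on a generating family of projectives: by Lemma \ref{lem 5.6}, projectives in $\A^b_S\mod^{B,\leq \nu}$ have the form $\ind_c(P)^{\leq \nu}$ for $P$ a projective in the $T$-equivariant category, and since the composition of $\ind_c$ with $(-)^{\leq \nu}$ is manifestly $\Rep(B)$-linear on both sides, the coherence reduces to the $T$-equivariant setting already handled by $\fF^b$.

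The main obstacle is matching the $B$-equivariant structure on the Verma objects $\sM_x$, provided by Corollary \ref{cor 4.9} through iterative application of the $G$-equivariant reflection functors $\T^c_s$ to $\sM_{w_0}=\sE^{\vee,b}_S$, with the $\Rep(B)$-action on $\fF(\sM_x) = M(x\bullet_l 0)_\k$ coming from the quantum Frobenius. Both are integrable $B$-actions whose restrictions to $T$ recover the weight grading on $M(x\bullet_l 0)_\k$. The rigidity of $B$-equivariant lifts given their $T$-restrictions, combined with the fact that $\fF^b$ intertwines the $T$-actions and that the reflection functors on both sides correspond under $\fF$ (Proposition \ref{prop 4.1}, Proposition \ref{prop 5.3}), should pin these actions down and force them to coincide, completing the verification first for Verma objects and then, by the d\'evissage outlined above, for all of $\widehat{\fF}$. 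The statement for $\fF$ follows by restricting to finitely generated objects, since both $\rep(B)$-actions preserve finite generation.
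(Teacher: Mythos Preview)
Your approach has a genuine gap. The d\'evissage from one-dimensional $B$-representations to arbitrary $V\in\rep(B)$ does not go through as you describe: if $0\to\k_\mu\to V\to\k_\nu\to 0$ is a non-split extension, the isomorphisms $\fF(\k_\mu\otimes M)\cong\k_\mu\otimes\fF(M)$ and $\fF(\k_\nu\otimes M)\cong\k_\nu\otimes\fF(M)$ do not by themselves produce a middle isomorphism $\fF(V\otimes M)\cong V\otimes\fF(M)$; you would need to know that the two resulting extension classes in $\Ext^1$ agree, which is exactly the content of the lemma. Your attempt to close this gap via ``rigidity of $B$-equivariant lifts given their $T$-restrictions'' is not a standard fact and is not justified---there is no reason a priori why two $B$-equivariant structures on a $\U^\hb_\zeta$-module that restrict to the same $T$-grading must coincide. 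Likewise, the claim that coherence ``reduces to the $T$-equivariant setting already handled by $\fF^b$'' conflates $T$-equivariance with $B$-equivariance: $\fF^b$ only knows about $\Lambda$-translations, not about the unipotent part of $B$.

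The paper's proof avoids all of this by exploiting how $\fF$ was actually constructed. By the proof of Theorem~\ref{thm 5.13} there is a commutative square
\[
\begin{tikzcd}
\A^b_S\mod^B \arrow[r,"\fF"]\arrow[d,"\V_c"'] & \sO^0_S \arrow[d,"\V_r"] \\
\Coh^B(\b_S\times_{\t/W}\t) \arrow[r,"\fG"] & \sO^{-\rho}_{S\otimes_{\k[\t/W]}\k[\t]}.
\end{tikzcd}
\]
The three functors $\V_c$, $\V_r$, $\fG$ are each manifestly $\rep(B)$-linear (they are built from forgetful functors, translation functors by tensoring with $U_\zeta$-modules, and $\I^\hb_1$, all of which respect tensoring with $V\in\rep(B)$). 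Since the $\rep(B)$-action preserves objects with Verma flags, and since $\V_r$ is fully faithful on such objects by Proposition~\ref{prop 5.8}, the $\rep(B)$-linearity of $\V_r\circ\fF=\fG\circ\V_c$ forces that of $\fF$ on this subcategory; one then resolves arbitrary objects by objects with Verma flags. This argument produces the required natural isomorphisms and their coherence simultaneously, with no need for d\'evissage on $V$.
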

\begin{proof} 
We prove the assertion for $\fF$, then the assertion for $\widehat{\fF}$ follows. 
We have a commutative diagram 
$$\begin{tikzcd} 
\A^b_S\mod^{B} \arrow[r,"\fF","\simeq"']\arrow[d,"\V_c"'] 
& \sO^{0}_S \arrow[d,"\V_r"]\\ 
\Coh^B(\b_S\times_{\t/W} \t) \arrow[r,"\fG","\simeq"'] 
& \sO^{0}_{S\otimes_{\k[\t/W]} \k[\t]}. 
\end{tikzcd}$$ 
Note that $\V_c$, $\V_r$ and $\fG$ are compatible with $\rep(B)$-action. 
Note also that the $\rep(B)$-action preserves the subcategory of objects admitting Verma flags. 
Hence by Proposition \ref{prop 5.8}, $\fF$ is compatible with $\rep(B)$-action on such subcategory in $\A^b_S\mod^{B}$. 
Since any object can be resolved by objects admitting Verma flags, such compatibility extends to the whole category. 
\end{proof}

\begin{corollary}\label{cor 6.6} 
The equivalence $\widehat{\fF}^n: \A_\n\Mod\xs \fU^n_\zeta\Mod_{\chi_0}$ is compatible with the natural rational $B$-action on both sides. 
Therefore, $\widehat{\fF}$ can be obtained as the $B$-equivariantization of $\widehat{\fF}^n$. 
\end{corollary}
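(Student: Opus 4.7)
The plan is to deduce the rational $B$-equivariance of $\widehat{\fF}^n$ formally from the $\Rep(B)$-equivariance of $\widehat{\fF}$ established in Lemma \ref{lem 6.5}, using the equivariantization/de-equivariantization duality recalled in \textsection\ref{subsect 6.1}. Since $\widehat{\fF}$ intertwines the $\Rep(B)$-actions by tensor product, it induces an equivalence on the categories of $\k[B]$-modules,
\[
\widehat{\fF}_B:\ (\A_\n\Mod^B)_B \xs (\widehat{\sO}^0_\k)_B,
\]
and by the coherent equivalence (\ref{equ 6.3}) and Proposition \ref{prop 6.3} restricted to the principal block, these de-equivariantizations are identified with $\A_\n\Mod$ and $\fU^n_\zeta\Mod_{\chi_0}$ via $\coind_c$ and $\coind_r$ respectively.

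The next step is to identify $\widehat{\fF}_B$ with $\widehat{\fF}^n$ under these identifications. The compatibility $\for^r\circ \widehat{\fF}\simeq \widehat{\fF}^n\circ \for^c$, noted immediately after the construction of $\widehat{\fF}^n$, combined with the adjunctions $\for^c\dashv \coind_c$ and $\for^r\dashv\coind_r$ and the fact that $\widehat{\fF}, \widehat{\fF}^n$ are equivalences, yields a canonical natural isomorphism
\[
\widehat{\fF}\circ\coind_c\simeq \coind_r\circ\widehat{\fF}^n,
\]
which is precisely the statement that $\widehat{\fF}_B$ transports along $\coind_c$ and $\coind_r$ to $\widehat{\fF}^n$.

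By the construction in \textsection\ref{subsect 6.1}, the de-equivariantizations $(\A_\n\Mod^B)_B$ and $(\widehat{\sO}^0_\k)_B$ carry canonical rational $B$-actions, and $\widehat{\fF}_B$ intertwines them automatically. Transporting along $\coind_c$ and $\coind_r$, these actions coincide with the natural rational $B$-actions on $\A_\n\Mod$ and $\fU^n_\zeta\Mod_{\chi_0}$ introduced in Corollary \ref{cor 6.4} (and its coherent analog), because both are induced by the same $\k[B]$ Hopf coaction. Hence $\widehat{\fF}^n$ intertwines the natural rational $B$-actions, and passing to $B$-equivariantizations on both sides via $\sC \xs (\sC_B)^B$ then recovers $\widehat{\fF}$ as its equivariantization.

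The main technical point I anticipate is confirming that the abstract coaction produced by the de-equivariantization formalism matches the concrete $\k[B]$-coaction from Proposition \ref{prop 6.3} and its coherent counterpart; this is a formal verification amounting to unraveling the Hopf comultiplication on $\k[B]=\coind_r(\k)=\coind_c(\k)$, but it is the one piece of bookkeeping that makes the two $B$-action structures align and allows the otherwise purely categorical machinery to yield the corollary.
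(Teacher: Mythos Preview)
Your proposal is correct and follows essentially the same approach as the paper's proof: both use Lemma \ref{lem 6.5} to pass to de-equivariantizations via Proposition \ref{prop 6.3} and (\ref{equ 6.3}), obtain the commutative square $\widehat{\fF}\circ\coind_c\simeq\coind_r\circ\widehat{\fF}^n$ from the adjunction with the forgetful functors, and then read off the $B$-equivariance of $\widehat{\fF}^n$ before re-equivariantizing. The only difference is presentational---you spell out the adjunction argument and the matching of coactions slightly more explicitly than the paper does.
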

\begin{proof} 
Since $\coind_r$ and $\coind_c$ are right adjoint to the forgetful functors, we have a commutative diagram 
$$\begin{tikzcd} 
\A_\n\Mod \arrow[r,"\widehat{\fF}^n","\simeq"']\arrow[d,"\coind_c"'] 
& \fU^n_\zeta\Mod_{\chi_0} \arrow[d,"\coind_r"]\\ 
\A_\n\Mod^{B} \arrow[r,"\widehat{\fF}","\simeq"'] 
& \widehat{\sO}_\k^0. 
\end{tikzcd}$$ 
By Proposition \ref{prop 6.3} and (\ref{equ 6.3}), the coinduction functors identify the source categories with the de-equivariantizations of the target categories, and moreover the rational $B$-action on the sources come from the $\Rep(B)$-action on the targets. 
Hence the compatibility between $\widehat{\fF}^n$ and $B$-action follows from the compatibility between $\widehat{\fF}$ and $\Rep(B)$-action, which is by Lemma \ref{lem 6.5}. 

The second assertion follows from the commutative diagram 
\begin{equation}\label{equ 6.4*} 
\begin{tikzcd} 
\A_\n\Mod^{B} \arrow[r,"\widehat{\fF}","\simeq"']\arrow[d,"\simeq"'] 
& \widehat{\sO}_\k^0 \arrow[d,"\simeq"]\\ 
(\A_\n\Mod)^{B} \arrow[r,"(\widehat{\fF}^n)^B","\simeq"'] 
& (\fU^n_\zeta\Mod_{\chi_0})^B, 
\end{tikzcd}
\end{equation}
where the left vertical equivalence is tautological, the right one is by Corollary \ref{cor 6.4}. 
\end{proof}

As an application, we have the following 

\begin{corollary}\label{cor 6.8}
The equivalence 
$$\A_\n\mod^B\xs \sO_\k^0$$ 
intertwines the reflection functors $\Theta^c_s$ and $\Theta^r_s$, for any $s\in \I_\af$. 
\end{corollary}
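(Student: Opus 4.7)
The plan is to deduce the corollary from the non-$B$-equivariant version by $B$-equivariantization, using the commutative square (\ref{equ 6.4*}) which identifies $\widehat{\fF}$ with $(\widehat{\fF}^n)^B$.

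First I would show that $\widehat{\fF}^n$ intertwines $\Theta^c_s$ and $\Theta^r_s$ for every $s\in \I_\af$. This is obtained from Proposition \ref{prop 4.1}: specializing $\fF^b$ at $0\in\t/W$ yields the equivalence (\ref{equ 4}), and forgetting the $\Lambda$-grading followed by ind-completion produces $\widehat{\fF}^n$. The intertwining survives each step, since $\Theta^c_s$ commutes with forgetting $T$-equivariance by its $G$-equivariance (Theorem \ref{thm 3.1}(iii)), and $\Theta^r_s$ commutes with forgetting the $\Lambda$-grading because it is defined by tensoring with Weyl modules followed by block projection, both insensitive to the weight grading.

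Second, I would check that the reflection functors are compatible with the rational $B$-action on both sides. On the coherent side, the $G$-equivariance of $\Theta^c_s$ (Theorem \ref{thm 3.1}(iii)) implies that it commutes with the coinduction $\coind_c=-\otimes\k[B]$, so that the reflection functor on $\A_\n\Mod^B$ is the $B$-equivariantization of its counterpart on $\A_\n\Mod$. On the representation side, tensoring with a Weyl module $V(\nu)$ commutes up to the associativity constraint with tensoring by any $V\in\Rep(B)$ regarded as a $\U^\geq_\zeta$-module via the quantum Frobenius; hence $\Theta^r_s$ is $\Rep(B)$-linear on $\widehat{\sO}_\k^0$, and Lemma \ref{lem 6.1} then identifies it on $\widehat{\sO}_\k^0=(\fU^n_\zeta\Mod_{\chi_0})^B$ with the equivariantization of the functor transported across $\widehat{\fF}^n$ in Step~1. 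Taking the $B$-equivariantization of the intertwining isomorphism for $\widehat{\fF}^n$, which is now available by these compatibilities and the diagram (\ref{equ 6.4*}), produces the isomorphism $\widehat{\fF}\circ\Theta^c_s\cong\Theta^r_s\circ\widehat{\fF}$, and restriction to finitely generated modules gives the corollary.

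The delicate point to verify is the affine case $s_0\in\I_\af\setminus\I$, where by construction $\Theta^c_{s_0}=b^{-1}\Theta^c_s b$ for a braid group element $b\in B_\ex$ and a finite simple reflection $s$. Its $B$-equivariance must be traced through the $G$-equivariance of the basic braid generators $\T^c_s$ and the line-bundle twists $-\otimes\sO_{\tilde{\g}}(\lambda)$ asserted in Theorem \ref{thm 3.1}(iii); this is bookkeeping rather than a conceptual obstacle, but it is the step that requires the most care.
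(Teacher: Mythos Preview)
Your proposal is correct and follows essentially the same approach as the paper: reduce to $\widehat{\fF}$, use Proposition \ref{prop 4.1} to get the intertwining for $\widehat{\fF}^n$, check that the reflection functors are compatible with the $B$- and $\Rep(B)$-actions, and then pass through the equivariantization diagram (\ref{equ 6.4*}). Your treatment is in fact more detailed than the paper's terse proof, particularly in spelling out how to descend from $\fF^b$ to $\widehat{\fF}^n$ and in flagging the bookkeeping needed for the affine reflection $s_0$.
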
 
\begin{proof}
It is enough to show the compatibility between $\widehat{\fF}$ and reflection functors. 
The reflection functors are compatible with the actions of $B$ and $\Rep(B)$, and with $\widehat{\fF}^n$ by Proposition \ref{prop 4.1}. 
Hence they are compatible with $(\widehat{\fF}^n)^B$, and therefore with $\widehat{\fF}$ by the commutative diagram (\ref{equ 6.4*}). 
\end{proof}

\section{Graded multiplicities}\label{sect 7} 

Recall that the tilting bundle $\sE$ on $\widetilde{\g}$ can be equipped with a $\C^\times$-equivariant structure, which makes $\A$ a $\C^\times$-equivariant algebra. 
Using the equivalence (\ref{equ 5.7}), we can define a graded version of $\sO^0_\C$ via  
$$\sO_\C^{0,\gr}\ :=\ \Coh^{G\times \C^\times}(\pi_{\widetilde{\sN}}^*\A)\ \simeq\ \A_\n\mod^{B\times \C^\times},$$ 
with a degrading functor $\sO_\C^{0,\gr}\rightarrow \sO^0_\C$. 
In this section, we compute the graded multiplicity of simple modules in Verma modules in $\sO_\C^{0,\gr}$, which turns out to be given by the generic Kazhdan--Lusztig polynomials defined in \cite{Kato85} (see also \cite{Lus80h, Soe97}). 
We also obtain formulas for graded multiplicity of Verma module in projective module. 

\subsection{Periodic and generic polynomials}\label{subsect 7.1} 
We recall the periodic and generic polynomials introduced by Lusztig \cite{Lus80h} and Kato \cite{Kato85}, following Soergel's notations and normalizations \cite{Soe97}. 

Let $\ell(-):W_\ex\rightarrow \Z$ be the length function. 
The extended affine Hecke algebra $\H_\ex$ is the $\Z[v^{\pm 1}]$-algebra generated by $\{H_x\}_{x\in W_\ex}$ modulo the relations 
$$\begin{cases}
	(H_s+v)(H_s-v^{-1})=0 & \text{for } s\in \I_\af, \\ 
	H_xH_y=H_{xy} & \text{if } \ell(x)+\ell(y)=\ell(xy). 
\end{cases}$$ 
There is a ring homomorphism (called the bar-involution) $\H_\ex\rightarrow \H_\ex$, $H\mapsto \overline{H}$ given by $\overline{v}=v^{-1}$, $\overline{H_x}=(H_{x^{-1}})^{-1}$. 

Recall the semi-infinite order $\leq^\semi$ in Definition~\ref{defn 5.6}. 
Let $\P$ be the free $\Z[v^{\pm 1}]$-module with basis $\{H^{\frac{\infty}{2}}_x\}_{x\in W_\ex}$. 
It is equipped with a right $\H_\ex$-module structure (called the \textit{periodic Hecke module}) via the following formulas for any $x\in W_\ex$, $s\in \I_\af$ and $\gamma\in \Lambda/\rQ$, 
\begin{align*}
H^{\frac{\infty}{2}}_x\cdot H_s &=
\begin{cases}
H^{\frac{\infty}{2}}_{xs} & \text{if } x\leq^{\frac{\infty}{2}} xs, \\ 
H^{\frac{\infty}{2}}_{xs}+(v^{-1}-v) H^{\frac{\infty}{2}}_x & \text{if } xs\leq^{\frac{\infty}{2}} x; 
\end{cases}\\
H^{\frac{\infty}{2}}_x\cdot H_\gamma &=H^\semi_{x\gamma}.
\end{align*} 
For $\lambda\in \Lambda$ we set $e(\lambda):= \sum_{w\in W} v^{\ell(w)}\cdot H^{\frac{\infty}{2}}_{t(\lambda)w}$. 
Let $\P^\circ$ be the $\H_\ex$-submodule of $\P$ generated by $e(\lambda)$ with $\lambda\in \Lambda$. 

\begin{thm}[{\cite{Lus80h,Soe97}}] 
\begin{enumerate}
\item On $\P^\circ$ there is a unique involution $\P^\circ\rightarrow \P^\circ$, $P\mapsto \overline{P}$ that is $\H_\ex$-skew-linear (i.e. compatible with the bar-involution of $\H_\ex$) and satisfies $\overline{e(\lambda)}=e(\lambda)$ for each $\lambda\in \Lambda$. 
\item For any $x\in W_\ex$, there is a unique element $\underline{H}^{\frac{\infty}{2}}_x\in \P^\circ$ such that $\underline{H}^{\frac{\infty}{2}}_x=\overline{\underline{H}^{\frac{\infty}{2}}_x}$ and $\underline{H}^{\frac{\infty}{2}}_x\in H^{\frac{\infty}{2}}_x+\sum_{y\leq^{\frac{\infty}{2}} x} v\Z[v] \cdot H^{\frac{\infty}{2}}_y$. 
\end{enumerate}
\end{thm}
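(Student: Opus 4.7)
The plan is to follow the Kazhdan--Lusztig paradigm, as adapted to the periodic Hecke module by Lusztig and Soergel. For part (1), uniqueness is immediate from the definition of $\P^\circ$: every element has the form $\sum_i e(\lambda_i)\cdot H_i$ with $\lambda_i\in\Lambda$ and $H_i\in\H_\ex$, so skew-linearity together with $\overline{e(\lambda)}=e(\lambda)$ determines the involution by $\overline{\sum_i e(\lambda_i)H_i}=\sum_i e(\lambda_i)\overline{H_i}$. The content is to show that this rule is well-defined, i.e.\ that if $\sum_i e(\lambda_i)H_i=0$ then $\sum_i e(\lambda_i)\overline{H_i}=0$. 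My approach would be to exhibit a concrete $\Z[v^{\pm 1}]$-basis of $\P^\circ$ of the form $\{e(\lambda)\cdot H_w\}$, with $\lambda$ running over $\Lambda$ and $w$ running over a system of coset representatives for the left translation action of $\Lambda$ on $W_\ex$; with such a basis in hand the involution is unambiguously defined, and it follows a posteriori that $\P^\circ=\P$ and that the involution is unitriangular with respect to $\leq^{\frac{\infty}{2}}$ on the basis $\{H^{\frac{\infty}{2}}_x\}$.

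For part (2), uniqueness follows from the standard Kazhdan--Lusztig maneuver. If $D:=\underline{H}^{\frac{\infty}{2}}_x-\underline{H}'^{\frac{\infty}{2}}_x\neq 0$, then $D$ is self-dual and lies in $\sum_{y<^{\frac{\infty}{2}} x}v\Z[v]\cdot H^{\frac{\infty}{2}}_y$. By the unitriangularity of the bar-involution established in (1), comparing the coefficient of $H^{\frac{\infty}{2}}_y$ in $D=\overline{D}$ for $y$ maximal among those appearing forces this coefficient to lie simultaneously in $v\Z[v]$ and in $v^{-1}\Z[v^{-1}]$, hence to vanish; iterating gives $D=0$. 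For existence I would induct on the number of $y\leq^{\frac{\infty}{2}} x$ appearing in the expansion. Once $\underline{H}^{\frac{\infty}{2}}_{xs}$ has been constructed for some $s\in\I_\af$ with $xs<^{\frac{\infty}{2}} x$, the product $\underline{H}^{\frac{\infty}{2}}_{xs}\cdot(H_s+v)$ is self-dual (since $\overline{H_s+v}=H_s+v$, using $H_s^{-1}=H_s+v-v^{-1}$) and, by the action formula, equals $H^{\frac{\infty}{2}}_x$ plus lower terms with coefficients in $\Z[v^{\pm 1}]$. Subtracting integer multiples of previously constructed $\underline{H}^{\frac{\infty}{2}}_y$ with $y<^{\frac{\infty}{2}} x$ to eliminate the parts in $\Z[v^{-1}]$ produces $\underline{H}^{\frac{\infty}{2}}_x$; a suitable base case is provided by the very anti-dominant elements, where the expansion of $e(\lambda)$ makes the structure explicit.

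The main obstacle is the existence half of (1). Unlike the ordinary Hecke algebra, where the bar-involution is forced by $\overline{H_s}=H_s^{-1}$, the periodic module $\P$ carries no canonical involution extending $v\mapsto v^{-1}$: the generators $H^{\frac{\infty}{2}}_x$ have no intrinsic self-dual meaning, and the would-be formula cannot be read off from the action. Verifying that the skew-linear extension from the periodic sums $e(\lambda)$ closes up on $\P^\circ$ is a genuine structural statement requiring either an explicit free basis adapted to the right $\H_\ex$-action, or a realization of $\P^\circ$ as an anti-spherical-type module that inherits an involution for intrinsic reasons --- this is the key combinatorial input contributed by Lusztig and Soergel.
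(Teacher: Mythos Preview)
The paper does not prove this theorem; it is stated with attribution to \cite{Lus80h,Soe97} and used as input, so there is no in-paper proof to compare against. Your outline follows the standard Kazhdan--Lusztig paradigm as developed by Lusztig and Soergel, and the overall architecture (uniqueness of the involution from the generating set, unitriangularity, then the usual ``correct the top coefficient'' induction) is the right one.

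One genuine technical point deserves attention. In the ordinary Hecke-algebra setting the Bruhat interval $\{y:y\leq x\}$ is finite, which makes the induction for existence in (2) straightforward. In the periodic module the set $\{y:y\leq^{\frac{\infty}{2}} x\}$ is infinite, so ``induct on the number of $y\leq^{\frac{\infty}{2}} x$ appearing in the expansion'' is not a well-founded scheme as stated: you do not know in advance that the process of multiplying by $H_s+v$ and subtracting earlier $\underline{H}^{\frac{\infty}{2}}_y$ terminates, nor that the base case (``very anti-dominant'') is reached after finitely many steps. Soergel's argument handles this by working with the generic length function (or equivalently the alcove depth) to control which $y$ can contribute, and by exploiting the $\Lambda$-translation symmetry $\langle\eta\rangle$ to reduce to a fundamental domain; Lusztig's original argument proceeds via the spherical module and stabilization. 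Your sketch gestures at this with the ``very anti-dominant'' base case, but the finiteness needed to close the induction is precisely the nontrivial combinatorial content of the cited references and should be flagged as such rather than folded into the standard maneuver.
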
 
\noindent 
We express $\underline{H}^{\frac{\infty}{2}}_x=\sum_y p_{y,x}(v)\cdot H^{\frac{\infty}{2}}_y$, then $p_{y,x}(v)\in \Z[v]$ is called the \textit{periodic Kazhdan--Lusztig polynomial}. 

Let $\hat{\P}$ be the completion of $\P$ consisting of the formal series in $\{H^{\frac{\infty}{2}}_x\}_{x\in W_\ex}$ that are bounded above (with respect to the order $\leq^{\frac{\infty}{2}}$). 
For $\eta\in \Lambda$, consider the $\Z[v^{\pm 1}]$-linear operator $\langle \eta \rangle$ on $\P$ by $\langle \eta \rangle(H^{\frac{\infty}{2}}_x)=H^{\frac{\infty}{2}}_{t(\eta)x}$, for any $x\in W_\ex$. 
We define two families of polynomials $\{q_{y,x}(v)\}_{x,y\in W_\ex}$ and $\{q'_{y,x}(v)\}_{x,y\in W_\ex}$ in $\Z[v]$ via the following equations in $\hat{\P}$ (for any $x\in W_\ex$) 
\begin{equation}\label{equ 7.1}
\big(\prod_{\alpha\in \Phi^+}(1+v^{2}\langle -\alpha \rangle+v^{4}\langle -2\alpha \rangle+\cdots)\big) (\underline{H}^{\frac{\infty}{2}}_x) = \sum_{y} q_{y,x}(v)\cdot H^{\frac{\infty}{2}}_y, 
\end{equation}
\begin{equation}\label{equ 7.2}
\big(\prod_{\alpha\in \Phi^+}(1+\langle -\alpha \rangle+\langle -2\alpha \rangle+\cdots)\big) (\underline{H}^{\frac{\infty}{2}}_x) = \sum_{y} q'_{y,x}(v)\cdot H^{\frac{\infty}{2}}_y. 
\end{equation}
Here $q_{y,x}(v)$ is called the \textit{generic Kazhdan--Lusztig polynomial}. 
Note that $q_{y,x}(1)=q_{y,x}'(1)$. 

\begin{rmk}
\begin{enumerate}
\item In \cite{Soe97} the periodic (resp. generic) polynomials are labeled by alcoves: $p_{y,x}(v)$ (resp. $q_{y,x}(v)$) is denoted by $p_{yA_\mathrm{fun},xA_\mathrm{fun}}(v)$ (resp. by $q_{yA_\mathrm{fun},xA_\mathrm{fun}}(v)$) in the \textit{loc. cit.}. 
\item The generic polynomials $\{q_{x,y}(v)\}_{x,y}$ are originally defined as the unique family of polynomials such that the element $\sum_{y} (-1)^{\ell(x)+\ell(y)}{q}_{y,x}(v^{-1})\cdot H^{\frac{\infty}{2}}_y\in \hat{\P}$ is contained in $H^{\frac{\infty}{2}}_x+\sum_{y} v^{-1}\Z[v^{-1}] \cdot H^{\frac{\infty}{2}}_y$ and is invariant under the involution on $\hat{\P}$ (obtained by extending the involution on $\P^{\circ}$), see \cite{Kato85}. 
Under this definition, the equality (\ref{equ 7.1}) is proved in \cite[Thm 3.5]{Kato85}, see also \cite[Thm 6.3]{Soe97}. 
\end{enumerate}
\end{rmk}

\subsection{Graded multiplicities}\label{subsect 7.2} 
\subsubsection{Grading on a category} 
A \textit{grading} on a category $\sC$ is the data of a triple $(\sC^\gr, \langle 1\rangle, \upsilon)$, where 
\begin{itemize}
\item $\langle 1\rangle$ is an auto-functor on the category $\sC^\gr$ (called the \textit{grading shift}), 
\item $\upsilon$ is a functor $\upsilon: \sC^\gr\rightarrow \sC$ (called the \textit{degrading functor}), 
\end{itemize}
together with the data of a natural isomorphism $\upsilon\circ \langle 1\rangle \xs \upsilon$ satisfying a natural isomorphism 
\begin{equation}\label{equ 7.0} 
\bigoplus_{d} \Hom_{\sC^\gr}(M_1,M_2\langle d\rangle) \xs \Hom_{\sC}(\upsilon M_1,\upsilon M_2),
\end{equation}
for any $M_1$, $M_2$ in $\sC^\gr$ (Here we abbreviate $\langle d\rangle=\langle 1\rangle^{\circ d}$ for any $d\in \Z$). 
For $M\in \sC$, an object $\widetilde{M}\in \sC^\gr$ with an isomorphism $\upsilon \widetilde{M}\cong M$ is called a \textit{lifting} of $M$ in $\sC^\gr$. 

For a graded algebra $A=A^\bullet$, the category of (finitely generated) graded modules $A^\bullet\gmod$ naturally provides a grading on $A\mod$. 
For a graded $A$-module $M=M^\bullet$, we set $M\langle d\rangle$ as the graded $A$-module with $(M\langle d\rangle)^i=M^{i-d}$, for any $i\in \Z$. 

\subsubsection{Graded $\A$-modules} 
Let $P(\lambda)_\C$ be the projective cover of $L(\lambda)_\C$ in $u_\zeta\mod^{\Lambda}_\C$. 
Let $Z(\lambda)_\C=u_\zeta\otimes_{u_\zeta^{\geq 0}} \C_\lambda$ be the \textit{baby Verma module}. 
Recall that $P^b(\lambda)_\C$ is the projective cover of $L(\lambda)_\C$ in $\fU^b_\zeta\mod^{\Lambda}_\C$. 
We have isomorphisms of $\fU^b_\zeta$-modules 
\begin{equation}\label{equ 7.4'}
P(\lambda)_\C\simeq P^b(\lambda)_\C \otimes_{Z_\Fr^-}\C , \quad Z(\lambda)_\C \simeq M(\lambda)_\C\otimes_{Z_\Fr^-}\C,
\end{equation}
where $\C$ is the trivial representation of $Z_\Fr^-$. 

Recall that we have a commutative diagram 
\begin{equation}\label{equ equival}
\begin{tikzcd} 
\A_\n\mod^{B} \arrow[r] \arrow[d,"\simeq"'] & \A_\n\mod^{T} \arrow[d,"\simeq"']  & \A_0\mod^{T} \arrow[l,hook'] \arrow[d,"\simeq"] \\ 
\sO^0_\C \arrow[r] 
& \fU^b_\zeta\mod^{\Lambda,0}_\C & u_\zeta\mod^{\Lambda,0}_\C \arrow[l,hook'] , 
\end{tikzcd}
\end{equation}
where the left horizontal maps are by forgetful functors, and the right inclusions are induced by the quotients $\A_\n\twoheadrightarrow \A_0$ and $\fU^b_\zeta\twoheadrightarrow u_\zeta$. 
For any $x\in W_\ex$ and bounded above poset ideal $\Omega\subset \Lambda$, we denote by 
$$\sL_x,\quad \sZ_x,\quad \sP_x,\quad \sP^b_x,\quad {\sQ}_x^{\Omega}$$ 
the preimages of $L(x\bullet_l 0)_\C$, $Z(x\bullet_l 0)_\C$, $P(x\bullet_l 0)_\C$, $P^b(x\bullet_l 0)_\C$, $Q(x\bullet_l 0)^{\Omega}_\C$ under the equivalences above. 
By Lemma \ref{lem 2.1*}, the $\A_\n$-module $\sL_x$ can be endowed with a $B$-equivariant structure, which makes $\sL_x$ a simple object in $\A_\n\mod^B$. 
Recall the Verma objects $\sM_x$ ($x\in W_\ex$) in $\A^b_S\mod^{B}$. 
We set 
$$\sM^\C_x=\sM_x\otimes_{S} \C$$ 
and call $\sM^\C_x$ (resp. $\sZ_x$) the \textit{Verma object} in $\A_\n\mod^{B}$ and $\A_\n\mod^{T}$ (resp. in $\A_0\mod^{T}$). 
A \textit{Verma flag} is a finite filtration with composition factors given by Verma objects. 

\begin{lem}\label{lem 7.3} 
Let $x\in W_\ex$, and let $\Omega\subset \Lambda$ be a bounded above poset ideal. 
\begin{enumerate}
\item The objects $\sZ_x$, $\sP_x$, $\sP^b_x$ in $\A_\n\mod^{T}$ admit liftings $\widetilde{\sZ}_x$, $\widetilde{\sP}_x$, $\widetilde{\sP}^b_x$ in $\A_\n\mod^{T\times \C^\times}$, and the liftings are unique up to grading shifts. 
\item The objects $\sL_x$, $\sM^\C_x$, $\sQ_x^{\Omega}$ in $\A_\n\mod^{B}$ admits liftings $\widetilde{\sL}_x$, $\widetilde{\sM}^\C_x$, $\widetilde{\sQ}_x^{\Omega}$ in $\A_\n\mod^{B\times \C^\times}$, and the liftings are unique up to grading shifts. 
\item Let $P$ be an object in $\A_\n\mod^{B}$ with a lifting $\widetilde{P}$ in $\A_\n\mod^{B\times \C^\times}$. 
If $P$ admits a Verma flag, then $\widetilde{P}$ also admits a Verma flag (i.e. a filtration with composition factors of the form $\widetilde{\sM}^\C_x\langle d\rangle$). 
Similar result holds for $\A_\n\mod^{T}$ and $\A_0\mod^{T}$. 
\end{enumerate}
\end{lem}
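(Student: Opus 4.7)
The lemma packages three standard kinds of lifting result, which I would treat in order: simples, then Vermas and projective covers, then filtered objects.

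For part (1) and the simple modules of part (2), the key observation is that each $\sL_x$ is finite-dimensional over $\C$ with $\End(\sL_x)=\C$ by Lemma~\ref{lem 2.1*}. The $\C^\times$-action on $\tilde{\g}$ preserves $\pi_{\tilde{\sN}}^*\A$ and hence defines auto-equivalences $\psi_t$ of $\A_\n\mod^B$. Connectedness of $\C^\times$ forces the isomorphism class of $\psi_t\sL_x$ to be constant in $t$, and the obstruction to coherently choosing these isomorphisms into a genuine $\C^\times$-equivariant structure lives in rational group cohomology $H^2_{\mathrm{rat}}(\C^\times,\C^\times)=0$. So a lifting $\widetilde{\sL}_x$ exists, and two liftings differ by an element of $X^*(\C^\times)=\Z$, which is precisely the grading-shift ambiguity. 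Given $\widetilde{\sL}_x$, each of $\sP^b_x$, $\sP_x$ and $\sQ_x^{\leq\nu}$ is the (ungraded) projective cover of a simple in an abelian category whose graded version still has enough projectives, so the graded projective cover of $\widetilde{\sL}_x$ produces the desired lift, unique up to shift by uniqueness of projective covers.

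For the Verma objects in (2) (and the baby Verma $\sZ_x$ in (1) viewed as an $\A_\n$-module through $\A_\n\twoheadrightarrow\A_0$), I would use Corollary~\ref{cor 4.9} and Proposition~\ref{prop 4.7}: one has $\sM_{w_0}=\sE^{\vee,b}_S$, which carries a tautological $B\times\C^\times$-equivariant structure because the tilting bundle $\sE$ does. Theorem~\ref{thm 3.1}(3) makes both the reflection functors $\T^c_s$ and the $\Lambda$-translations $\C^\times$-equivariant, so formula (\ref{equ 4.13}) yields graded lifts $\widetilde{\sM}_x\in\A^b_S\mod^{B\times\C^\times}$. Specializing along the $\C^\times$-equivariant maps $S\twoheadrightarrow\C$ and $\k[\b]\twoheadrightarrow\k[\n]$, $\k[\b]\twoheadrightarrow\k_0$ yields $\widetilde{\sM}^\C_x$ and $\widetilde{\sZ}_x$; uniqueness again follows from $\End=\C$ on the simple quotient.

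For part (3), I would induct on the length of the Verma flag of $P$. Choose a Verma factor $\sM^\C_x$ minimal among the factors with respect to $\leq^{\frac{\infty}{2}}$; by Proposition~\ref{prop 5.4} it admits no non-trivial extension by any other Verma factor of $P$, so it splits off as a quotient $P\twoheadrightarrow\sM^\C_x$. Under the graded $\Hom$-decomposition built into the grading datum, and because $\End(\widetilde{\sM}^\C_x)=\C$ is concentrated in degree $0$, exactly one degree $d$ contributes, producing a graded short exact sequence $0\to K\to\widetilde{P}\to\widetilde{\sM}^\C_x\langle d\rangle\to 0$ whose kernel is a lift of a module with shorter Verma flag, and the induction hypothesis finishes the argument. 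The cases $\A_\n\mod^T$ and $\A_0\mod^T$ are identical modulo the corresponding Ext-vanishing between Verma (resp.\ baby Verma) objects. I expect the main obstacle to be precisely this $\Ext^1$-vanishing in the non-deformed, non-$B$-equivariant categories, since Proposition~\ref{prop 5.4} is formulated for $\A^b_S\mod^B$; the cleanest fix is to invoke the constructible-sheaf description of \S\ref{subsect 5.2.1}, where the $\leq^{\frac{\infty}{2}}$-vanishing is intrinsic and survives the specializations needed to pass to $\A_\n\mod^B$, $\A_\n\mod^T$ and $\A_0\mod^T$.
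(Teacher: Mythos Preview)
Your overall strategy matches the paper's: liftings of simples via the vanishing of the $H^2$ obstruction for $\C^\times$, liftings of Verma objects via the $\C^\times$-equivariance of $\T^c_s$ and formula~(\ref{equ 4.13}), and an induction on the length of the Verma flag for part~(3). Two points deserve comment.

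First, in part~(3) the paper takes the dual route to yours: it chooses $x$ \emph{maximal} for $\leq^{\frac{\infty}{2}}$ and exhibits $\sM^\C_x$ as a \emph{submodule} of $P$, using that any nonzero map $\sM^\C_x\to P$ is injective (through the equivalence $\fF$ this is the statement that $P$, having a Verma flag, is free over the domain $\fU_\zeta^-$, so any nonzero map from the rank-one free module $M(x\bullet_l 0)_\C$ is an inclusion). Your claim that ``exactly one degree $d$ contributes'' is not justified by $\End(\widetilde{\sM}^\C_x)=\C$: if $\sM^\C_x$ occurs with multiplicity greater than one in the flag, $\Hom(P,\sM^\C_x)$ has dimension greater than one and can spread over several degrees. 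What you actually need, and what is true, is that \emph{every} nonzero map $P\to\sM^\C_x$ is surjective when $x$ is minimal; this follows because any such map factors through the quotient of $P$ by the sub built from the $\sM^\C_y$-factors with $y\neq x$ (using the $\Hom$-vanishing from Proposition~\ref{prop 5.4}), and the remaining iterated self-extension of $\sM^\C_x$ is generated by its weight-$(x\bullet_l 0)$ space, which must hit the highest-weight line of $\sM^\C_x$. With this correction your argument goes through and is genuinely dual to the paper's.

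Second, for $\sQ_x^{\leq\nu}$ the paper does not invoke an abstract graded projective cover; it builds the lift explicitly as the truncation of $(\A_\n\rtimes U\n)\otimes_{\A_\n}\widetilde{\sP}^b_x$, after first proving~(3) so that this induced module is known to carry a graded Verma flag. Your projective-cover argument is fine for existence and uniqueness, but it presupposes that the truncated graded category $\A_\n\mod^{B\times\C^\times,\leq\nu}$ has enough projectives, which itself needs the graded analogue of Lemma~\ref{lem 5.6}; the paper's order of exposition avoids this circularity and, more importantly, the explicit description is exactly what is used in the subsequent multiplicity computation (Theorem~\ref{thm 7.3}).
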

\begin{proof} 
\textit{Step 1.} We firstly show (1) and the assertion for $\sL_x$, $\sM^\C_x$ in (2). 
By a standard argument (see e.g. \cite[Proof of Prop 5.2.3]{BM13}), one can show that any $B$-equivariant simple $\A_\n$-module (resp. $T$-equivariant projective module of $\A_\n$ or $\A_0$) can be equipped with a $\C^\times$-equivariant structure, and up to isomorphism any lifting differs by a $\C^\times$-character. 

By its construction (\ref{equ 4.13}), any Verma object $\sM^\C_x$ can be naturally equipped with a $\C^\times$-equivariant structure. 
Let $\widetilde{\sM}^\C_x$ be a lifting of $\sM^\C_x$, then $\widetilde{\sZ}_x=\widetilde{\sM}^\C_x\otimes_{\C[\n]} \C$ provides a lifting of $\sZ_x$ by (\ref{equ 7.4'}). 
Suppose $\widetilde{\sM}_{1}$, $\widetilde{\sM}_{2}$ are two liftings of $\sM^\C_x$. 
Since $\End(\sM^\C_x)=\C$, by (\ref{equ 7.0}) we have $\Hom(\widetilde{\sM}_{1},\widetilde{\sM}_{2}\langle i\rangle)=\C \delta_{i,d}$ for some $d\in \Z$, and any nonzero map from $\widetilde{\sM}_{1}$ to $\widetilde{\sM}_{2}\langle d\rangle$ is an isomorphism. 
The uniqueness of $\widetilde{\sZ}_x$ follows similarly. 

\textit{Step 2.}
Now we show (3). 
We only show the assertion for $\A_\n\mod^{B}$. 
We prove by increasing induction on the length of Verma flag $\sum_{x} (P:\sM^\C_x)$. 
Let $x$ be such that $x\bullet_l 0$ is maximal among the weights $y\bullet_l 0$ with $\sM^\C_y$ appearing as a Verma factor of $P$. 
Then we have $\Hom(\widetilde{\sM}^\C_x\langle d\rangle,\widetilde{P})\neq 0$ for some $d$, and any nonzero element gives an inclusion $\widetilde{\sM}^\C_x\langle d\rangle \hookrightarrow \widetilde{P}$ (since any nonzero map $\sM^\C_x\rightarrow P$ is an inclusion). 
Now $\widetilde{P}/\widetilde{\sM}^\C_x\langle d\rangle$ is a lifting of $P/\sM^\C_x$, which by induction hypothesis admits a Verma flag. 
Hence $\widetilde{P}$ admits a Verma flag. 

\textit{Step 3.}
We show the assertion for $\sQ_x^{\Omega}$ in (2). 
Recall the truncated categories $\A^b_S\Mod^{T,\Omega}$, $\A^b_S\Mod^{B,\Omega}$, and the truncation functors $\tau^{T,\Omega}$, $\tau^{U\n,\Omega}$ introduced in \textsection\ref{subsect 5.2.2}. 
We set $\A_\n\Mod^T=\A_\n\Mod^T\cap \A^b_S\Mod^{T,\Omega}$ and define $\A_\n\Mod^{B,\Omega}$ similarly. 
The truncation functors restrict to 
$$\tau^{T,\Omega}:\ \A_\n\Mod^T\rightarrow \A_\n\Mod^{T,\Omega} \quad \text{and}\quad 
\tau^{U\n,\Omega}:\ (\A_\n\rtimes U\n)\Mod^T\rightarrow \A_\n\Mod^{B,\Omega}.$$
We set $\A_\n\Mod^{T\times \C^\times, \Omega}$ as the preimage of $\A_\n\Mod^{T,\Omega}$ under the degrading functor $\A_\n\Mod^{T\times \C^\times}\rightarrow \A_\n\Mod^{T}$, and define $\A_\n\Mod^{B\times \C^\times, \Omega}$ similarly. 
Consider the truncation functor 
$$\tau^{T\times \C^\times,\Omega}:\ \A_\n\Mod^{T\times \C^\times}\rightarrow \A_\n\Mod^{T\times \C^\times, \Omega},$$
by sending $M$ to its quotient by the minimal graded submodule containing $\ker(M\rightarrow \tau^{T,\Omega}(M))$. 
The functor $\tau^{T\times \C^\times,\Omega}$ is left adjoint to the natural inclusion. 
We define 
$$\tau^{U\n\times \C^\times,\Omega}:\ (\A_\n\rtimes U\n)\Mod^{T\times \C^\times}\rightarrow \A_\n\Mod^{B\times \C^\times, \Omega},$$
by sending $M$ to its quotient by the (graded) $\A_\n\rtimes U\n$-module generated by $\ker(M\rightarrow \tau^{T\times \C^\times,\Omega}(M))$. 
Then $\tau^{U\n\times \C^\times,\Omega}$ is left adjoint to the natural inclusion. 

We claim that the following natural transformations are isomorphisms 
\begin{equation}\label{equ 7.5new}
\tau^{T,\Omega}\circ \upsilon \xs \upsilon\circ \tau^{T\times \C^\times,\Omega}  
\quad \text{and}\quad
\tau^{U\n,\Omega}\circ \upsilon \xs \upsilon\circ \tau^{U\n\times \C^\times,\Omega} 
\end{equation}
where $\upsilon$ is the degrading functor. 
Note that the second isomorphism follows from the first one, hence we only need to show the first isomorphism. 
By the right exactness of truncation functors and degrading functors, it is enough to construct isomorphism $\upsilon\circ \tau^{T\times \C^\times,\Omega}(P^\gr)\simeq \tau^{T,\Omega}(P)$ for any $T$-equivariant projective $\A_\n$-module $P$ and its lifting $P^\gr$ (which exists by (1)). 
By (3), $P^\gr$ admits a Verma filtration. 
By \eqref{equ new2.4} and (\ref{equ 7.0}) applied to the degrading $\Db(\A_\n\mod^{T\times \C^\times})\rightarrow \Db(\A_\n\mod^{T})$, we have $\Ext^1(\widetilde{\sM}^\C_x,\widetilde{\sM}^\C_y\langle d\rangle)=0$ if $x\bullet_l 0\nless y\bullet_l 0$. 
Hence $\tau^{T\times \C^\times,\Omega}(P^\gr)$ is the quotient of $P^\gr$ by the submodule composed by Verma factors $\widetilde{\sM}^\C_y\langle d\rangle$ with $y\bullet_l 0\notin \Omega$, which shows that $\tau^{T\times \C^\times,\Omega}(P^\gr)=\tau^{T,\Omega}(P)$ after forgetting the grading structure. 
It proves the claim. 

Finally, consider the induction functor 
$$\ind_c:=(\A_\n\rtimes U\n)\otimes_{\A_\n}-:\ \A_\n\mod^{T\times \C^\times}\rightarrow (\A_\n\rtimes U\n) \mod^{T\times \C^\times}.$$ 
Let $x\in W_\ex$ such that $x\bullet_l 0\in \Omega$. 
The module $\widetilde{\sQ}_x^\Omega= \tau^{U\n\times \C^\times,\Omega}(\ind_c \widetilde{\sP}^b_x)$ is the projective cover of $\widetilde{\sL}_x\langle d\rangle$ (for some $d\in \Z$) in $\A_\n\mod^{B\times \C^\times,\Omega}$, which provide a lifting of $\sQ^\Omega_x=\tau^{U\n,\Omega}(\ind_c {\sP}^b_x)$. 
The uniqueness of lifting follows from the uniqueness of projective cover. 
\end{proof}

For an object $P$ in $\A_\n\mod^{B\times \C^\times}$ admitting Verma flags, we set $(P: \widetilde{\sM}^\C_x\langle d\rangle)$ as the multiplicity of the Verma factor $\widetilde{\sM}^\C_x\langle d\rangle$, which does not depend on the choice of Verma flag. 
We define the \textit{graded multiplicity} as 
\begin{equation}\label{equ 7.3}
(P: \widetilde{\sM}^\C_x)_{\gr}=\sum_d (P: \widetilde{\sM}^\C_x\langle d\rangle) \cdot v^d 
\ \in \Z[v^{\pm 1}].
\end{equation}
For any object $M$ in $\A_\n\mod^{B\times \C^\times}$, it admits a separated decreasing $\Z_{\leq 0}$-filtration (may of infinite length) with simple composition factors. 
We set $[M: \widetilde{\sL}_x\langle d\rangle]$ as the multiplicity (which is finite) of $\widetilde{\sL}_x\langle d\rangle$ in the factors, which dose not depend on the choice of such filtration, and we define the \textit{graded multiplicity} 
\begin{equation}\label{equ 7.4}
[M: \widetilde{\sL}_x]_{\gr}=\sum_d [M: \widetilde{\sL}_x\langle d\rangle] \cdot v^d \ \in \Z[v^{\pm 1}].
\end{equation}
We also introduce similar notations for $\A_\n\mod^{T\times \C^\times}$ and $\A_0\mod^{T\times \C^\times}$. 

Let $\hat{K}_0(\A_\n\mod^{B\times \C^\times})$ be the space of $\Z[v^{\pm 1}]$-linear formal series in $\{[\widetilde{\sL}_x]\}_{x}$ that are bounded above (with respect to the order $\leq^{\frac{\infty}{2}}$). 
We have an inclusion 
$$K_0(\A_\n\mod^{B\times \C^\times})\rightarrow \hat{K}_0(\A_\n\mod^{B\times \C^\times}), \quad [M]\mapsto \sum_x [M: \widetilde{\sL}_x]_{\gr}\cdot [\widetilde{\sL}_x].$$ 
Note that (the images of) $\{[\widetilde{\sM}_x]\}_x$ and $\{[\widetilde{\sZ}_x]\}_x$ give other topological basis on $\hat{K}_0(\A_\n\mod^{B\times \C^\times})$. 

\subsubsection{Graded multiplicities} 
Here is the main result of this section. 
\begin{thm}\label{thm 7.3}
Let $\Omega$ be a bounded above poset ideal of $\Lambda$. 
Under the degrading functor $\sO_\C^{0,\gr} \rightarrow \sO_\C^0$, the modules $L(x\bullet_l 0)_\C$, $M(x\bullet_l 0)_\C$ and $Q(x\bullet_l 0)_\C^{\Omega}$ ($x\in W_\ex$) admit liftings $\widetilde{L}(x\bullet_l 0)_\C$, $\widetilde{M}(x\bullet_l 0)_\C$ and $\widetilde{Q}(x\bullet_l 0)_\C^{\Omega}$ in $\sO_\C^{0,\gr}$, such that the graded multiplicities are given by 
\begin{equation}\label{equ 7.6'}
(\widetilde{Q}(x\bullet_l 0)_\C^{\Omega}: \widetilde{M}(y\bullet_l 0)_\C)_\gr=
\begin{cases}
	q'_{w_0y,w_0x}(v) & \text{if } y\bullet_l 0 \in \Omega, \\ 
	0 & \text{if else}, 
\end{cases}
\end{equation}
and 
\begin{equation}\label{equ 7.7'}
[\widetilde{M}(x\bullet_l 0)_\C: \widetilde{L}(y\bullet_l 0)_\C]_\gr= q_{w_0x,w_0y}(v),
\end{equation}
for any $x,y\in W_\ex$. 
\end{thm}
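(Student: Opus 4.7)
The first step is to construct the claimed liftings. By Corollary \ref{cor 5.14}, the equivalence $\fF$ specializes to $\A_\n\mod^B\xs \sO^0_\C$, and this upgrades to a $\C^\times$-equivariant version $\sO^{0,\gr}_\C = \A_\n\mod^{B\times\C^\times}$. We therefore define
\[ \widetilde{L}(x\bullet_l 0)_\C:=\fF(\widetilde{\sL}_x),\quad \widetilde{M}(x\bullet_l 0)_\C:=\fF(\widetilde{\sM}^\C_x),\quad \widetilde{Q}(x\bullet_l 0)_\C^{\leq \nu}:=\fF(\widetilde{\sQ}_x^{\leq \nu}), \]
where the RHS are the liftings furnished by the preceding lemma. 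What must then be computed are the graded multiplicities $(\widetilde{\sQ}_x^{\leq \nu}:\widetilde{\sM}^\C_y)_\gr$ and $[\widetilde{\sM}^\C_x:\widetilde{\sL}_y]_\gr$ in $\A_\n\mod^{B\times\C^\times}$.

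The computation should proceed through the mixed/graded version of Bezrukavnikov's equivalence (\ref{equ 5.5}), which identifies $D^b\Coh^{G\times\C^\times}(\widehat{\mathrm{St}})$ with a mixed version of $\widehat{D}_{\check{I}_0}(\widetilde{\Fl})$, the affine Hecke category with semi-infinite combinatorics. As recalled in the proof of Proposition \ref{prop 5.4}, after composing with $\pi_2^*\sE_{\hat{0}}\otimes^L-$ and passing to the subcategory with support at $0\in\t/W$, the Verma object $\sM_{t(\nu)w_0w^{-1}}$ corresponds to $\sJ_{w_0\nu}*j_{w*}$, which in the Grothendieck group acquires the class $v^{l(w)}H^{\frac{\infty}{2}}_{t(w_0\nu)w}$ in the periodic Hecke module $\P^\circ$. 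The indecomposable projective/tilting objects on the coherent side correspond to the indecomposable tilting objects in the affine Hecke category, whose classes in $\P^\circ$ are the periodic Kazhdan--Lusztig elements $\underline{H}^{\frac{\infty}{2}}_x$ (up to Bruhat-to-semi-infinite relabelling by $w_0$).

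The next step is to explain the appearance of the generic polynomials and the factor $\prod_{\alpha\in\Phi^+}(1+v^2\langle-\alpha\rangle+v^4\langle-2\alpha\rangle+\cdots)$ in (\ref{equ 7.1})--(\ref{equ 7.2}). This factor is precisely the graded Euler characteristic (resp.\ class) of the Koszul resolution computing the derived restriction from $\tilde{\g}$ along the closed embedding $\tilde{\sN}=G\times^B\n\hookrightarrow \tilde{\g}$, since the conormal bundle of $\tilde{\sN}$ is $G\times^B(\b/\n)^*\cong G\times^B\t^*$, placed in cohomological degree $1$ and internal degree $2$ (from the $\C^\times$-action $t.x=t^{-2}x$). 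Restricting the tilting/simple generators on $\A\mod^{G\times\C^\times}$ to $\A_\n\mod^{B\times\C^\times}$ and then reading off Verma flag multiplicities, one obtains (\ref{equ 7.6'}) directly from the definition (\ref{equ 7.2}) of $q'$, while passing to simple factors in a Verma and using Koszul duality between the Wakimoto and IC bases delivers (\ref{equ 7.7'}) via (\ref{equ 7.1}). The truncation $(-)^{\leq\nu}$ in $\widetilde{\sQ}_x^{\leq\nu}$ accounts for the vanishing/cutoff condition $y\bullet_l 0\leq l\nu$ on the RHS of (\ref{equ 7.6'}), since by construction the Verma factors $\widetilde{\sM}^\C_{t(\mu)y}$ with $\mu\nleq\nu$ are quotiented out.

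The main obstacle will be bookkeeping the gradings and the relabelling $x\mapsto w_0 x$ consistently across three equivalences (the equivalence $\fF$, the mixed Bezrukavnikov equivalence, and the Wakimoto-standard dictionary), and in particular verifying that the $\C^\times$-weight conventions on $\sE$ and on the mixed structure of $\widehat{D}^{\mx}_{\check{I}_0}(\widetilde{\Fl})$ are compatible with the normalization $\underline{H}^{\frac{\infty}{2}}_x\in H^{\frac{\infty}{2}}_x+\sum_{y<^{\frac{\infty}{2}} x}v\Z[v]\cdot H^{\frac{\infty}{2}}_y$. Once these normalizations are pinned down, the identities (\ref{equ 7.6'}) and (\ref{equ 7.7'}) follow from the defining equations (\ref{equ 7.1}) and (\ref{equ 7.2}) by taking classes in $\hat{\P}^\circ$ of the exact sequences expressing the Verma and projective objects in $\A_\n\mod^{B\times\C^\times}$.
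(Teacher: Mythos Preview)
Your approach is genuinely different from the paper's, but it has a real gap and a concrete error.

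\textbf{The paper's route.} The paper does \emph{not} go through a mixed version of Bezrukavnikov's equivalence. Instead it bootstraps from the small quantum group: Theorem~\ref{thm 7.4} (AJS94) gives the periodic polynomials $p_{w_0x,w_0y}$ as graded multiplicities $(\widetilde P_y:\widetilde Z_x)_\gr=[\widetilde Z_x:\widetilde L_y]_\gr$ in a Koszul-graded model of $u_\zeta\mod^{\Lambda,0}_\C$. Proposition~\ref{prop 7.5} then identifies this Koszul grading with the $\C^\times$-grading on $\A_0\mod^{T\times\C^\times}$ by choosing $\sE$ so that $\A^\bullet$ is Koszul and comparing radicals. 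Finally, one passes from $\A_0$ to $\A_\n$ using two ingredients: the induction $\widetilde\sQ_x=U\n\otimes\widetilde\sP^b_x$ (which supplies the operator $\prod_{\alpha\in\Phi^+}(1+\langle-\alpha\rangle+\cdots)$ and hence the polynomials $q'$), and the Koszul resolution of $\C$ over $\C[\n]$ (which supplies $\prod_{\alpha\in\Phi^+}(1-v^2\langle-\alpha\rangle)$, relating $[\widetilde\sZ_x]$ and $[\widetilde\sM^\C_x]$, whence $q$ via the inversion formula~(\ref{equ 7.13})).

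\textbf{The gap in your argument.} You assert that ``the indecomposable projective/tilting objects on the coherent side correspond to the indecomposable tilting objects in the affine Hecke category, whose classes in $\P^\circ$ are the periodic Kazhdan--Lusztig elements $\underline{H}^{\frac{\infty}{2}}_x$.'' But this is essentially the statement to be proved: nothing in the paper establishes a mixed version of~(\ref{equ 5.5}) matching the graded projectives $\widetilde\sQ_x^{\leq\nu}$ (which are truncated projectives, not tiltings) to self-dual objects on the constructible side. Without an independent argument for this (e.g.\ parity vanishing, or a graded Soergel-type characterization), your proof is circular. The paper avoids this by importing the hard input from AJS94.

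\textbf{The concrete error.} Your geometric explanation of the factor $\prod_{\alpha\in\Phi^+}(1+v^{2}\langle-\alpha\rangle+\cdots)$ is incorrect. The conormal bundle of $\tilde\sN=G\times^B\n$ in $\tilde\g=G\times^B\b$ is $G\times^B(\b/\n)^*\cong G\times^B\t^*$, which has rank $|\I|$, not $|\Phi^+|$; it cannot produce a product indexed by positive roots. In the paper, both products over $\Phi^+$ come from $\n$ itself: the factor in~(\ref{equ 7.2}) is the weight expansion of $U\n$ in the induction $\widetilde\sQ_x=U\n\otimes\widetilde\sP^b_x$ (no $v$-shift, since $U\n$ carries trivial $\C^\times$-action here), and the factor in~(\ref{equ 7.1}) arises from the Koszul complex $\C[\n]\otimes\bigwedge^\bullet\n^*$ resolving $\C$ over $\C[\n]$, where $\n^*$ sits in $\C^\times$-degree~$2$.
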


Before proving Theorem \ref{thm 7.3}, we firstly consider the graded multiplicities in $\A_0\mod^{T\times \C^\times}$. 
Set $\rB$ as the $\Lambda$-graded (alternatively, $T$-equivariant) algebra with an isomorphism 
$$\rB^\op=\bigoplus_{\lambda\in \Lambda} \Hom\big(\bigoplus_{w\in W} P(w\bullet 0)_\C,\bigoplus_{w\in W} P(w\bullet 0-l\lambda)_\C\big).$$ 
Note that $\rB$ is finite dimensional, and there is an equivalence 
$$\bigoplus_{\lambda}\Hom(\bigoplus_{w} P(w\bullet 0+l\lambda)_\C,-):\ u_\zeta\mod_\C^{\Lambda,0}\xs \rB\mod^T.$$ 
We denote by $L_x$, $Z_x$, $P_x$ the image of $L(x\bullet_l 0)_\C$, $Z(x\bullet_l 0)_\C$, $P(x\bullet_l 0)_\C$ ($x\in W_\ex$) under this equivalence. 
The following theorem is a recollection of results by Andersen--Jantzen--Soergel in \cite[\textsection18]{AJS94} (with conventions adopted from \cite{Soe97}). 

\begin{thm}[{\cite{AJS94}}]\label{thm 7.4} 
The algebra $\rB$ admits a $T$-stable Koszul grading $\rB^\bullet$. 
Under the degrading functor $\rB^\bullet\gmod^T\rightarrow \rB\mod^T$,  
the modules $L_x$, $Z_x$, $P_x$ ($x\in W_\ex$) admit liftings $\widetilde{L}_x$, $\widetilde{Z}_x$, $\widetilde{P}_x$ in $\rB^\bullet\gmod^T$. 
Moreover, these liftings can be chosen such that $\widetilde{L}_{t(\lambda)x}=\widetilde{L}_x\otimes \C_\lambda$ for any $\lambda\in \Lambda$ (and similarly for $\widetilde{Z}_x$, $\widetilde{P}_x$), and 
$$(\widetilde{P}_y:\widetilde{Z}_x)_{\gr}= [\widetilde{Z}_x: \widetilde{L}_y]_{\gr}=p_{w_0x,w_0y}(v), \quad \forall x,y\in W_\ex.$$ 
(Here the graded multiplicities are defined as in (\ref{equ 7.3}) and (\ref{equ 7.4}).) 
\end{thm}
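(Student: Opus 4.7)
The plan is to view Theorem \ref{thm 7.4} as a recollection of \cite[\textsection 18]{AJS94} in the periodic reformulation of \cite{Soe97}, and to sketch how each of its three parts fits into the geometric/coherent machinery recalled in this paper. The grading and the existence of liftings are essentially formal; the substantive content (Koszulity and the Kazhdan--Lusztig formula) is reduced to deep external inputs on perverse sheaves on the affine flag variety.

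First I would produce the grading and the liftings from $\C^\times$-equivariance. By Proposition \ref{prop 3.7}, $u_\zeta\mod^{\Lambda,0}_\C \simeq \A_0\mod^T$; since the tilting bundle $\sE$ on $\tilde{\g}$ admits a canonical $\C^\times$-equivariant structure for the contracting action $t.x = t^{-2}x$, so do $\A = \sEnd^{\op}_{\tilde{\g}}(\sE)$ and its specialization $\A_0$. This yields an enhancement $\A_0\mod^{T\times\C^\times}$ of $\A_0\mod^T$ together with a degrading functor. Defining $\rB^\bullet$ in the same way as $\rB$ but with $\C^\times$-equivariant projective generators $\widetilde{\sP}_w$ and a grading shift in each Hom-space produces the required $T$-stable graded algebra whose degrading recovers $\rB$. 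The liftings $\widetilde{L}_x, \widetilde{Z}_x, \widetilde{P}_x$ are the corresponding $\C^\times$-equivariant simples, baby Vermas, and indecomposable projectives in $\A_0\mod^{T\times\C^\times}$, which exist and are unique up to a grading shift by standard obstruction theory for $\C^\times$-actions; the shift is pinned down by demanding $\widetilde{L}_{t(\lambda)x} = \widetilde{L}_x \otimes \C_\lambda$, and analogously for $\widetilde{Z}_x$ and $\widetilde{P}_x$.

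The crux is Koszulity of $\rB^\bullet$ together with the identification of $[\widetilde{Z}_x : \widetilde{L}_y]_\gr$ with $p_{w_0x, w_0y}(v)$. I would combine the derived equivalence (\ref{equ 3.5}) $D^b(\A\mod_0) \simeq D^b\Coh_\sB(\tilde{\g})$ with Bezrukavnikov's equivalence (\ref{equ 5.5}), applied to a suitable completion/restriction at $0 \in \t$, so as to identify a mixed version of the derived category of $\A_0\mod^{T\times\C^\times}$ with a mixed (or monodromic) category of $\check{I}_0$-equivariant perverse sheaves on $\widetilde{\Fl}$. Under this transport, baby Vermas $\widetilde{Z}_x$ correspond to standard sheaves $j_{x'!}$ with $x' = w_0x$ (the $w_0$-twist reflecting the convention $\sO_{\tilde{\g}_\Delta}(w_0\lambda) \leftrightarrow \sJ_\lambda$ recalled in \textsection\ref{subsect 5.2.1}), simples $\widetilde{L}_y$ to $\IC_{w_0y}$, and projectives $\widetilde{P}_y$ to indecomposable tilting sheaves. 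Koszulity of $\rB^\bullet$ then follows from Beilinson--Ginzburg--Soergel-type purity of IC sheaves on the Kac--Moody flag variety, and $[\widetilde{Z}_x : \widetilde{L}_y]_\gr$ becomes the weight-graded coefficient of $j_{w_0x!}$ in $\IC_{w_0y}$, i.e.\ an affine-type Kazhdan--Lusztig polynomial for $W_\ex$. To identify this with the \emph{periodic} polynomial $p_{w_0x, w_0y}(v)$, I would invoke Soergel's \cite[\textsection 4, \textsection 6]{Soe97}: a regular block of tilting sheaves completed by Wakimoto sheaves categorifies $\P^\circ$, whereby the basis change $\{H^{\frac{\infty}{2}}_y\} \to \{\underline{H}^{\frac{\infty}{2}}_x\}$ on the combinatorial side matches the expansion of simple classes in terms of baby Verma classes on the geometric side, forcing the coefficient to be $p_{w_0x, w_0y}(v)$.

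The main obstacle is \emph{not} the grading or the existence of liftings---which are formal consequences of the $\C^\times$-equivariant structure on $\sE$---but the two deep geometric inputs just invoked: (i) the purity and Koszulity of IC sheaves on the Kac--Moody flag variety $\widetilde{\Fl}$ in the mixed or monodromic framework, which rests on the full Beilinson--Ginzburg--Soergel machinery adapted to Kac--Moody setting, and (ii) Soergel's combinatorial identification of the periodic Hecke module $\P^\circ$ with the category of Wakimoto/tilting sheaves, together with the careful matching of the $w_0$-twist between Steinberg-block labels and affine-flag Schubert cell labels. Since both inputs are external to the present paper (going back to \cite{AJS94, Soe97, Bezru16}), the statement is legitimately quoted rather than reproved, and the proposal above is best read as an explanation of why the quoted result is compatible with, and indeed suggested by, the coherent model built here.
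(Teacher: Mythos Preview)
The paper does not prove Theorem \ref{thm 7.4}: it is stated as a recollection of results from \cite[\textsection 18]{AJS94} (in Soergel's normalization from \cite{Soe97}) and is used as a black box to deduce Proposition \ref{prop 7.5}. You correctly identify this at the end of your proposal, so there is no discrepancy to flag at the level of what the paper actually does.

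Your sketch is not a proof of the cited theorem but rather an outline of an alternative route that bypasses \cite{AJS94} in favor of the coherent/constructible machinery of \cite{Bezru16} and Koszulity results of Beilinson--Ginzburg--Soergel type. This is exactly the strategy the paper alludes to in the Remark following Proposition \ref{prop 7.5}, where it notes that the analogue for the restricted enveloping algebra is carried out in \cite[Thm 1.2]{BL23}. So your proposal is in good company, but it is a genuinely different (and more geometric) approach than the one \cite{AJS94} takes, which proceeds via deformation theory and wall-crossing combinatorics without perverse sheaves.

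One point of caution in your sketch: the dictionary you invoke---baby Vermas $\leftrightarrow$ standard sheaves $j_{x'!}$, simples $\leftrightarrow$ IC sheaves, projectives $\leftrightarrow$ tiltings---is not what the paper's \textsection\ref{subsect 5.2.1} establishes. There, the equivalence $D^b\Coh^G(\widehat{\mathrm{St}}) \simeq \widehat{D}_{\check{I}_0}(\widetilde{\Fl})$ matches the \emph{deformed Verma objects} $\sM_x$ (living over $\b_S$, not over the point $0\in\g$) with Wakimoto-twisted standard sheaves. Passing to $\A_0\mod^{T\times\C^\times}$ requires a further specialization, and the resulting correspondence for baby Vermas and projectives is not the naive one you state; sorting this out carefully is precisely the content of \cite{BL23} in the modular setting. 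Your high-level plan is sound, but executing it would require more than a direct appeal to (\ref{equ 5.5}).
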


We use the theorem above to show the following 

\begin{prop}\label{prop 7.5} 
The liftings $\widetilde{\sL}_x$, $\widetilde{\sZ}_x$, $\widetilde{\sP}_x$ ($x\in W_\ex$) in $\A_0\mod^{T\times \C^\times}$ can be chosen such that $\widetilde{\sL}_{t(\lambda)x}=\widetilde{\sL}_x\otimes \C_\lambda$ for any $\lambda\in \Lambda$ (and similarly for $\widetilde{\sZ}_x$, $\widetilde{\sP}_x$), and 
$$(\widetilde{\sP}_y:\widetilde{\sZ}_x)_{\gr}= [\widetilde{\sZ}_x: \widetilde{\sL}_y]_{\gr}=p_{w_0x,w_0y}(v), \quad \forall x,y\in W_\ex.$$ 
\end{prop}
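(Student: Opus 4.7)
The plan is to transport Theorem \ref{thm 7.4} from $\rB^\bullet\gmod^T$ to $\A_0\mod^{T\times \C^\times}$ through the chain of equivalences
\[
\A_0\mod^T\ \xrightarrow{\sim}\ u_\zeta\mod^{\Lambda,0}_\C\ \xrightarrow{\sim}\ \rB\mod^T,
\]
where the first equivalence comes from the specialization at $0\in\t$ of Proposition \ref{prop 3.7}, and the second is the Morita equivalence given by the projective generator $\bigoplus_{\lambda,w}P(w\bullet 0+l\lambda)_\C$ displayed just before Theorem \ref{thm 7.4}. Under this composition, $\sL_x,\sZ_x,\sP_x$ correspond respectively to $L_x,Z_x,P_x$.

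The $\C^\times$-equivariant structure on the tilting bundle $\sE$ makes $\A$ a $\C^\times$-equivariant $\C[\g]$-algebra, and hence $\A_0$ inherits a grading; this is precisely what provides the degrading functor $\A_0\mod^{T\times \C^\times}\to\A_0\mod^T$ with its grading shift $\langle 1\rangle$. Transferring this grading through the composite Morita equivalence yields a second grading $\rB^{\bullet'}$ on $\rB$. The central step of the proof is to identify $\rB^{\bullet'}$ with the AJS Koszul grading $\rB^\bullet$, up to an automorphism of $\rB$ fixing the degree-zero semisimple quotient and up to the freedom of shifting the liftings of simples.

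The identification of the two gradings is the main obstacle. Both $\rB^{\bullet'}$ and $\rB^\bullet$ are non-negative gradings on the same algebra $\rB$ with the same semisimple degree-zero quotient. One can show that $\rB^{\bullet'}$ is Koszul by checking that $\Ext^i_{\A_0\mod^T}(\sL_x,\sL_y)$ is pure of internal weight $i$ in the graded category $\A_0\mod^{T\times \C^\times}$; this purity ultimately comes from the fact that the $\C^\times$-action scales $\g$ by weight $-2$, together with the $\C^\times$-equivariant exotic t-structure of Bezrukavnikov--Mirkovi\'{c} on $D^b\Coh^{G\times \C^\times}(\widetilde{\sN})$. Uniqueness of Koszul gradings (Beilinson--Ginzburg--Soergel) then yields a graded isomorphism $\rB^{\bullet'}\cong \rB^\bullet$. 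An alternative route, which may be cleaner, is to pass via Bezrukavnikov's equivalence between $D^b\Coh^{G\times\C^\times}(\widehat{\mathrm{St}})$ and a suitable mixed constructible category on $\widetilde{\Fl}$: the $\C^\times$-weight on the coherent side corresponds to the mixed (Frobenius) weight on the constructible side, and the Koszul grading of AJS is known to be realized by this mixed structure through Soergel's theory.

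Once the two gradings are identified, choose liftings $\widetilde{\sL}_x,\widetilde{\sZ}_x,\widetilde{\sP}_x$ in $\A_0\mod^{T\times \C^\times}$ corresponding to $\widetilde{L}_x,\widetilde{Z}_x,\widetilde{P}_x$ under the graded equivalence. The compatibility $\widetilde{\sL}_{t(\lambda)x}=\widetilde{\sL}_x\otimes \C_\lambda$ (and similarly for $\widetilde{\sZ}_x,\widetilde{\sP}_x$) transports directly from Theorem \ref{thm 7.4} because the Morita equivalence and the $\C^\times$-structure both commute with the $\Lambda$-translations $-\otimes \C_\lambda$. The graded multiplicity equalities
\[
(\widetilde{\sP}_y:\widetilde{\sZ}_x)_\gr=[\widetilde{\sZ}_x:\widetilde{\sL}_y]_\gr=p_{w_0x,w_0y}(v)
\]
then follow immediately from the corresponding equalities in Theorem \ref{thm 7.4}.
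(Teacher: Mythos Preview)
Your overall strategy coincides with the paper's: transport the AJS grading on $\rB$ to $\A_0\mod^{T\times\C^\times}$ by comparing the two gradings on $\rB$ and then read off the multiplicity formula. The only substantive gap is in the step where you claim the $\C^\times$-grading $\rB^{\bullet'}$ is Koszul. Your justification (``the $\C^\times$-action scales $\g$ by weight $-2$, together with the $\C^\times$-equivariant exotic t-structure'') is not enough: Koszulity of the exotic heart is a theorem, not a formal consequence of the $\C^\times$-weight normalization, and it depends on the choice of tilting generator. The paper handles this by invoking \cite[\S5.2.3, \S5.5]{BM13} (and \cite[Thm~4.5]{BL23}) to \emph{replace} $\sE$ by a specific $\sE^{new}$ for which $\A^\bullet=\End^{\op}_{\tilde{\g}}(\sE^{new})$ is Koszul; then $\A_0^\bullet$ is non-negatively graded with semisimple degree~$0$ part and is generated in degree~$1$, and the same three properties pass to $\rB'^\bullet$.

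Once that is in place, the comparison of gradings in the paper is exactly your ``uniqueness of Koszul gradings'' step, carried out concretely via \cite[Prop~2.4.1]{BGS96}: for both $\rB^\bullet$ and $\rB'^\bullet$ one has $\rad(\rB)^i=\bigoplus_{j\ge i}\rB^j$, so passing to the associated graded of the radical filtration yields a $T$-equivariant graded isomorphism $\rB^\bullet\cong\rB'^\bullet$. The paper then notes that this isomorphism agrees with the ungraded one on the semisimple quotient, so simples go to simples, whence projectives to projectives and (as projective covers in the relevant Serre subcategories) baby Vermas to baby Vermas. Your alternative route through Bezrukavnikov's mixed equivalence would also work in principle, but would require substantially more input than you indicate.
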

\begin{proof} 
Through the equivalences $\rB\mod^T\simeq u_\zeta\mod_\C^{\Lambda,0}\simeq \A_0\mod^{T}$, the grading $\rB^\bullet$ on $\rB$ induces a different grading, hence a different graded multiplicity, on $\A_0\mod^{T}$. 
Our main task is to compare this graded multiplicity to the natural one induced by $\A_0\mod^{T\times \C^\times}$. 

By \cite[\textsection5.2.3, \textsection5.5]{BM13} (see also \cite[Thm 4.5]{BL23}), there is a $T\times \C^\times$-equivariant vector bundle $\sE^{new}$ on $\widetilde{\g}$ satisfying Theorem \ref{thm BM}(2), such that the graded algebra $\End^{\op}_{\widetilde{\g}}(\sE^{new})$ is Koszul (where grading is induced from $\C^\times$-equivariant structure). 
We can replace $\sE$ by $\sE^{new}$, then $\A^\bullet$ is a Koszul algebra. 
In particular, the graded algebra $\A_0^\bullet=\A^\bullet\otimes_{\C[\g]} \C$ satisfies: (1) $\A_0^\bullet$ is non-negatively graded; (2) $\A^0_0$ is a semi-simple algebra; (3) $\A_0^\bullet$ is generated by $\A^1_0$ over $\A^0_0$. 

We can choose a lifting $\widetilde{\sP}_x$ of $\sP_x$ ($x\in W_\ex$) such that it is generated by degree 0 elements as a graded $\A_0^\bullet$-module. 
Define a graded algebra $\rB'^\bullet$ via 
$$(\rB'^\bullet)^\op = \bigoplus_{d\in \Z, \lambda\in \Lambda} \Hom\big(\bigoplus_{w\in W} \widetilde{\sP}_w,\bigoplus_{w\in W} \widetilde{\sP}_w\otimes \C_{-\lambda} \langle d\rangle )\big).$$ 
Then we have an isomorphism of algebras $\rB\simeq \rB'$. 
From the properties of $\A_0^\bullet$ (resp. Koszulity of $\rB^\bullet$), we deduce that (1) $\rB'^\bullet$ (resp. $\rB^\bullet$) is non-negatively graded; (2) $\rB'^0$ (resp. $\rB^0$) is a semi-simple algebra; (3) $\rB'^\bullet$ (resp. $\rB^\bullet$) is generated by $\rB'^1$ (resp. $\rB^1$) over $\rB'^0$ (resp. $\rB^0$). 
Hence by \cite[Prop 2.4.1]{BGS96}, we have $\rad(\rB')^i=\bigoplus_{j\geq i} \rB'^{j}$ and $\rad(\rB)^i=\bigoplus_{j\geq i} \rB^{j}$, and therefore we have a ($T$-equivariant) isomorphism of graded algebras 
\begin{equation}\label{equ 7.5}
\rB^{\bullet} \xs \bigoplus_{i} \rad(\rB)^i/\rad(\rB)^{i+1} \simeq \bigoplus_{i} \rad(\rB')^i/\rad(\rB')^{i+1} \ \xleftarrow{\sim} \rB'^\bullet.
\end{equation}
It induces an equivalence $\rB^{\bullet}\gmod^T \simeq \rB'^{\bullet}\gmod^T$. 
Note that the isomorphism (\ref{equ 7.5}) and the earlier isomorphism $\rB\simeq \rB'$ might be different, but they induce the same isomorphism on the semi-simple quotients $\rB/\rad(\rB)\simeq \rB'/\rad(\rB')$. 
Therefore, under the composed equivalence $\rB^{\bullet}\gmod^T \simeq \rB'^{\bullet}\gmod^T \simeq \A_0\mod^{T\times \C^\times}$, the simple module $\widetilde{L}_x$ is send to $\widetilde{\sL}_x$ ($x\in W_\ex$) (up to shifts). 
As projective cover of $\widetilde{L}_x$, $\widetilde{P}_x$ is send to $\widetilde{\sP}_x$ (up to shifts). 
Since $\widetilde{Z}_x$ is the projective cover of $\widetilde{L}_x$ in the Serre subcategory generated by $\widetilde{L}_y\langle d\rangle$ with $y\leq^{\frac{\infty}{2}} x$, it follows that $\widetilde{Z}_x$ is send to $\widetilde{\sZ}_x$ (up to shifts). 

Now, we can deduce our assertion from Theorem \ref{thm 7.4}. 
\end{proof}

\begin{rmk}
Proposition \ref{prop 7.5} might be proved as the analogue statement for restricted enveloping algebra in \cite[Thm 1.2]{BL23}, instead of using Theorem \ref{thm 7.4}. 
\end{rmk}

\begin{proof}[Proof of Theorem \ref{thm 7.3}] 
We fix the liftings $\widetilde{\sL}_x$, $\widetilde{\sZ}_x$, $\widetilde{\sP}_x$ ($x\in W_\ex$) in Proposition \ref{prop 7.5}. 
And choose liftings $\widetilde{\sM}^\C_x$, $\widetilde{\sP}^b_x$ such that $\widetilde{\sM}^\C_x\otimes_{\C[\n]} \C=\widetilde{\sZ}_x$ and $\widetilde{\sP}^b_x\otimes_{\C[\n]} \C= \widetilde{\sP}_x$. 
Then we have $\widetilde{\sM}^\C_{t(\lambda)x}=\widetilde{\sM}^\C_x\otimes \C_\lambda$ for any $\lambda\in \Lambda$ (and similarly for $\widetilde{\sP}^b_x$). 

\textit{Step 1.}
We firstly consider the multiplicity $[\widetilde{\sM}^\C_x:\widetilde{\sL}_y]_\gr$. 
Consider the following sequence induced by Koszul resolution 
\begin{equation}\label{equ 7.6}
\widetilde{\sM}^\C_x\otimes_{\C[\n]} (\C[\n]\otimes \bigwedge^\bullet \n^*) \rightarrow \widetilde{\sZ}_x \rightarrow 0,
\end{equation}
which is exact, since the corresponding sequence in $\fU^b_\zeta\mod^{\Lambda}_\C$ 
$$M(x\bullet_l 0)_\C \otimes_{Z_\Fr^-}(Z_\Fr^-\otimes \bigwedge^\bullet \n^*)\rightarrow Z(x\bullet_l 0)_\C\rightarrow 0$$ 
is exact by the freeness of the $Z_\Fr^-$-module $M(x\bullet_l 0)_\C$. 
By abuse of notation, we view $\langle \eta \rangle$ ($\eta\in \Lambda$) as a continuous $\Z[v^{\pm 1}]$-linear operator on $\hat{K}_0(\A_\n\mod^{B\times \C^\times})$ sending $[\widetilde{\sM}^\C_x]$ to $[\widetilde{\sM}^\C_{t(\eta)x}]$, for any $x\in W_\ex$. 
By (\ref{equ 7.6}), we have the following equality in $K_0(\A_\n\mod^{B\times \C^\times})$, 
\begin{equation}\label{equ 7.11}
[\widetilde{\sZ}_x]=\big(\prod_{\alpha\in \Phi^+}(1- v^2\langle -\alpha\rangle)\big) [\widetilde{\sM}^\C_x]. 
\end{equation}
In $\hat{K}_0(\A_\n\mod^{B\times \C^\times})$, we express $[\widetilde{\sL}_y]$ into (topological) basis $\{[\widetilde{\sZ}_x]\}_x$ and $\{[\widetilde{\sM}^\C_x]\}_x$ as 
$$[\widetilde{\sL}_y]=\sum_x [\widetilde{\sL}_y: \widetilde{\sZ}_x]_\gr\cdot [\widetilde{\sZ}_x]=\sum_x [\widetilde{\sL}_y: \widetilde{\sM}^\C_x]_\gr\cdot [\widetilde{\sM}^\C_x].$$ 
Then (\ref{equ 7.11}) shows that 
\begin{equation}\label{equ 7.12}
\big(\prod_{\alpha\in \Phi^+}(1- v^2\langle -\alpha\rangle)\big)(\sum_x [\widetilde{\sL}_y: \widetilde{\sZ}_x]_\gr\cdot [\widetilde{\sM}^\C_x])=\sum_x [\widetilde{\sL}_y: \widetilde{\sM}^\C_x]_\gr\cdot [\widetilde{\sM}^\C_x]. 
\end{equation}
By \cite[\textsection 11]{Lus80h} (see also \cite[Thm 6.1]{Soe90}), there is an equality 
\begin{equation}\label{equ 7.13} 
\sum_x (-1)^{\ell(x)+\ell(y)} q_{x,y}(v)\cdot p_{w_0x,w_0z}(v) =\delta_{y,z}, \quad \forall y,z\in W_\ex.
\end{equation}
Hence it follows from Proposition \ref{prop 7.5} that 
\begin{equation}\label{equ 7.14}
[\widetilde{\sL}_y: \widetilde{\sZ}_x]_\gr=(-1)^{\ell(x)+\ell(y)} q_{x,y}(v), \quad \forall x,y\in W_\ex. 
\end{equation}
Combining (\ref{equ 7.1}), (\ref{equ 7.12}) with (\ref{equ 7.14}), we deduce that 
$$[\widetilde{\sL}_y: \widetilde{\sM}^\C_x]_\gr=(-1)^{\ell(x)+\ell(y)} p_{x,y}(v), \quad \forall x,y\in W_\ex.$$ 
By (\ref{equ 7.13}) again, we have 
\begin{equation}\label{equ 7.18}
[\widetilde{\sM}^\C_x:\widetilde{\sL}_y]_\gr=q_{w_0x,w_0y}(v), \quad \forall x,y\in W_\ex. 
\end{equation}

\textit{Step 2.} 
Now we study the multiplicity $(\widetilde{\sQ}^\Omega_x: \widetilde{\sM}^\C_y)_\gr$. 
Apply $-\otimes_{\C[\n]}\C$ to any Verma flag of $\widetilde{\sP}^b_x$, we obtain a Verma flag of $\widetilde{\sP}_x$, since the corresponding statement in $\fU^b_\zeta\mod^{\Lambda}_\C$ is true by the freeness of Verma modules over $Z_\Fr^-$. 
Hence we have 
\begin{equation}\label{equ 7.7} 
(\widetilde{\sP}^b_x:\widetilde{\sM}^\C_y)_\gr=(\widetilde{\sP}_x:\widetilde{\sZ}_y)_\gr. 
\end{equation}

Consider the $(T\times \C^\times)$-equivariant $\A_\n \rtimes U\n$-module 
$$\widetilde{\sQ}_x:= \ind_c(\widetilde{\sP}^b_x)= (\A_\n \rtimes U\n) \otimes_{\A_\n} \widetilde{\sP}^b_x,$$ 
and set $\sQ_x$ the module by forgetting the $\C^\times$-action of $\widetilde{\sQ}_x$. 
We have $\widetilde{\sQ}_x^\Omega= \tau^{U\n\times \C^\times,\Omega}(\widetilde{\sQ}_x)$, where $\tau^{U\n\times \C^\times,\Omega}$ is the truncation functor in the proof of Lemma \ref{lem 7.3}. 
There is a finite filtration 
\begin{equation}\label{equ 7.17}
0=\sQ_0\subset \sQ_1 \subset \sQ_2\subset \cdots \subset \sQ_n =\widetilde{\sQ}_x,
\end{equation}
with composition factors $\sQ_{i+1}/\sQ_i$ of the form $\ind_c(\widetilde{\sM}^\C_y\langle d\rangle)$, where $\widetilde{\sM}^\C_y\langle d\rangle$ are Verma factors of $\widetilde{\sP}^b_x$. 
Since $\widetilde{\sM}^\C_y$ is a $\A_\n \rtimes U\n$-module, there is an isomorphism of $\A_\n \rtimes U\n$-modules $\ind_c(\widetilde{\sM}^\C_y)\simeq \widetilde{\sM}^\C_y\otimes U\n$, where $U\n$ acts on $\widetilde{\sM}^\C_y\otimes U\n$ diagonally. 
There is a separated decreasing filtration 
$$\widetilde{\sM}^\C_y\otimes U\n = \sM_0 \supset \sM_1 \supset \cdots ,$$
with composition factors $\sM_i/\sM_{i+1}$ of the form $\widetilde{\sM}^\C_y\otimes \C_\eta$, $\eta\geq 0$. 
We formally write 
$$(\widetilde{\sQ}_x:\widetilde{\sM}^\C_y)_\gr:=\sum_{i=0}^{n-1} (\sQ_{i+1}/\sQ_i:\widetilde{\sM}^\C_y)_\gr.$$ 
Then there is an equality in $\hat{\P}$, 
$$\big(\prod_{\alpha \in \Phi^+} (1+\langle -\alpha \rangle +\langle -2\alpha \rangle +\cdots)\big)(\sum_{y} (\widetilde{\sP}^b_x:\widetilde{\sM}^\C_y)_\gr\cdot H^{\frac{\infty}{2}}_{w_0y}) = \sum_y (\widetilde{\sQ}_x:\widetilde{\sM}^\C_y)_\gr\cdot H^{\frac{\infty}{2}}_{w_0y}. $$ 
By (\ref{equ 7.2}) and (\ref{equ 7.7}) and Proposition \ref{prop 7.5}, we have 
$$(\widetilde{\sQ}_x:\widetilde{\sM}^\C_y)_\gr=q'_{w_0y,w_0x}(v), \quad \forall x,y\in W_\ex.$$ 

To complete the proof, we show that as a truncation of $\widetilde{\sQ}_x$, the module $\widetilde{\sQ}^{\Omega}_x$ admits graded multiplicity obtained as truncation from the one for $\widetilde{\sQ}_x$, as (\ref{equ 7.6'}). 
As $(T\times \C^\times)$-equivariant $\A_\n$-modules, we have $\widetilde{\sM}^\C_y\otimes U\n \simeq \bigoplus_\eta \widetilde{\sM}^\C_y\otimes (U\n)_\eta$. 
Since $\Ext^1(\widetilde{\sM}^\C_x,\widetilde{\sM}^\C_y\langle d\rangle)=0$ if $x\bullet_l 0\nless y\bullet_l 0$, from the filtration (\ref{equ 7.17}) one can obtain a separated decreasing filtration of $\widetilde{\sQ}_x$ in $\A_\n\Mod^{T\times \C^\times}$, with composition factors by Verma objects. 
Hence $\tau^{T\times \C^\times,\Omega}(\widetilde{\sQ}_x)$ (see again the proof of Lemma \ref{lem 7.3} for the truncation functor $\tau^{T\times \C^\times,\Omega}$) is the quotient of $\widetilde{\sQ}_x$ by the submodule composed by $\widetilde{\sM}^\C_y\langle d\rangle$ with $y\bullet_l 0\notin \Omega$, and therefore its graded multiplicity of Verma objects is as the RHS of (\ref{equ 7.6'}). 
Since $\widetilde{\sQ}_x^\Omega= \tau^{U\n\times \C^\times,\Omega}(\widetilde{\sQ}_x)$ is a quotient of $\tau^{T\times \C^\times,\Omega}(\widetilde{\sQ}_x)$, we have 
\begin{equation}\label{equ 7.19}
(\sQ_x^\Omega:{\sM}^\C_y) \leq \big(\tau^{T,\Omega}({\sQ}_x) : {\sM}^\C_y\big) =
\begin{cases}
	q'_{w_0y,w_0x}(1) & \text{if } y\bullet_l 0 \in \Omega, \\ 
	0 & \text{if else}. 
\end{cases}
\end{equation}
On the other hand, by the BGG reciprocity \cite[Prop 3.4]{Situ1} (in \textit{loc. cit.} it is stated for $\Omega=\{\lambda\in \Lambda| \lambda\leq \nu\}$ for some $\nu$, but the proof works for general $\Omega$) and (\ref{equ 7.18}) we have 
$$(\sQ_x^\Omega:{\sM}^\C_y)=
\begin{cases}
	q_{w_0y,w_0x}(1) & \text{if } y\bullet_l 0 \in \Omega, \\ 
	0 & \text{if else}. 
\end{cases}$$
Hence (\ref{equ 7.19}) is an equality. 
It follows that $\widetilde{\sQ}_x^\Omega=\tau^{U\n\times \C^\times,\Omega}(\widetilde{\sQ}_x)=\tau^{T\times \C^\times,\Omega}(\widetilde{\sQ}_x)$, whose graded multiplicity $(\widetilde{\sQ}_x^\Omega:\widetilde{\sM}^\C_y)_\gr=(\tau^{T\times \C^\times,\Omega}(\widetilde{\sQ}_x):\widetilde{\sM}^\C_y)_\gr$ is equal to the RHS of (\ref{equ 7.6'}). 
\end{proof}

\ 

\nocite{*} 

\bibliographystyle{plain} 
\bibliography{MyBibtex3}

\end{document}